\theoremstyle{plain}
\newtheorem{thm}{Theorem}[section]
\newtheorem{lem}[thm]{Lemma}
\newtheorem{prop}[thm]{Proposition}
\newtheorem{definition}[thm]{Definition}
\def\@rst #1 #2other{#1}
\newcommand\MR[1]{\relax\ifhmode\unskip\spacefactor3000 \space\fi
  \MRhref{\expandafter\@rst #1 other}{#1}}
\newcommand{\MRhref}[2]{\href{http://www.ams.org/mathscinet-getitem?mr=#1}{MR#2}}
\theoremstyle{definition}
\newtheorem{defn}[thm]{Definition}
\newtheorem{remark}[thm]{Remark}
\newtheorem{ques}[thm]{Question}
\numberwithin{equation}{section}
\newcommand{\dsb}{\begin{adjustwidth}{2.5em}{0pt}
\begin{footnotesize}}
\newcommand{\dse}{\end{footnotesize}
\end{adjustwidth}}
\newcommand{\ssb}{\begin{adjustwidth}{2.5em}{0pt}}
\newcommand{\sse}{\end{adjustwidth}}
\newcommand{\aryb}{\begin{eqnarray*}}
\newcommand{\arye}{\end{eqnarray*}}
\def\alb#1\ale{\begin{align*}#1\end{align*}}
\def\allb#1\alle{\begin{align}#1\end{align}}
\newcommand{\eqb}{\begin{equation}}
\newcommand{\eqe}{\end{equation}}
\newcommand{\eqbn}{\begin{equation*}}
\newcommand{\eqen}{\end{equation*}}
\newcommand{\BB}{\mathbbm}
\newcommand{\ol}{\overline}
\newcommand{\op}{\operatorname}
\newcommand{\frk}{\mathfrak}
\newcommand{\ep}{\epsilon}
\newcommand{\rta}{\rightarrow}
\newcommand{\wt}{\widetilde}
\newcommand{\wh}{\widehat} 
\newcommand{\mcl}{\mathcal}
\newcommand{\bdy}{\partial}
\newcommand{\SLE}{\mathrm{SLE}}
\let\originalleft\left
\let\originalright\right
\renewcommand{\left}{\mathopen{}\mathclose\bgroup\originalleft}
\renewcommand{\right}{\aftergroup\egroup\originalright}
\newcommand{\Z}{\BB Z}
\newcommand{\N}{\BB N}
\newcommand{\cL}{\mathcal L}
\newcommand{\cR}{\mathcal R}
\newcommand{\cZ}{\mathcal Z}
\newcommand{\cV}{\mathcal V}
\newcommand{\cH}{\mathcal H}
\newcommand{\cG}{\mathcal G}
\newcommand{\Mn}{M_n}
\newcommand{\Ar}{A_{\mathrm{r}}}
\newcommand{\Ab}{A_{\mathrm{b}}}
\newcommand{\Ag}{A_{\mathrm{g}}}
\newcommand{\Tr}{T_{\mathrm{r}}}
\newcommand{\Tb}{T_{\mathrm{b}}}
\newcommand{\Tg}{T_{\mathrm{g}}}
\newcommand{\set}{\mathcal T_{\mathrm{SE}}     }
\newcommand{\nwt}{\mathcal T_{\mathrm{NW}}     }
\newcommand{\cE}{\mathcal E}
\newcommand{\cF}{\mathcal F}
\newcommand{\cO}{\mathcal O}
\newcommand{\cQ}{\mathcal Q}
\newcommand{\cP}{\mathcal P}
\newcommand{\map}{M}
\newcommand{\R}{\mathbb R}
\newcommand{\outf}{f_0}
\newcommand{\innV}{\mathring{\cV}}
\newcommand{\innF}{\mathring{\cF}}
\newcommand{\npole}{\mathrm{N}}
\newcommand{\spole}{\mathrm{S}}
\renewcommand{\path}{\lambda}
\newcommand{\UBOM}{\mathrm{UIBOM}}
\newcommand{\UIWT}{\mathrm{UIWT}}
\newcommand{\1}{\mathbf{1}}
\renewcommand{\P}{\mathbb{P}}
\newcommand{\E}{\mathbb{E}}
\newcommand{\nb}{\mathrm{Nb}}
\title{A mating-of-trees approach for graph distances\\ in random planar maps}
\date{  }
\author{
\begin{tabular}{c} Ewain Gwynne\\[-5pt]\small University of Cambridge \end{tabular}
\begin{tabular}{c} Nina Holden\\[-5pt]\small ETH Z\"urich \end{tabular}
\begin{tabular}{c} Xin Sun\\[-5pt]\small Columbia University \end{tabular}
}
\begin{document}

\maketitle
 
\begin{abstract} 
We introduce a general technique for proving estimates for certain random planar maps which belong to the $\gamma$-Liouville quantum gravity (LQG) universality class for $\gamma \in (0,2)$. The family of random planar maps we consider are those which can be encoded by a two-dimensional random walk with i.i.d.\ increments via a mating-of-trees bijection, and includes the uniform infinite planar triangulation (UIPT; $\gamma=\sqrt{8/3}$); and planar maps weighted by the number of different spanning trees ($\gamma=\sqrt 2$), bipolar orientations ($\gamma=\sqrt{4/3}$), or Schnyder woods ($\gamma=1$) that can be put on the map.

Using our technique, we prove estimates for graph distances in the above family of random planar maps. In particular, we obtain non-trivial upper and lower bounds for the cardinality of a graph distance ball consistent with the Watabiki (1993) prediction for the Hausdorff dimension of $\gamma$-LQG and we establish the existence of an exponent for certain distances in the map. 

The basic idea of our approach is to compare a given random planar map $M$ to a \emph{mated-CRT map}---a random planar map constructed from a correlated two-dimensional Brownian motion---using a strong coupling (Zaitsev, 1998) of the encoding walk for $M$ and the Brownian motion used to construct the mated-CRT map. This allows us to deduce estimates for graph distances in $M$ from the estimates for graph distances in the mated-CRT map which we proved (using continuum theory) in a previous work. In the special case when $\gamma=\sqrt{8/3}$, we instead deduce estimates for the $\sqrt{8/3}$-mated-CRT map from known results for the UIPT. 

The arguments of this paper do not directly use SLE/LQG, and can be read without any knowledge of these objects.
\end{abstract}

\tableofcontents

\section{Introduction}
\label{sec-intro}

\subsection{Overview}
\label{sec-overview}

A \emph{planar map} is a graph embedded in the plane, viewed modulo orientation-preserving homeomorphisms\footnote{It is more common in the planar map literature to define a planar map to be a graph embedded in the \emph{$2$-sphere}, rather than in the plane. We choose to embed in the plane since we will only be working with infinite planar maps and finite planar maps with boundary, so it is convenient to have a notion of ``infinity".}.
Random planar maps are a natural model of discrete random surfaces and are of fundamental importance in mathematical physics, probability, and combinatorics. 
Many interesting random planar maps converge in various topologies to $\gamma$-Liouville quantum gravity (LQG) surfaces --- a family of continuum random fractal surfaces --- with the parameter $\gamma \in (0,2]$ depending on the random planar map model. 
Such random planar maps are said to belong to the \emph{$\gamma$-LQG universality class}. 

The $\sqrt{8/3}$-LQG universality class contains uniform random planar maps, including uniform triangulations, quadrangulations, and uniform maps with unconstrained face degree. The $\gamma$-LQG universality class for $\gamma \not=\sqrt{8/3}$ contains random planar maps sampled with probability proportional to the partition function of some statistical mechanics model on the map, e.g., the uniform spanning tree ($\gamma = \sqrt2$),  bipolar orientations ($\gamma=\sqrt{4/3}$), the Ising model ($\gamma=\sqrt 3$), or the Schnyder wood ($\gamma=1$). It is expected (and in some cases known) that the universality class is affected only by changing the statistical mechanics model, not by changing the microscopic features of the planar map such as constraints on face or vertex degree.  
 
The goal of this paper is to set up a general framework for proving estimates for a certain family of random planar maps in the $\gamma$-LQG universality class for $\gamma \in (0,2)$ and apply this framework to obtain estimates for graph distances on these maps (see Section~\ref{sec-main-results} for precise statements).
The family of planar maps we consider are those which can be encoded by a two-dimensional random walk with i.i.d.\ increments via a so-called \emph{mating-of-trees bijection}. It includes the uniform infinite planar triangulation (UIPT) as well as infinite-volume limits of planar maps sampled with probability proportional to the number of different spanning trees, bipolar orientations, or Schnyder woods that can be put on the map. 

Using a strong coupling of the encoding walk with Brownian motion~\cite{kmt,zaitsev-kmt}, we will obtain a coupling of each map in our family with the \emph{$\gamma$-mated-CRT map}---a specific random planar map in the $\gamma$-LQG universality class constructed from a correlated two-dimensional Brownian motion (see Section~\ref{sec-peanosphere} for a precise definition). The mated-CRT map, in turn, is directly connected to SLE and LQG, so can be studied using continuum theory (we will not use this theory directly in the present paper, however).

To estimate graph distances in our family of random planar maps, we will show that in the above coupling the mated-CRT map and the other map are roughly isometric up to a polylogarithmic factor with high probability (Theorem~\ref{thm-map-coupling}). 
We will then deduce estimates for distances in the other map from the estimates for distances in the mated-CRT map which were proven in~\cite{ghs-dist-exponent}.
See Figure~\ref{fig-cont-to-discrete-dist} for a schematic illustration of our approach.

In particular, for each of the maps in our family, we obtain upper and lower bounds for the cardinality of a graph-metric ball consistent with the prediction of Watabiki~\cite{watabiki-lqg} for the dimension of $\gamma$-LQG (Theorem~\ref{thm-map-ball}); and establish the existence of an exponent $\chi$ which describes various distances in the planar map and which, at least for $\gamma \leq \sqrt 2$, we expect be the scaling exponent for the graph distance needed to get a non-trivial scaling limit in the Gromov-Hausdorff topology (Theorem~\ref{thm-map-dist}). 
To our knowledge, our results constitute the first non-trivial bounds for distances for planar maps weighted by spanning trees, bipolar orientations, or Schnyder woods. 
Since distances in uniform maps are already well understood, in the case when $\gamma =\sqrt{8/3}$ we instead obtain sharper bounds for distances in the $\sqrt{8/3}$-mated-CRT map than those in~\cite{ghs-dist-exponent} (Theorem~\ref{thm-6-ball}). 

The results of this paper can also be used to prove other results for random planar maps. For example, our coupling theorems are used in~\cite{gm-spec-dim,gh-displacement} to solve several open problems concerning the simple random walk on the UIPT and on the other random planar maps considered in this paper (including computing the exponents for the return probability and graph distance displacement of the walk). Our results are also used in~\cite{dg-lqg-dim} to relate the metric ball volume exponent for random planar maps to various exponents related to LQG --- arising from the Liouville heat kernel, Liouville graph distance, and Liouville first passage percolation --- and to obtain new bounds for these exponents. More generally, since there is a wide range of tools available for studying mated-CRT maps (due to their connection to SLE/LQG) we expect that the techniques developed in this paper could potentially be useful whenever one needs to prove estimates for random planar maps which can be encoded by mating-of-trees bijections.  

In the rest of this section, we provide some background on distances in random planar maps, define mated-CRT maps, introduce some notation, and state our main results. Section~\ref{sec-dist-comparison} contains the core of our argument, where we apply the strong coupling theorem of~\cite{zaitsev-kmt} to compare graph distances for mated-CRT maps and other random planar maps. In Section~\ref{sec-bijection} we review the mating-of-trees bijections for several particular random planar maps, then use the results of Section~\ref{sec-dist-comparison} to deduce our main results (since the bijections for the various maps are defined in slightly different ways, we need to treat the maps individually). In Section~\ref{sec-questions}, we discuss some open problems.

\begin{figure}[ht!]
 \begin{center}
\includegraphics[scale=1]{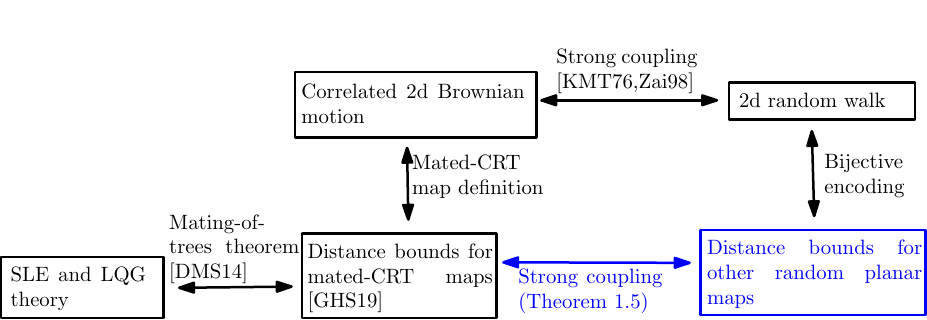} 
\caption[Logical structure of Chapter~\ref{chap-map-dist} and related works]{Schematic illustration of how the results of this paper fit together with other works (blue indicates what is done in the present work). As explained in Section~\ref{sec-peanosphere}, the mated-CRT map can be constructed from a two-dimensional correlated Brownian motion, or equivalently from a $\gamma$-LQG surface decorated by a space-filling SLE$_\kappa$ for $\kappa = 16/\gamma^2$ due to the results of~\cite{wedges}. In~\cite{ghs-dist-exponent}, we proved estimates for distances in mated-CRT maps using a combination of Brownian motion and SLE/LQG techniques. In the present paper, we transfer these estimates to other random planar map models which can be bijectively encoded by a two-dimensional random walk with i.i.d.\ increments. This is done by coupling the encoding walk with the Brownian motion used to define the mated-CRT map using~\cite{zaitsev-kmt}; and arguing that in such a coupling, the other random planar map and the mated-CRT map are roughly isometric up to a polylogarithmic factor. 
}\label{fig-cont-to-discrete-dist}
\end{center}
\end{figure}

\medskip

\noindent\textbf{Acknowledgements.}
We thank an anonymous referee for helpful comments on an earlier version of this article.
We thank Jason Miller for helpful discussions.
E.G.\ was partially funded by NSF grant DMS 1209044.
N.H.\ was supported by a fellowship from the Norwegian Research Council.
X.S.\ was supported by the  Simons Foundation as a Junior Fellow at Simons Society of Fellows.

\subsection{Background and context}
\label{sec-background}

\subsubsection{Mating-of-trees bijections and mated-CRT maps} 
\label{sec-peanosphere}

Many random planar map models in the $\gamma$-LQG universality class for $\gamma \in (0,2)$ can be encoded by a two-dimensional random walk via a bijection of \emph{mating-of-trees} (a.k.a.\ \emph{peanosphere}) type.
The reason for the name is that these bijections can be interpreted as gluing together two discrete trees (corresponding to the two coordinates of a two-dimensional random walk)  to construct a planar map decorated by a space-filling (``peano") curve. 

The first mating-of-trees bijection is that of Mullin~\cite{mullin-maps} (explained more explicitly in~\cite{bernardi-maps}), which encodes a random spanning tree-decorated planar map by means of a  nearest-neighbor random walk in $\BB Z^2$. Sheffield's hamburger-cheeseburger bijection~\cite{shef-burger} (which is a re-formulation of a bijection in~\cite{bernardi-maps}) is a generalization of Mullin's bijection which encodes a planar map decorated by the critical Fortuin Kasteleyn (FK) model~\cite{fk-cluster} with parameter $q\in (0,4)$. There are also mating-of-trees bijections for bipolar-oriented planar maps~\cite{kmsw-bipolar}, uniform random triangulations decorated by site percolation~\cite{bernardi-dfs-bijection,bhs-site-perc}, random planar maps decorated by a Schnyder wood~\cite{lsw-schnyder-wood}, and random planar maps decorated by an activity-weighted spanning tree~\cite{gkmw-burger}.  

For $\gamma \in (0,2)$, the \emph{$\gamma$-mated-CRT map} is a random planar map in the $\gamma$-LQG universality class, constructed by means of a semi-continuous variant of a mating-of-trees bijection with a correlated two-dimensional Brownian motion in place of a random walk. Suppose $\gamma \in (0,2)$ and $Z = (L,R)$ is a two-sided, two-dimensional Brownian motion with variances and covariances 
\eqb \label{eqn-bm-cov}
\op{Var}(L_t) = \op{Var}(R_t) = |t| \quad\op{and} \quad \op{Cov}(L_t,R_t) = -\cos(\pi\gamma^2/4) |t| ,\quad\forall t\in \BB R .
\eqe 
Note that the correlation of $L$ and $R$ ranges over $(-1,1)$ as $\gamma$ ranges over $(0,2)$. 

We define the mated-CRT map $\mcl G$ to be the graph with vertex set $\BB Z$, with two vertices $x_1, x_2 \in \BB Z$ with $x_1<x_2$ connected by an edge if and only if either
\allb \label{eqn-bm-inf-adjacency}
&\left( \inf_{t\in [x_1-1 , x_1]} L_t  \right) \vee \left( \inf_{t\in [x_2 - 1 , x_2]} L_t  \right) \leq \inf_{t\in [x_1  , x_2-1]} L_t \quad\op{or}\quad \notag \\
&\qquad \left( \inf_{t\in [x_1 - 1 , x_1]} R_t  \right) \vee \left( \inf_{t\in [x_2 -1 , x_2]} R_t  \right) \leq \inf_{t\in [x_1  , x_2-1]} R_t  .
\alle
Note that~\eqref{eqn-bm-inf-adjacency} holds in particular if $|x_1-x_2| = 1$.
The vertices $x_1$ and $x_2$ are connected by two edges if both conditions in~\eqref{eqn-bm-inf-adjacency} hold but $|x_1-x_2| > 1$. 
See Figure~\ref{fig-mated-crt-map} for an illustration of this definition and an explanation of why $\mcl G$ is a planar map (in fact, a triangulation).\footnote{The papers~\cite{ghs-dist-exponent,gms-tutte} consider the mated-CRT maps $\mcl G^\ep$ with vertex set $\ep\BB Z$, with each vertex $x$ corresponding to an interval of the form $[x-\ep,x]$. In this notation $\mcl G = \mcl G^1$. By Brownian scaling the law of $\mcl G^\ep$ does not depend on $\ep$.} 

\begin{figure}[ht!]
 \begin{center}
\includegraphics[scale=.7]{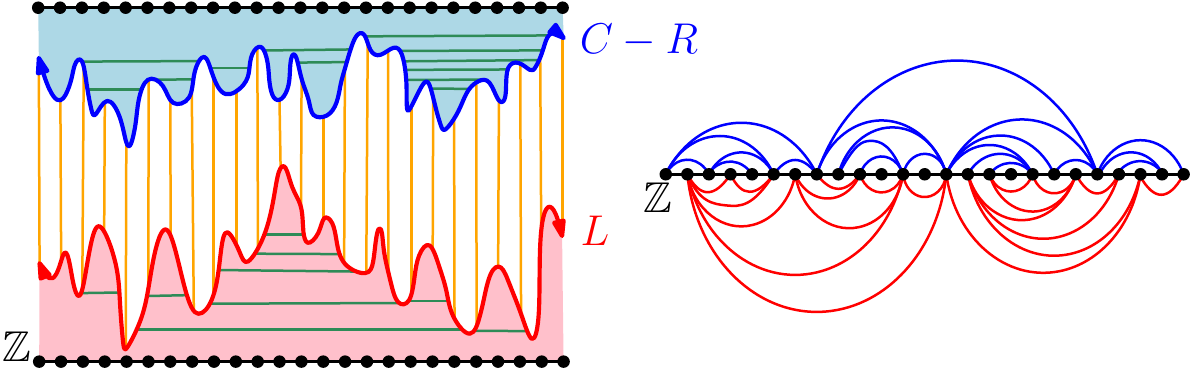} 
\caption[Definition of the mated-CRT map]{\textbf{Left:} To construct the mated-CRT map $\mcl G $ geometrically, one can draw the graph of $L$ (red) and the graph of $C-R$ (blue) for some large constant $C > 0$ chosen so that the parts of the graphs over some time interval of interest do not intersect. One then divides the region between the graphs into vertical strips (boundaries shown in orange) and identifies each strip with the horizontal coordinate $x\in \BB Z$ of its rightmost point. Vertices $x_1,x_2\in \BB Z$ are connected by an edge if and only if the corresponding strips either share a boundary segment (i.e., $|x_1-x_2| = 1$) or are connected by a horizontal line segment which lies under the graph of $L$ or above the graph of $C-R$. One such segment is shown in green in the figure for each pair of vertices for which this latter condition holds.
\textbf{Right:} One can draw the graph $\mcl G $ in the plane by connecting two vertices $x_1,x_2 \in  \BB Z$ by an edge by an arc above (resp.\ below) the blue line if the corresponding strips are connected by a horizontal segment above (resp.\ below) the graph of $L$ (resp.\ $C-R$); and connecting each pair of consecutive vertices of $ \BB Z$ by an edge. This gives $\mcl G $ a planar map structure under which it is a triangulation. Note that the two pictures do not correspond to the same mated-CRT map realization.
}\label{fig-mated-crt-map}
\end{center}
\end{figure}

The mated-CRT map is a discretized mating of the continuum random trees (CRT's)~\cite{aldous-crt1,aldous-crt2,aldous-crt3} associated with $L$ and $R$. As we will see in Section~\ref{sec-bijection}, this adjacency condition~\eqref{eqn-bm-inf-adjacency} is an exact continuum analogue of the condition for two vertices to be adjacent in terms of the encoding walk in various discrete mating-of-trees bijections.

It follows from~\cite[Theorem 1.9]{wedges} (and is explained in more detail in~\cite{ghs-dist-exponent}) that the mated-CRT map is directly connected to SLE-decorated Liouville quantum gravity. We will not directly use this relationship here, but we briefly mention it for the sake of context.
Suppose $h$ is the distribution corresponding to a $\gamma$-quantum cone (a type of $\gamma$-LQG surface) and $\eta$ is an independent whole-plane space-filling SLE$_\kappa$ from $\infty$ to $\infty$ for $\kappa =16/\gamma^2$, as constructed in~\cite[Sections 1.2.3 and 4.3]{ig4}. 
If we parametrize $\eta : \BB R\rta \BB C$ in such a way that the $\gamma$-LQG mass of $\eta([s,t])$ equals $t-s$ whenever $s < t$, then two vertices $x_1,x_2\in  \BB Z$ of $\mcl G$ are joined by an edge in $\mcl G$, i.e.,~\eqref{eqn-bm-inf-adjacency} holds, if and only if the ``cells" $\eta([x_1-1, x_1])$ and $\eta([x_2-1,x_2])$ intersect along a non-trivial connected boundary arc (this adjacency graph of cells is sometimes called the \emph{structure graph} associated with $(h,\eta)$). 
This gives an embedding of the mated-CRT map into $\BB C$ which can be analyzed using SLE and LQG estimates. 
 
Using a combination of the above relationship between mated-CRT maps and SLE/LQG and Brownian motion techniques, the paper~\cite{ghs-dist-exponent} proves a number of estimates for graph distances in the mated-CRT map $\mcl G $, valid for all $\gamma \in (0,2)$. This paper will transfer these estimates to other random planar maps.
We also mention the recent paper~\cite{gms-tutte}, which proves that the mated-CRT map under the so-called \emph{Tutte embedding} converges to $\gamma$-LQG by showing that it is close to the a priori embedding (coming from SLE-decorated LQG) discussed above. We will not need the results of this latter paper here.

\subsubsection{Graph distances in random planar maps}
\label{sec-dist-background}

To put our results in context, we provide here some background about graph distances in random planar maps.

Uniformly random planar maps (i.e., those in the $\sqrt{8/3}$-LQG universality class) can be analyzed by means of the Schaeffer bijection~\cite{schaeffer-bijection} and variants thereof (e.g.,~\cite{bdg-bijection}), which, unlike mating-of-trees bijections, describe graph distances in a simple way. In particular, for a number of different types of uniformly random planar maps (including $k$-angulations for $k=3$ or $k \geq 4$ even) it is known that if we sample a map of size $n$ and re-scale graph distances by $n^{-1/4}$ then the re-scaled metric converges in law with respect to the Gromov-Hausdorff topology to the \emph{Brownian map}~\cite{legall-uniqueness,miermont-brownian-map}, a continuum metric measure space constructed via a continuum analogue of the Schaeffer bijection. 
It is shown in~\cite{lqg-tbm1,lqg-tbm2,lqg-tbm3} that the Brownian map is equivalent, as a metric measure space, to a certain type of $\sqrt{8/3}$-LQG surface. 
Hence the metric space structure of uniformly random planar maps is well understood. 

For random planar maps in the $\gamma$-LQG universality class for $\gamma\not=\sqrt{8/3}$, however, the graph distance is not at all well understood. In this case there is no known bijective encoding analogous to the Schaeffer bijection which describes distances in a simple way.
However, a candidate for the Gromov-Hausdorff limit, the so-called \emph{$\gamma$-LQG metric} has recently been constructed in~\cite{gm-uniqueness}.

In fact, even the basic exponents for distances in random planar maps outside the $\sqrt{8/3}$-LQG universality class are unknown. 
Such exponents include the exponent for the diameter of a random planar map of size $n$ or the exponent for the cardinality of a metric ball of radius $n$ in an infinite-volume random planar map (which is expected to be its reciprocal). This latter exponent should be the same as the Hausdorff dimension $d_\gamma$ of the $\gamma$-LQG metric. It was predicted by Watabiki in~\cite{watabiki-lqg} that
\eqb \label{eqn-watabiki}
d_\gamma = 1 + \frac{\gamma^2}{4} + \frac14 \sqrt{(4+\gamma^2)^2 + 16\gamma^2} .
\eqe
The prediction~\eqref{eqn-watabiki} appears to match closely with numerical simulations~\cite{ambjorn-budd-lqg-dist} and there are several rigorous upper and lower bounds for quantities associated with $\gamma$-LQG which are consistent with this prediction~\cite{ghs-dist-exponent,mrvz-heat-kernel,andres-heat-kernel,dzz-heat-kernel,dg-lqg-dim,gp-lfpp-bounds,ang-discrete-lfpp}. 

However,~\eqref{eqn-watabiki} is known to be false for small values of $\gamma$. Indeed, Ding and Goswami~\cite{ding-goswami-watabiki} proved an estimate for various discrete approximations of the LQG metric which shows that $2/d_\gamma \leq 1 - c \frac{\gamma^{4/3}}{\log\gamma^{-1}}$ for small enough $\gamma$, whereas~\eqref{eqn-watabiki} gives $2/d_\gamma = 1 - O_\gamma(\gamma^2)$. 
It follows from the results of~\cite{dzz-heat-kernel,dg-lqg-dim} that one has an analogous estimate for the $\gamma$-mated CRT map when $\gamma$ is small, which then extends to other random planar maps (in particular, the biased bipolar-oriented maps in~\ref{item-kappa>8} below) using the results of the present paper. This then contradicts~\eqref{eqn-watabiki} for random planar maps.   
It is not clear whether~\eqref{eqn-watabiki} is true in the regime when $\gamma$ is not close to 0. 
See~\cite{ghs-dist-exponent,ding-goswami-watabiki,dg-lqg-dim} for further discussion of Watabiki's prediction and the problem of computing $d_\gamma$.

\subsection{Basic notation} 
\label{sec-basic-notation}

\noindent
We write $\BB N$ for the set of positive integers and $\BB N_0 = \BB N\cup \{0\}$. 
\vspace{6pt}

\noindent
For $a,b \in \BB R$ with $a<b$, we define the discrete intervals $[a,b]_{ \BB Z} := [a, b]\cap  \BB Z$ and $(a,b)_{ \BB Z} := (a,b)\cap \BB Z$.
\vspace{6pt}

\noindent
If $a$ and $b$ are two quantities depending on a variable $x$, we write $a = O_x(b)$ (resp.\ $a = o_x(b)$) if $a/b$ remains bounded (resp.\ tends to 0) as $x\rta 0$ or as $x\rta\infty$ (the regime we are considering will be clear from the context). We write $a = o_x^\infty(b)$ if $a = o_x(b^s)$ for every $s\in\BB R$.  
\vspace{6pt}

 \subsubsection{Notation for graphs} 
 \label{sec-graph-notation}
 
\noindent
For a planar graph $G$, we write $\mcl V(G)$, $\mcl E(G)$, and $\mcl F(G)$, respectively, for the set of vertices, edges, and faces of $G$, respectively. 
We write $v_1\sim v_2$ in $G$ if $v_1,v_2\in \cV(G)$ is connected by an edge in $G$.
For $v\in\mcl V(G)$, we write $\op{deg}(v;G)$ for the degree of $v$ (i.e., the number of edges with $v$ as an endpoint).
\vspace{6pt}

\noindent
A \emph{path} in $G$ is a function $P : [0,n]_{\BB Z }\rta \mcl V(G)$ for some $n\in\BB N$ such that $P(i)\sim P(i+1)$ for $i=0,\dots,n-1$. The \emph{length} $|P|$ of $P$ is the integer $n$. 
\vspace{6pt}

\noindent
For sets $A , B$ consisting of vertices and/or edges of $G$, we write $\op{dist}(A,B ; G)$ for the graph distance from $A$ to $B$ in $G$, i.e., the length of the shortest path from a vertex in $A$ or the endpoint of an edge in $A$ to a vertex or an endpoint of an edge in $B$. If $A = \{x\}$ and/or $B = \{y\}$ is a singleton, we drop the set brackets.
\vspace{6pt}

\noindent
For a set $A$ consisting of vertices and/or edges of $G$ and
$r>0$, we define the \emph{graph metric ball} $B_r(A ; G)$ to be the subgraph of $G$ consisting of the set of vertices of $G$ which lie at graph distance at most $r$ from $A$ and the set of edges of $G$ which join two such vertices (the set brackets are omitted if $A = \{x\}$ is a singleton).  
\vspace{6pt}

\noindent
We write $\op{diam}(A ; G ) :=\max_{x,y\in \mcl V(A)} \op{dist}(x,y;G) $ for the graph-distance diameter of $A$ with respect to the graph metric on $G$. We abbreviate $\op{diam}(G) := \op{diam}(G ; G)$. 
 \vspace{6pt}

\subsection{Main results} 
\label{sec-main-results}

We will consider the following infinite random planar maps $M$ with a distinguished oriented root edge $e_0$, each decorated by statistical mechanics models, which are the local limits of corresponding finite random planar maps in the Benjamini-Schramm topology~\cite{benjamini-schramm-topology}.  
\begin{enumerate}
\item $\gamma=\sqrt 2$ and $(M,e_0,T)$ is the uniform infinite spanning-tree decorated map, which can be constructed via the infinite-volume version of Mullin's bijection~\cite{mullin-maps,shef-burger,bernardi-maps,chen-fk}. \label{item-kappa8}

\item $\gamma=\sqrt{8/3}$ and $(M,e_0 , \theta)$ is the uniform infinite planar triangulation of type II~\cite{angel-schramm-uipt} (i.e., it allows multiple edges but not self-loops) decorated by a critical ($p=1/2$~\cite{angel-uihpq-perc}) site percolation configuration.\label{item-kappa6}

\item\label{item-kappa12}  $\gamma=\sqrt{4/3}$ and $(M,e_0, \mcl O)$ is the uniform infinite bipolar-oriented map, constructed via the infinite-volume version of the bijection in~\cite{kmsw-bipolar}\footnote{We give a careful proof that this and other infinite-volume limits of bipolar-oriented planar maps exist in Section~\ref{subsub:ubom}. See Proposition~\ref{prop:BS}.}.

\item $\gamma < \sqrt 2$ and $(M,e_0,\mcl O)$ is an infinite bipolar-oriented map with one of the other face degree distributions considered in~\cite[Section 2.3]{kmsw-bipolar} for which the face degree distribution has an exponential tail and the correlation between the coordinates of the encoding walk is $-\cos(\pi\gamma^2/4)$ (e.g., an infinite bipolar-oriented $k$-angulation for $k\geq 3$---in which case $\gamma=\sqrt{4/3}$---or one of the bipolar-oriented maps with biased face degree distributions considered in~\cite[Remark 1]{kmsw-bipolar}). \label{item-kappa>8}

\item $\gamma = 1$ and $(M,e_0 , T)$ is the uniform infinite Schnyder wood-decorated triangulation, as constructed in~\cite{lsw-schnyder-wood}.  \label{item-kappa16} 
\end{enumerate}
The reason why we consider these five cases is that in each of the above cases, the map $(M,e_0)$ and its associated statistical mechanics model can be encoded by a two-sided two-dimensional random walk $\mcl Z : \BB Z\rta\BB Z^2$ with i.i.d.\ increments via a mating-of-trees bijection (these bijections are reviewed in Section~\ref{sec-bijection}), so we can apply the two-dimensional version of the KMT strong coupling theorem~\cite{kmt,zaitsev-kmt} to couple $\mcl Z$ with the correlated Brownian motion $Z$ of~\eqref{eqn-bm-cov}. 

We emphasize that all of our results can be extended to any other random planar map model which can be encoded by a mating-of-trees bijection wherein the encoding walk can be strongly approximated by Brownian motion---the models listed above are just the most natural ones for which we presently know this to be the case. Section~\ref{sec-dist-comparison} considers a larger (but perhaps less natural) class of random planar map models for which slightly stronger versions of our results hold. See Section~\ref{sec-questions} for open problems and additional discussion concerning expanding the set of models for which the results of this paper apply. 

As in Section~\ref{sec-peanosphere}, in each of the above four cases we let $Z = (L,R)$ be a correlated two-dimensional Brownian motion correlation $-\cos(\pi\gamma^2/4)$, as in~\eqref{eqn-bm-cov}, and we let $\mcl G $ be the mated-CRT map constructed from $Z$ as in~\eqref{eqn-bm-inf-adjacency}. 
  
Just below, we will state a coupling result (Theorem~\ref{thm-map-coupling}) which allows us to compare $M$ and $\mcl G$.
The strong coupling theorem~\cite{zaitsev-kmt} only allows us to compare random walk and Brownian motion on a finite time interval, so to state our result we need to describe finite (approximate) subgraphs of $M$ and $\mcl G$ which correspond to finite time intervals. We start by defining such graphs for $\mcl G$. 

\begin{defn} \label{def-sg-restrict}
For $n\in\BB N$ we write $\mcl G_{n}$ for the subgraph of $\mcl G$ whose vertex set is $[-n,n]_{\BB Z}$ and whose edge set consists of all of the edges of $\mcl G$ between two such vertices ($\mcl G_n$ is called $\mcl G^1|_{[-n,n]}$ in~\cite{ghs-dist-exponent}). We also write $\bdy\mcl G_{n}$ for the subgraph of $\mcl G_{n}$ consisting of all vertices of $\mcl G_{n}$ which are connected by an edge to a vertex of $\mcl G\setminus \mcl G_{n}$; and all edges of $\mcl G_{n}$ which join two such vertices. 
\end{defn}

We now discuss the analogue of Definition~\ref{def-sg-restrict} for the map $M$. 
In each of the above situations, the corresponding bijection gives for each $n\in\BB N$ a planar map $\Mn$ associated with the random walk increment $\mcl Z|_{[-n,n]_{\BB Z}}$.
The maps $\Mn$ are defined slightly different in each of the four cases and will be defined precisely in Section~\ref{sec-bijection}. 

The map $\Mn$ is not necessarily a subgraph of $M$ (see, however, Remark~\ref{remark-subgraph}), but there is a distinguished boundary\footnote{Recall that a planar map with boundary is a planar map $G$ with a distinguished face (the \emph{external face}), in which case the boundary $\bdy G$ is the set of vertices and edges on the boundary of this face.}
$\bdy \Mn$ and an ``almost" inclusion function
\eqb \label{eqn-inclusion-function} 
\iota_n : \Mn \rta M \quad \text{which is injective on $\Mn \setminus \bdy \Mn$},
\eqe 
i.e., a function from $\mcl V(\Mn) \cup \mcl E(\Mn) $ to $\mcl V(M) \cup \mcl E(M)  $. 
The injectivity of $\iota_n$ on $\Mn\setminus \bdy \Mn$ means that we can canonically identify $\Mn \setminus \bdy \Mn$ with a subgraph of $M $. We emphasize that $\iota_n$ is not injective on all of $\Mn$ in case~\ref{item-kappa6}: see Remark~\ref{remark-subgraph}. The map $\Mn$ possesses a canonical root vertex which is mapped to $\BB v$ by $\iota_n$, and which (by a slight abuse of notation) we identify with $\BB v$. 
 
To set up a correspondence between the vertex sets $\mcl V(\map_n)$ and $\mcl V( \mcl G_n) = [-n,n]_{\BB Z}$, we will define in Section~\ref{sec-bijection} for each $n \in\BB N$ functions
\eqb \label{eqn-peano-functions}
\phi_n : \mcl V(\Mn) \rta [-n,n]_{\BB Z}  \quad \op{and} \quad \psi_n : [-n,n]_{\BB Z} \rta \mcl V(\Mn) , \quad \op{with} \quad \phi_n(\BB v) = 0 \quad \op{and} \quad \psi_n(0) = \BB v , 
\eqe   
where $\BB v$ denotes the initial endpoint of $e_0$.  
Roughly speaking, the vertex $\psi_n(i)$ corresponds to the $i$th step of the walk $\mcl Z|_{[-n,n]_{\BB Z}}$ in the bijective construction of $(M,e_0,T)$ from $\mcl Z$ and $\phi_n$ is ``close" to being the inverse of $\psi_n$. 
However, the construction of $\Mn$ from $\mcl Z|_{[-n,n]_{\BB Z}}$ does not set up an exact bijection between $[-n, n]_{\BB Z}$ and the vertex set of $\Mn $, so the functions $\phi_n$ and $\psi_n$ are neither injective nor surjective. See Figure~\ref{fig-coupling-setup} for an illustration of the above definitions.

\begin{remark} \label{remark-subgraph}
In cases~\ref{item-kappa8} and \ref{item-kappa12} through~\ref{item-kappa16} above, each of the planar maps $\Mn$ can be canonically identified with a subgraph of $M$ and one can take $\iota_n$ to be the identity map. Moreover, there are functions $\phi : \mcl V(M) \rta \BB Z$ and $\psi : \BB Z \rta \mcl V(M)$ for which $\phi_n = \phi|_{\mcl V(\Mn)}$ and $\psi_n = \psi|_{[-n,n]_{\BB Z}}$. However, in case~\ref{item-kappa6} $\Mn$ cannot be identified with a subgraph of $M$ since it may be be the case that one or more pairs of vertices or edges of $\Mn$ get identified together to a single edge of $M$ at a later step of the bijection; see Figure~\ref{fig4}. 
\end{remark}

\begin{remark} \label{remark-edge-map}
For each of the random planar maps considered here, for $i\in\BB Z$ the translated encoding walk $ j\mapsto \mcl Z_{j + i} -\mcl Z_i$ has the same law as $\mcl Z$, so encodes rooted planar map with the same law as $(M,e_0)$, decorated by a statistical mechanics model. It is easy to see from the definitions of the bijections, as reviewed in Section~\ref{sec-bijection}, that the random planar map encoded by the translated walk is the same as $M$ but with a different choice of root edge which we denote by $\lambda(i)$ (the statistical mechanics model may not be the same). The function $ \lambda : \BB Z \rta \mcl E(M)$ is a bijection in cases~\ref{item-kappa6} through~\ref{item-kappa16} and is two-to-one in case~\ref{item-kappa8}.
It is tempting to try to define $\Mn$ to be the subgraph of $M$ whose edge set is $ \lambda([-n,n]_{\BB Z})$ and whose vertex set consists of the endpoints of edges in $ \lambda([-n,n]_{\BB Z})$. This definition does not work in general, however, since the resulting graph is not necessarily connected in cases~\ref{item-kappa12} and~\ref{item-kappa>8} and the resulting graph may have pairs of vertices which get identified at a later step in case~\ref{item-kappa6}, as in Remark~\ref{remark-subgraph}. However, with our definitions of $\Mn$ one can check that in each of cases~\ref{item-kappa8} through~\ref{item-kappa>8} above,
\eqb \label{eqn-edge-map}
\lambda^{-1} \left( \iota_n\left( \mcl E (\Mn)  \setminus \mcl E ( \bdy \Mn )   \right) \right) \subset  [-n,n]_{\BB Z}  \quad \op{and} \quad \lambda([-n,n-1]_{\BB Z} )\subset    \iota_n \left( \mcl E (\Mn) \right)   .
\eqe
In the case~\ref{item-kappa16} of the Schnyder wood-decorated map,~\eqref{eqn-edge-map} does not hold with our definitions since we treat such maps as a special case of bipolar-oriented maps for the sake of convenience (see Section~\ref{subsub:Schnyder}). However, we expect that one can use the bijection from~\cite{lsw-schnyder-wood} to give a different definition of $\Mn$ for which~\eqref{eqn-edge-map} holds.
\end{remark}

\begin{figure}[ht!]
 \begin{center}
\includegraphics[scale=.7]{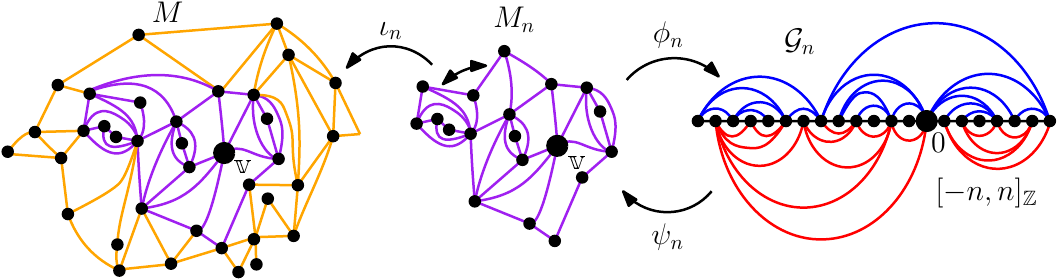} 
\caption[The setup for Section~\ref{sec-main-results}]{Illustration of the map $\Mn$, the ``almost inclusion" map $\iota_n : \Mn\rta M$, and the maps $\phi_n : \mcl V(\Mn) \rta \mcl V(\mcl G_n)$ and $\psi_n : \mcl V(\mcl G_n) \rta \mcl V(\Mn)$. Note that two edges of $\bdy \Mn$ get identified when we apply $\iota_n$. Theorem~\ref{thm-map-coupling} asserts that $\phi_n$ and $\psi_n$ are rough isometries up to polylogarithmic factors.
}\label{fig-coupling-setup}
\end{center}
\end{figure}

The main result which allows us to compare the graph metrics on $M$ and $\mcl G $ is Theorem~\ref{thm-map-coupling} just below, which says that these two maps are roughly isometric (up to a polylogarithmic factor), in the following sense.

\begin{defn} \label{def-rough-isometry}
Let $(X_1,d_1)$ and $(X_2,d_2)$ be metric spaces. For $a,b,c > 0$, a map $\phi : X_1 \rta X_2$ is called a \emph{rough isometry} with parameters $(a,b,c)$ if
\eqb \label{eqn-rough-iso-compare}
a^{-1} d_1(x , y) - b \leq  d_2(\phi(x) , \phi(y))   \leq  a d_1(x,y) + b ,\quad \forall x,y \in X_1 
\eqe
and for each $z\in X_2$, there is an $x\in X_1$ such that
\eqb
d_2(z, \phi(x)) \leq c .
\eqe
\end{defn}
 
\begin{thm} \label{thm-map-coupling}
Suppose we are in one of the five settings listed above. 
For each $A> 0$,  there is a constant $C=  C(A) > 0$ such that for each $n\in\BB N$, there is a coupling of $Z$ and $(M,e_0, T)$ such that with probability  $1-O_n(n^{-A})$, the map $\phi_{n }$ of~\eqref{eqn-peano-functions} is a rough isometry from $\Mn$ to $\mcl G_{n}$ (each equipped with their respective graph distances) with parameters $a = C (\log n)^4$, $b = 2$, and $c = C (\log n)^4$; and the map $\psi_{n}$ is a rough isometry from $\mcl G_{n}$ to $\Mn$ with these same values of $a,b,$ and $c$. 
\end{thm}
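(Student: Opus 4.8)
The plan is to reduce the theorem to two robust ``edge--to--short--path'' estimates together with two ``approximate inverse'' estimates, and then to extract these from the strong coupling of~\cite{zaitsev-kmt} combined with a stability property of graph distances in the mated-CRT map under small perturbations of the driving process. Throughout, $\phi_n$ and $\psi_n$ play symmetric roles, so it suffices to prove the estimates in the two directions $M_n \to \mcl G_n$ and $\mcl G_n \to M_n$ and then read off both halves of the statement.

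First I would apply the two-dimensional KMT coupling of Zaitsev~\cite{kmt,zaitsev-kmt} on the time interval $[-n,n]$. Since the increments of $\mcl Z$ are mean zero with finite exponential moments and, after the normalization carried out in Section~\ref{sec-bijection}, covariance matrix matching~\eqref{eqn-bm-cov}, this produces, for each fixed $n$, a coupling of $\mcl Z|_{[-n,n]_{\BB Z}}$ with $Z|_{[-n,n]}$ for which $\max_{t\in[-n,n]_{\BB Z}}|\mcl Z_t - Z_t| \leq C_0 \log n$ off an event of probability $O_n(n^{-A})$. I would then work on the intersection of this event with a regularity event $E_n$ for $Z$ of probability $1 - O_n(n^{-A})$, on which $L$ and $R$ have oscillation at most $C_0(\log n)^{1/2}$ over each of the $O(n)$ unit subintervals of $[-n,n]$ (so that discrete and continuum infima of $L,R$ over the intervals appearing in~\eqref{eqn-bm-inf-adjacency} agree up to an additive $O((\log n)^{1/2})$), and on which the perturbative distance bound below holds.

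The heart of the matter is the comparison of adjacencies. As recalled in Section~\ref{sec-bijection}, in each of the five cases the mating-of-trees bijection describes $\mcl E(M_n)$ so that, writing $i=\phi_n(x)$ and $j=\phi_n(y)$ with $i<j$, the relation $x\sim y$ in $M_n$ is equivalent, up to additive $O(1)$ errors in all quantities involved, to the discrete analogue of~\eqref{eqn-bm-inf-adjacency} for $\mcl Z$ --- i.e.\ one of $\mcl L$, $\mcl R$ satisfies the appropriate nesting relation between its running infima over $[i-1,i]_{\BB Z}$, $[j-1,j]_{\BB Z}$ and $[i,j-1]_{\BB Z}$ --- and symmetrically with $\mcl G_n$ and $M_n$ interchanged. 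Feeding the coupling bound and the oscillation bound on $E_n$ into this equivalence turns a genuine $\mcl G_n$-edge (resp.\ $M_n$-edge) into the statement that the relevant coordinate of $Z$ (resp.\ $\mcl Z$) satisfies~\eqref{eqn-bm-inf-adjacency} up to an additive error $\epsilon = C(\log n)^{1/2}$; call such a pair $\epsilon$-\emph{approximately adjacent}. The key input, which I would place in $E_n$ and prove separately (or cite from~\cite{ghs-dist-exponent}), is a \emph{stability estimate}: with probability $1-O_n(n^{-A})$, for all $i,j\in[-n,n]_{\BB Z}$ that are $\epsilon$-approximately adjacent one has $\op{dist}(i,j;\mcl G_n) \leq (\log n)^{O(1)}$, and symmetrically for $M_n$. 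Proving this is where the combinatorics of the mating of trees genuinely enters: one connects $i$ and $j$ through a chain of \emph{exact} adjacencies~\eqref{eqn-bm-inf-adjacency} obtained by descending and re-ascending the relevant continuum random tree near the approximate common ancestor of $i$ and $j$, and one must bound the number of generations traversed by a multiscale coalescence estimate for $Z$. I expect this stability lemma to be the main obstacle.

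Granting the above, the theorem assembles quickly. The approximate-inverse bounds $\op{dist}(\psi_n(\phi_n(x)),x;M_n)\leq(\log n)^{O(1)}$ and $\op{dist}(\phi_n(\psi_n(i)),i;\mcl G_n)\leq(\log n)^{O(1)}$ follow from the same comparison, since $\psi_n(\phi_n(x))$ and $x$ (resp.\ $\phi_n(\psi_n(i))$ and $i$) correspond to walk-steps within $O(1)$ of one another. Given $x,y\in\mcl V(M_n)$, applying the edge-to-short-path estimate along an $M_n$-geodesic from $x$ to $y$ gives $\op{dist}(\phi_n(x),\phi_n(y);\mcl G_n)\leq(\log n)^{O(1)}\,\op{dist}(x,y;M_n)$; applying the edge-to-short-path estimate for $M_n$ along a $\mcl G_n$-geodesic from $\phi_n(x)$ to $\phi_n(y)$ and then the approximate-inverse bound gives $\op{dist}(x,y;M_n)\leq(\log n)^{O(1)}\,\op{dist}(\phi_n(x),\phi_n(y);\mcl G_n)+(\log n)^{O(1)}$, which rearranges into the lower bound of~\eqref{eqn-rough-iso-compare}; and the surjectivity-up-to-$c$ condition for $\phi_n$ is immediate from $\op{dist}(i,\phi_n(\psi_n(i));\mcl G_n)\leq(\log n)^{O(1)}$ together with $\psi_n(i)\in\mcl V(M_n)$. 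By symmetry the identical argument shows $\psi_n$ is a rough isometry with the same parameters. Finally I would choose $C=C(A)$ large enough that all the $(\log n)^{O(1)}$ losses are absorbed into $a=C(\log n)^4$ (the exponent $4$ being the worst polylogarithmic loss, incurred in the stability lemma) and into $c=C(\log n)^4$, with the additive constant reduced to $b=2$ by absorbing the remaining lower-order additive polylogarithmic term into the multiplicative constant; this completes the proof.
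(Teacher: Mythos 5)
Your high-level skeleton (KMT/Zaitsev coupling on $[-n,n]$, comparison of the adjacency conditions, chaining along geodesics, and approximate-inverse bounds assembled as in Lemma~\ref{lem-map-thms}) is the same as the paper's, but there are two genuine gaps. First, the ``stability estimate'' you isolate --- that $\epsilon$-approximately adjacent indices with $\epsilon = O(\log n)$ are within $(\log n)^{O(1)}$ graph distance in $\mcl G_n$ (and in the walk map), with controlled multiplicity --- cannot be cited from~\cite{ghs-dist-exponent}; it is precisely the new technical content of this paper (Theorem~\ref{thm-sg-map-dist}), and it is proved not by a multiscale coalescence argument in the CRT but by an elementary Brownian estimate (Lemma~\ref{lem-bm-bad-cell}): after hitting level $-r$, the number of unit intervals on which the motion returns within $O(r)$ of its starting level has an exponential tail beyond $O(r^2)$, which with $r \asymp \log n$ gives paths of length $O((\log n)^3)$ through the set of ``low'' indices, any two consecutive of which are exactly adjacent by~\eqref{eqn-bm-inf-adjacency} or~\eqref{eqn-walk-adjacency}. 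You flag this as the main obstacle but do not supply it, so the core of the proof is missing.

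Second, and more structurally, your claim that for each of the five models $x\sim y$ in $M_n$ is equivalent ``up to additive $O(1)$ errors'' to the discrete analogue of~\eqref{eqn-bm-inf-adjacency} at the times $\phi_n(x),\phi_n(y)$ is false. The walk-adjacency condition~\eqref{eqn-walk-adjacency} encodes adjacency in an auxiliary graph indexed by walk steps --- the triangle graph $\mcl T$ in the Mullin case, the augmented radial map $\ol\cQ$ in the bipolar/Schnyder cases, and a graph built from a modified walk (with each $c$ step replaced by $-a,-b$ and quadrilaterals contracted) in the UIPT case --- not adjacency in $M$ itself, and $\phi_n,\psi_n$ are neither injective nor surjective. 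An edge $v_1\sim v_2$ of $M_n$ corresponds to an exact walk adjacency only at some pair of times $(i_1,i_2)$ which may be far from $(\phi_n(v_1),\phi_n(v_2))$; bridging them requires routing through all triangles/faces incident to $v_1$ and $v_2$, and hence needs the model-specific combinatorics of Section~\ref{sec-bijection} together with exponential-tail degree bounds (Lemma~\ref{lem-8-deg}, the UIPT root-degree tail, Lemma~\ref{lem-12-deg}). These degree bounds are what contribute the extra factor of $\log n$ taking the exponent from $3$ to $4$ (and from $6$ to $7$ in the multiplicity bounds of Theorem~\ref{thm-map-count}); attributing the exponent $4$ to the stability lemma, and asserting that $\psi_n(\phi_n(x))$ and $x$ correspond to walk steps within $O(1)$, misidentifies where the loss occurs and omits the combinatorial half of the argument entirely.
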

 
In cases~\ref{item-kappa8} and~\ref{item-kappa12} through~\ref{item-kappa16} above, Theorem~\ref{thm-map-coupling} will be used to deduce estimates for the graph metric on $M$ from the estimates for the graph metric on the mated-CRT map $\mcl G $ from~\cite{ghs-dist-exponent} (in case~\ref{item-kappa6}, these estimates for $M$ are weaker than the ones obtained from a Schaeffer-type bijection or from peeling). We now state the results we can obtain for the graph metric on $M$. 
Our first main result concerns the cardinality of graph metric balls in $M$, and is a planar map analogue of~\cite[Theorem 1.10]{ghs-dist-exponent}. 
 
\begin{thm} \label{thm-map-ball}
In each of the settings listed above, the following is true. Let
\allb \label{eqn-ball-scaling-exponent}
d_- :=  \frac{2\gamma^2}{4 + \gamma^2 - \sqrt{16 + \gamma^4}}   
\quad \op{and}\quad 
d_+ :=  2 + \frac{\gamma^2}{2} + \sqrt2 \gamma .
\alle
Then for each $u> 0$, there exists $c =c(u) > 0$ such that with probability $1-O_n(n^{-c})$, we have the following bounds for the number of vertices in graph metric balls of $M$ centered at the root edge:  
\eqb \label{eqn-map-ball}
  n^{d_-   - u} \leq  \# \mcl V\left( B_n \left( e_0 ; M \right)\right)  \leq n^{d_+ + u}    .
\eqe 
\end{thm}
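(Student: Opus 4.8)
The plan is to transfer the metric-ball cardinality bounds from the mated-CRT map $\mcl G$ to the map $M$ using the rough isometry from Theorem~\ref{thm-map-coupling}, combined with the corresponding estimates for $\mcl G$ from~\cite[Theorem 1.10]{ghs-dist-exponent}. First I would recall that~\cite{ghs-dist-exponent} gives, for each $u>0$, with probability $1-O_n(n^{-c})$, the two-sided bound $n^{d_- - u} \leq \#\mcl V(B_n(0;\mcl G_N)) \leq n^{d_+ + u}$ for $N$ a suitable polynomial power of $n$ (large enough that the ball stays well inside $\mcl G_N$), where $d_\pm$ are exactly as in~\eqref{eqn-ball-scaling-exponent}. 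The point is that on the high-probability event of Theorem~\ref{thm-map-coupling} applied with this $N$ (and with $A$ chosen larger than the exponent $c$ we want to lose, so the error is absorbed), $\phi_N$ and $\psi_N$ are rough isometries between $M_N$ and $\mcl G_N$ with parameters $a = C(\log N)^4 = C'(\log n)^4$, $b=2$, $c = C(\log n)^4$.

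The core of the argument is then a deterministic comparison of ball cardinalities under a rough isometry. For the upper bound: any vertex $v \in \mcl V(B_n(e_0;M_N))$ has $\op{dist}(v,\BB v;M_N) \leq n$, so by~\eqref{eqn-rough-iso-compare} for $\phi_N$, $\op{dist}(\phi_N(v),0;\mcl G_N) \leq a n + b = C'(\log n)^4 n + 2$. The map $\phi_N$ need not be injective, but the failure of injectivity is controlled: because $\phi_N$ is ``close to'' being the inverse of $\psi_N$, the preimage $\phi_N^{-1}(x)$ of any $x$ consists of vertices of $M_N$ that are all within bounded $M_N$-distance of $\psi_N(x)$ (this is exactly the sort of statement established in Section~\ref{sec-dist-comparison} / Section~\ref{sec-bijection} to even make sense of $\phi_n,\psi_n$ being rough isometries; in the cleanest formulation one can bound $\#\phi_N^{-1}(x)$ by a polylog factor, or simply note it is at most the size of a bounded-radius ball in $M_N$ around $\psi_N(x)$, which in turn injects into a polylog-radius ball in $\mcl G_N$ around $x$, and the latter has polynomially-bounded cardinality in $\#\mcl V(\mcl G_N)$ hence is certainly $n^{o(1)}$-many — more carefully, one bounds the multiplicity using that $\mcl G$ has a bounded number of edges per unit interval in expectation and a tail bound). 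Hence $\#\mcl V(B_n(e_0;M_N)) \leq (\max_x \#\phi_N^{-1}(x)) \cdot \#\mcl V(B_{C'(\log n)^4 n + 2}(0;\mcl G_N))$. Applying the~\cite{ghs-dist-exponent} upper bound with radius $C'(\log n)^4 n + 2 \leq n^{1+u/(2d_+)}$ for $n$ large, this is at most $n^{o(1)} \cdot (n^{1+u/(2d_+)})^{d_+ + u/2} \leq n^{d_+ + u}$ for $n$ large. Finally, since $B_n(e_0;M)$ and $B_n(e_0;M_N)$ agree on the interior region $M_N \setminus \bdy M_N$ (using~\eqref{eqn-inclusion-function} and that $N$ is large enough that the $n$-ball around $e_0$ in $M$ is contained in $\iota_N(M_N\setminus\bdy M_N)$, which itself needs a short separate argument — see below), we transfer the bound from $M_N$ to $M$.

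For the lower bound: fix any vertex $x \in \mcl V(\mcl G_N)$ with $\op{dist}(x,0;\mcl G_N) \leq n$. By the surjectivity clause of the rough isometry $\psi_N$ there is $v = \psi_N(x') \in \mcl V(M_N)$ with $x'$ within $c = C(\log n)^4$ of $x$, hence $v$ maps under $\phi_N$ to within bounded $\mcl G_N$-distance of $x$; chasing the inequalities, $\op{dist}(v,\BB v;M_N) \leq a(\op{dist}(x,0;\mcl G_N) + O((\log n)^4)) + b \leq C''(\log n)^4 n$. Distinct $x$ at pairwise $\mcl G_N$-distance more than some polylog can be arranged to have distinct images; more simply, the map $x \mapsto \psi_N(x')$ composed appropriately shows $\#\mcl V(B_{C''(\log n)^4 n}(\BB v;M_N)) \geq (\text{polylog})^{-1} \#\mcl V(B_n(0;\mcl G_N))$. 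Then~\cite{ghs-dist-exponent}'s lower bound $\#\mcl V(B_n(0;\mcl G_N)) \geq n^{d_- - u/2}$ gives, after rescaling the radius (replace $n$ by $n/(C''(\log n)^4) \geq n^{1 - u/(2d_-)}$ in the $M$-ball and using monotonicity), $\#\mcl V(B_n(e_0;M)) \geq n^{d_- - u}$ for $n$ large. Again one passes from $M_N$ to $M$ using that the relevant balls lie in the interior $M_N\setminus\bdy M_N$; here one needs the elementary fact that $\op{dist}(\BB v, \bdy M_N; M) \gg n$ with high probability, which follows from Theorem~\ref{thm-map-coupling} (the boundary $\bdy M_N$ maps near $\bdy \mcl G_N = $ vertices adjacent to $[-N,N]_{\BB Z}^c$, and $\op{dist}(0,\bdy\mcl G_N;\mcl G_N)$ is polynomially large in $N$, hence $\gg n$ once $N$ is a large enough power of $n$ — this is itself part of the input from~\cite{ghs-dist-exponent}).

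The step I expect to be the main obstacle is the bookkeeping around the non-injectivity and non-surjectivity of $\phi_n$ and $\psi_n$ and the passage between $M_n$ and $M$: one must verify that controlling graph distances up to polylog factors genuinely controls ball cardinalities up to $n^{o(1)}$ factors, which requires a uniform bound on the multiplicity of $\phi_N$ (equivalently on the number of walk steps mapping to a fixed vertex, and on vertex/edge degrees) that holds with high probability, and one must argue that the $n$-neighborhood of $e_0$ never ``reaches the boundary'' $\bdy M_N$ for $N$ a suitable polynomial in $n$ — both are conceptually routine given Section~\ref{sec-dist-comparison} and the distance estimates of~\cite{ghs-dist-exponent} but require care to state cleanly for all five models simultaneously. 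By contrast, the choice of exponents: the formulas~\eqref{eqn-ball-scaling-exponent} for $d_\pm$ are inherited verbatim from the mated-CRT map estimates, and the polylogarithmic distortion is harmless precisely because $(\log n)^{4 d_\pm}= n^{o(1)}$, so no new exponent computation is needed here.
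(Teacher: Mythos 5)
Your route is the paper's own: couple via Theorem~\ref{thm-map-coupling}, check that the radius-$n$ balls sit inside $M_{n^K}$ and $\mcl G_{n^K}$ (the paper isolates exactly this as Lemma~\ref{lem-ball-iso}), and import the exponents $d_\pm$ from \cite[Theorem~1.10]{ghs-dist-exponent}. However, two of the steps you fill in do not hold up as written. The first is the multiplicity control for $\phi_N$ (and, in your lower bound, for $\psi_N$, hidden in ``composed appropriately''). The claim that $\#\phi_N^{-1}(x) = n^{o(1)}$ is indeed what is needed, but both justifications you offer fail: the ``injects into a polylog-radius ball in $\mcl G_N$'' argument is circular, since the only available map from $M_N$ to $\mcl G_N$ is $\phi_N$ itself and a rough isometry need not be injective on small balls; the appeal to the number of edges of $\mcl G$ per unit interval is irrelevant to how many vertices of $M_N$ share an image; and bounding the preimage by the volume of a $C(\log n)^4$-radius ball in $M_N$ using the maximal-degree bound $O(\log n)$ only yields $(\log n)^{O((\log n)^4)}$, which is superpolynomial. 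The bound that actually works is model-specific and comes from the explicit definitions of $\phi_n,\psi_n$ in Section~\ref{sec-bijection}: $\phi$ is at most $2$-to-$1$ for the Mullin bijection, $O(1)$-to-$1$ in the UIPT case, and has multiplicity at most the relevant neighborhood size $|\nb_v|$ in the bipolar/Schnyder cases, while the multiplicity of $\psi$ is controlled by vertex degrees, the number of incident triangles, or $|\nb_v|$; all of these are $O(\log n)$ with the required probability by Lemma~\ref{lem-8-deg}, \eqref{eq2}, and Lemma~\ref{lem-12-deg}. So the loss is genuinely $n^{o(1)}$, but the input is combinatorial information about the bijections, not a property of $\mcl G$ or of the abstract rough isometry.

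The second gap is the passage from $M_{n^K}$ to $M$. You justify $\op{dist}(\BB v,\bdy M_N;M)\gg n$ by asserting that $\bdy M_N$ maps near $\bdy\mcl G_N$ under the coupling; nothing in Theorem~\ref{thm-map-coupling} gives this, since boundary vertices of $M_N$ are just vertices and their $\phi_N$-images can lie anywhere in $[-N,N]_{\BB Z}$. The paper's Lemma~\ref{lem-ball-iso} argues in the opposite direction: it pushes the boundary cycle of $\mcl G_{n^K}$ forward with $\psi_{n^K}$, connects consecutive images by the short paths provided by the coupling to obtain a cycle in $M_{n^K}$ all of whose vertices lie at $M_{n^K}$-distance $n^{2-o_n(1)}$ from $\BB v$, and then uses planarity to conclude that any path from $\BB v$ to $\bdy M_{n^K}$ must cross this cycle. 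Citing that lemma (or reproducing its separating-cycle argument) closes this gap; with these two repairs, both available in the paper, your argument coincides with the paper's proof, apart from minor slack in your exponent arithmetic that is absorbed by shrinking $u$.
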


As noted in~\cite{ghs-dist-exponent}, it is expected that typically $\# \mcl V( B_n( e_0 ; M) ) = n^{d_\gamma + o_n(1)}$, where $d_\gamma$ is the Hausdorff dimension of the $\gamma$-LQG metric (in fact, it can be proven that this is the case by combining~\cite[Theorem 1.6]{dg-lqg-dim} and~\cite[Corollary 1.7]{gp-kpz}, both of which were established after this paper). Hence Theorem~\ref{thm-map-ball} gives upper and lower bounds for $d_\gamma$. See Figure~\ref{fig-ball-bounds} for a graph of these bounds and a table giving their values for various special planar map models. 

The papers~\cite{dg-lqg-dim,gp-lfpp-bounds} build on the results of the present paper to prove new upper and lower bounds for $\# \mcl V( B_n( e_0 ; M) )$ which are sharper than those of Theorem~\ref{thm-map-ball} for most values of $\gamma$. For example, the bound in the case of spanning-tree weighted maps is improved to $3.550408 \leq d_{\sqrt 2} \leq 3.63299$~\cite[Corollary 2.5]{gp-lfpp-bounds}. The proof of these new bounds relies crucially on the results of the present paper both to transfer from the mated-CRT map to other random planar maps and to obtain that the volume growth exponent for the $\sqrt{8/3}$-mated-CRT map is 4 (Theorem~\ref{thm-6-ball}), which is used in combination with the monotonicity (in $\gamma$) of various exponents to obtain the new bounds.

\begin{figure}[ht!]
 \begin{center}
\includegraphics[scale=.6]{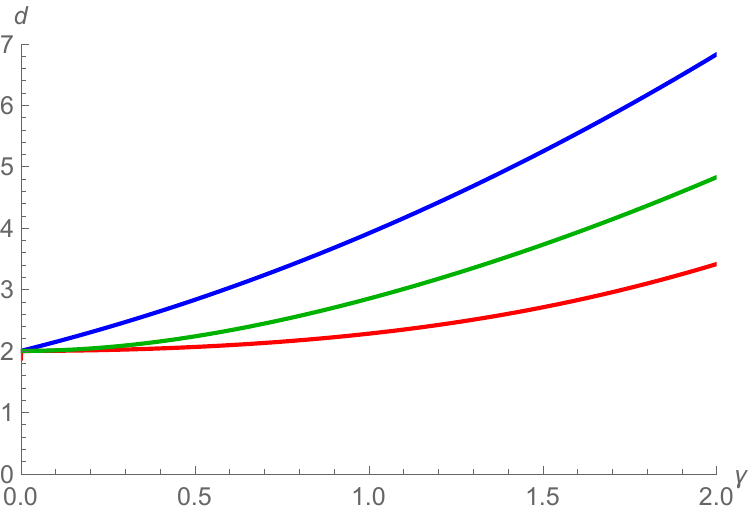} 
\hspace{6pt}
\includegraphics[scale=.9]{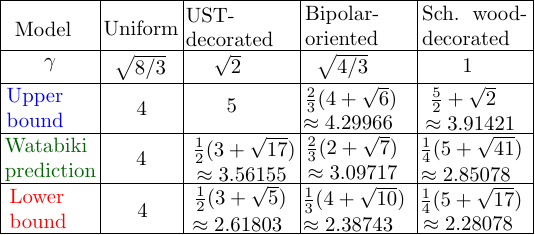} 
\caption[Upper and lower bounds for the cardinality of a metric ball in $M$]{\textbf{Left:} graph of the upper and lower bounds for the cardinality of a graph metric ball from Theorem~\ref{thm-map-ball} (blue and red) and the Watabiki prediction~\eqref{eqn-watabiki} for the Hausdorff dimension of $\gamma$-LQG (green). 
This graph was produced using Mathematica, and also appears in~\cite{ghs-dist-exponent}. \textbf{Right:} Table of the exact and approximate values of the the known upper/lower bounds and the Watabiki prediction for several special planar map models.
}\label{fig-ball-bounds}
\end{center}
\end{figure}

In~\cite[Theorem 1.12]{ghs-dist-exponent}, it is shown that for $\gamma \in (0,2)$, there exists an exponent $\chi=\chi(\gamma) > 0$ which can be defined as the limit
\eqb \label{eqn-chi-def}
\chi = \lim_{ n \rta \infty} \frac{\log \BB E\left[ \op{diam}\left(\mcl G_{n} \right) \right]}{\log n}
\eqe
and which satisfies
\eqb \label{eqn-chi-bounds}
\frac{1}{ 2 + \gamma^2/2 + \sqrt2 \gamma}   \vee \left(1 - \frac{2}{\gamma^2} \right) \leq \chi \leq  \frac12  .
\eqe 
Our next result shows that this same exponent $\chi$ also describes distances in the other random planar maps considered above. 

\begin{thm} \label{thm-map-dist}
Suppose we are in one of the settings listed above and let $\chi$ be as in~\eqref{eqn-chi-def} (for our given choice of $\gamma$). 
For $u > 0$ we have (with the notation $o_n^\infty(n)$ as in Section~\ref{sec-basic-notation}),
\eqb \label{eqn-dist-upper0}
\BB P\left[ \op{diam}\left( \Mn \right) \leq  n^{\chi + u} \right] \geq 1  - o_n^\infty(n)  
\eqe 
and 
\eqb \label{eqn-dist-lower0}
\BB P\left[ \op{diam} \left( \Mn \right)      \geq   n^{\chi - u}  \right] \geq n^{-o_n(1)} .
\eqe
In particular, for each $p > 0$,
\eqb \label{eqn-map-moment}
\BB E\left[ \op{diam} \left( \Mn \right)^p \right] = n^{\chi p  + o_n(1)} .
\eqe 
\end{thm}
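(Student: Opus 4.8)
The plan is to derive Theorem~\ref{thm-map-dist} directly from Theorem~\ref{thm-map-coupling} together with the corresponding estimates for the mated-CRT map established in~\cite{ghs-dist-exponent}. Recall from~\eqref{eqn-chi-def} that $\chi$ is defined via $\op{diam}(\mcl G_n)$ and that~\cite[Theorem 1.12]{ghs-dist-exponent} provides, for each $u>0$, bounds of the form $\P[\op{diam}(\mcl G_n) \leq n^{\chi+u}] \geq 1 - o_n^\infty(n)$ and $\P[\op{diam}(\mcl G_n) \geq n^{\chi - u}] \geq n^{-o_n(1)}$ (these are the continuum analogues of~\eqref{eqn-dist-upper0} and~\eqref{eqn-dist-lower0}, and should be quoted explicitly at the start of the proof). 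The idea is that a rough isometry with parameters $(a,b,c)$ where $a = C(\log n)^4$ and $b,c = O((\log n)^4)$ distorts the diameter by at most a polylogarithmic factor, which is absorbed into the $n^{\pm u}$ error when passing to exponents.

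First I would fix $u > 0$ and choose $A = A(u)$ large (to be determined), and invoke Theorem~\ref{thm-map-coupling} with this $A$ to obtain, for each $n$, a coupling of $Z$ and $(M,e_0,T)$ such that off an event of probability $O_n(n^{-A})$ the maps $\phi_n : M_n \to \mcl G_n$ and $\psi_n : \mcl G_n \to M_n$ are rough isometries with parameters $a = C(\log n)^4$, $b = 2$, $c = C(\log n)^4$. On this good event, I would compare diameters in both directions. For the upper bound on $\op{diam}(M_n)$: given $x,y \in \mcl V(M_n)$, the upper inequality in~\eqref{eqn-rough-iso-compare} applied to $\phi_n$ gives $\op{dist}(x,y;M_n) \leq a \,\op{dist}(\phi_n(x),\phi_n(y);\mcl G_n) + b \leq a\,\op{diam}(\mcl G_n) + b$, whence $\op{diam}(M_n) \leq a\,\op{diam}(\mcl G_n) + b$. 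For the lower bound on $\op{diam}(M_n)$: using $\psi_n$, for any $w,z \in \mcl V(\mcl G_n)$ we have $\op{dist}(w,z;\mcl G_n) \leq a\,\op{dist}(\psi_n(w),\psi_n(z);M_n) + b \leq a\,\op{diam}(M_n) + b$, so $\op{diam}(\mcl G_n) \leq a\,\op{diam}(M_n) + b$, i.e.\ $\op{diam}(M_n) \geq a^{-1}(\op{diam}(\mcl G_n) - b)$. (The surjectivity parameter $c$ is not needed for diameter comparison, only for more refined distance estimates.) Then on the intersection of the good coupling event with the event $\{\op{diam}(\mcl G_n) \leq n^{\chi + u/2}\}$ (which holds with probability $1 - o_n^\infty(n)$ by~\cite{ghs-dist-exponent}), we get $\op{diam}(M_n) \leq C(\log n)^4 n^{\chi+u/2} + 2 \leq n^{\chi + u}$ for $n$ large, proving~\eqref{eqn-dist-upper0}; since $O_n(n^{-A}) = o_n^\infty(n)$ the union bound is harmless.

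For~\eqref{eqn-dist-lower0}, I would intersect the good coupling event with $\{\op{diam}(\mcl G_n) \geq n^{\chi - u/2}\}$, which by~\cite{ghs-dist-exponent} has probability at least $n^{-o_n(1)}$; the good coupling event has probability $1 - O_n(n^{-A}) \geq 1/2$ for $n$ large, and by a union bound the intersection has probability at least $n^{-o_n(1)} - O_n(n^{-A}) \geq n^{-o_n(1)}$ (choosing $A \geq 1$, say). On this intersection, $\op{diam}(M_n) \geq a^{-1}(\op{diam}(\mcl G_n) - b) \geq (C(\log n)^4)^{-1}(n^{\chi - u/2} - 2) \geq n^{\chi - u}$ for $n$ large, which gives~\eqref{eqn-dist-lower0}. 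Finally, the moment statement~\eqref{eqn-map-moment} follows by a routine argument from~\eqref{eqn-dist-upper0} and~\eqref{eqn-dist-lower0}: the lower bound~\eqref{eqn-dist-lower0} together with $\op{diam}(M_n) \geq 0$ gives $\E[\op{diam}(M_n)^p] \geq n^{(\chi - u)p} \cdot n^{-o_n(1)} = n^{\chi p - o_n(1)}$ since $u$ is arbitrary; for the upper bound one combines~\eqref{eqn-dist-upper0} with the deterministic estimate $\op{diam}(M_n) \leq \#\mcl E(M_n) = O_n(n)$ (each $M_n$ has $O_n(n)$ edges by construction from $\mcl Z|_{[-n,n]_{\BB Z}}$) to control the contribution of the low-probability event: $\E[\op{diam}(M_n)^p] \leq n^{(\chi+u)p} + O_n(n)^p \cdot o_n^\infty(n) = n^{\chi p + o_n(1)}$, and letting $u \to 0$ finishes.

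I do not expect any serious obstacle here: the entire content is in Theorem~\ref{thm-map-coupling} and in the mated-CRT map estimates of~\cite{ghs-dist-exponent}, and what remains is elementary bookkeeping with the rough-isometry inequalities and the probabilities. The one point requiring a little care is making sure the polylogarithmic factors $C(\log n)^4$ genuinely get absorbed into $n^{\pm u}$ — this is immediate since $(\log n)^4 = n^{o_n(1)}$ — and that the error probability $O_n(n^{-A})$ from the coupling is negligible compared to both the $o_n^\infty(n)$ bound in~\eqref{eqn-dist-upper0} (automatic) and the $n^{-o_n(1)}$ lower bound in~\eqref{eqn-dist-lower0} (which forces us to take $A$ at least a fixed constant, e.g.\ $A = 1$, but imposes no real constraint). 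A secondary technical point is the deterministic bound $\op{diam}(M_n) = O_n(n)$ needed for the moment upper bound, which should be recorded as a consequence of the explicit construction of $M_n$ in Section~\ref{sec-bijection}.
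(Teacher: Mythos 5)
Your route is the same as the paper's: the paper proves Theorem~\ref{thm-map-dist} exactly by combining the coupling result (Theorem~\ref{thm-map-coupling}, via Theorem~\ref{thm-map-count}) with the diameter estimates for the mated-CRT map from the earlier work, and it leaves the bookkeeping you carry out implicit. Your implementation of that bookkeeping is correct in substance, up to two small points. First, a cosmetic one: the bound $\op{dist}(x,y;M_n)\le a\,\op{dist}(\phi_n(x),\phi_n(y);\mcl G_n)+ab$ comes from rearranging the \emph{lower} inequality in~\eqref{eqn-rough-iso-compare} (and the additive error is $ab$, not $b$); this changes nothing since $ab=O((\log n)^8)=n^{o_n(1)}$ is still absorbed into $n^{\pm u}$.

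Second, and more materially, your moment upper bound leans on the claim that $\op{diam}(M_n)\le \#\mcl E(M_n)=O_n(n)$ \emph{deterministically}. That is true in the Mullin and UIPT cases (bounded face degrees, so $M_n$ has at most a constant times $n$ vertices), but it fails in the bipolar-oriented and Schnyder-wood cases (cases~\ref{item-kappa12} through~\ref{item-kappa16}): there the walk increments, hence the face degrees of $M_n$, are unbounded, so $\#\mcl E(M_n)$ is only bounded by a sum of $2n+1$ i.i.d.\ variables with exponential tails, not by a deterministic multiple of $n$. The step as written would therefore fail in those cases, but the repair is routine: bound $\op{diam}(M_n)\le \#\mcl E(M_n)=:X_n$, note $\BB E[X_n^{2p}]=O_n(n^{2p})$ (e.g.\ by the triangle inequality in $L^{2p}$ using the exponential moments of the increments), and control the bad-event contribution by Cauchy--Schwarz, $\BB E\bigl[\op{diam}(M_n)^p \1_{\op{diam}(M_n)>n^{\chi+u}}\bigr]\le \bigl(\BB E[X_n^{2p}]\bigr)^{1/2}\,\BB P\bigl[\op{diam}(M_n)>n^{\chi+u}\bigr]^{1/2}=O_n(n^p)\cdot o_n^\infty(n)$, which is negligible. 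With that substitution your argument goes through in all five cases.
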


We note that~\eqref{eqn-dist-lower0} does not tell us that $\op{diam} \left( \Mn \right)      \geq   n^{\chi - o_n(1)}$ with probability tending to 1 as $n\rta\infty$, although we expect this to be the case. In the special case of spanning-tree weighted planar maps ($\gamma = \sqrt 2$), a much stronger version of this statement is proven in~\cite[Theorem 3.1]{gp-dla}.

In case~\ref{item-kappa6}, certain exponents for graph distances in the UIPT are already known (see, e.g.,~\cite{angel-peeling}), so we can deduce estimates for graph distances in the $\sqrt{8/3}$-mated-CRT map from our coupling result Theorem~\ref{thm-map-coupling}. For example, we get the correct exponent for the cardinality of a metric ball. 

\begin{thm} \label{thm-6-ball}
Suppose $\gamma=\sqrt{8/3}$, so the correlation of $L$ and $R$ is $1/2$. For each $u> 0$, the graph metric ball in the mated-CRT map satisfies
\eqb \label{eqn-map-ball6}
\lim_{n\rta \infty} \BB P\left[ n^{4   - u} \leq  \# \mcl V\left( B_n \left( 0 ; \mcl G  \right)  \right) \leq n^{4 + u} \right] = 1  .
\eqe 
\end{thm}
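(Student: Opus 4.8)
The plan is to combine the coupling result Theorem~\ref{thm-map-coupling} (in case~\ref{item-kappa6}, i.e.\ $\gamma = \sqrt{8/3}$) with known estimates for graph metric balls in the UIPT. Recall that the UIPT volume growth is by now well understood: there is $c > 0$ such that with probability $1 - O_n(n^{-c})$ one has $n^{4-u} \leq \#\mcl V(B_n(\BB v ; M)) \leq n^{4+u}$, which follows for instance from the peeling estimates of~\cite{angel-peeling} (or from the scaling limit results identifying the Brownian map, where the exponent $4 = d_{\sqrt{8/3}}$ appears). The goal is to transfer this to the $\sqrt{8/3}$-mated-CRT map $\mcl G$. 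The subtlety, relative to Theorems~\ref{thm-map-ball}--\ref{thm-map-dist}, is that here the implication runs in the \emph{opposite} direction: we want to deduce an estimate for $\mcl G$ from an estimate for $M$, so we use Theorem~\ref{thm-map-coupling} to push information from $M_n$ to $\mcl G_n$ rather than the reverse. The main work is bookkeeping to pass between balls in the whole maps $M, \mcl G$ and balls in the finite truncations $M_n, \mcl G_n$.

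First I would fix $u > 0$ and choose $A$ large in Theorem~\ref{thm-map-coupling}; this gives, for each $n$, a coupling of $Z$ and $(M,e_0,\theta)$ under which, off an event of probability $O_n(n^{-A})$, both $\phi_m$ and $\psi_m$ are rough isometries between $M_m$ and $\mcl G_m$ with parameters $a = C(\log m)^4$, $b = 2$, $c = C(\log m)^4$, for a suitable value of $m$ (polynomially related to $n$). Working inside $M_m$ for $m = n^{1+o(1)}$ large enough that the ball $B_n(\BB v; M)$ of radius $n$ around the root is contained in the ``bulk'' $M_m \setminus \bdy M_m$ (one needs $\op{dist}(\BB v, \bdy M_m; M_m)$ to exceed, say, $2n$; since $M_m \setminus \bdy M_m$ can be identified with a subgraph of $M$ and its diameter grows polynomially in $m$ by Theorem~\ref{thm-map-dist} applied to $M$, choosing $m = n^{1/\chi + u'}$ suffices, up to checking the easy direction that small $M$-distance from $\BB v$ forces being deep inside $M_m$), the ball $B_n(\BB v; M)$ coincides with the corresponding ball computed inside $M_m$. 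Then the rough isometry $\phi_m$ maps this ball into a set of $\mcl G_m$-graph distance at most $a n + b = C (\log m)^4 n + 2 \leq n^{1+u/2}$ from $0$, and likewise $\psi_m$ gives the reverse containment (up to the $c$-coarseness of the image and the $a,b$ distortion). Combining the two directions of rough isometry, one gets that $B_n(0; \mcl G)$ is sandwiched, up to polylogarithmic distortion of the radius and up to additive $O(c)$ errors in cardinality coming from the non-injectivity of $\phi_m, \psi_m$, between $\phi_m(B_{n^{1-u/2}}(\BB v;M))$ and $\phi_m(B_{n^{1+u/2}}(\BB v;M))$. Since the UIPT estimate gives these two balls cardinality in $[n^{4 - u'}, n^{4+u'}]$ for $u'$ comparable to $u$, and since $\phi_m$ is at most $(2c+1)$-to-one on the relevant set (so changes cardinalities only by a polylogarithmic factor), we conclude $n^{4-u} \leq \#\mcl V(B_n(0;\mcl G)) \leq n^{4+u}$ with probability $1 - O_n(n^{-c_0})$, hence in particular with probability tending to $1$.

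The one place requiring care — and the step I expect to be the main obstacle — is handling the boundary effects and the non-injectivity of $\iota_m, \phi_m, \psi_m$ in case~\ref{item-kappa6}: because $M_m$ is \emph{not} a subgraph of $M$ (pairs of boundary vertices/edges get glued later in the bijection, cf.\ Remark~\ref{remark-subgraph}), one must verify that the ball $B_n(0; \mcl G) = B_n(0; \mcl G_m)$ for the chosen $m$, i.e.\ that the $n$-ball around $0$ in $\mcl G$ does not reach $\bdy \mcl G_m$; this is where one uses that graph distances in $\mcl G$ also grow polynomially (again by Theorem~\ref{thm-map-dist}, or directly by the estimates of~\cite{ghs-dist-exponent}) together with the rough isometry on a slightly larger window $M_{m'}$, $m' = m^{1+o(1)}$. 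One also needs the a priori fact that $B_n(\BB v; M)$ has at least one vertex and at most $n^{5}$ vertices or so (a crude deterministic bound using bounded-degree-like estimates fails since the UIPT is not bounded degree, but any polynomial upper bound with probability $1 - o_n^\infty(n)$, e.g.\ from peeling, is more than enough) so that the $O(c) = O((\log m)^4)$ additive and $(2c+1)$-multiplicative errors are negligible on the exponential scale. Once these containments are in place, taking $n \to \infty$ and using that the exceptional probabilities are $O_n(n^{-c_0})$ yields~\eqref{eqn-map-ball6}. Finally, since $\mcl G = \mcl G^1$ and the law of $\mcl G^\ep$ is $\ep$-independent by Brownian scaling, nothing is lost by working with the unit-spacing version throughout.
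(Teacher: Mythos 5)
Your proposal is correct and is essentially the paper's own argument: the paper deduces Theorem~\ref{thm-6-ball} directly from the UIPT volume estimate $\#\mcl V(B_n(e_0;M)) = n^{4+o_n(1)}$ of~\cite{angel-peeling}, the coupling result (Theorem~\ref{thm-map-count}/Theorem~\ref{thm-map-coupling} in case~\ref{item-kappa6}), and Lemma~\ref{lem-ball-iso}, which is exactly the ball-containment bookkeeping (balls in $M$ and $\mcl G$ agreeing with balls in $M_{n^K}$ and $\mcl G_{n^K}$) that you carry out by hand. The only nitpick is that fibers of $\phi_m$ are contained in $M$-balls of radius $\approx ab$ rather than having at most $2c+1$ elements, but since such polylog-radius balls have sub-polynomial volume with high probability this does not affect the exponent-level conclusion.
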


\subsection{A stronger coupling theorem}
\label{sec-stronger-coupling}

Our proof of Theorem~\ref{thm-map-coupling} yields a stronger statement than just the existence of rough isometries between the maps $\mcl G_{n}$ and $\Mn$, which we state here.

\begin{thm} \label{thm-map-count} 
Suppose we are in one of the five settings listed at the beginning of Section~\ref{sec-main-results}
For each $A> 0$,  there is a constant $C = C(A) >0$ such that for each $n\in\BB N$, there is a coupling of $Z$ and $(M,e_0, T)$ such that with probability $1-O_n(n^{-A})$, the following is true (with $\phi_n$ and $\psi_n$ as in~\eqref{eqn-peano-functions}).  
\begin{enumerate} 
\item For each $v_1,v_2 \in  \mcl V(\Mn )$ with $v_1 \sim v_2$ in $\Mn$, there is a path $P_{v_1,v_2}^{\mcl G}$ from $\phi_{n}(v_1)$ to $\phi_{n}(v_2)$ in $\mcl G_{n}$ with $|P_{v_1,v_2}^{\mcl G}| \leq C (\log n)^4$; and each $i\in [-n , n]_{\BB Z}$ is hit by a total of at most $ C (\log n)^7 $ of the paths $P_{v_1,v_2}^{\mcl G}$ for $v_1,v_2 \in  \mcl V(M_{ [-n , n] } )$ with $v_1 \sim v_2$.  \label{item-map-count-G}
\item For each $i_1,i_2 \in    [-n , n]_{\BB Z}$ with $i_1 \sim i_2$ in $\mcl G_n$, there is a path $P_{i_1,i_2}^M$ from $\psi_n(i_1)$ to $\psi_n(i_2)$ in $\Mn$ with $|P_{i_1,i_2}^{M}| \leq C (\log n)^4$; and each $v \in \mcl V(\Mn)$ is hit by a total of at most $ C (\log n)^7$ of the paths $P_{i_1,i_2}^M$ for $i_1,i_2 \in    [-n , n]_{\BB Z}$ with $i_1 \sim i_2$.  \label{item-map-count-M} 
\item We have $\op{dist}\left(\psi_{n}(\phi_{n}(v)) , v ; \Mn \right) \leq C ( \log n)^4$ for each $v\in \mcl V(\Mn)$ and $ \op{dist}\left(\phi_{n}(\psi_{n}(i)) , i ; \mcl G_{n} \right) \leq C (\log n)^4 $ for each $i \in [-n , n]_{\BB Z}$. \label{item-map-count-close}
\end{enumerate}
\end{thm}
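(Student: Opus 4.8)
The plan is to prove Theorem~\ref{thm-map-count} by working entirely on the common index set $[-n,n]_{\BB Z}$. By definition, $\mcl G_n$ is the graph on $[-n,n]_{\BB Z}$ with adjacency~\eqref{eqn-bm-inf-adjacency} read off from the Brownian motion $Z=(L,R)$, and -- as the bijections reviewed in Section~\ref{sec-bijection} show, and as is emphasized after~\eqref{eqn-bm-inf-adjacency} -- the map $M_n$ is, up to the identifications encoded by $\phi_n$ and $\psi_n$, the graph on $[-n,n]_{\BB Z}$ with the discrete analogue of~\eqref{eqn-bm-inf-adjacency} read off from the encoding walk $\mcl Z=(\mcl L,\mcl R)$ (with $\inf_{[x-1,x]}L$ replaced by $\mcl L_{x-1}\wedge\mcl L_x$ and $\inf_{[x,y-1]}L$ by the running minimum of $\mcl L$ over integer times). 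So the whole theorem reduces to comparing these two graphs on the same vertex set once $\mcl Z$ and $Z$ are coupled to be close.

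First I would invoke the multidimensional strong approximation theorem of~\cite{zaitsev-kmt} (using that the increments of $\mcl Z$ have exponential tails in all five settings, which follows from the exponential tails of the face-degree laws) to couple $\mcl Z|_{[-n,n]_{\BB Z}}$ with $Z|_{[-n,n]}$ so that $\sup_{|t|\le n}|\mcl Z_{\lfloor t\rfloor}-Z_t|\le C\log n$ with probability $1-O_n(n^{-A})$; here $\mcl Z$ must first be put in the coordinates in which its increment has covariance~\eqref{eqn-bm-cov}, which is exactly what the bijection verification in Section~\ref{sec-bijection} supplies. On the same event I would impose a handful of Brownian regularity estimates, each valid with probability $1-o_n^\infty(n)$: a modulus-of-continuity bound $\sup_{|s-t|\le 1,\,|s|,|t|\le n}|Z_s-Z_t|\le C\log n$ (so that running infima of $Z$ over unit intervals and running minima of $\mcl Z$ over the corresponding integer times agree up to $O(\log n)$), together with lower bounds on the oscillation of $L$ and $R$ at all dyadic scales and the ball-volume and degree bounds for $\mcl G_n$ from~\cite{ghs-dist-exponent}. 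Combining the coupling with the modulus-of-continuity bound, every edge of $\mcl G_n$ becomes a ``$\delta$-approximate edge'' of $M_n$ and every edge of $M_n$ a $\delta$-approximate edge of $\mcl G_n$, where $\delta=C\log n$ and a $\delta$-approximate $L$-edge between $i<j$ means $\left(\inf_{[i-1,i]}L\right)\vee\left(\inf_{[j-1,j]}L\right)\le\inf_{[i,j-1]}L+\delta$ (and similarly for $R$, and for the walk).

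The heart of the argument is then a deterministic lemma: on the good event, for every $i<j$ in $[-n,n]_{\BB Z}$ joined by a $\delta$-approximate $L$- (resp.\ $R$-) edge there is a path from $i$ to $j$ inside $\mcl G_n$ using only genuine edges, of length $\le C(\log n)^4$, and these connecting paths can be chosen so that no vertex of $\mcl G_n$ lies on more than $C(\log n)^7$ of them; the same statement holds for $M_n$, with $\mcl Z$ in place of $Z$. To build the path I would exploit the running-infimum/tree structure underlying~\eqref{eqn-bm-inf-adjacency}: a $\delta$-approximate $L$-edge forces the running infimum of $L$ started at time $i$ to stay within $O(\delta)$ of $\inf_{[i,j-1]}L$ throughout $[i,j-1]$, and one threads through $i$ the successive running-infimum records of $L$ (each consecutive pair of which is a genuine edge of $\mcl G_n$ by a direct check of~\eqref{eqn-bm-inf-adjacency}), inserting a bounded number of extra vertices per dyadic scale and using the dyadic oscillation lower bounds to see that only polylogarithmically many records are needed. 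The multiplicity bound comes from the observation that a given vertex $k$ lies on the canonical path associated to a $\delta$-approximate edge $\{i,j\}$ only if $k$ is simultaneously a running-infimum record reachable from $i$ and within $\mcl G_n$-distance $C(\log n)^4$ of $i$, and the number of such pairs $\{i,j\}$ is controlled by the ball-volume and degree bounds for $\mcl G_n$ from~\cite{ghs-dist-exponent}. I expect this lemma -- in particular obtaining polylogarithmic (rather than polynomial-in-$\delta$, or worse) bounds on both the path lengths and the multiplicities -- to be the main obstacle; everything else is bookkeeping.

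Granting the lemma, I would finish as follows. Items~\ref{item-map-count-G} and~\ref{item-map-count-M} are immediate: for $v_1\sim v_2$ in $M_n$ the pair $\{\phi_n(v_1),\phi_n(v_2)\}$ is a $\delta$-approximate edge of $\mcl G_n$, so take $P^{\mcl G}_{v_1,v_2}$ to be the path furnished by the lemma, and the multiplicity statement transfers through $\phi_n$ after noting from the bijection that $\phi_n$ has bounded fibres; symmetrically for $P^M_{i_1,i_2}$ using $\psi_n$. Item~\ref{item-map-count-close} follows because, by the construction of $\phi_n$ and $\psi_n$ in Section~\ref{sec-bijection}, $\psi_n(\phi_n(v))$ and $v$ are associated to time indices differing by $O(1)$, hence are joined by an $O(\log n)$-approximate edge of $M_n$ and so lie within $C(\log n)^4$ in $M_n$ by the $M_n$-version of the lemma; likewise $\phi_n(\psi_n(i))$ and $i$ differ by $O(1)$, so the same reasoning applies in $\mcl G_n$. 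Since the constructions of $M_n$, $\phi_n$, $\psi_n$ differ slightly across the five cases of Section~\ref{sec-main-results}, the above is run case by case in Section~\ref{sec-bijection}, which is routine once the common reduction and the deterministic lemma are in place. Finally, Theorem~\ref{thm-map-coupling} is a formal consequence: the rough-isometry inequalities~\eqref{eqn-rough-iso-compare} for $\phi_n$ and $\psi_n$ follow from items~\ref{item-map-count-G}--\ref{item-map-count-close} by concatenating the paths $P^{\mcl G}$ and $P^M$ along geodesics of $M_n$ and $\mcl G_n$.
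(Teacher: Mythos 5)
Your first stage --- the Zaitsev coupling, the good event consisting of KMT closeness plus a modulus-of-continuity bound, and the ``deterministic lemma'' connecting each approximate edge by a genuine path built out of near-record times of $L$ or $R$ --- is essentially the paper's Section~\ref{sec-dist-comparison} (Theorem~\ref{thm-sg-map-dist}, proved via Lemmas~\ref{lem-bm-bad-cell}, \ref{lem-sg-map-event} and~\ref{lem-sg-map-path}), and that part of your plan is sound. The genuine gap is in your opening reduction: it is not true that ``$M_n$ is, up to the identifications encoded by $\phi_n$ and $\psi_n$, the graph on $[-n,n]_{\BB Z}$ with the discrete analogue of~\eqref{eqn-bm-inf-adjacency} read off from $\mcl Z$.'' The graph on $[-n,n]_{\BB Z}$ defined by the walk-adjacency condition (the paper's $\mcl H_n$) is isomorphic not to $M_n$ but to an auxiliary object whose vertices are \emph{not} vertices of $M$: in the Mullin case it is the adjacency graph of triangles of $\mcl Q\cup T\cup T_*$ (Proposition~\ref{prop-kappa-8-tri}); in the bipolar/Schnyder cases it is the radial map $\ol{\cQ}$, whose vertex set is $\cV(M)\cup\cF(M)$ (Proposition~\ref{prop:bipolar}, with the modified adjacency~\eqref{eq:walk-adjacency2}); in the UIPT case one must first pass to a modified walk and a contraction argument (Lemma~\ref{prop6}), since $M_n$ is not even a subgraph of $M$. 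Passing from this auxiliary graph to $M_n$ is a separate combinatorial step and it is exactly where the extra factor of $\log n$ (taking $(\log n)^3\to(\log n)^4$ and $(\log n)^6\to(\log n)^7$) comes from: one needs that the number of triangles/faces incident to a vertex has an exponential tail (Lemmas~\ref{lem-8-deg} and~\ref{lem-12-deg}, and the root-degree tail for the UIPT), so that the ``star'' of a vertex can be crossed in $O(\log n)$ steps of the auxiliary graph.

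Concretely, three of your assertions fail because of this conflation. First, for $v_1\sim v_2$ in $M_n$ the pair $\{\phi_n(v_1),\phi_n(v_2)\}$ is in general \emph{not} a single $\delta$-approximate edge: the edge $v_1v_2$ gives two adjacent triangles (or a shared face), but $\phi_n(v_1)$ is only some index whose triangle/face is incident to $v_1$, and reaching the relevant index requires a chain of up to $\operatorname{deg}$-many genuine adjacencies in the auxiliary graph. Second, $\phi_n$ and $\psi_n$ do not have bounded fibres; the fibres have size comparable to vertex degrees, which are only $O(\log n)$ with high probability, so your multiplicity transfer needs this extra (probabilistic) input rather than a ``bounded fibres'' remark. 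Third, in item~\ref{item-map-count-close} the indices $\phi_n(\psi_n(i))$ and $i$ do \emph{not} differ by $O(1)$; they are merely indices of two triangles/faces sharing a vertex of $M$, which can be far apart along the walk, and bounding $\op{dist}(\phi_n(\psi_n(i)),i;\mcl G_n)$ again requires the degree bound plus the paths from the first stage. A minor further point: your multiplicity bound via ball-volume estimates of~\cite{ghs-dist-exponent} would at best give a larger power of $\log n$ than the stated $(\log n)^7$; the paper instead bounds, for each fixed $j$, the number of pairs $(i_1,i_2)$ whose near-record set contains $j$ directly via Lemma~\ref{lem-bm-bad-cell} applied to the Brownian motion run forward and backward from $j$.
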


Theorem~\ref{thm-map-count} is strictly stronger than Theorem~\ref{thm-map-coupling}. Indeed, Theorem~\ref{thm-map-count} trivially implies Theorem~\ref{thm-map-coupling} (we record this fact as Lemma~\ref{lem-map-thms} below for the sake of reference). However, the upper bounds on the total number of the paths $P_{v_1,v_2}^{\mcl G}$ or $P_{i_1,i_2}^M$ which hit a given vertex in Theorem~\ref{thm-map-count} are not implied by Theorem~\ref{thm-map-coupling}. These bounds will be important in~\cite{gm-spec-dim}. 

\begin{lem} \label{lem-map-thms}
Let $A > 0$ and $n\in\BB N$. Any coupling of $Z$ with $(M,e_0 , T)$ which satisfies the conditions of Theorem~\ref{thm-map-count} for this choice of $A$ and $n$ also satisfies the conditions of Theorem~\ref{thm-map-coupling} for this choice of $A$ and $n$, with the same value of $C$.
\end{lem}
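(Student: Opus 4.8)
The plan is to verify that each of the three conditions in Theorem~\ref{thm-map-count} forces the corresponding piece of Theorem~\ref{thm-map-coupling}. The essential observation is that the rough-isometry bounds in Definition~\ref{def-rough-isometry} are purely metric statements, so one only needs the existence of short paths between $\phi_n(v_1),\phi_n(v_2)$ (respectively $\psi_n(i_1),\psi_n(i_2)$) for adjacent pairs, plus the approximate-surjectivity statement, plus the ``$\psi_n\circ\phi_n$ is close to the identity'' statement. The ``at most $C(\log n)^7$ paths through a vertex'' bounds in Theorem~\ref{thm-map-count} are simply not used.

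First I would establish the upper bound in~\eqref{eqn-rough-iso-compare} for $\phi_n$. Fix $v,w\in\mcl V(M_n)$ and a shortest path $v=v_0,v_1,\dots,v_k=w$ in $M_n$, so $k=\op{dist}(v,w;M_n)$. For each consecutive pair $v_{j-1}\sim v_j$, part~\ref{item-map-count-G} of Theorem~\ref{thm-map-count} supplies a path $P^{\mcl G}_{v_{j-1},v_j}$ in $\mcl G_n$ from $\phi_n(v_{j-1})$ to $\phi_n(v_j)$ of length $\le C(\log n)^4$. Concatenating these $k$ paths gives a walk in $\mcl G_n$ from $\phi_n(v)$ to $\phi_n(w)$ of length $\le C(\log n)^4\,k = C(\log n)^4\,\op{dist}(v,w;M_n)$, hence $\op{dist}(\phi_n(v),\phi_n(w);\mcl G_n)\le C(\log n)^4\,\op{dist}(v,w;M_n)$, which is the $\le a\,d_1(x,y)+b$ bound with $a=C(\log n)^4$ and $b=0\le 2$. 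The symmetric argument using part~\ref{item-map-count-M} gives $\op{dist}(\psi_n(i),\psi_n(j);M_n)\le C(\log n)^4\,\op{dist}(i,j;\mcl G_n)$. For the lower bound in~\eqref{eqn-rough-iso-compare} for $\phi_n$, take $v,w\in\mcl V(M_n)$, apply the just-proved upper bound for $\psi_n$ to the pair $\phi_n(v),\phi_n(w)\in\mcl V(\mcl G_n)$, and then use part~\ref{item-map-count-close}: $\op{dist}(v,w;M_n)\le \op{dist}(v,\psi_n(\phi_n(v));M_n)+\op{dist}(\psi_n(\phi_n(v)),\psi_n(\phi_n(w));M_n)+\op{dist}(\psi_n(\phi_n(w)),w;M_n)\le 2C(\log n)^4 + C(\log n)^4\,\op{dist}(\phi_n(v),\phi_n(w);\mcl G_n)$. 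Rearranging yields $\op{dist}(\phi_n(v),\phi_n(w);\mcl G_n)\ge \bigl(C(\log n)^4\bigr)^{-1}\op{dist}(v,w;M_n) - 2$, i.e.\ the $a^{-1}d_1(x,y)-b$ bound with $b=2$. The symmetric computation bounds $\op{dist}(\psi_n(i),\psi_n(j);M_n)$ from below.

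It remains to check the approximate-surjectivity condition with $c=C(\log n)^4$. For $\phi_n:M_n\to\mcl G_n$: given $i\in\mcl V(\mcl G_n)=[-n,n]_{\BB Z}$, set $v:=\psi_n(i)\in\mcl V(M_n)$; then part~\ref{item-map-count-close} gives $\op{dist}(\phi_n(\psi_n(i)),i;\mcl G_n)\le C(\log n)^4$, so $\phi_n(v)$ is within $c$ of $i$. Symmetrically, for $\psi_n:\mcl G_n\to M_n$ and any $v\in\mcl V(M_n)$, take $i:=\phi_n(v)$ and use $\op{dist}(\psi_n(\phi_n(v)),v;M_n)\le C(\log n)^4$. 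This completes the verification that the coupling of Theorem~\ref{thm-map-count} makes both $\phi_n$ and $\psi_n$ rough isometries with parameters $a=C(\log n)^4$, $b=2$, $c=C(\log n)^4$, with the same $C$, on the high-probability event in question.

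There is essentially no obstacle here: every inequality is an elementary triangle-inequality or path-concatenation argument, and since all of these deterministic statements hold on the single high-probability event provided by Theorem~\ref{thm-map-count}, the probability bound $1-O_n(n^{-A})$ is inherited verbatim. The only point requiring a modicum of care is keeping track of the constant — one should note that absorbing the additive slack ``$2$'' into $b$ (rather than into $a$) is what lets us reuse the same $C$, and that the three appearances of $C(\log n)^4$ in Theorem~\ref{thm-map-count} are all bounded by the single quantity $C(\log n)^4$, so no inflation of $C$ is needed.
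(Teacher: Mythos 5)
Your proof is correct and follows essentially the same route as the paper: concatenating the short paths from conditions~\ref{item-map-count-G} and~\ref{item-map-count-M} to get the multiplicative upper bounds, combining those with condition~\ref{item-map-count-close} and the triangle inequality to extract the lower bounds with additive error $b=2$, and reading the approximate-surjectivity condition directly off condition~\ref{item-map-count-close}. Your side remark that the vertex-multiplicity bounds of Theorem~\ref{thm-map-count} are not needed here is also consistent with the paper, which notes they are extra information used elsewhere.
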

\begin{proof} 
The second condition from Definition~\ref{def-rough-isometry} with $c = C (\log n)^4$ for either $\phi_{n}$ or $\psi_{n}$ is immediate from condition~\ref{item-map-count-close} of Theorem~\ref{thm-map-count}, so we just need to check the first condition (concerning the amount by which each of $\phi_{n}$ and $\psi_{n}$ distort distances).

We first argue that the conditions of Theorem~\ref{thm-map-count} imply that
\eqb \label{eqn-map-coupling-G}
 \op{dist}\left( \phi_{n}(v_1) , \phi_{n}(v_2) ; \mcl G_{n} \right)  
  \leq  C (\log n)^4 \op{dist}\left( v_1 , v_2  ;  \Mn \right)   ,\quad \forall v_1,v_2 \in \mcl V(\Mn) .
\eqe 
Indeed, suppose we are given $v_1,v_2 \in \mcl V(\Mn)$ and let $P$ be a geodesic in $\mcl V(\Mn)$ from $v_1$ to $v_2$. 
Condition~\ref{item-map-count-G} from Theorem~\ref{thm-map-count} implies that 
\eqbn
\op{dist}\left( \phi_{n}(P(k-1)) , \phi_{n}(P(k)) ; \mcl G_{n} \right) \leq C (\log n)^4 ,
\eqen
for each $k \in [1,|P|]_{\BB Z}$. Summing over all such $k$ and applying the triangle inequality yields~\eqref{eqn-map-coupling-G}. Similarly, condition~\ref{item-map-count-M} from Theorem~\ref{thm-map-count} implies that
\eqb \label{eqn-map-coupling-M} 
 \op{dist}\left( \psi_{n}(i_1) , \psi_{n}(i_2) ; \Mn \right)  
  \leq  C (\log n)^4 \op{dist}\left( i_1 , i_2  ; \mcl G_{n} \right)    , \quad \forall i_1,i_2 \in [-n , n]_{\BB Z} .
\eqe   

Combining~\eqref{eqn-map-coupling-M} (applied with $i_1 = \phi_{n}(v_1)$ and $i_2 = \phi_{n}(v_2)$) and condition~\ref{item-map-count-close} from Theorem~\ref{thm-map-count} and using the triangle inequality gives that for $v_1,v_2 \in \mcl V(\Mn)$, 
\eqb \label{eqn-map-coupling-lower-M}
 \op{dist}\left( \phi_{n}(v_1) , \phi_{n}(v_2) ; \mcl G_{n} \right)  
 \geq C^{-1} (\log n)^{-4} \op{dist}\left(v_1,v_2 ; \Mn \right)  - 2   .
\eqe 
Similarly, combining~\eqref{eqn-map-coupling-G} and condition~\ref{item-map-count-close} from Theorem~\ref{thm-map-count} gives
\eqb \label{eqn-map-coupling-lower-G}
 \op{dist}\left( \psi_{n}(i_1) , \psi_{n}(i_2) ; \Mn \right)   
 \geq C^{-1} (\log n)^{-4} \op{dist}\left(i_1 , i_2  ;  \mcl G_{n} \right)  - 2 .
\eqe
Combining~\eqref{eqn-map-coupling-G},~\eqref{eqn-map-coupling-M},~\eqref{eqn-map-coupling-lower-M}, and~\eqref{eqn-map-coupling-lower-G} gives the first condition from Definition~\ref{def-rough-isometry} with $a = C (\log n)^4$ and $b=2$.
\end{proof}

\subsection{Comparison of graph metric balls}
\label{sec-ball-compare}

In order to use Theorem~\ref{thm-map-coupling} (or Theorem~\ref{thm-map-count}) to compare graph metric balls in $M$ and $\mcl G$, we need to make sure that such balls are contained in $\Mn$ and $\mcl G_n$, respectively, with high probability. This is the purpose of the present subsection. In particular, we will establish the following lemma, which is sufficient for our purposes.

\begin{lem} \label{lem-ball-iso}
Suppose we are in one of the five settings listed at the beginning of Section~\ref{sec-main-results}. 
For each $A > 0$, there exists $K = K(A) > 0$ such that with probability at least $1-O_n(n^{-A})$, we have $B_n(0;\mcl G) \subset \mcl G_{n^K}$ and the map $\iota_{n^K}$ of~\eqref{eqn-inclusion-function} restricts to a graph isomorphism from $B_n(\BB v ; M_{n^K})$ to $B_n(\BB v; M)$. 
\end{lem}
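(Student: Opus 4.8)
The plan is to deduce both assertions of the lemma from a single polynomial lower bound for graph distances in the mated-CRT map --- which is part of the estimates of~\cite{ghs-dist-exponent} --- together with the coupling of Theorem~\ref{thm-map-coupling} (or, better, Theorem~\ref{thm-map-count}), applied not at scale $n$ but at scale $m = n^K$ with $K = K(A)$ chosen large at the end. The bulk of the work is \emph{not} in getting distance bounds, which we import, but in controlling the two boundaries $\bdy\mcl G_{m}$ and $\bdy M_m$ and the ``almost inclusion'' $\iota_m$ of~\eqref{eqn-inclusion-function}, so that these ingredients can be combined.

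\emph{The mated-CRT ball.} Since any path in $\mcl G$ leaving $\mcl G_m$ must pass through $\bdy\mcl G_m$, we have $B_n(0;\mcl G)\subseteq\mcl G_m$ as soon as $\op{dist}(0,\bdy\mcl G_m;\mcl G) > n$. I would obtain the latter from a polynomial lower bound $\op{dist}(0 , \mcl G\setminus\mcl G_m ; \mcl G) \ge m^{c_0}$, valid with probability $1-O_m(m^{-A})$ for a constant $c_0 = c_0(\gamma)>0$; applied with $m = n^K$ and $K$ chosen with $c_0 K > 1$ this gives the first assertion. Such a bound is contained in (or is a routine consequence of) the distance estimates of~\cite{ghs-dist-exponent}; its only delicate point is that $\bdy\mcl G_m$ can a priori contain vertices near $0$ joined to $\BB Z\setminus[-m,m]_{\BB Z}$ by a single ``long'' edge, but a direct Brownian computation shows that the probability that $x_1<x_2$ satisfy the adjacency condition~\eqref{eqn-bm-inf-adjacency} decays summably in $x_2-x_1$ (it is governed by the behaviour of $L$ and $R$ near their running infima over an interval of length $x_2-x_1$), so that a union bound confines such long edges to $\BB Z\setminus[-m^{1/2},m^{1/2}]_{\BB Z}$ with probability $1-O_m(m^{-A})$.

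\emph{Transfer to $M$ and the isomorphism.} Apply Theorem~\ref{thm-map-count} with $m = n^K$ in place of $n$: with probability $1-O_n(n^{-A})$, $\phi_m$ and $\psi_m$ are rough isometries (Definition~\ref{def-rough-isometry}) between $M_m$ and $\mcl G_m$ with multiplicative and additive errors $C(\log n)^4$. The same type of estimate as above --- now for the encoding walk $\mcl Z$ rather than $Z$, and using the description of $\bdy M_m$ furnished by the bijections reviewed in Section~\ref{sec-bijection} --- shows that with high probability $\phi_m(\bdy M_m)\subseteq \mcl G_m\setminus\mcl G_{m^{1/2}}$: a vertex of $\bdy M_m$ whose $\phi_m$-image lay close to $0$ would force $\mcl Z$ to exhibit a rare record. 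Feeding this into the rough isometry and the bound of the previous paragraph gives, for $K$ large,
\eqb
\op{dist}\left( \BB v , \bdy M_m ; M_m \right) \;\ge\; \frac{1}{C(\log n)^4}\,\op{dist}\left( 0 , \phi_m(\bdy M_m) ; \mcl G_m \right) - C(\log n)^4 \;\ge\; 10n
\eqe
with probability $1-O_n(n^{-A})$. On this event, $B_n(\BB v; M_m)$ is disjoint from $\bdy M_m$, so $\iota_m$ is injective on it and identifies it with a subgraph of $M$. To see that this subgraph \emph{is} $B_n(\BB v;M)$, with the same internal distances, I would use the structural fact --- to be verified for each bijection in Section~\ref{sec-bijection}, in the spirit of Remarks~\ref{remark-subgraph} and~\ref{remark-edge-map} --- that every edge of $M$ incident to a vertex of $M_m\setminus\bdy M_m$ is already the $\iota_m$-image of an edge of $M_m$; this rules out $M$-shortcuts. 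Indeed, any $M_m$-geodesic between two vertices of $B_n(\BB v; M_m)$ stays in $B_{2n}(\BB v; M_m)$, hence avoids $\bdy M_m$, hence maps to an $M$-path of the same length; and, since the displayed bound also gives $\op{dist}(\BB v,\iota_m(\bdy M_m);M) > 10n$, any $M$-geodesic between two such vertices stays in the part of $M$ identified with $M_m\setminus\bdy M_m$ and therefore lifts to an $M_m$-path of the same length. Hence $\op{dist}(\cdot,\cdot;M)$ and $\op{dist}(\cdot,\cdot;M_m)$ agree on $B_n(\BB v;M_m)$, and $\iota_m$ restricts to a graph isomorphism $B_n(\BB v; M_m)\to B_n(\BB v; M)$. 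Case~\ref{item-kappa6}, where $\iota_m$ is not injective on all of $M_m$, is unaffected, since the identifications involve only vertices and edges of $\bdy M_m$, which the ball avoids.

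\emph{Main obstacle.} The substantive step is showing that $\op{dist}(\BB v,\bdy M_m;M_m)$ is large: lacking a direct distance estimate for $M$, we must route it through the coupling, and hence must understand how $\phi_m$ acts on $\bdy M_m$. This is where the combinatorics of each mating-of-trees bijection enters, and is the reason the argument must be carried out case by case (the analogous statement for $\mcl G$ in the first step is easier, because the long-edge estimate there is a clean Brownian computation).
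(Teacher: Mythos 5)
Your first step coincides with the paper's: both deduce $B_n(0;\mcl G)\subset \mcl G_{n^K}$ directly from the mated-CRT estimates of~\cite{ghs-dist-exponent} (the paper quotes its Corollary 3.2, which gives $B_{n^2+1}(0;\mcl G)\subset\mcl G_{n^K}$ and hence also that every vertex of $\bdy\mcl G_{n^K}$ lies at graph distance at least $n^2$ from $0$). Where you genuinely diverge is in how you push $B_n(\BB v; M_{n^K})$ away from $\bdy M_{n^K}$. The paper never analyzes $\phi_{n^K}(\bdy M_{n^K})$ at all: it takes a cycle traversing $\bdy\mcl G_{n^K}$, maps its vertices into $M_{n^K}$ by $\psi_{n^K}$, joins consecutive images by the $n^{o_n(1)}$-length paths supplied by Theorem~\ref{thm-map-count}, and notes that the resulting cycle lies at $M_{n^K}$-distance $n^{2-o_n(1)}$ from $\BB v$ while, by planarity, every path from $\BB v$ to $\bdy M_{n^K}$ must cross it. This is soft: it needs no combinatorial description of $\bdy M_{n^K}$ and no new random-walk estimate, which is exactly the purpose of routing everything through the two-sided path statement of Theorem~\ref{thm-map-count}. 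Your route instead requires, for each of the five bijections, a walk-level description of $\bdy M_m$ and a proof that $\phi_m(\bdy M_m)$ avoids $[-m^{1/2},m^{1/2}]_{\BB Z}$; you correctly flag this as the main obstacle, but it is precisely the per-model work the paper's argument is built to avoid.

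On the substance of that deferred step, be careful with the quantitative claims: both for the ``long edges'' of $\mcl G$ and for boundary vertices of $M_m$ with small $\phi_m$-image, a fixed index in $[-m^{1/2},m^{1/2}]_{\BB Z}$ contributes probability of order $m^{-1/2}$ and there are order $m^{1/2}$ indices, so the union bound you invoke gives only $O(1)$, not $1-O_n(n^{-A})$. The statements are still true with polynomially small failure probability, but only after exploiting the nesting of these events (each of them forces the running minimum of $\mcl L$ or $\mcl R$ over $[m^{1/2},m]_{\BB Z}$ to exceed its minimum over $[-m^{1/2},m^{1/2}]_{\BB Z}$ up to an additive error), which yields a small positive exponent in $m$; since $K=K(A)$ may be taken large this suffices, but as written the high-probability assertions are claimed too casually, and in cases~\ref{item-kappa6} and~\ref{item-kappa12}--\ref{item-kappa16} the identification of $\bdy M_m$ in walk terms is itself nontrivial. (The long-edge digression is in any case unnecessary for the first assertion: the ball-containment statement imported from~\cite{ghs-dist-exponent} already accounts for long edges.) Your explicit verification that ``ball avoids boundary'' yields the isomorphism onto $B_n(\BB v;M)$ --- via the structural fact that every $M$-edge incident to a vertex of $M_m\setminus\bdy M_m$ is the image of an $M_m$-edge --- is more detailed than the paper, which treats this as immediate from~\eqref{eqn-inclusion-function}; that elaboration is correct and harmless, but the heart of your plan remains a harder, estimate-driven substitute for the paper's short topological transfer.
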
 

To prove Lemma~\ref{lem-ball-iso} we will need the following lemma.

\begin{lem} \label{lem-ball-layers}
Suppose $m < n$ are positive integers such that 
\allb \label{eqn-walk-inf-condition0}
 \max\left\{ \min_{i \in \left[  m +1 , n \right]_{\BB Z} } \mcl L_i , \min_{i \in \left[ - n  ,  - m -1 \right]_{\BB Z} } \mcl L_i \right\}
&< \min_{i \in \left[-m -1  , m+ 1 \right]_{\BB Z} } \mcl L_i   
\alle
and the same holds with $\mcl R$ in place of $\mcl L$. 
Then the map $\phi_{n}$ of~\eqref{eqn-peano-functions} satisfies
\eqb
\phi_{n}(\bdy M_n) \cap [-m , m ]_{\BB Z} = \emptyset  .
\label{eq:bdy-disjoint}
\eqe
\end{lem} 

In each of the five cases we consider, Lemma~\ref{lem-ball-layers} is an easy consequence of the definitions in Section~\ref{sec-bijection}. 
We will check the lemma separately for each case in the appropriate subsection of Section~\ref{sec-bijection}.

\begin{proof}[Proof of Lemma~\ref{lem-ball-iso}]
In light of~\eqref{eqn-inclusion-function}, we only need to find $K =K(A)$ as in the statement of the lemma such that
\eqb \label{eqn-ball-include}
\BB P\left[ B_n(0;\mcl G) \subset \mcl G_{n^K}  \:\op{and} \:   B_n(\BB v ; M_{n^K} ) \subset M_{n^K}\setminus \bdy M_{n^K} \right] \geq 1 - O_n(n^{-A}) .
\eqe 

By~\cite[Corollary 3.2]{ghs-dist-exponent}, there exists $K   > 2$ such that with probability at least $1-O_n(n^{-A})$, 
\eqb \label{eqn-sg-ball-bdy}
B_{n^{2}}(0; \mcl G)  \subset \mcl G_{n^{K }}  . 
\eqe 
We will now deduce from~\eqref{eqn-sg-ball-bdy}, Lemma~\ref{lem-ball-layers}, and Theorem~\ref{thm-map-count} that after possibly increasing $K$, we also have $\BB P\left[B_n(\BB v ; M_{n^{K^2} } )  \subset M_{n^{K^2}}\setminus \bdy M_{n^{K^2}} \right] \geq 1 - O_n(n^{-A})$. 
Since~\eqref{eqn-sg-ball-bdy} implies the analogous statement with $K^2$ in place of $K$, this will imply~\eqref{eqn-ball-include} with $K^2$ in place of $K$. 

Recall that each of the two coordinates of $\mcl Z$ is a one-dimensional random walk started at zero with i.i.d.\ increments having an exponential tail at $\infty$.
Just below, we will explain using basic random walk estimates that if $K$ is chosen to be sufficiently large, in a manner depending only on $A$, then with probability at least $1-O_n(n^{-A})$, 
\allb \label{eqn-walk-inf-condition}
 \max\left\{ \min_{i \in \left[  n^{K} + 1 , n^{K^2} \right]_{\BB Z} } \mcl L_i , \min_{i \in \left[ - n^{K^2} - 1  ,  - n^{K } \right]_{\BB Z} } \mcl L_i \right\}
 < \min_{i \in \left[-n^{K } -1 , n^{K }+ 1 \right]_{\BB Z} } \mcl L_i   
\alle
and the same holds with $\mcl R$ in place of $\mcl L$. One way to justify this is as follows. Let $B$ be a standard linear Brownian motion. Using the reflection principle to estimate the running minima of $B$, one gets that if $K$ is chosen to be sufficiently large, in a manner depending only on $A$, then with probability at least $1-O_n(n^{-A})$, 
\allb \label{eqn-bm-inf-condition}
\max\left\{ \inf_{t \in \left[  n^{K} + 1 , n^{K^2} \right] } B_t  , \inf_{t \in \left[ - n^{K^2} - 1  ,  - n^{K } \right]  } B_t \right\}
 < \inf_{t \in \left[-n^{K } -1 , n^{K }+ 1 \right]_{\BB Z} } B_t   - n .
\alle
We then deduce~\eqref{eqn-walk-inf-condition} from~\eqref{eqn-bm-inf-condition} and the KMT coupling theorem~\cite{kmt} (see~\cite[Theorem 7.1.1]{lawler-limic-walks} for the precise version which we use here). 
 
By Lemma~\ref{lem-ball-layers}, if~\eqref{eqn-walk-inf-condition} holds, then  
\eqb \label{eqn-no-bdy-intersect}
\phi_{n^{K^2}}(\bdy M_{n^{K^2}}) \cap \left[-n^{K }, n^{K }\right]_{\BB Z} = \emptyset .
\eqe
By Theorem~\ref{thm-map-coupling}, there is a $C = C(A) > 1$ such that with probability at least $1- O_n(n^{-A})$, 
\eqb \label{eqn-bdy-intersect-dist} 
\op{dist}\left( v , \BB v ; M_{n^{K^2}}\right) \geq C^{-1} (\log n)^{-4} \op{dist}\left( \phi_{n^{K^2}}(v) , 0 ; \mcl G_{n^{K^2}} \right)  -  2 ,
\quad \forall v \in \mcl V(\mcl M_{n^{K^2}}) .
\eqe

Henceforth assume that~\eqref{eqn-sg-ball-bdy},~\eqref{eqn-no-bdy-intersect}, and~\eqref{eqn-bdy-intersect-dist} all hold, which happens with probability at least $1-O_n(n^{-A})$. 
If $v \in \bdy M_{n^{K^2}}$, then by~\eqref{eqn-no-bdy-intersect}, $\phi_{n^{K^2}}(v) \notin \left[-n^{K }, n^{K }\right]_{\BB Z} = \mcl V(\mcl G_{n^{K }})$. 
By~\eqref{eqn-sg-ball-bdy}, this means that 
\eqb
\op{dist}\left( \phi_{n^{K^2}}(v) , 0 ; \mcl G_{n^{K^2}} \right) \geq \op{dist}\left( \phi_{n^{K^2}}(v) , 0 ; \mcl G  \right) \geq n^2. 
\eqe
By~\eqref{eqn-bdy-intersect-dist}, it therefore follows that for large enough $n$, 
\eqb
\op{dist}\left( v , \BB v ; M_{n^{K^2}} \right)
\geq  C^{-1} (\log n)^{-4} n^2 - 2 \geq n +1 .
\eqe
That is, $v\notin B_n(\BB v ; M_{n^{K^2}})$.
This gives the lemma statement with $K^2$ in place of $K$. 
\end{proof}

\begin{remark} \label{remark-dual}
All of the results in this and the previous two subsections remain true with $M$ replaced by its dual map $M_*$. One can even use the same coupling in Theorems~\ref{thm-map-coupling} and~\ref{thm-map-count} for both $M$ and $M_*$ simultaneously. This is because one can formulate the bijections used in this paper in terms of $M_*$ rather than $M$, and then use similar arguments to the ones in Section~\ref{sec-bijection} to transfer from Theorem~\ref{thm-sg-map-dist} below to estimates for $M_*$ instead of estimates for $M$. 
Similar considerations hold if, instead of $M_*$, we consider, e.g., the so-called \emph{radial quadrangulation} $\mcl Q = \mcl Q(M)$ whose vertex set is $\mcl V(M) \cup \mcl V(M_*)$ with two vertices joined by an edge if and only if they correspond to a face of $M$ and a vertex on the boundary of this face; or the dual map $\mcl Q_*$ of $\mcl Q$. 
\end{remark}

\section{Comparing distances via strong coupling}
\label{sec-dist-comparison}

In this section, we prove a variant of Theorem~\ref{thm-map-count} which will be the main technical input in the proofs of our main theorems. This theorem compares the mated-CRT map $\mcl G $ to a random planar map $\mcl H$ constructed from a general two-sided random walk in $\BB R^2$ with i.i.d.\ increments. 
Roughly speaking, this planar map is constructed via a simpler bijection than the ones corresponding to the four cases in Section~\ref{sec-main-results}, where the functions $\phi_n$ and $\psi_n$ of~\eqref{eqn-peano-functions} can be taken to be restrictions of globally defined inverse bijections $\phi : \mcl V(\mcl H) \rta \BB Z$ and $\psi : \BB Z\rta \mcl V(\mcl H)$, so the vertex set of $\mcl H$ can be identified with $\BB Z = \mcl V(\mcl G)$. 
As we will explain in Section~\ref{sec-bijection}, for several particular choices of $\mcl Z$ the graph $\mcl H$ is a close approximation of one of the random planar maps considered in Section~\ref{sec-main-results}. 

Let $\mcl Z = (\mcl L , \mcl R) : \BB Z\rta \BB R^2$ be a two-sided two-dimensional random walk normalized so that $\mcl Z_0 = 0$. We assume that the increments $\mcl Z_j - \mcl Z_{j-1}$ of $\mcl Z$ are i.i.d.\ with mean-zero; that there is a constant $c > 0$ such that the increment distribution satisfies
\eqb \label{eqn-moment-hypothesis}
\BB E\left[ \exp\left( (\mcl Z_j - \mcl Z_{j-1}) \cdot \xi \right) \right] < \infty ,\quad \forall \xi \in \BB R^2 \: \op{with} \: |\xi| \leq c ;
\eqe 
and that the walk is truly two-dimensional in the sense that the correlation $\rho := \op{Corr}(\mcl L_j-\mcl L_{j-1} , \mcl R_j - \mcl R_{j-1} )$ belongs to $(-1,1)$. 
We note that the hypothesis~\eqref{eqn-moment-hypothesis} is precisely the condition needed to apply the strong coupling result~\cite[Theorem 1.3]{zaitsev-kmt}. 

We define an infinite random planar map $\mcl H = \mcl H(\mcl Z)$ via the following discrete analogue of the formula~\eqref{eqn-bm-inf-adjacency} defining the mated-CRT map. 
The vertex set of $\mcl H$ is $\BB Z$, and for $i_1,i_2\in \BB Z$ with $i_1 < i_2$ we declare that $i_1$ and $i_2$ are connected by an edge in $\mcl H$ if and only if either
\eqb \label{eqn-walk-adjacency}
\mcl L_{i_1-1} \vee \mcl L_{i_2} < \min_{j\in [i_1 ,i_2-1]_{\BB Z}} \mcl L_j \quad \op{or} \quad
\mcl R_{i_1-1} \vee  \mcl R_{i_2} < \min_{j\in [i_1  ,i_2-1]_{\BB Z}} \mcl R_j \quad \op{or} \quad i_2-i_1 = 1 .
\eqe
(The arguments in this subsection are robust with respect to small modifications of~\eqref{eqn-walk-adjacency}; see Remark~\ref{remark-other-adjacency}).

As in Definition~\ref{def-sg-restrict}, for $n\in\BB N$ we define $\mcl H_{n}$ to be the subgraph of $\mcl H$ whose vertex set is $[-n,n]_{\BB Z}$, with two vertices connected by an edge in $\mcl H_{n}$ if and only if they are connected by an edge in $\mcl H$. 

Let $\gamma \in (0,2)$ be chosen so that $\rho = -\cos(\pi\gamma^2/4)$, so that the Brownian motion $Z$ of~\eqref{eqn-bm-cov} for this choice of $\gamma$ has correlation $\rho$.

\begin{thm} \label{thm-sg-map-dist}
For each $n\in\BB N$ and each $A > 0$, there is a $C   > 0$ depending on $A$ and the particular law of $\mcl Z$ and a coupling of $\mcl Z$ with the Brownian motion $Z$ (and thereby the mated-CRT map $\mcl G$) such that for each $n\in\BB N$, the following is true with probability at least $1-O_n(n^{-A})$. 
For each $i_1,i_2 \in  [-n , n]_{\BB Z}$ with $i_1 \sim i_2$ in $\mcl H$, there is a path $P_{i_1,i_2}^{\mcl G}$ from $i_1$ to $i_2$ in $\mcl G_{n}$ with $|P_{i_1,i_2}^{\mcl G}| \leq C (\log n)^3$; and each $i\in [-n , n]_{\BB Z}$ is hit by a total of at most $ C (\log n)^6 $ of the paths $P_{i_1,i_2}^{\mcl G}$. Moreover, the same is true with $\mcl H$ and $\mcl G$ interchanged. 
In particular, for each $i_1,i_2 \in [-n , n]_{\BB Z}$, 
\allb \label{eqn-sg-map-dist}
 C^{-1} (\log n)^{-3} \op{dist}\left(  i_1 , i_2 ; \mcl G_{n} \right)  
\leq \op{dist}\left( i_1,i_2 ; \mcl H_{n} \right)  
 \leq  C (\log n)^3 \op{dist}\left( i_1 , i_2; \mcl G_{n} \right) .
\alle
\end{thm}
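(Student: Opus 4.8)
The plan is to reduce everything to the strong coupling of $\mcl Z$ with $Z$, to upgrade the resulting pathwise closeness into a ``one edge becomes a short path'' statement for the adjacency graphs $\mcl H_n$ and $\mcl G_n$ in both directions, and then to deduce~\eqref{eqn-sg-map-dist} by concatenating paths exactly as in the proof of Lemma~\ref{lem-map-thms}.

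\emph{Step 1 (strong coupling and good events).} The graph $\mcl H$ is unchanged if $\mcl L$ and $\mcl R$ are rescaled by positive constants, since the adjacency condition~\eqref{eqn-walk-adjacency} involves each coordinate only through its running minima; so I may assume $\op{Var}(\mcl L_1) = \op{Var}(\mcl R_1) = 1$, which makes the increment covariances of $\mcl Z$ and $Z$ agree. By hypothesis~\eqref{eqn-moment-hypothesis} I may apply the multidimensional KMT coupling~\cite[Theorem 1.3]{zaitsev-kmt} on $[0,n]$ and on $[-n,0]$ to get, for each $n$ and $A$, a coupling of $\mcl Z$ and $Z$ with $\P[\max_{|j|\le n}|\mcl Z_j - Z_j| > C_0\log n] = O_n(n^{-A})$ for a suitable $C_0 = C_0(A)$ also depending on the law of $\mcl Z$. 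I would then intersect this event with several auxiliary events of probability $1 - O_n(n^{-A})$: the modulus of continuity of $Z$ at unit scale on $[-n-1,n]$ is at most $C_0\log n$; the maximal degree of a vertex of $\mcl G_n$ or of $\mcl H_n$ is at most $C_0\log n$ (the degree of a fixed vertex of either graph has an exponential tail, by excursion estimates for $Z$ and for $\mcl Z$); and a quantitative non-degeneracy statement such as ``$L$ and $R$ each oscillate by at least $2^{k/2}/(\log n)$ on every subinterval of $[-n,n]$ of length $2^k$ with $1\le 2^k\le 2n$'' (whose complement has probability at most $n^2 e^{-c(\log n)^2}$ by Gaussian small-ball estimates). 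Call the intersection $\Omega_n$; then $\P[\Omega_n^c] = O_n(n^{-A})$, and the rest of the argument is deterministic on $\Omega_n$.

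\emph{Step 2 (one edge becomes a short path).} The heart of the proof is: on $\Omega_n$, every edge $\{i_1,i_2\}$ of $\mcl H$ with $i_1,i_2\in[-n,n]_\BB Z$ admits a path $P^{\mcl G}_{i_1,i_2}$ from $i_1$ to $i_2$ in $\mcl G_n$ with $|P^{\mcl G}_{i_1,i_2}|\le C(\log n)^3$, and symmetrically for every edge of $\mcl G$. To organize this I would introduce the ``thickened'' mated-CRT map $\mcl G^{+\delta}$ (vertex set $\BB Z$; $x_1\sim x_2$ iff $|x_1-x_2|=1$, or $(\inf_{[x_1-1,x_1]}L)\vee(\inf_{[x_2-1,x_2]}L)\le(\inf_{[x_1,x_2-1]}L)+\delta$, or the same with $L$ replaced by $R$) and the analogous thickening $\mcl H^{+\delta}$ of~\eqref{eqn-walk-adjacency}. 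On $\Omega_n$, the bound $\max_{|j|\le n}|\mcl Z_j-Z_j|\le C_0\log n$ together with the unit-scale modulus bound shows, by directly translating the defining inequalities, that every $\mcl H$-edge with endpoints in $[-n,n]_\BB Z$ is an edge of $\mcl G^{+C_1\log n}$, and conversely every $\mcl G$-edge with endpoints in $[-n,n]_\BB Z$ is an edge of $\mcl H^{+C_1\log n}$; so it suffices to connect each edge of $\mcl G^{+C_1\log n}$ with endpoints in $[-n,n]_\BB Z$ by a $\mcl G_n$-path of length $O((\log n)^3)$, and symmetrically for $\mcl H$. Given such an edge $\{x_1,x_2\}$ arising from the thickened $L$-condition, one has $L\ge\ell$ on $[x_1,x_2-1]$ with $\ell:=\inf_{[x_1,x_2-1]}L$, while both cells $[x_1-1,x_1]$ and $[x_2-1,x_2]$ have infimum at most $\ell+C_1\log n$. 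I would then build the connecting path by an ancestry/descent argument in the $L$-direction: from each of $x_1$ and $x_2$ repeatedly pass to the nearest cell on the appropriate side whose infimum does not exceed that of the current cell (the ``nearest such cell'' rule forces consecutive cells to be genuinely $\mcl G$-adjacent, and the cell-infima decrease along the descent), and show that the two descents meet — or can be joined by $O(\log n)$ additional cells — after $O((\log n)^2)$ steps each. The count $O((\log n)^2)$ comes from a dyadic decomposition of $[x_1,x_2]$: the non-degeneracy event forces the relevant separating levels on distinct dyadic sub-intervals to differ, so there are only $O(\log n)$ relevant scales, and at each scale only $O(\log n)$ cells are traversed; the extra factor of $\log n$ yielding $(\log n)^3$ accounts for the width-$O(\log n)$ window of ambiguity introduced by the coupling error. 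The reverse direction ($\mcl G$-edge to short $\mcl H_n$-path) is identical with $L,R$ replaced by $\mcl L,\mcl R$.

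\emph{Step 3 (multiplicity and conclusion).} I would arrange the construction so that $P^{\mcl G}_{i_1,i_2}$ uses only cells whose index lies in $[i_1,i_2]$ and whose infimum lies within $O(\log n)$ of the separating level of $\{i_1,i_2\}$. A fixed vertex $i$ is then hit by $P^{\mcl G}_{i_1,i_2}$ only for edges $\{i_1,i_2\}$ that ``straddle'' $i$ (or have an endpoint within $O(\log n)$ of $i$) and whose separating level is within $O(\log n)$ of $\inf_{[i-1,i]}L$ or $\inf_{[i-1,i]}R$; a Brownian (resp.\ walk) excursion estimate, combined with the degree bound and the same dyadic bookkeeping as in Step 2, shows there are at most $O((\log n)^6)$ such edges on $\Omega_n$, which gives the stated multiplicity bound. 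Finally, once the two-directional edge-to-path statement holds, \eqref{eqn-sg-map-dist} follows by taking a geodesic in $\mcl G_n$ (resp.\ $\mcl H_n$) and replacing each of its edges by the corresponding path in the other graph, exactly as in the proof of Lemma~\ref{lem-map-thms}. The main obstacle is Step 2: because~\eqref{eqn-walk-adjacency} and~\eqref{eqn-bm-inf-adjacency} involve infima over intervals, an $O(\log n)$ perturbation of the coordinate processes can both create and destroy edges whose endpoints are arbitrarily far apart in $\BB Z$, so one genuinely has to exhibit explicit short connecting paths and control them uniformly over all edges, and it is the near-degenerate case — where all the relevant infima lie within an $O(\log n)$-window of each other — that forces the multi-scale argument and the powers of $\log n$.
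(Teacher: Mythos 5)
Your overall architecture matches the paper's: normalize variances, apply the Zaitsev coupling to get $\max_{|j|\le n}|\mcl Z_j-Z_j|\le C_0\log n$, observe that each adjacency condition transfers to the other graph with an $O(\log n)$ shift of the relevant infima, build a connecting path out of cells whose infima lie within $O(\log n)$ of the separating level, bound how many edges can use a given vertex, and concatenate along geodesics to get~\eqref{eqn-sg-map-dist}. The gap is in the key counting step of your Step 2 (and it propagates into Step 3, which leans on ``the same dyadic bookkeeping''). Your ``non-degeneracy'' event is an oscillation \emph{lower} bound, and such an event cannot control the number of cells whose infima lie within a window of width $O(\log n)$ above the interval minimum: a sawtooth-type path that dips to the minimum in every unit cell and rises by a large amount in between satisfies your oscillation condition at every dyadic scale, yet every cell is a ``record'' cell, so the claims ``the two descents meet after $O((\log n)^2)$ steps'' and ``at each scale only $O(\log n)$ cells are traversed'' do not follow from the event you put into $\Omega_n$. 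Relatedly, consecutive record cells at distance $\ge 2^k$ need not have infima differing by anything like $2^{k/2}/\log n$ — the oscillation bound only forces the path to rise somewhere in between, which is irrelevant to the descent count.

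What is actually needed, and what the paper isolates, is a probabilistic estimate on the number of near-running-minimum cells: for a standard Brownian motion, the number of unit cells in which the path dips below level $cr$ before the path first hits $-r$ is at most $s r^2$ except with probability $a_0 e^{-a_1 s}$ (proved by a simple strong Markov/renewal argument). Applying this with $r\asymp\log n$ and $s\asymp\log n$, and taking a union bound over the polynomially many pairs $(i_1,i_2)$ satisfying the near-excursion condition, gives the uniform bound $O((\log n)^3)$ on the cardinality of the set of low cells — this is exactly where the exponent $3$ comes from, and the same estimate applied to the forward and backward motions from a fixed $j$ gives at most $O((\log n)^3)\times O((\log n)^3)=O((\log n)^6)$ pairs using $j$, which is the multiplicity bound. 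Once you have that count, your descent machinery is also unnecessary: the set $\{j\in[i_1,i_2]_{\BB Z}:\inf_{[j-1,j]}(L_t-L_{i_1})\le 2C_1\log n\}$ is automatically connected in $\mcl G$ (any two consecutive elements satisfy~\eqref{eqn-bm-inf-adjacency}) and contains $i_1,i_2$, so it directly furnishes the path; and in the reverse direction the analogous walk-level set has consecutive elements adjacent in $\mcl H$ by~\eqref{eqn-walk-adjacency}, while its cardinality is still controlled by the \emph{Brownian} estimate via the coupling, so no separate excursion or degree estimates for the walk (or for $\mcl G$) are needed. If you replace your oscillation event by this near-minimum cell-count event (stated uniformly over all near-excursion pairs and, for the multiplicity, over all $j$), the rest of your argument goes through.
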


The proof of Theorem~\ref{thm-sg-map-dist} proceeds by coupling $Z$ and $\mcl Z$ using the two-dimensional variant of the KMT coupling theorem~\cite[Theorem 1.3]{zaitsev-kmt} then comparing the adjacency conditions~\ref{eqn-bm-inf-adjacency} and~\eqref{eqn-walk-adjacency}. See Figure~\ref{fig-dist-comparison} for an illustration.

\begin{figure}[ht!]
\begin{center}
\includegraphics[scale=1.1]{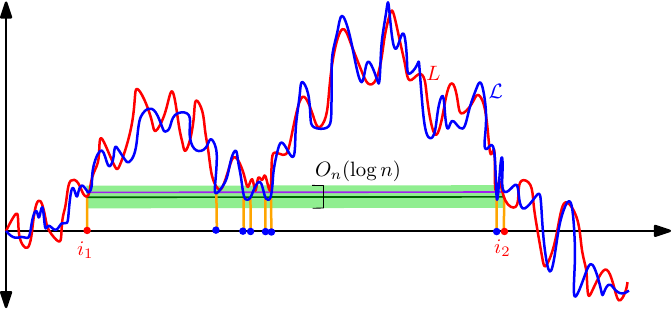} 
\caption[Illustration of the proof of Theorem~\ref{thm-sg-map-dist}]{\label{fig-dist-comparison} Illustration of the proof of Theorem~\ref{thm-sg-map-dist}. If $i_1 , i_2\in \BB Z$ are adjacent in $\mcl G $, then by~\eqref{eqn-bm-inf-adjacency} either we can draw a horizontal line segment (dark green) under the graph of $L$ connecting $(t_1 , L_{t_1})$ and $(t_2, L_{t_2})$ for some $t_1 \in [i_1 - 1, i_1]$ and $t_2 \in [i_2-1,i_2]$; or the same holds with $R$ in place of $L$. By the strong coupling result~\cite[Theorem 1.3]{zaitsev-kmt} we can couple $Z$ and $\mcl Z$ such that they differ by at most $O_n(\log n)$ on $[-n , n]_{\BB Z}$ with high probability. The purple segment is obtained by translating the dark green segment upward by $O_n(\log n)$. The blue and red dots correspond to times $j\in [i_1,i_2]_{\BB Z}$ for which $(j, \mcl L_j)$ lies below the purple line. For each such $j$, $(j,L_j)$ lies within distance $O_n(\log n)$ of the dark green segment (in particular, in the light green region), so Lemma~\ref{lem-bm-bad-cell} implies that there are at most $O_n( (\log n)^3)$ blue dots.  
By~\eqref{eqn-walk-adjacency}, successive blue or red dots are connected by edges of $\mcl H$, which gives a path in $\mcl H$ from $i_1$ to $i_2$ of length at most $O_n((\log n)^3)$. 
Similar considerations hold with the roles of $\mcl H$ and $\mcl G$ interchanged.
}
\end{center}
\end{figure}
 
The formula~\eqref{eqn-walk-adjacency} is unaffected if we rescale each of the coordinates $\mcl L$ and $\mcl R$ by a (possibly different) constant, so we can assume without loss of generality that these coordinates are normalized so that $\op{Var}(\mcl L_j  -\mcl L_{j-1}) = \op{Var}(\mcl R_j - \mcl R_{j-1} ) = 1$.  

By the multi-dimensional strong coupling theorem~\cite[Theorem 1.3]{zaitsev-kmt} (the higher dimensional analogue of~\cite{kmt}), there are constants $b_0,b_1>0$, depending only on the law of the increments of $\mcl Z$, and a coupling of $\mcl Z$ with $Z $ such that
\eqb \label{eqn-kmt-moment}
\BB E\left[ \exp\left( b_0 \max_{j \in [-n , n]_{\BB Z} } |\mcl Z_j - Z_j | \right) \right] \leq O_n(n^{b_1 }) .
\eqe 
By the Chebyshev inequality,~\eqref{eqn-kmt-moment} implies that there is a constant $C_0  > 0$, depending on $A$ and the law of the increments of $\mcl Z$, such that except on an event of probability $O_n(n^{-A})$ we have $\max_{j\in [-n,n]_{\BB Z}} |\mcl Z_j - Z_j| \leq C_0   \log n$. 
Henceforth fix such a coupling and constants $C_1 , C_2 \geq C_0\vee 1$ to be chosen later in a manner depending only on $\gamma$, $A$, and the law of the increments of $\mcl Z$. 

We will show that the conditions in the statement of Theorem~\ref{thm-sg-map-dist} are satisfied on an event $E^n$ depending on $Z$ and $\mcl Z$ (c.f.\ Lemma~\ref{lem-sg-map-path}).
In particular, we let $E^n = E^n(C_0,C_1,C_2)$ be the event that the following is true.  
\begin{enumerate}
\item $\max_{j\in [-n,n]_{\BB Z}} |\mcl Z_j - Z_j| \leq C_0   \log n$. \label{item-sg-map-kmt}
\item For each $i \in [-n,n]_{ \BB Z}$, we have $\sup_{s,t\in [i-1  , i]} |Z_t - Z_s| \leq   \log n$. \label{item-sg-map-gaussian}
\item For each pair of integers $(i_1,i_2)$ satisfying $ -n \leq i_1 < i_2 \leq n $ and
\eqb \label{eqn-sg-map-hyp}
\inf_{t\in [i_1,i_2]} (L_t - L_{i_1}) \geq - 6C_1  \log n \quad \op{and} \quad  |L_{i_2 }  - L_{i_1 } | \leq 6 C_1   \log n ,
\eqe
we have
\eqb \label{eqn-sg-map-set}
 \#\left\{ j  \in [i_1,i_2]_{ \BB Z} : \inf_{t\in [j-1 , j]} (L_t - L_{i_1}) \leq 7 C_1  \log n \right\} \leq C_1^3 (\log n)^3 .
\eqe  
and the same holds with $R$ in place of $L$. \label{item-sg-map-set}
\item For each $j \in [-n , n]_{\BB Z}$, the number of pairs $(i_1,i_2) \in [-n , n]_{\BB Z}^2$ with $i_1 < i_2$ for which~\eqref{eqn-sg-map-hyp} holds and $j$ belongs to the set in~\eqref{eqn-sg-map-set} is at most $C_2 (\log n)^6$; and the same holds with $R$ in place of $L$.  \label{item-sg-map-count}
\end{enumerate}
The condition~\eqref{eqn-sg-map-hyp} says that $[i_1,i_2]$ is in some sense ``close" to being an excursion interval for $L$, in the sense that the minimum of $L_t - L_{i_1}$ over this interval is not much less than zero and the difference $|L_{i_2} - L_{i_1}|$ is small. The condition~\eqref{eqn-sg-map-set} requires the $L$ does not get close to zero too many times during any such interval. See also Figure~\ref{fig-dist-comparison}. 
Before checking that the conditions in the theorem statement are satisfied on $E^n$, we show that $E^n$ occurs with high probability.

\begin{lem} \label{lem-sg-map-event}
If the constants $C_1 , C_2 \geq C_0 \vee 1$ are chosen sufficiently large, in a manner depending only on $\gamma$, $A$, and the law of $\mcl Z$, then 
\eqb \label{eqn-sg-map-event-prob}
\BB P\left[ E^n \right] \geq 1- O_n(n^{-A})  .
\eqe 
\end{lem}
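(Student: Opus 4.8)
The plan is to write $E^n$ as the intersection of the four events described in items~\ref{item-sg-map-kmt}--\ref{item-sg-map-count} and to bound the failure probability of each, analyzing the later events on the event that the earlier ones hold. Items~\ref{item-sg-map-kmt} and~\ref{item-sg-map-gaussian} need only soft inputs. Item~\ref{item-sg-map-kmt} is exactly the high-probability event extracted from~\eqref{eqn-kmt-moment} via Chebyshev's inequality (already recorded in the discussion preceding the lemma), so it fails with probability $O_n(n^{-A})$. For item~\ref{item-sg-map-gaussian} we use that the oscillation of Brownian motion over a unit time interval has a Gaussian upper tail, so $\BB P[\sup_{s,t\in[i-1,i]}|Z_t - Z_s| > \log n] \leq C\exp(-c(\log n)^2)$ for each fixed $i$, and a union bound over the $O_n(n)$ values of $i\in[-n,n]_{\BB Z}$ shows item~\ref{item-sg-map-gaussian} fails with probability $o_n^\infty(n)$, far smaller than $n^{-A}$. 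Here and below write $b := C_1 \log n$.

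The substance of the lemma is items~\ref{item-sg-map-set} and~\ref{item-sg-map-count}, and by the symmetry of the two coordinates it suffices to treat $L$; I would work on the event that item~\ref{item-sg-map-gaussian} holds. The first point is that, on item~\ref{item-sg-map-gaussian}, if a pair $(i_1,i_2)$ satisfies~\eqref{eqn-sg-map-hyp} and $j$ lies in the set in~\eqref{eqn-sg-map-set}, then $L$ is confined to the band $[L_{i_1} - 6b,\, L_{i_1}+8b]$ throughout the whole interval $[j-1,j]$: the lower bound is the near-excursion hypothesis in~\eqref{eqn-sg-map-hyp}, while the upper bound combines the defining inequality $\inf_{t\in[j-1,j]}(L_t - L_{i_1}) \leq 7b$ with the unit-interval oscillation bound from item~\ref{item-sg-map-gaussian}. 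Hence the cardinality in~\eqref{eqn-sg-map-set} is at most the number of integers $j\in[i_1,i_2]$ such that $L|_{[j-1,j]}$ lies in a fixed band of width $14b$ that, by~\eqref{eqn-sg-map-hyp}, sits just above the level $L_{i_1}-6b$ below which $L$ does not descend on $[i_1,i_2]$. The key analytic input is then a Brownian estimate of the type illustrated in Figure~\ref{fig-dist-comparison}: for one-dimensional Brownian motion started inside a band of width $O(b)$ and run until it first exits the band through the bottom, the occupation time of the band is of order $b^2$ in expectation and has an exponential upper tail at that scale (this follows from the Green's function of Brownian motion killed at a point, together with the classical fact that occupation times of sets with finite Green's measure have exponential moments). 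Since the cardinality in~\eqref{eqn-sg-map-set} is then bounded by the occupation time of that band for $L$ run from $i_1$ until it first drops below $L_{i_1}-6b$, the probability that a given pair $(i_1,i_2)$ both satisfies~\eqref{eqn-sg-map-hyp} and has cardinality in~\eqref{eqn-sg-map-set} exceeding $C_1^3(\log n)^3 = b\cdot b^2$ is at most $C\exp(-cb) = C n^{-cC_1}$. Summing over the $O_n(n^2)$ pairs $(i_1,i_2)\in[-n,n]_{\BB Z}^2$ gives a total of $O_n(n^{2-cC_1})$, which is $O_n(n^{-A})$ once $C_1$ is chosen large enough (depending on $\gamma$, $A$, and the law of $\mcl Z$) that $cC_1 \geq A+2$; this proves item~\ref{item-sg-map-set}.

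For item~\ref{item-sg-map-count}, fix $j\in[-n,n]_{\BB Z}$ and bound the number of pairs $(i_1,i_2)$ with $i_1\leq j\leq i_2$, satisfying~\eqref{eqn-sg-map-hyp}, for which $j$ lies in the set in~\eqref{eqn-sg-map-set}. On item~\ref{item-sg-map-gaussian}, any such pair forces $L_{i_1}$, and hence $L_{i_2}$ (using $|L_{i_2}-L_{i_1}|\leq 6b$), to lie within $O(b)$ of $L_j$, and forces $L$ to stay above a level that is $O(b)$ below $L_j$ on all of $[i_1,i_2]$. Letting $[\alpha_j,j]$ and $[j,\beta_j]$ be the largest integer intervals around $j$ on which $L$ stays above that level, the number of admissible values of $i_1$ (resp.\ $i_2$) is at most the number of integers of $[\alpha_j,j]$ (resp.\ of $[j,\beta_j]$) at which $L$ lies in a band of width $O(b)$ just above that level, and the count of pairs is at most the product of these two quantities. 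Each is controlled by the same occupation-time estimate as above, applied to $L$ run backward (resp.\ forward) from $j$ until it first exits through the bottom of the band: it is at most $C_1^3(\log n)^3$ except on an event of probability $\leq C n^{-cC_1}$. Hence the count of pairs is at most $C_1^6(\log n)^6$ off an event of probability $O_n(n^{-cC_1})$, and a union bound over the $O_n(n)$ values of $j$ together with the choice $C_2\geq C_1^6$ (and $C_1$ as above) yields item~\ref{item-sg-map-count}, hence~\eqref{eqn-sg-map-event-prob}.

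The main obstacle is the Brownian occupation-time estimate underlying items~\ref{item-sg-map-set} and~\ref{item-sg-map-count} --- controlling how many unit intervals a Brownian motion can spend hugging a level just above a barrier it does not cross --- together with the bookkeeping needed to make the bounds uniform over the $\asymp n^2$ pairs $(i_1,i_2)$ and the $\asymp n$ choices of $j$. The uniformity is what dictates the structure above: because the per-pair failure probability can be made as small as $n^{-cC_1}$ with $C_1$ at our disposal, a crude union bound suffices, and this is the only place the freedom to enlarge $C_1$ (and then $C_2$) is used.
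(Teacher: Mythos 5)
Your proposal is correct, and its overall architecture is the same as the paper's: bound condition~\ref{item-sg-map-kmt} by the KMT/Chebyshev estimate already set up before the lemma, condition~\ref{item-sg-map-gaussian} by Gaussian tails plus a union bound, then prove conditions~\ref{item-sg-map-set} and~\ref{item-sg-map-count} via a per-pair (resp.\ per-$j$, splitting into forward and backward motions from $j$ and multiplying the two counts) exponential estimate at scale $(\log n)$, followed by crude union bounds over the $\asymp n^2$ pairs and $\asymp n$ indices, using the freedom to enlarge $C_1$ and then $C_2$.

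The one genuine difference is the auxiliary one-dimensional Brownian input. The paper isolates Lemma~\ref{lem-bm-bad-cell}, which directly bounds the number of unit cells $[j-1,j]$ on which $\inf B \leq c r$ before the first passage below $-r$, proved by an elementary successive-attempts argument (stopping times spaced $r^2$ apart, each attempt dropping below $-r$ with probability bounded away from $0$, so the count is dominated by a geometric number of blocks of length $\asymp r^2$). You instead bound the cell count by the occupation time of an $O(C_1\log n)$-wide band before the motion exits below it, and control that occupation time via the Green's function of Brownian motion killed at the barrier together with Kac/Khas'minskii exponential moments; this gives the same exponential tail at scale $(C_1\log n)^2$ and hence the same per-pair bound $O_n(n^{-c C_1})$. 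A small consequence of your route is that even for condition~\ref{item-sg-map-set} you must work on the event of condition~\ref{item-sg-map-gaussian} (to convert "the infimum over $[j-1,j]$ dips into the band" into "a full unit of occupation of a slightly widened band"), whereas the paper's cell-counting lemma applies unconditionally there; this is harmless given your decomposition $\BB P[(E^n)^c] \leq \BB P[1^c]+\BB P[2^c]+\BB P[2\cap 3^c]+\BB P[2\cap 4^c]$, which is also how the paper handles condition~\ref{item-sg-map-count}. Either version of the auxiliary estimate suffices; the paper's is more elementary and self-contained, while yours imports a standard occupation-time fact and avoids proving a bespoke lemma.
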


For the proof of Lemma~\ref{lem-sg-map-event}, we will need the following elementary lemma about Brownian motion. 

\begin{lem} \label{lem-bm-bad-cell}
Let $B$ be a standard linear Brownian motion and for $r  \geq 1$, let $T_r := \inf\{t\geq 0 : B_t\leq -r\}$. For each $c>0$, there are constants $a_0 , a_1 > 0$ depending only on $c$ such that for $ r \geq 1$ and $s > 0$,  
\eqb \label{eqn-bm-bad-cell}
\BB P\left[  \#\left\{ j \in (0, T_r]_{ \BB Z} : \inf_{t\in [j-1,j]} B_t \leq c r \right\}  > s r^2  \right] \leq  a_0 e^{-a_1 s} .
\eqe 
\end{lem}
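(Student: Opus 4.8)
The plan is to decompose the excursion interval $[0,T_r]$ into a sequence of sub-excursions at which $B$ descends successive multiples of, say, $r/2$, and to show that each such sub-excursion contributes an expected number of ``bad'' integers (those $j$ with $\inf_{[j-1,j]} B \le cr$) that is bounded by a geometric series in the number of the sub-excursion, so that the total has an exponential tail. Concretely, set $r_k := k r / 2$ for $k = 0,1,2,\dots$ and let $\sigma_k := \inf\{t \ge 0 : B_t \le -r_k\}$, so $\sigma_0 = 0$, $\sigma_2 = T_r$, and $T_r = \sigma_{\lceil 2\rceil}$; more usefully, run the argument in the other direction, looking backward from $T_r$. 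A bad integer $j$ satisfies $\inf_{t\in[j-1,j]} B_t \le cr$; by continuity this forces $B$ to have been above level roughly $cr - 1$ somewhere near time $j$, hence (using condition-free monotonicity of the relevant hitting times and the strong Markov property) the bad integers are concentrated in the portion of the excursion during which $B$ is still above level $cr - O(1)$. The number of \emph{unit integer intervals} spent by $B$ above a level before it goes down to $-r$ has, by the strong Markov property applied at successive downcrossings and a standard Gaussian small-ball estimate for the time spent above a level, an exponential moment that is uniform in $r$ after rescaling by $r^2$ (Brownian scaling: time to descend order $r$ is of order $r^2$, and the number of integer intervals in a window of length $r^2$ is of order $r^2$).

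The key steps, in order, are: (i) By Brownian scaling, $B_t^{(r)} := r^{-1} B_{r^2 t}$ is again a standard Brownian motion, $T_r$ corresponds to $r^2 T_1^{(r)}$ where $T_1^{(r)} = \inf\{t : B_t^{(r)} \le -1\}$, and the event in~\eqref{eqn-bm-bad-cell} becomes a statement about $B^{(r)}$ with the count now over integers $j$ in $(0, r^2 T_1^{(r)}]$ with $\inf_{t \in [(j-1)/r^2, j/r^2]} B^{(r)}_t \le c$; grouping the $r^2$ integers in each unit-length interval of the rescaled clock, it suffices to bound $\#\{m \in (0, \lceil T_1^{(r)}\rceil]_{\BB Z} : \inf_{t\in[m-1,m]} B^{(r)}_t \le c\}$ (each such unit interval contributes at most $r^2 + O(r)$ original integers), reducing everything to $r = 1$ up to constants. (ii) For the $r=1$ problem, let $N := \#\{m \in (0,\lceil T_1\rceil]_{\BB Z} : \inf_{t\in[m-1,m]} B_t \le c\}$; show $\BB E[e^{\lambda N}] < \infty$ for some $\lambda = \lambda(c) > 0$. (iii) To prove (ii): observe that if $m$ is counted then $B$ must be $\le c$ at some point of $[m-1,m]$, so the last counted integer is within $O(1)$ of $\inf\{t : B_t \le -1\}$ only in a weak sense — instead, use the fact that the running infimum $\ul B_t := \inf_{[0,t]} B$ decreases from $0$ to $-1$, and stratify $[0,T_1]$ by the dyadic levels $\ul B_t \in (-2^{-k}, -2^{-(k+1)}]$; within each such stratum $B$ stays above $-2^{-k}$, and the number of unit integer intervals in a stratum is dominated by the time length of the stratum, which by the strong Markov property at the successive level-hitting times has an exponential tail uniformly (the overshoot is controlled since $B$ is continuous). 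Summing a geometrically-weighted sum of i.i.d.-like exponential-tail contributions gives the exponential moment; equivalently and more cleanly, use the reflection principle / Brownian scaling to write the time-above-a-level functional directly and apply a Borell–TIS or explicit density bound. (iv) Conclude via Markov's inequality: $\BB P[N > s] \le e^{-\lambda s}\BB E[e^{\lambda N}] \le a_0 e^{-a_1 s}$, and unwind the rescaling in (i) to get~\eqref{eqn-bm-bad-cell} with the $s r^2$ scaling and constants depending only on $c$.

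The main obstacle is step (iii): making precise and uniform (in $r$, equivalently after rescaling, a clean $r=1$ statement) the claim that the number of integer intervals on which $B$ comes within $cr$ of its starting value before descending by $r$ has a genuinely exponential — not merely polynomial — tail. The subtlety is that $c$ can be large (larger than $1$), so ``within $cr$ of the start'' is a weaker constraint than ``above $0$''; one must instead exploit that the relevant integers lie before the running infimum reaches $-r$, and that, \emph{conditionally on the sequence of times at which $B$ hits the successive levels $-r/2^k$}, the contributions are stochastically dominated by independent copies of a fixed functional with exponential tail (via the strong Markov property and time-homogeneity), so that their sum — with the number of terms itself having a geometric tail — retains an exponential tail. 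I expect this to be a short but slightly delicate argument combining the strong Markov property, Brownian scaling, and a one-line Gaussian estimate for $\BB P[\sup_{[0,1]} B \ge \lambda]$ or for the Laplace transform of a downcrossing time; once set up correctly it is routine, but the stratification and the uniformity in $c$ require care.
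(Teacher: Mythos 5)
Your overall plan (rescale, reduce to counting bad unit intervals, prove an exponential moment, finish with Markov's inequality) is sensible, and your step (i) is essentially the same grouping into length-$r^2$ windows that the paper's argument uses. But the crux, your step (iii), contains a genuine gap: you bound the number of bad unit intervals in each stratum by the \emph{time length} of the stratum and assert that this duration has an exponential tail, uniformly. It does not. The duration of the stratum where the running infimum lies between $-2^{-k}$ and $-2^{-(k+1)}$ is a passage time of Brownian motion between two levels, i.e.\ a one-sided stable-$\tfrac12$ hitting time: its tail decays like $t^{-1/2}$ and it has infinite mean. More fundamentally, no argument that dominates the count by elapsed time can prove \eqref{eqn-bm-bad-cell}, because $T_r$ itself has no exponential moments (indeed infinite mean), whereas the lemma says the \emph{count} is of order $r^2$ with an exponential tail; the entire content of the statement is that $B$ spends most of $[0,T_r]$ well above $cr$, where no bad integers are produced. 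Your proposed alternative (``time-above-a-level functional'' plus Borell--TIS) has the same defect: occupation times before $T_r$ are also heavy-tailed, and Borell--TIS controls suprema, not occupation times. Two smaller issues: the running infimum starts at $0$, so there are infinitely many dyadic strata to sum over even if each did behave well; and your opening decomposition at the levels $-kr/2$ produces only two sub-excursions before $-r$ is reached, so ``a geometric series in the number of the sub-excursion'' does not parse, and an expectation bound per sub-excursion would in any case not yield an exponential tail.

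The correct mechanism---and the paper's short proof---counts \emph{visits} to the region $\{B\le cr\}$ rather than time. Set $\tau_0=0$, let $\tau_k$ be the first time $t\ge\tau_{k-1}+r^2$ with $B_t\le cr$, and let $K$ be the first $k$ with $\inf_{t\in[\tau_{k-1},\tau_k]}B_t\le -r$. Every bad integer lies within $r^2+2$ of some $\tau_{k-1}$ with $k\le K$ (if $B_t\le cr$ with $t\le T_r$, then $t\in[\tau_{k-1},\tau_{k-1}+r^2]$ for some such $k$ by minimality of $\tau_k$), so the count is at most $K(r^2+2)\le 3Kr^2$ for $r\ge 1$. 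Conditionally on $B|_{[0,\tau_{k-1}]}$, the $k$-th window starts at height at most $cr$ and lasts at least $r^2$, so by Brownian scaling the process falls the additional $(1+c)r$ within the window with probability at least some $p(c)>0$; hence $K$ is stochastically dominated by a geometric random variable, and \eqref{eqn-bm-bad-cell} follows. You have the right ingredients (strong Markov property, scaling, a constant-probability descent), but as written your proof routes through the false claim about exponential tails of stratum durations and therefore does not go through.
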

\begin{proof}
Let $\tau_0 = 0$ and for $k\in\BB N$ inductively let $\tau_k$ be the smallest $t\geq\tau_{k-1} + r^2$ for which $B_t \leq c r$. Also let $K$ be the smallest $k\in\BB N$ for which $\tau_k \geq T_r$, i.e., $K = \inf\left\{k\in\BB N : \inf_{t\in [\tau_{k-1} ,\tau_k]} B_t\leq -r \right\}$.
Then for $r\geq 1$, 
\eqbn
\#\left\{ j \in (0, T_r]_{ \BB Z} : \inf_{t\in [j-1 ,j]} B_t \leq r \right\} \leq   K  ( r^2 + 2) \leq 3 K r^2  .
\eqen
By the strong Markov property and Brownian scaling, there is a $p = p(c) \in (0,1)$ such that for each $k\in \BB N$,
\eqbn
\BB P\left[ \inf_{t \in [\tau_{k-1} ,\tau_k]} B_t \leq - r \,|\, B|_{[0,\tau_{k-1}]} \right] \geq p .
\eqen
Hence for $s  > 0$, $\BB P\left[ K \geq s \right] \leq (1-p)^{\lfloor s\rfloor}$, whence~\eqref{eqn-bm-bad-cell} holds.
\end{proof}

\begin{proof}[Proof of Lemma~\ref{lem-sg-map-event}]
Condition~\ref{item-sg-map-kmt} holds except on an event of probability $O_n(n^{-A})$ by our choice of coupling and condition~\ref{item-sg-map-gaussian} holds except on an event of sub-polynomial probability in $n$ by the Gaussian tail bound, the reflection principle, and a union bound over all $i\in [-n,n]_{ \BB Z}$.  

By Lemma~\ref{lem-bm-bad-cell} applied with $r = 6 C_1   \log n$, $c = 7/6$, and $s= \frac{1}{36} C_1 \log n$, there is a constant $a_1 = a_1(\gamma) > 0$ such that for each $i_1,i_2\in [-n,n]_{ \BB Z}$ with $i_1\leq i_2$ and each choice of $C_1 >0$, the probability of that~\eqref{eqn-sg-map-hyp} holds but~\eqref{eqn-sg-map-set} fails is at most $O_n(n^{-a_1 C_1 })$. If we choose $C_1$ sufficiently large, then by a union bound over all such pairs $(i_1,i_2)$, we find that condition~\ref{item-sg-map-set} holds except on an event of probability $O_n(n^{-A})$. 

Now we turn our attention to condition~\ref{item-sg-map-count}, which will also be obtained using Lemma~\ref{lem-bm-bad-cell}. If $(i_1,i_2) \in [-n , n]_{\BB Z}^2$ for which~\eqref{eqn-sg-map-hyp} holds and $j\in [i_1,i_2]_{\BB Z}$, then for some universal constant $K > 0$, 
\eqb \label{eqn-sg-map-count-adjacent}
\inf_{t\in [i_1 ,  j ]}  (L_t - L_{ i_1 } ) \geq -K C_1  \log n  \quad \op{and} \quad
\inf_{t\in [  j  , i_2 ]}  (L_t - L_{i_2} ) \geq -K C_1  \log n  .
\eqe
If also $j$ belongs to the set in~\eqref{eqn-sg-map-set} for this choice of $(i_1,i_2)$, then also $\inf_{t\in [j-1 , j]} (L_t - L_{i_1}) \leq 7 C_1  \log n $.
If these properties are satisfied and furthermore condition~\ref{item-sg-map-gaussian} in the definition of $E^n$ holds, then $L_j$ differs from $\inf_{t\in [j-1 , j]}  L_t$ by at most $ \log n$ so $|L_j - L_{i_1}| \vee |L_j - L_{i_2}|$ is bounded above by a universal constant times $C_1\log n$. Hence, for a possibly larger universal constant $K$, 
\allb \label{eqn-sg-map-count-adjacent'}
&\inf_{t\in [i_1 ,  j ]}  (L_t - L_j ) \geq -K C_1  \log n ,\quad 
\inf_{t\in [  j  , i_2 ]}  (L_t - L_j ) \geq -K C_1  \log n  , \notag \\
&\qquad \op{and} \quad 
(L_{i_1} - L_j) \vee (L_{i_2} - L_j) \leq     K C_1 \log n .
\alle
By Lemma~\ref{lem-bm-bad-cell}, applied to each of the Brownian motions $t\mapsto L_{t + j} - L_j$ and $t\mapsto L_{-t+j} - L_j$ and with $s $ equal to a large enough constant times $ \log n$ and $r =K C_1 \log n$, there is a constant $C_2 = C_2(A) >0$ such for each fixed $j \in \BB Z$, the probability that the number of pairs $(i_1,i_2) \in \BB Z^2$ for which~\eqref{eqn-sg-map-count-adjacent'} holds is larger than $C_2 (\log n)^6$ is at most $O_n(n^{-A-1})$. By a union bound over all $j \in [-n , n]_{\BB Z}$, we see that the probability that condition~\ref{item-sg-map-gaussian} holds but condition~\ref{item-sg-map-count} fails is at most $O_n(n^{-A})$. 
Combining the four preceding paragraphs shows that~\eqref{eqn-sg-map-event-prob} holds.
\end{proof}

\begin{lem} \label{lem-sg-map-path}
Let $C_1 ,C_2 \geq C_0 \vee 1$ be the constants from Lemma~\ref{lem-sg-map-event}. There is a constant $C \geq 1$, depending only on $C_1,C_2,$ and $C_0$ such that if the event $E^n$ defined just above Lemma~\ref{lem-sg-map-event} occurs, then the first part of the conclusion of Theorem~\ref{thm-sg-map-dist} is satisfied. That is, for each $i_1,i_2 \in  [-n , n]_{\BB Z}$ with $i_1 \sim i_2$ in $\mcl H$, there is a path $P_{i_1,i_2}^{\mcl G}$ from $i_1$ to $i_2$ in $\mcl G_{n}$ with $|P_{i_1,i_2}^{\mcl G}| \leq C (\log n)^3$; and each $i\in [-n , n]_{\BB Z}$ is contained in at most $ C (\log n)^6 $ of the paths $P_{i_1,i_2}^{\mcl G}$. Moreover, the same is true with $\mcl H$ and $\mcl G$ interchanged.
\end{lem}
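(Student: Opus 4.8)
The plan is to work on the event $E^n$ throughout and to build the paths $P_{i_1,i_2}^{\mcl G}$ by reading off the adjacency condition~\eqref{eqn-walk-adjacency} for $\mcl H$ and transferring it to~\eqref{eqn-bm-inf-adjacency} for $\mcl G$ via the strong coupling (condition~\ref{item-sg-map-kmt}). Fix $i_1 < i_2$ in $[-n,n]_{\BB Z}$ with $i_1 \sim i_2$ in $\mcl H$. If $i_2 - i_1 = 1$ there is nothing to do since $i_1,i_2$ are automatically adjacent in $\mcl G$ by~\eqref{eqn-bm-inf-adjacency}, so assume the edge comes from (say) the $\mcl L$-condition: $\mcl L_{i_1-1}\vee \mcl L_{i_2} < \min_{j\in[i_1,i_2-1]_{\BB Z}}\mcl L_j$. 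By condition~\ref{item-sg-map-kmt}, $|\mcl Z_j - Z_j|\leq C_0\log n$ for all $j\in[-n,n]_{\BB Z}$, so the analogous inequality for $L$ holds up to an additive error $2C_0\log n$; that is, $L_{i_1-1}$ and $L_{i_2}$ are each below $\min_{j\in[i_1,i_2-1]_{\BB Z}} L_j + 2C_0\log n$. Using condition~\ref{item-sg-map-gaussian} to pass from the integer-time minimum of $L$ to the continuous-time infimum over $[i_1,i_2-1]$ (another additive $\log n$), I would conclude that the pair $(i_1,i_2)$, after possibly shifting the endpoints by one, satisfies the hypothesis~\eqref{eqn-sg-map-hyp} with the constant $C_1$ chosen large enough to absorb these universal multiples of $C_0$.

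Next I would identify the ``candidate vertices'' of the path: the set $J := \{ j\in[i_1,i_2]_{\BB Z} : \inf_{t\in[j-1,j]} L_t \leq \min_{k}L_k + (\text{const})\,C_1\log n\}$, i.e.\ essentially the set in~\eqref{eqn-sg-map-set}. I claim that if $j<j'$ are two \emph{consecutive} elements of $J$, then $j\sim j'$ in $\mcl G$. Indeed, for every $k$ strictly between $j$ and $j'$ we have $\inf_{t\in[k-1,k]}L_t$ large (it is \emph{not} in $J$), hence $\inf_{t\in[j,j'-1]}L_t$ is bounded below by roughly $\min L + C_1\log n$, while $\inf_{t\in[j-1,j]}L_t$ and $\inf_{t\in[j'-1,j']}L_t$ are both $\leq \min L + C_1\log n$ — this is exactly~\eqref{eqn-bm-inf-adjacency} for $L$ once the constants are lined up. Concatenating the edges between successive elements of $J$ (together with the trivial consecutive-integer edges if needed at the ends) gives a path $P^{\mcl G}_{i_1,i_2}$ from $i_1$ to $i_2$ in $\mcl G$, all of whose vertices lie in $[i_1,i_2]\subset[-n,n]_{\BB Z}$, hence in $\mcl G_n$. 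Its length is $|J|$, which by condition~\ref{item-sg-map-set} (the key input) is at most $C_1^3(\log n)^3$, giving the length bound with $C$ a suitable multiple of $C_1^3$.

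For the multiplicity bound, fix $i\in[-n,n]_{\BB Z}$ and count the edges $(i_1,i_2)$ of $\mcl H_n$ whose path $P^{\mcl G}_{i_1,i_2}$ passes through $i$. By construction, if $i$ lies on $P^{\mcl G}_{i_1,i_2}$ then $i$ is one of the chosen candidate vertices, so $(i_1,i_2)$ satisfies~\eqref{eqn-sg-map-hyp} and $i$ belongs to the set in~\eqref{eqn-sg-map-set} for that pair (with $L$ or with $R$). Condition~\ref{item-sg-map-count} bounds the number of such pairs by $C_2(\log n)^6$ for $L$ plus the same for $R$, so at most $2C_2(\log n)^6$ altogether — the multiplicity bound with $C$ a multiple of $C_2$. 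Finally, the ``$\mcl H$ and $\mcl G$ interchanged'' direction is symmetric: one starts from an edge $i_1\sim i_2$ of $\mcl G$, i.e.\ a horizontal segment under the graph of $L$ (or $C-R$) joining $(t_1,L_{t_1})$ and $(t_2,L_{t_2})$ with $t_1\in[i_1-1,i_1]$, $t_2\in[i_2-1,i_2]$, translates it up by $C_0\log n$ to get a segment under the graph of $j\mapsto\mcl L_j$ (using conditions~\ref{item-sg-map-kmt} and~\ref{item-sg-map-gaussian}), reads off the integers $j$ with $(j,\mcl L_j)$ below this segment as the candidate vertices, checks that consecutive ones are $\mcl H$-adjacent via~\eqref{eqn-walk-adjacency}, and controls the count and multiplicity by the same two conditions~\ref{item-sg-map-set} and~\ref{item-sg-map-count}. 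The main obstacle is the bookkeeping of the universal constants: one must verify that a single choice of $C_1$ (depending only on the universal constants appearing when passing between $\mcl Z$ and $Z$, between integer-time and continuous-time minima, and between the two slightly different forms of the excursion condition) simultaneously makes~\eqref{eqn-sg-map-hyp} hold for every relevant pair and makes the inflated set still be contained in the set of~\eqref{eqn-sg-map-set}; this is where Figure~\ref{fig-dist-comparison} is the right picture to keep in mind, and it is purely a matter of chasing the additive $O(\log n)$ errors through carefully.
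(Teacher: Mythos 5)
Your plan is correct and follows essentially the same route as the paper's proof: working on $E^n$, you transfer the adjacency conditions between $\mcl Z$ and $Z$ via conditions~\ref{item-sg-map-kmt}--\ref{item-sg-map-gaussian}, take the near-minimum candidate set as the path (consecutive elements being adjacent by~\eqref{eqn-bm-inf-adjacency} resp.~\eqref{eqn-walk-adjacency}), get the length bound from condition~\ref{item-sg-map-set} and the multiplicity bound from condition~\ref{item-sg-map-count}, and argue symmetrically in the reverse direction. The one refinement the paper makes that your sketch glosses over is that in the $\mcl G\to\mcl H$ direction the candidate set must be taken as $\left\{ j : \mcl L_j \wedge \mcl L_{j-1} \le \mcl L_{i_1} + O(C_1\log n) \right\}$ rather than just the $j$ with $\mcl L_j$ below the translated segment, so that the term $\mcl L_{j_1-1}$ appearing in~\eqref{eqn-walk-adjacency} is controlled when you verify that consecutive candidates are adjacent in $\mcl H$.
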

\begin{proof}
Throughout the proof we assume that $E^n$ occurs. 

First consider a pair of vertices $i_1,i_2 \in [-n , n]_{\BB Z}$ with $i_1 < i_2$ and $i_1\sim i_2 $ in $\mcl H$. 
We will construct a path $P_{i_1,i_2}^{\mcl G}$ from $i_1$ to $i_2$ in $\mcl G_{n}$ with length at most $C_1^3 (\log n)^3$.

By~\eqref{eqn-walk-adjacency} either $|i_1-i_2| = 1$, $\mcl L_{i_1-1} \vee \mcl L_{i_2}  <  \min_{j \in [i_1  , i_2-1 ]_{\BB Z} } \mcl L_j $, or the same holds with $\mcl R$ in place of $\mcl L$. If $|i_1-i_2| = 1$ we take $P_{i_1,i_2}^{\mcl G}$ to be the length-1 path in $\mcl G$ from $i_1$ to $i_2$. 
We will construct $P_{i_1,i_2}^{\mcl G}$ in the case when $\mcl L_{i_1-1} \vee \mcl L_{i_2}  <  \min_{j \in [i_1  , i_2-1 ]_{\BB Z}} \mcl L_j $; the construction when this holds with $\mcl R$ in place of $\mcl L$ is similar. By conditions~\ref{item-sg-map-kmt} and~\ref{item-sg-map-gaussian} in the definition of $E^n$ and since $C_0 \leq C_1$,
\eqb \label{eqn-map-to-sg-condition}
\inf_{t\in [i_1 ,i_2 ] }  (L_t - L_{i_1 } ) \geq - 3 C_1  \log n \quad \op{and} \quad |L_{i_2 }  - L_{i_1 } | \leq 3 C_1   \log n.
\eqe  
By condition~\ref{item-sg-map-set} in the definition of $E^n$, the set
\eqb \label{eqn-use-bm-bad-cell'}
 \left\{ j \in [i_1  , i_2 ]_{  \BB Z} : \inf_{t\in [j- 1  ,j]} (L_t - L_{i_1}) \leq   3 C_1   \log n \right\}   
\eqe 
has cardinality at most $C_1^3 (\log n)^3$. 
By~\eqref{eqn-bm-inf-adjacency}, any two consecutive elements of the set~\eqref{eqn-use-bm-bad-cell'} are connected by an edge in $\mcl G $. Hence this set is connected in $\mcl G $. Since the set~\eqref{eqn-use-bm-bad-cell'} contains $  i_1$ and $i_2$, the vertices $i_1$ and $i_2$ can be connected by a path $P_{i_1,i_2}^{\mcl G}$ contained in this set which has length at most $C_1^3 (\log n)^3$. 

Using condition~\ref{item-sg-map-count} in the definition of $E^n$, we see that each $j\in [-n , n]_{\BB Z}$ is contained in at most $C_2 (\log n)^6$ of the sets~\eqref{eqn-use-bm-bad-cell'} for $i_1  < i_2$ such that $i_1\sim i_2$ in $\mcl H$, so each such $j$ is hit by at most $C_2 (\log n)^6$ of the paths $P_{i_1,i_2}^{\mcl G}$. 
This gives the statement of the lemma for adjacent vertices in $\mcl H$. 

We next prove the reverse relationship. Assume $i_1 , i_2 \in [-n , n]_{\BB Z}$ with $i_1 < i_2$ and $i_1 \sim i_2$ in $\mcl G$. We will construct a path $ P_{i_1,i_2}^{\mcl H}$ in $\mcl H_n$ from $i_1$ to $i_2$ with length at most $C_1^3 (\log n)^3$. The argument is similar to the one for adjacent vertices of $\mcl H$ given above.
 
By~\eqref{eqn-bm-inf-adjacency}, the condition that $i_1\sim i_2$ in $\mcl G$ implies that either
\eqb \label{eqn-use-bm-inf-adjacency}
\left( \inf_{t\in [  i_1-1  ,  i_1]} L_t \right) \vee    \left( \inf_{t\in [ i_2-1  ,  i_2]} L_t \right) \leq   \inf_{t\in [ i_1 ,  i_2-1 ]} L_t  
\eqe 
or the same holds with $R$ in place of $L$. Assume without loss of generality that we are in the former setting. 
By~\eqref{eqn-use-bm-inf-adjacency} and condition~\ref{item-sg-map-gaussian} in the definition of $E^n$, 
\eqb \label{eqn-sg-to-map-connected0}
 \inf_{t\in [i_1 , i_2]} (L_t -L_{i_1}) \geq -2 \log n \quad\op{and} \quad    |L_{i_1} - L_{i_2}| \leq   2 \log n .  
\eqe 
By this and condition~\ref{item-sg-map-kmt} in the definition of $E^n$, 
\eqb \label{eqn-sg-to-map-connected}
\inf_{t\in [i_1  , i_2 ]} (\mcl L_t - \mcl L_{i_1 } ) \geq - 4C_1   \log n \quad \op{and} \quad |\mcl L_{i_2}  - \mcl L_{i_1}| \leq 4C_1  \log n.
\eqe 
Moreover, by condition~\ref{item-sg-map-gaussian} in the definition of $E^n$ we have $|L_j - L_{j-1}| \leq \log n$ for each $j \in [-n, n]_{\BB Z}$
so by condition~\ref{item-sg-map-kmt}  in the definition of $E^n$, 
\eqb \label{eqn-sg-to-map-contain}
 \left\{ j \in [i_1,i_2]_{\BB Z} : \mcl L_j \wedge \mcl L_{j-1} - \mcl L_{i_1} \leq 4 C_1   \log n \right\} 
 \subset \left\{ j \in [i_1 ,i_2]_{\BB Z} : L_j - L_{i_1 } \leq 7 C_1  \log n \right\} .
\eqe 
By~\eqref{eqn-sg-to-map-connected0} and condition~\ref{item-sg-map-set} in the definition of $E^n$, the set on the right side of~\eqref{eqn-sg-to-map-contain} has cardinality at most $C_1^3 (\log n)^3$. 
By~\eqref{eqn-sg-to-map-connected} the set on the left side in~\eqref{eqn-sg-to-map-contain} contains $i_1$ and $i_2$. 
If $j_1$ and $j_2$ are consecutive elements of the left set in~\eqref{eqn-sg-to-map-contain} then either $j_2 = j_1+1$ or 
\eqbn
\mcl L_{j_1-1}  \leq 4 C_1   \log n  +  \mcl L_{i_1}  ,\quad
\mcl L_{j_2}  \leq 4 C_1   \log n  +  \mcl L_{i_1} ,\quad \op{and} \quad
\min_{j \in [j_1 , j_2-1]_{\BB Z}} \mcl L_j > 4 C_1   \log n  +  \mcl L_{i_1} ,
\eqen
so by~\eqref{eqn-walk-adjacency} $j_1$ and $j_2$ are connected by an edge in $\mcl H$. 
Hence $i_1$ and $i_2$ can be connected by a path $P_{i_1,i_2}^{\mcl H}$ in $\mcl H$ with length at most $C_1^3 (\log n)^3$ which is contained in the set on the left side of~\eqref{eqn-sg-to-map-contain}. 

Using condition~\ref{item-sg-map-count} in the definition of $E^n$, we see that each $j\in [-n , n]_{\BB Z}$ is contained in at most $C_2 (\log n)^6$ of the sets on the right side of~\eqref{eqn-sg-to-map-contain} for $i_1  < i_2$ such that~\eqref{eqn-sg-to-map-connected0} holds, so each such $j$ is contained in at most $C_2 (\log n)^6$ of the paths $P_{i_1,i_2}^{\mcl H}$. 

Consequently, the statement of the lemma holds with $C = C_2 \vee C_1^3$. 
\end{proof}

\begin{proof}[Proof of Theorem~\ref{thm-sg-map-dist}]
The first statement of the lemma (concerning the existence of paths satisfying the desired properties) is immediate from Lemmas~\ref{lem-sg-map-event} and~\ref{lem-sg-map-path}.
To deduce~\eqref{eqn-sg-map-dist} from this statement, suppose $i_1,i_2 \in [-n , n]_{\BB Z}$ and let $\wh P$ be a $\mcl H_{n}$-geodesic from $i_1$ to $i_2$. 
By concatenating paths of length at most $C (\log n)^3$ in $\mcl G_{n}$ between the vertices of $\mcl G_{n}$  corresponding to the vertices traversed by $\wh P$, we obtain a path of length at most $C (\log n)^3 |\wh P| $ in $\mcl G_{n}$ from $i_1$ to $i_2$, which gives the lower bound in~\eqref{eqn-sg-map-dist}. We similarly obtain the upper bound in~\eqref{eqn-sg-map-dist}. 
\end{proof}

\begin{remark} \label{remark-other-adjacency}
The arguments in this subsection still work almost verbatim if we slightly modify the adjacency condition~\eqref{eqn-walk-adjacency}, e.g., by inserting $\pm 1$ in various places. The reason for using~\eqref{eqn-walk-adjacency} is that this particular adjacency condition is closely connected to the mating-of-trees bijections for spanning tree-decorated maps and for site percolation on the UIPT (see Sections~\ref{sec-kappa8} and~\ref{sec:perc}). 
In the case of bipolar-oriented and Schnyder wood-decorated maps (Section~\ref{sec:bipolar}), we will apply Theorem~\ref{thm-sg-map-dist} in the case when $\mcl H$ is defined with~\eqref{eqn-walk-adjacency} replaced by \eqref{eq:walk-adjacency2} below.
\end{remark}

\section{Combinatorial arguments}
\label{sec-bijection}

In this section we review the bijective encodings of each of the random planar maps $M$ listed in Section~\ref{sec-main-results} by means of a certain two-dimensional random walk. 
We then compare $M$ to the random planar map $\mcl H$ constructed in Section~\ref{sec-dist-comparison} from this same two-dimensional random walk; and deduce the strong coupling result Theorem~\ref{thm-map-count} from this and Theorem~\ref{thm-sg-map-dist}. Our other main results will be easy consequences of Theorem~\ref{thm-map-count} and the results of~\cite{ghs-dist-exponent}.

Since the comparison between $M$ and $\mcl H$ relies on the fine geometric properties of the bijection, each of the cases needs to be treated separately. 
The reader may wish to read only one of the subsections of this section to get a general idea of the sort of arguments involved.  

We start in Section~\ref{sec-kappa8} by treating the simplest case---that of the infinite spanning tree-decorated map (case~\ref{item-kappa8}), which is encoded by a simple random walk on $\BB Z^2$ via the Mullin bijection. 
We review this encoding, show how the map $\mcl H$ from Section~\ref{sec-dist-comparison} arises from the bijection (Proposition~\ref{prop-kappa-8-tri}) then prove our main results in this case. In Section~\ref{sec:perc}, we treat the case of site percolation on the UIPT (case~\ref{item-kappa6}) by relating the bijection from~\cite{bernardi-dfs-bijection,bhs-site-perc} to the Mullin bijection. 

In Section~\ref{sec:bipolar}, we treat the case of the uniform infinite bipolar-oriented map (case~\ref{item-kappa12}) using the bijection of~\cite{kmsw-bipolar}. Along the way, we check carefully that this infinite map exists and is the local limit of the finite bipolar-oriented maps considered in~\cite{kmsw-bipolar} (unlike in the other cases, the existence of this local limit has not previously been established rigorously). We then treat the case of more general bipolar-oriented maps (case~\ref{item-kappa>8}) and deduce the case of Schnyder-wood decorated maps (case~\ref{item-kappa16}) 
from the result for bipolar-oriented maps plus a bijection relating Schnyder wood with a special type of bipolar-oriented maps~\cite{Felner}.
Section~\ref{sec:bipolar} can be read independently of Sections~\ref{sec-kappa8} and~\ref{sec:perc}. 

Several places in this section we will use the following notion of \emph{submap} of a planar map.
\begin{defn}
	A planar map $M'$ is a \emph{submap} of a planar map $M$ if, with $F_\infty\subset\cF(M')$ denoting the set of faces of $M'$ containing $\infty$, we have
	\eqbn
	\cV(M')\subset \cV(M),\qquad
	\cE(M')\subset \cE(M),\qquad
	\cF(M')\setminus F_\infty \subset \cF(M).
	\eqen
	The \emph{boundary} of $M'$ is the set of $v\in\cV(M)$ and $e\in\cE(M')$ which is incident to a face in $F_\infty$.
\end{defn}

\subsection{Spanning-tree decorated planar maps}
\label{sec-kappa8}

\subsubsection{Mullin bijection}
\label{sec-kappa8-bijection}

The first mating-of-trees bijection to be discovered is the Mullin bijection~\cite{mullin-maps}, which encodes a spanning-tree decorated map by a nearest-neighbor walk in $\BB Z^2$. This bijection is explained in more detail in~\cite{bernardi-maps}, and is also equivalent to the $p=0$ case of Sheffield's hamburger-cheeseburger bijection~\cite{shef-burger}. 
Here we will review the infinite-volume version of the Mullin bijection, which is also explained in~\cite{chen-fk,gms-burger-cone,blr-exponents} in the more general setting of the hamburger-cheeseburger bijection with arbitrary $p \in [0,1]$. See Figure~\ref{fig-mullin-bijection} for an illustration.

\begin{figure}[ht!]
\begin{center}
\includegraphics[scale=.8]{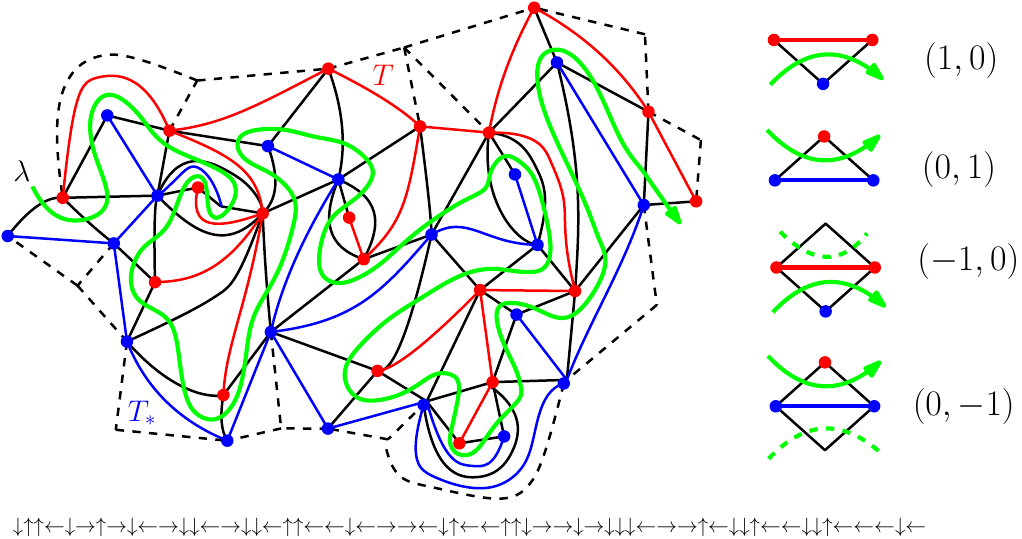} 
\caption[Infinite-volume Mullin bijection]{\label{fig-mullin-bijection} \textbf{Left:} The subset of the triangulation $\mcl Q \cup T\cup T_*$ consisting of the triangles in $\frk t([a,b]_{\BB Z})$, with edges of $\mcl Q$ (resp.\ $T$, $T_*$) shown in black (resp.\ red, blue). The green curve indicates the order in which the triangles are hit by $\frk t$. The edges of triangles of $\mcl T$ which do not belong to $\frk t([a,b]_{\BB Z})$, but which are adjacent to triangles in $\frk t([a,b]_{\BB Z})$, are shown as dotted lines. If $a=-n$ and $b=n$, then the vertices of $\Mn$ are the red vertices and the edges of $\Mn$ are the edges of $M$ which join these vertices (this includes the red edges in the figure plus some additional edges of $M$ which cross blue edges in the figure). \textbf{Right:} The correspondence between $M$ and $\mcl Z$. If $i\in \BB Z$ and $\frk t(i)$ has an edge in the red tree $ T$, then $\mcl Z_i - \mcl Z_{i-1}$ is equal to $(1,0)$ or $(-1,0)$ according to whether the other triangle which shares this same edge of $T$ is hit by $\frk t$ before or after time $i$. 
The other coordinate of $\mcl Z$ is defined symmetrically. 
\textbf{Bottom:} The steps of $\mcl Z$ corresponding to this segment of $\frk t$ if we assume that each of the exterior triangles (i.e., those with dotted edges) is hit by $\frk t$ before each of the non-dotted triangles. 
}
\end{center}
\end{figure}

Let $(M,e_0 , T)$ be the \emph{uniform infinite spanning-tree decorated map}, which is the Benjamini-Schramm~\cite{benjamini-schramm-topology} local limit of uniformly random triples consisting of a planar map with an oriented root edge and a distinguished spanning tree. 
This infinite-volume limit is shown to exist in~\cite{shef-burger,chen-fk}.  

Let $M_*$ be the dual map of $M$ and let $T_*$ be the dual spanning tree of $T$, so that $\mcl E(T_*)$ is the set of edges of $M_*$ which do not cross edges of $T$. 
Also let $\mcl Q = \mcl Q(M)$ be the \emph{radial quadrangulation}, whose vertex set is $\mcl V(M)\cup \mcl V(M_*)$, with two vertices of $\mcl Q$ connected by an edge if and only if they correspond to a face of $M$, i.e., a vertex of $M_*$, and a vertex of $M$ incident to that face (the number of edges is equal to the multiplicity of the vertex as a prime end on the boundary of the face). We declare that the root edge of $\mcl Q$ is the edge $\BB e_0 \in \mcl E(\mcl Q)$ whose primal endpoint coincides with the initial endpoint of $e_0$ and which is the first edge in clockwise order after $e_0$ with this property.

Each face of $\mcl Q$ is crossed diagonally by an edge of $M$ and an edge of $M_*$, exactly one of which belongs to $T\cup T_*$. Hence the graph $\mcl Q\cup T\cup T_*$ is a triangulation with the same vertex set as $\mcl Q$. Let $\mcl T$ be the planar dual of this triangulation, so that $\mcl T$ is the adjacency graph of triangles of $\mcl Q\cup T\cup T_*$, with two triangles considered adjacent if they share an edge. 
We declare that the root edge of $\mcl T$ is the edge of $\mcl T$ which crosses $\BB e_0$, oriented so that the primal (resp.\ dual) endpoint of $\BB e_0$ is to its left (resp.\ right). 

There is a unique path $\frk t : \BB Z \rta \mcl V(\mcl T)$ which hits each triangle (vertex) of $\mcl T$ exactly once; hits the initial and terminal points of the root edge of $\mcl T$ at times 0 and 1, respectively; and does not cross $T$ or $T_*$ in the sense that $\frk t(i)$ and $\frk t(i-1)$ share an edge belonging to $\mcl Q$ for each $i\in \BB Z$.
 
We define a walk $\mcl Z  = (\mcl L , \mcl R) : \BB Z \rta \BB R^2$ with increments in $\{(0,1) , (1,0) , (-1,0) , (0,-1)\}$ as follows. Define $\mcl Z_0 := 0$. Suppose $i\in\BB Z$ and the triangle $\frk t(i) \in \mcl V(\mcl T)$ has an edge in $T$ on its boundary (the other two boundary edges are in $\mcl Q$). 
There is one other triangle of $\mcl T$ with this same edge of $T$ on its boundary. If this triangle is hit by $\frk t$ before (resp.\ after) time $i$, we set $\mcl Z_i - \mcl Z_{i-1} = (1,0)$ (resp.\ $(-1,0)$). 
Symmetrically, if $\frk t(i)$ has a boundary edge in $T_*$ and the other triangle of $\mcl T$ sharing this boundary edge is hit before (resp.\ after) time $i$, we set $\mcl Z_i - \mcl Z_{i-1} = (0,1)$ (resp.\ $(0,-1)$). Then $\mcl Z$ has the law of a standard nearest-neighbor simple random walk in $\BB Z^2$. 

We next state and prove a proposition which will allow us to transfer from the results of Section~\ref{sec-dist-comparison} to results for the infinite spanning-tree weighted map. For the statement, we will use the following definition.

\begin{defn} \label{def-iso-mod-multiplicity}
Let $G$ and $G'$ be graphs and let $f : \mcl V(G) \rta\mcl V(G')$. We say that $f$ is a \emph{graph isomorphism modulo multiplicity} if $f$ is a bijection and two distinct vertices $v,w \in \mcl V(G)$ are connected by at least one edge in $G$ if and only if $f(v)$ and $f(w)$ are connected by at least one edge in $G'$ (i.e., adjacency, but not necessarily the number of edges between two vertices, is preserved). 
\end{defn}

Since we are primarily interested in graph distances, there is no difference for our purposes between a true graph isomorphism and a graph isomorphism modulo multiplicity. However, some of the graphs we consider naturally have multiple edges whereas the graphs $\mcl H$ from Section~\ref{sec-dist-comparison} by definition do not, so we often get isomorphisms modulo multiplicity instead of true isomorphisms. 

\begin{prop} \label{prop-kappa-8-tri}
The function $i\mapsto \frk t(i)$ is an isomorphism modulo multiplicity from the graph $\mcl H$ of Section~\ref{sec-dist-comparison} with the walk $\mcl Z$ as above and adjacency defined as in~\eqref{eqn-walk-adjacency}; to the graph of triangles $\mcl T$. 
\end{prop}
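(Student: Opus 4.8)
The plan is to show that the two adjacency relations coincide by interpreting the walk increments $\mcl Z_i - \mcl Z_{i-1}$ in terms of the planar-dual structure of $\mcl T$. First I would recall the geometric meaning of the coordinates: the red coordinate $\mcl L$ increases by $1$ each time $\frk t$ traverses a $T$-edge ``for the first time'' (i.e., the other triangle sharing that $T$-edge is hit later) and decreases by $1$ when it traverses a $T$-edge ``for the second time'' (the matching triangle was hit earlier); the blue coordinate $\mcl R$ behaves the same way with respect to $T_*$. This is exactly the classical identification of the two coordinates of the Mullin walk with the contour functions of $T$ and $T_*$: $\mcl L$ is (up to sign conventions) the height process of the tree $T$ read along $\frk t$, and $\mcl R$ is the height process of $T_*$. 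The key consequence I would extract is the following \emph{matching/excursion} statement: two triangles $\frk t(i_1)$ and $\frk t(i_2)$ with $i_1<i_2$ are joined by a $T$-edge of $\mcl T$ (that is, they share a primal edge of $T$) if and only if the pair $(i_1,i_2)$ is a ``matched pair'' for $\mcl L$, which in the nearest-neighbor setting is equivalent to the running-infimum condition $\mcl L_{i_1-1}\vee \mcl L_{i_2} < \min_{j\in[i_1,i_2-1]_{\BB Z}} \mcl L_j$; and symmetrically for $T_*$-edges and $\mcl R$. (The ``or $i_2-i_1=1$'' alternative in \eqref{eqn-walk-adjacency} corresponds to the two triangles sharing a $\mcl Q$-edge, i.e., being consecutive along $\frk t$.)

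Granting this, the proposition is almost immediate. Every edge of $\mcl T$ is, by construction of $\mcl Q\cup T\cup T_*$, either a $\mcl Q$-edge (two triangles consecutive in the $\frk t$ order, giving $|i_1-i_2|=1$), a $T$-edge (giving the $\mcl L$ running-infimum condition), or a $T_*$-edge (giving the $\mcl R$ running-infimum condition). Conversely each of the three disjuncts in \eqref{eqn-walk-adjacency} produces an edge of $\mcl T$ of the corresponding type. Hence $\frk t(i_1)\sim \frk t(i_2)$ in $\mcl T$ for some edge of $\mcl T$ if and only if $i_1\sim i_2$ in $\mcl H$; since $\frk t$ is by definition a bijection from $\BB Z$ onto $\mcl V(\mcl T)$, this is precisely the assertion that $i\mapsto\frk t(i)$ is a graph isomorphism modulo multiplicity (Definition~\ref{def-iso-mod-multiplicity}). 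The ``modulo multiplicity'' caveat is exactly what is needed: a given pair of triangles may share both a $T$-edge \emph{and} a $T_*$-edge (this is the geometric reason $\mcl G$ and $\mcl T$ can have double edges), whereas $\mcl H$ by definition has at most one edge between two vertices, so only adjacency and not edge-multiplicity is preserved.

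The one point that requires genuine care---and which I expect to be the main obstacle---is the verification of the matching/excursion statement relating $T$-edges of $\mcl T$ to running-infimum pairs of $\mcl L$, and in particular getting the boundary indices right (why $\mcl L_{i_1-1}$ and $\mcl L_{i_2}$ rather than $\mcl L_{i_1}$ and $\mcl L_{i_2-1}$, etc.). Here I would argue as follows: fix a $T$-edge $e$ and the two triangles $\frk t(i_1), \frk t(i_2)$ of $\mcl T$ incident to $e$, with $i_1<i_2$. The curve $\frk t$, restricted to $[i_1,i_2]_{\BB Z}$, traces out exactly the set of triangles lying on one side of $e$ (the side not containing $\infty$ relative to the finite sub-tree cut off by $e$), because $\frk t$ cannot cross $T$ or $T_*$, so once it ``enters through $e$'' at step $i_1$ it must exhaust that region before exiting through $e$ at step $i_2$; equivalently, $[i_1,i_2-1]_{\BB Z}$ is the excursion of the $T$-contour above the level just below $e$. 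This forces $\mcl L_j > \mcl L_{i_1-1}$ and $\mcl L_j > \mcl L_{i_2}$ for all $j\in[i_1,i_2-1]_{\BB Z}$, and conversely any such excursion of $\mcl L$ arises from a unique $T$-edge. I would write this out carefully once and note the $T_*$ case is identical by the red/blue symmetry of the construction; the $\mcl Q$-edge case ($i_2=i_1+1$) is immediate from the definition of $\frk t$. This is essentially a re-derivation of the standard correspondence between the Mullin walk and the pair of contour functions (see~\cite{mullin-maps,bernardi-maps,shef-burger}), specialized to the form of the adjacency condition \eqref{eqn-walk-adjacency}.
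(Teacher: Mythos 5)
Your proposal is correct and follows essentially the same route as the paper's proof: you identify $\mcl L$ and $\mcl R$ with the contour functions of $T$ and $T_*$ (up to the constant steps coming from the other coordinate), observe that every edge of $\mcl T$ is either a $\mcl Q$-edge, a $T$-edge, or a $T_*$-edge corresponding respectively to the three disjuncts of \eqref{eqn-walk-adjacency} via the excursion/matching characterization, and correctly note that only adjacency, not edge multiplicity, is preserved. One small remark: the increment convention you state ($\mcl L$ increases at the \emph{first} visit to a $T$-edge and decreases at the second) is the one consistent with the sewing procedure of Remark~\ref{remark-mullin-inverse}, with the contour-function identification invoked in the paper's proof, and with \eqref{eqn-walk-adjacency} itself, even though the prose definition of $\mcl Z$ in Section~\ref{sec-kappa8-bijection} literally assigns the opposite signs (under which the matched pairs would satisfy the reflected condition), so your reading is the intended one.
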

\begin{proof}
For $i\in\BB Z$, the triangle $\frk t(i)$ has two edges in $\mcl Q$ which are shared by $\frk t(i-1)$ and $\frk t(i+1)$, respectively, and one edge in either $T$ or $T_*$. 
Let $e_i$ be this third edge.
For each edge $e$ of $T$, there are precisely two values of $i\in\BB Z$ for which $e_i = e$, and the corresponding triangles $\frk t(i)$ are adjacent in $\mcl T$. 
The ordering of the edges of the tree $T$ obtained from $i\mapsto e_i$ by ignoring the values of $i$ for which $e_i \in T_*$ is the same as the contour (depth-first) ordering of $T$: indeed, this follows immediately from the definition of the path of triangles $i \mapsto \frk t(i)$.
The same holds with $T_*$ in place of $T$. By combining this with the above definitions of $\mcl L$ and $\mcl R$, we find that $\mcl L$ and $\mcl R$ coincide with the contour functions of the trees $T$ and $T_*$~\cite[Section 1]{legall-tree-survey} except that they have extra constant steps (which do not affect the trees). From this, it follows that for $i_1 < i_2$, the condition that $e_{i_1} = e_{i_2}$ is equivalent to the adjacency condition~\eqref{eqn-walk-adjacency} for $\mcl H$. 
Since the three neighbors of $\frk t(i)$ in $\mcl T$ are $\frk t(i-1)$, $\frk t(i+1)$, and the other triangle with $e_i$ on its boundary, we obtain the statement of the lemma.
\end{proof}

\subsubsection{The inverse construction and the sewing procedure}
\label{sec-mullin-inverse}  

In the above discussion, we only explained how to produce the walk $\mcl Z$ from the decorated map $(M,e_0,T)$. We can also go in the reverse direction and produce $(M,e_0,T)$ from $\mcl Z$ by first constructing $\mcl T$ from $\mcl Z$ via~\eqref{eqn-walk-adjacency} (see Proposition~\ref{prop-kappa-8-tri}) then constructing $(M,e_0,T)$ from $\mcl T$. Note that this uses the fact that we can tell which edges of the dual map $\mcl Q\cup T\cup T_*$ of $\mcl T$ belong to each of $\mcl Q$, $T$, and $T_*$ since edges of $\mcl Q$ correspond to edges of $\mcl T$ between consecutive vertices. We emphasize that the procedure for constructing $(M,e_0,T)$ works for any bi-infinite walk $\mcl Z = (\mcl L , \mcl R)$ in $\BB Z^2$ with nearest neighbor steps such that 
\eqb
\liminf_{n\rta -\infty} \mcl L_n = \liminf_{n\rta  \infty} \mcl L_n = \liminf_{n\rta -\infty} \mcl R_n = \liminf_{n\rta  \infty} \mcl R_n = -\infty ,
\eqe
not just a.s. We refer to~\cite[Section 4.1]{chen-fk} for more details on the inverse bijection. 

 The construction of $(M,e_0,T)$ from $\mcl Z$ is uniquely characterized by an inductive relationship between certain submaps of the triangulation $\mcl Q\cup T\cup T_*$, called the \emph{sewing procedure}, which we now describe (see Figure~\ref{fig-mullin-sewing} for an illustration).  
 For $i \in\BB Z$, let $\wh M_{-\infty,i}$ be the submap of $\mcl Q\cup T\cup T_*$ consisting of the vertices and edges on the boundaries of the triangles in $\frk t((-\infty,i]_{\BB Z})$. 
For $i\in\BB Z$, let $e^h_i$ (the ``head") be the edge shared by the triangles $\frk t(i-1)$ and $\frk t(i)$. 
Then $\wh M_{-\infty,i}$ is an infinite triangulation with boundary, and the external face has infinitely many edges lying to the left and right of $e^h_i$. 
We can recover $(\wh M_{-\infty,i+1} , e^h_{i+1})$ from $(\wh M_{-\infty,i } , e^h_{i })$ by attaching a certain triangle (corresponding to $\frk t(i+1)$) with one edge equal to $e^h_i$ and one edge equal to $e_{i+1}^h$. If $\mcl Z_{i+1} - \mcl Z_i = (1,0)$ (resp.\ $(0,1)$), this triangle has two edges in the external face of $\wh M_{-\infty,i}$ and $e^h_{i+1}$ is the right (resp.\ left) one of these two edges if we face outward toward the external face. If $\mcl Z_{i+1} - \mcl Z_i = (-1,0)$ (resp.\ $(0,-1)$), then one of the edges of the new triangle coincides with the edge of $\wh M_{-\infty,i}$ immediately to the left (resp.\ right) of $e^h_i$, and the remaining edge lies in the external face of $\wh M_{-\infty,i}$ and equals $e^h_{i+1}$. The edges of the tree $T$ (resp.\ $T_*$) are precisely the edges of $\mcl Q\cup T\cup T_*$ which are not crossed by $\frk t$ and which lie to the left (resp.\ right) of the path $\frk t$. It follows from the inverse bijection described above~\cite{shef-burger,chen-fk} that, given a walk $\cZ$ with i.i.d.\ increments there a.s.\ exists a unique tuple $(M,e_0,T)$ which can be constructed via the sewing procedure just described.

\begin{figure}[ht!]
\begin{center}
\includegraphics[scale=.8]{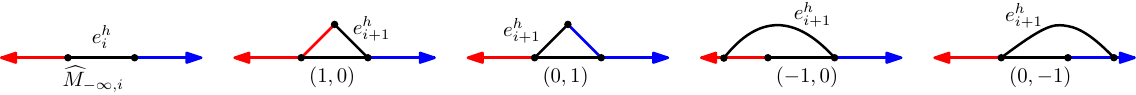} 
\caption[Sewing procedure for the Mullin bijection]{\label{fig-mullin-sewing} Illustration of the inverse Mullin bijection as described in Section~\ref{sec-mullin-inverse}. The leftmost panel shows the outer boundary of the map $\wh M_{-\infty,i}$, which is a subgraph of the triangulation $\mcl Q\cup T\cup T_*$ (with the external face lying above the line). The next four panels show how to construct $\wh M_{-\infty,i+1}$ from $\wh M_{-\infty,i}$ depending on the walk increment $\mcl Z_{i+1} -\mcl Z_i$. 
}
\end{center}
\end{figure}

\subsubsection{Proofs of main theorems in the case of spanning-tree decorated maps}
\label{sec-kappa8-proof}

In this subsection we prove our main results in case~\ref{item-kappa8}, when $\gamma = \sqrt 2$ and $(M,e_0)$ is an infinite-volume spanning-tree weighted planar map. Throughout, we define the quadrangulation $\mcl Q$, the graph of triangles $\mcl T$, and the bijective path $\frk t : \BB Z\rta \mcl V(\mcl T)$ as in Section~\ref{sec-kappa8-bijection}. 
We also let $Z = (L,R)$ be the Brownian motion as in~\eqref{eqn-bm-cov} for $\gamma = \sqrt 2$, so that $L$ and $R$ are independent; and we let $\mcl G $ be the associated mated-CRT map. 

Our first task is to define the maps $\Mn$ and the functions $\phi_n$ and $\psi_n$ from Section~\ref{sec-main-results} in this setting. 
In keeping with Remark~\ref{remark-subgraph}, we will first define functions $\phi : \mcl V(M) \rta \BB Z$ and $\psi : \BB Z\rta\mcl V(M)$. 
For $v\in\mcl V(M)$, we choose $\phi(v) \in \BB Z$ to be the integer with the smallest absolute value for which the triangle $\frk t(\phi(v)) \in \mcl V(\mcl T)$ has the vertex $v$ on its boundary (in the case of a tie, we choose $i$ to have a positive sign). 
For $i\in\BB Z$, we define $\psi(i) \in \mcl V(M)$ to be one of the vertices of $M$ on the boundary of the triangle $\frk t(i)$, chosen via some arbitrary deterministic convention in the case when there are two such vertices. We require $\psi(0) $ to be the initial endpoint of $e_0$ (i.e., the primal endpoint of $\BB e_0$), in order to ensure that the last condition in~\eqref{eqn-peano-functions} is satisfied.

For $n\in\BB N$, we define $\Mn$ the vertex set of $\Mn$ to be $\phi^{-1}([-n,n]_{\BB Z})$.  Equivalently, $\mcl V(\Mn)$ consists of all of the vertices of $M$ on the boundaries of the triangles in $\frk t([-n,n]_{\BB Z})$. The edge set of $\Mn$ is defined to be the set of edges $e$ of $M$ such that either $e$ lies on the boundary of a triangle in $\frk t([-n,n]_{\BB Z})$; or $e$ is contained in the union of two triangles of $\frk t([-n,n]_{\BB Z})$ (if the endpoints of $e$ are in $\mcl V(M_n)$, this latter condition is equivalent to requiring that the dual edge of $e$ lies on the boundary of a triangle in $\frk t([-n,n]_{\BB Z})$).
We define $\bdy \Mn$ to be the subgraph consisting of those vertices and edges of $\Mn$ which lie on the boundary of the unbounded connected component of $\Mn$ (viewed as a subgraph of $M$) and we define $\iota_n : \Mn \rta M$ to be the inclusion map. Set
\eqb
\phi_n := \phi|_{\mcl V(\Mn)} \quad \op{and} \quad \psi_n := \psi|_{[-n,n]_{\BB Z}  } .
\eqe 
Note that $\phi_n$ maps $\mcl V(\Mn)$ into $[-n,n]_{\BB Z}$ by definition and $\psi_n$ maps $[-n,n]_{\BB Z}$ into $\mcl V(\Mn)$ since the above definitions of $\phi$ and $\psi$ show that $|\phi(\psi(i))| \leq |i|$ for each $i\in\BB Z$. 
 
Let us now prove our main coupling theorems with this choice of $\phi_n$ and $\psi_n$. By Proposition~\ref{prop-kappa-8-tri}, if we let $\mcl Z$ be the simple random walk on $\BB Z^2$ which encodes $(M,e_0,T)$ via Mullin's bijection, then the map $\mcl H$ of Theorem~\ref{thm-sg-map-dist} with this choice of $\mcl Z$ is isomorphic to $\mcl T$ via $i\mapsto \frk t(i)$. 
The basic idea of the proof is to deduce Theorem~\ref{thm-map-count} from the analogous statements for $\mcl T$ from Theorem~\ref{thm-sg-map-dist} and straightforward geometric arguments. We first need the following degree bound, which is the reason why we have $(\log n)^4$ in Theorem~\ref{thm-map-count} as compared to the $(\log n)^3$ in Theorem~\ref{thm-sg-map-dist}.

\begin{lem} \label{lem-8-deg}
For $A>0$, there exists $C  = C (A) > 0$ such that for each $n\in\BB N$, it holds with probability at least $1-O_n(n^{-A})$ that each vertex of $\Mn$ lies on the boundary of at most $C \log n$ triangles of $\mcl T$.  
\end{lem}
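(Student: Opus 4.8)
The plan is to reduce the statement to an exponential tail bound for the degree of a single vertex of $M$, and then apply a union bound using the translation invariance of the encoding. For the reduction: since $\mcl Q\cup T\cup T_*$ is a triangulation and $\mcl T$ is its planar dual, a vertex $v\in\mcl V(M)$ lies on the boundary of a triangle of $\mcl T$ precisely when $v$ is incident to the corresponding face of $\mcl Q\cup T\cup T_*$, so the number of triangles of $\mcl T$ with $v$ on their boundary equals $\op{deg}(v;\mcl Q\cup T\cup T_*)$. For a primal vertex $v$ the edges of $\mcl Q\cup T\cup T_*$ at $v$ are the $\op{deg}(v;M)$ edges of $\mcl Q$ at $v$ (one $\mcl Q$-edge per corner of $M$ at $v$) together with the edges of $T$ at $v$ (no edge of $T_*$ is incident to a primal vertex), so this degree is $\op{deg}(v;M)+\op{deg}(v;T)\le 2\op{deg}(v;M)$. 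Hence it suffices to find $C_*=C_*(A)$ with $\max_{v\in\mcl V(M_n)}\op{deg}(v;M)\le C_*\log n$ with probability $1-O_n(n^{-A})$, and then take $C=2C_*$.

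For the union bound I would use stationarity. By construction $\mcl V(M_n)$ is exactly the set of primal vertices appearing on the boundaries of the triangles $\frk t(i)$ for $i\in[-n,n]_{\BB Z}$, and each such triangle has at most two primal vertices on its boundary, so it is enough to bound, for each fixed $i$, the probability that some primal vertex of $\frk t(i)$ has $M$-degree larger than $C_*\log n$, and then sum over $i$. Since $\frk t$ is a deterministic functional of the rooted decorated map, Remark~\ref{remark-edge-map} shows that the primal vertices of $\frk t(i)$ together with their $M$-degrees are distributed as those of $\frk t(0)$ (the statistical mechanics model is irrelevant here). The primal vertices of $\frk t(0)$ are among the root vertex $\BB v$ and the terminal endpoints of two specific edges of $M$ at $\BB v$; by invariance of the law of $(M,e_0)$ under reversal of the root edge and under cyclic rotation of the root edge within the star at $\BB v$ (both bijections of oriented edges in the finite models whose local limit is $M$), each of these terminal endpoints is the root vertex of a re-rooted copy of $(M,e_0)$. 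So every primal vertex of $\frk t(0)$ has $M$-degree distributed as $\op{deg}(\BB v;M)$.

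The remaining ingredient is an exponential bound $\BB P[\op{deg}(\BB v;M)>k]\le C_0 e^{-c_0 k}$ for the root-vertex degree of the uniform infinite spanning-tree–decorated map. This follows from the bi-infinite Mullin encoding: $\op{deg}(\BB v;M)$ is controlled by the number of triangles attached adjacent to $\BB v$ while $\BB v$ remains on the boundary of the maps $\wh M_{-\infty,i}$ in the sewing procedure of Remark~\ref{remark-mullin-inverse}, and the position of $\BB v$ along that boundary evolves as a functional of the i.i.d.\ increments of $\mcl Z$ whose relevant excursion has a geometric tail (see also \cite{shef-burger,chen-fk}). Combining this with the previous paragraph and a union bound over $i\in[-n,n]_{\BB Z}$ gives $\BB P\big[\max_{v\in\mcl V(M_n)}\op{deg}(v;M)>C_*\log n\big]\le (2n+1)\cdot 2C_0 e^{-c_0 C_*\log n}=O_n(n^{1-c_0 C_*})$, which is $O_n(n^{-A})$ once $C_*\ge (A+1)/c_0$.

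The main difficulty is the exponential tail of $\op{deg}(\BB v;M)$: a merely polynomial tail would not survive the union bound over the $\Theta(n)$ vertices of $\mcl V(M_n)$, so exponential decay is essential. The only other point requiring care is the explicit identification of the (at most two) primal vertices of $\frk t(0)$ and the verification that each is, in law, the root vertex of $(M,e_0)$, so that the re-rooting invariance of Remark~\ref{remark-edge-map} can be applied.
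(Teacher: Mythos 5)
Your proposal is correct and takes essentially the same route as the paper: an exponential tail bound for the degree of the (at most two) $M$-vertices on the boundary of a fixed triangle $\frk t(i)$ (which the paper gets directly from \cite[Lemma 6]{chen-fk} together with the stationarity of the walk increments, where you insert an explicit re-rooting reduction to $\op{deg}(\BB v;M)$), followed by a union bound over the $O_n(n)$ indices $i\in[-n,n]_{\BB Z}$, and the deterministic factor-2 bound relating the number of triangles of $\mcl T$ incident to $v$ to $\op{deg}(v;M)$. The only cosmetic difference is in that last geometric step, which the paper argues via the quadrilaterals of $\mcl Q$ bisected by edges of $M$ at $v$ while you count the degree of $v$ in the triangulation $\mcl Q\cup T\cup T_*$; both yield the same bound $2\op{deg}(v;M)$.
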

\begin{proof}
It follows from~\cite[Lemma 6]{chen-fk} (see the proof of recurrence in~\cite[Section 4.2]{chen-fk}) and the stationary increments property of the walk $\mcl Z$ that there are universal constants $c_0,c_1>0$ such that for $i \in \BB Z$ and $k \in \BB N$, the probability that the triangle $\frk t(i)$ has a vertex of $M$ with degree (in $M$) at least $k$ on its boundary is at most $c_0 e^{-c_1 k}$. By a union bound over all $i\in [-n , n]_{\BB Z}$, we can find $C  = C(A) > 0$ as in the statement of the lemma such that 
\eqb  \label{eqn-8-deg0}
 \BB P\left[ \max_{v \in \mcl V(\Mn )} \op{deg}\left( v ; M \right) \leq  \frac12 C  \log n  \right] \geq   1 - O_n(n^{-A})  .
\eqe 
If $v\in \mcl V(M)$, then each quadrilateral of $\mcl Q$ with $v$ on its boundary is bisected by a unique edge of $M$ with $v $ as an endpoint. 
Consequently, the number of triangles of $\mcl T$ with $v $ on their boundaries is at most $2 \op{deg}(v  ; M)$.   
Combining this with~\eqref{eqn-8-deg0} concludes the proof.
\end{proof}

\begin{proof}[Proof of Theorems~\ref{thm-map-coupling} and~\ref{thm-map-count} in case~\ref{item-kappa8}]
In light of Lemma~\ref{lem-map-thms}, we only need to prove Theorem~\ref{thm-map-count}. 

We first define a high-probability event on which we will show that the conditions in the theorem statement are satisfied.
Let $C_0  >  0$ be the constant from Theorem~\ref{thm-sg-map-dist} for our given choice of $A$ and for $\mcl Z$ the simple random walk on $\BB Z^2$. Recall from Proposition~\ref{prop-kappa-8-tri} that the corresponding graph $\mcl H$ from that proposition is isomorphic to the graph of triangles $\mcl T$ via $i\mapsto \frk t(i)$. Hence Theorem~\ref{thm-sg-map-dist} shows that with probability at least $1-O_n(n^{-A})$, the following is true (where here, in obvious notation, we define $\mcl T_{n}$ to be the graph $ \frk t(\mcl H_{n})$).
\begin{enumerate}[label=(\Alph{enumi})]
\item For each $i_1,i_2 \in  [-n , n]_{\BB Z}$ with $\frk t(i_1) \sim \frk t(i_2)$ in $\mcl T$, there is a path $\wt P_{i_1,i_2}^{\mcl G}$ from $i_1$ to $i_2$ in $\mcl G _{n}$ with $| \wt P_{i_1,i_2}^{\mcl G}| \leq C_0 (\log n)^3$; and each $j \in [-n , n]_{\BB Z}$ is contained in at most $ C_0 (\log n)^6 $ of the paths $\wt P_{i_1,i_2}^{\mcl G}$. \label{item-8-path-G}
\item For each $i_1,i_2 \in  [-n , n]_{\BB Z}$ with $i_1 \sim i_2$ in $\mcl G$, there is a path $\wt P_{i_1,i_2}^{\mcl T}  $ from $\frk t(i_1)$ to $\frk t(i_2)$ in $\mcl T_{n}$ with $| \wt P_{i_1,i_2}^{\mcl T}| \leq C_0 (\log n)^3$; and each $j  \in [-n , n]_{\BB Z}$ is contained in at most $ C_0 (\log n)^6 $ of the paths $\wt P_{i_1,i_2}^{\mcl T}$. \label{item-8-path-T}
\end{enumerate} 
By Lemma~\ref{lem-8-deg}, there exists $C_1 =C_1(A)  >0$ such that with probability at least $1-O_n(n^{-A})$, 
\begin{enumerate}[label=(\Alph{enumi})]
 \setcounter{enumi}{2}
\item Each vertex of $\Mn$ lies on the boundary of at most $C_1 \log n$ triangles of $\mcl T$.  \label{item-8-deg}
\end{enumerate} 
Henceforth assume that conditions~\ref{item-8-path-G},~\ref{item-8-path-T}, and~\ref{item-8-deg}  above are all satisfied. 
We will check the conditions of the theorem statement in order. 
\medskip

\noindent\textit{Proof of condition~\ref{item-map-count-G}.} Suppose $v_1 , v_2 \in \mcl V(\Mn)$ with $v_1 \sim v_2$. We will construct a path $P_{v_1,v_2}^{\mcl G}$ in $\mcl G_{n}$ between $\phi(v_1)$ and $\phi(v_2)$. 

By the definition of $\mcl E(\Mn)$, either the edge of $\Mn$ joining $v_1$ and $v_2$ lies on the boundary of a triangle of $\mcl T_n$ or this edge is contained in the union of two triangles of $\mcl T_n$. In either case, we can find $i_1,i_2 \in [-n,n]_{\BB Z}$ such that $v_1$ is a vertex of $\frk t(i_1)$, $v_2$ is a vertex of $\frk t(i_2)$, and the triangles $\frk t(i_1)$ and $\frk t(i_2)$ are either identical or they share an edge (i.e., they are adjacent in $\mcl T$). 

By the definition of $\phi$, the triangle $\frk t(\phi(v_1))$ also has $v_1$ as a vertex. 
The set of triangles in $\frk t([-n , n]_{\BB Z})$ with $v_1$ as a vertex is connected in $\mcl T_n$.
Consequently, there exists a path in $\mcl T_{n}$ from $\frk t(i_1)$ to $\frk t(\phi(v_1))$ with length at most the total number of triangles of $\mcl T$ with $v$ on their boundaries. 
Similar considerations hold with $v_2$ in place of $v_1$.
 
By the preceding paragraph and condition~\ref{item-8-deg} above, there is a $d\leq 2 C_1 \log n$ and integers $j_0 , \dots , j_d  \in [-n , n]_{\BB Z}$ such that $j_0 = \phi(v_1)$, $j_d = \phi(v_2)$, and $\frk t(j_k) \sim \frk t(j_{k-1})$ in $\mcl T$ for each $k \in [1,d]_{\BB Z}$. 
For each such $k$, let $\wt P_{j_{k-1} , j_k}^{\mcl G}$ be the path from $j_{k-1}$ to $j_k$ afforded by condition~\ref{item-8-path-G} above and let $P_{v_1,v_2}^{\mcl G}$ be the concatenation of all $d$ of these paths. 
Then $ P_{v_1,v_2}^{\mcl G} $ is a path from $\phi(v_1)$ to $\phi(v_2)$ in $\mcl G_{n}$ with length at most $C_0 d (\log n)^3 \leq 2 C_0 C_1 (\log n)^4$.

Each of the triangles $\frk t(j_k)$ for $k\in [0,d]_{\BB Z}$ has either $v_1$ or $v_2$ on its boundary. 
Therefore, for each $j\in [-n ,n]_{\BB Z}$ the number of pairs $(v_1,v_2) \in [-n,n]_{\BB Z}\times [-n,n]_{\BB Z}$ for which $v_1\sim v_2$ in $M$ and $j$ is one of the corresponding $j_k$'s is at most the sum of the degrees of the (at most two) vertices of $M$ on the boundary of $\frk t(j)$, which is at most $2 C_1 \log n$ by condition~\ref{item-8-deg} above. 
 
Consequently, each of the paths $\wt P_{j,j'}^{\mcl G}$ for $j,j' \in [-n,n]_{\BB Z}$ with $\frk t(j) \sim \frk t(j')$ in $\mcl T_n$ is a sub-path of at most $2 C_0 \log n$ of the paths $P_{v_1,v_2}^{\mcl G}$ for $v_1,v_2 \in \mcl V(M)$. 
Since the total number of the $\wt P_{j,j'}^{\mcl G}$'s which hit each specified vertex of $\mcl G_{n}$ is at most $C_0 (\log n)^6$, the total number of the $P_{v_1,v_2}^{\mcl G}$'s which hit each such vertex is at most $2C_0 C_1 (\log n)^7$. This gives condition~\ref{item-map-count-G} in the theorem statement with $C = 2C_0 C_1$. 
\medskip

\noindent\textit{Proof of condition~\ref{item-map-count-M}.} Suppose $i_1,i_2 \in [-n  ,n]_{\BB Z}$ with $i_1\sim i_2$ in $\mcl G$. 
Let $\wt P  = \wt P_{i_1,i_2}^{\mcl T}$ be the path from $\frk t(i_1)$ to $\frk t(i_2)$ in $\frk t([-n , n]_{\BB Z})$ as in condition~\ref{item-8-path-T} above.
We will construct a path $P_{i_1,i_2}^M$ in $\Mn$ between the vertices $\psi(i_1)$ and $\psi(i_2)$ of $M$. 

Set $v_0 = \psi(i_1)$, let $v_{|\wt P|} = \psi(i_2)$, and for $k\in [1,|\wt P|-1]_{\BB Z}$ let $v_k$ be a vertex of the triangle $\wt P(k)$ which belongs to $\mcl V(\Mn)$.
For $k\in [1,|\wt P|]_{\BB Z}$, the triangles $\wt P(k-1)$ and $\wt P(k)$ share an edge. This implies that the vertices $v_{k-1}$ and $v_k$ can be connected by a path of length at most 2 in $\Mn$ which hits only vertices of $\wt P(k-1)$ or $\wt P(k)$ (see Figure~\ref{fig-triangle-types}).
If we let $P_{i_1,i_2}^M$ be the concatenation of these $|\wt P|$ paths, then $P_{i_1,i_2}^M$ is a path from $v_1$ to $v_2$ in $\Mn$ with length at most $2|\wt P| \leq 2C_0 (\log n)^3$.

It remains to bound the number of paths $P_{i_1,i_2}^M$ for $i_1,i_2 \in [-n ,n]_{\BB Z}$ with $i_1\sim i_2$ in $\mcl G$ which hit a fixed vertex $v$ of $\Mn$. Indeed, if $P_{i_1,i_2}^M$ hits $v$ then the above construction shows that $\wt P_{i_1,i_2}^{\mcl T}$ must hit one of the triangles of $\mcl T$ which has $v$ as a vertex. By  condition~\ref{item-8-deg}, the number of such triangles is at most $2 C_1 \log n$. 
By condition~\ref{item-8-path-T} above, the total number of the paths $P_{i_1,i_2}^M$ which hit each such triangle is at most $C_0 (\log n)^6$. 
Therefore, $v$ is hit by at most $2C_0 C_1 (\log n)^7$ of the $P_{i_1,i_2}^M$'s, which gives condition~\ref{item-map-count-M} in the theorem statement with $C = 2C_0 C_1$.
\medskip

\noindent\textit{Proof of condition~\ref{item-map-count-close}.} Since the vertices $v$ and $\psi(\phi(v))$ lie on the boundary of the same triangle of $\mcl T$ by definition, we obviously have $\op{dist}\left(\psi(\phi(v)) , v ; \Mn \right) \leq 1$. On the other hand, the triangles $\frk t(i)$ and $\frk t(\phi(\psi(i)))$ share a common vertex of $M$, so by condition~\ref{item-8-deg} the $\mcl T_{n}$-graph distance between these triangles is at most
 $C_1 \log n$. By considering a path in $\mcl T_{n}$ between these two triangles and applying condition~\ref{item-8-path-G} to the successive pairs of triangles which it hits, we get $ \op{dist}\left(\phi(\psi(i)) , i ; \mcl G_{n} \right) \leq C_0 C_1 (\log n)^4 $.  
\end{proof}

\begin{figure}[ht!]
\begin{center}
\includegraphics[scale=.8]{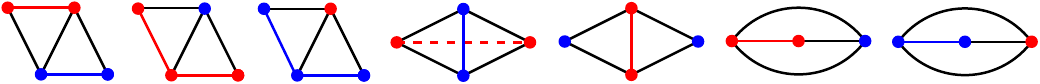} 
\caption[Adjacent triangles in the Mullin bijection]{\label{fig-triangle-types} Illustration of the seven different situations that can arise when two triangles of $\mcl T$ share a vertex. Vertices and edges of $M$ (resp.\ $M_*$) are shown in red (resp.\ blue). Edges of $\mcl Q$ are shown in black. The dotted red edge is an edge of $M$ which does not belong to the tree $T$. Note that in each case, there is a path of length at most 2 in $M$ from any given red vertex of the first triangle to any given red vertex of the second triangle. Note that if the two triangles are in $\mcl T_n$, then by definition the red vertices and edges in the figure are in $\Mn$.}
\end{center}
\end{figure}

\begin{figure}[ht!]
\begin{center}
\includegraphics[scale=.8]{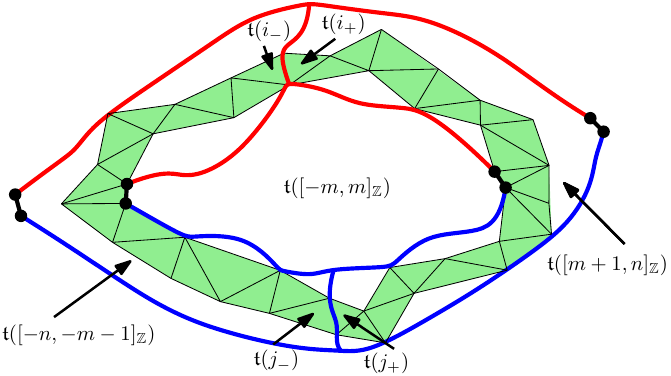} 
\caption{\label{fig-mullin-circuit} Illustration of the proof of Lemma~\ref{lem-ball-layers} in case~\ref{item-kappa8}. The triangles in the path $P$ in $\mcl T$ which separates $\frk t([-m,m]_{\BB Z})$ from $\mcl T\setminus \frk t([-n,n]_{\BB Z})$ are shown in light green. The edges on the boundary of the union of the triangles in each of $\frk t([-m,m]_{\BB Z})$, $\frk t([-n-1,-m]_{\BB Z})$, and $\frk t([m+1,n]_{\BB Z})$ are shown in red, blue, or black according to whether they below to $T$, $T_*$, or $Q$. Other edgeds are not shown. 
}
\end{center}
\end{figure}

\begin{proof}[Proof of Lemma~\ref{lem-ball-layers} in case~\ref{item-kappa8}]
See Figure~\ref{fig-mullin-circuit} for an illustration of the proof.
Recall that the adjacency graph of triangles $\mcl T$ is isomorphic to the graph $\mcl H$ of~\eqref{eqn-walk-adjacency} via $i\mapsto \frk t(i)$. 
Furthermore, the condition on the minimum of $\mcl L$ (resp.\ $\mcl R$) in~\eqref{eqn-walk-adjacency} holds if and only if $\frk t(i_1)$ and $\frk t(i_2)$ share an edge of $T$ (resp.\ $T_*$). 
From this, we infer that if the random walk condition~\eqref{eqn-walk-inf-condition0} and its analog for $\mcl R$ hold, then there exists $i_+ , j_+ \in [m+1,n]_{\BB Z}$ and $i_- , j_- \in [-n , -m-1]_{\BB Z}$ such that $\frk t(i_+) , \frk t(i_-)$ share an edge of $T$ and $\frk t(j_+) , \frk t(j_-)$ share an edge of $T_*$.  
Since consecutive triangles of $\mcl T$ share a side, each of $\frk t([m+1,n]_{\BB Z})$  and $\frk t([-n,-m-1]_{\BB Z})$ is connected in $\mcl T$. 
Hence we can find a path of triangles $P_+$ in $\frk t([m+1,n]_{\BB Z})$ from $\frk t(i_+)$ to $\frk t(j_+)$ and  a path of triangles $P_-$ in $\frk t([-n,-m-1]_{\BB Z})$ from $\frk t(j_-)$ to $\frk t(i_-)$. 
The concatenation $P$ of the paths $P_+$ and $P_-$ is a cycle in $\frk t([-n,n]_{\BB Z})$. 

We claim that $P$ necessarily disconnects $\frk t([-m,m]_{\BB Z})$ from $\mcl T\setminus \frk t([-n,n]_{\BB Z})$. Indeed, consider the set of edges of triangles in $\mcl T$ which lie on the boundary of both a triangle of $\frk t([m+1,n]_{\BB Z})$ and a triangle of $ \frk t([-n,-m-1]_{\BB Z})$. This set has at most two connected components, one of which is a subset of $T$ and the other of which is a subset of $T_*$. By definition, $P$ crosses an edge of each of these two connected components (in particular, they are both non-empty). It follows that the union of the triangles in $\frk t([-n,n]_{\BB Z}) \setminus \frk t([-m,m]_{\BB Z})$ has the topology of a Euclidean annulus whose inner and outer boundaries are disconnected by $P$. Our claim thus follows. 

The vertex set $\mcl V(M_m)$ consists of all vertices on the boundaries of triangles in $\frk t([-m,m]_{\BB Z})$, so the preceding paragraph implies that no vertex of $M_m$ can lie on the boundary of a triangle which is not in $\frk t([-n,n]_{\BB Z})$ (otherwise, such a triangle would have to cross $P$). If $v\in \bdy M_n$ then $v$ lies on the boundary of a triangle which is not in $\frk t([-n,n]_{\BB Z})$, so $\phi(v)$ does not lie on the boundary of a triangle in $\frk t([-m,m]_{\BB Z})$.  
Since $\phi(v) \in \BB Z$ is chosen so that $v$ lies on the boundary of $\frk t(\phi(v))$, we therefore have $\phi(v) \notin [-m,m]_{\BB Z}$. 
Hence $\phi(\bdy M_n) \cap [-m,m]_{\BB Z} = \emptyset$. 
\end{proof}

\begin{proof}[Proof of Theorems~\ref{thm-map-ball} and~\ref{thm-map-dist} in case~\ref{item-kappa8}]
Theorem~\ref{thm-map-ball} in the case of the spanning-tree weighted map is an immediate consequence of Theorem~\ref{thm-map-coupling}, Lemma~\ref{lem-ball-iso}, and~\cite[Theorem~1.10]{ghs-dist-exponent}. Theorem~\ref{thm-map-dist} follows from Theorem~\ref{thm-map-coupling} and~\cite[Theorem 1.15]{ghs-dist-exponent}.
\end{proof}

\subsection{Site percolation on the UIPT}
\label{sec:perc}

\subsubsection{Bijection for site percolation on the UIPT}
\label{sec-perc-bijection}
In this subsection we review the infinite-volume version of the bijection between site-percolated triangulations of type II (no self-loops, but multiple edges allowed) and \emph{Kreweras walks}---those with steps in $\{(1,0) , (0,1) , (-1,-1)\}$---which was first described in \cite{bhs-site-perc}. This bijection is based on an earlier bijection by Bernardi \cite{bernardi-dfs-bijection} between such walks and trivalent maps decorated by a depth-first search tree. 
We emphasize that the only part of~\cite{bhs-site-perc} needed here is the definition of the bijection in~\cite[Section 2]{bhs-site-perc}.

Let $(M,e_0)$ be an instance of the uniform infinite planar triangulation (UIPT) of type II, rooted at a directed edge $e_0$. Let $P$ be an instance of critical site percolation on the map, i.e., $P$ associates each $v\in \mcl \cV(M)$ with either red or blue, uniformly and independently at random. In \cite[Section 2.5]{bhs-site-perc} (based on \cite{bernardi-dfs-bijection}) it was proved that $(M,e_0,P)$ can a.s.\ be encoded by a bi-infinite walk with increments $a=(1,0)$, $b=(0,1)$, $c=(-1,-1)$\footnote{To be precise, the correspondence between walks and triples $(M,e_0,P)$ in \cite{bhs-site-perc} is for the case where the edge $e_0$ is undirected. The direction of $e_0$ does not play an important role in this section, and we may for example assume that the direction of $e_0$ is determined by an additional binary random variable.}. Conversely, a bi-infinite walk $\cZ=(\cZ_k)_{k\in\Z}$ with $\cZ_0=0$ and steps $a,b,c$ chosen uniformly and independently at random, a.s.\ encodes an instance of $(M,e_0,P)$. Throughout this section we let 
$(\Delta \cZ_k)_{k\in\N}$ denote the steps of $\cZ$, i.e., 
\eqbn
\Delta \cZ_k:=\cZ_k-\cZ_{k-1}\in\{a,b,c \} .
\eqen

As in the case of the Mullin bijection (see Section~\ref{sec-mullin-inverse}), the construction of  $(M,e_0,P)$ from the walk $\cZ$ is uniquely characterized by a sewing procedure, which we now describe. Each $k\in\Z$ corresponds to a map $M_{-\infty,k}$ with a percolation configuration $P_{-\infty,k}$ on the inner vertices and a marked edge $e_k^h$. Again the sewing procedure describes how to obtain $(M_{-\infty,k},P_{-\infty,k},e_k^h)$ from $(M_{-\infty,k-1},P_{-\infty,k-1},e_{k-1}^h)$ by observing the step $\Delta \cZ_k$ of the walk. The map $M$ rooted at $e_0$ is the local limit of $M_{-\infty,k}$ rooted at $e_0$ as $k$ goes to $\infty$ in the Benjamin-Schramm sense. 
We will see that each $k$ can be associated with a unique element $\wt\lambda(k)\in\cV(M)\cup\cF(M)$. More precisely, we have $\wt\lambda(k)\in\cV(M)$ if $\Delta \cZ_k=c$ and $\wt\lambda(k)\in\cF(M)$ if $\Delta \cZ_k\in\{a,b \}$. The root edge $e_0$ of the map is $e_{-1}^h$. For any $k\in\Z$ the map $M_{-\infty,k}$ has an infinite left frontier and an infinite right frontier. Note that the sewing procedure is only an inductive procedure, but, as for the Mullin bijection, it can be shown that given a walk $\cZ$ with i.i.d.\ increments there is a.s.\ a unique collection of triples $(M_{-\infty,k},P_{-\infty,k},e_k^h)$ such that $(M_{-\infty,k},P_{-\infty,k},e_k^h)$ is obtained from $(M_{-\infty,k-1},P_{-\infty,k},e_{k-1}^h)$ via the sewing procedure.  We refer to \cite[Section 2.5]{bhs-site-perc} for a more complete description, and to Figure \ref{fig1} for an illustration.

\begin{figure}
	\centering
	\includegraphics[scale=1.0]{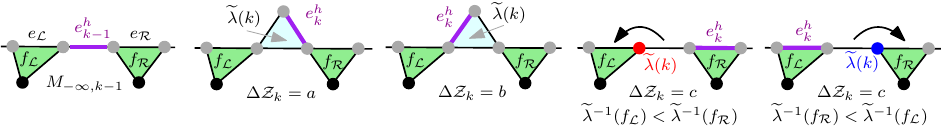}
	\caption[The sewing procedure for site percolation on the UIPT]{Illustration of the sewing procedure for the measure-preserving map between percolation-decorated triangulations and Kreweras walks (i.e., walks with steps $a,b,c$).
	The black arrows indicate that we glue $e^h_{k-1}$ to an adjacent edge, i.e., the two edges are identified. See the main text for the precise description. Vertices are colored gray if their color is not yet determined, while they are colored black if their color has been determined, so they are either blue or red, but the color cannot be determined from the figure.
	}
	\label{fig1}
\end{figure}

Given $M_{-\infty,k-1}$ and $e_{k-1}^h$, we will now explain how to construct $M_{-\infty,k}$ via the sewing procedure. See Figure \ref{fig1}. If $\Delta \cZ_{k}=a$ (resp.\ $\Delta \cZ_{k}=b$), we add a triangle to the edge  $e^h_{k-1}$. We let the new head $e^h_{k}$ be the right (resp.\ left) side of the triangle. We define $\wt\lambda(k)\in\cF(M)$ to be the triangle we added. Next consider the case $\Delta \cZ_{k}=c$. For an edge $e$ on the frontier of the map we say that a face $f$ is \emph{behind} $e$ if it is the unique face of the map for which $e$ is on its boundary. Let $e_\cL$ (resp.\ $e_\cR$) be the edge on the frontier of $M_{-\infty,k-1}$ immediately to the left (resp.\ right) of $e_{k-1}^h$, and let $f_\cL$ (resp.\ $f_\cR$) be the face behind $e_\cL$ (resp.\ $e_\cR$). If $\wt\lambda^{-1}(f_\cL)<\wt\lambda^{-1}(f_\cR)$ (resp.\ $\wt\lambda^{-1}(f_\cL)>\wt\lambda^{-1}(f_\cR)$) define $e^h_k=e_\cR$ (resp.\ $e^h_k=e_\cL$). Then identify $e^h_{k-1}$ with $e_\cL$ (resp.\ $e_\cR$), i.e., glue the two edges together. The common vertex of $e_\cL$ (resp.\ $e_\cR$) and $e^h_{k-1}$ is colored red (resp.\ blue), and we define $\wt\lambda(k)\in\cV(M)$ to be this vertex. We see from this description that the function $\cL$ (resp.\ $\cR$) represents the net change in the length of the left (resp.\ right) frontier of the map, relative to the length at time 0.

We remark that $M_{-\infty,k}$ is not a submap of $M$, since for every step $\Delta \cZ_k=c$ in the sewing procedure we identify two edges and two vertices, and vertices and edges which are identified in the sewing procedure at times $>k$ are \emph{not} identified in $M_{-\infty,k}$. However, as remarked above, there are natural maps from the set of vertices (resp.\ inner faces, edges) of $M_{-\infty,k}$ to the set of vertices (resp.\ faces, edges) of $M_{-\infty,k+1}$ and $M$. For faces this map is injective, while this is not the case for vertices and edges. We will often identify vertices (resp.\ faces, edges) of the various maps when this identification is well-defined. We remark that for $k\in\BB Z$, the edge $\lambda(k)$ of Remark~\ref{remark-edge-map} is the edge of $M$ corresponding\footnote{Note that the function $\lambda:\Z\to \mcl E(M)$ is \emph{not} the same as the function $\eta_{\op{e}}:\Z\to \mcl E(M)$ considered in \cite{bhs-site-perc}. For all $k\in\Z$, we have $\lambda(k-1)=\eta_{\op{e}}(k)$.} to $e_k^h$.

Given $n\in\N$ we now define the map $\Mn$ introduced in Section \ref{sec-main-results}. We will let $\Mn$ be a particular submap of $M_{-\infty,n}$. Let the vertex set $\cV(\Mn)\subset\cV( M_{-\infty,n})$ of $\Mn$ be the set of all vertices which are either an endpoint of some edge $e_{k-1}^h$ or $e_k^h$ or on the boundary of some face $\wt\lambda(k)\in\cF(M_{-\infty,n})$ for $k\in[-n,n]_{\Z}$. Let $\cE_1\subset \cE(M_{-\infty,n})$ be the set of edges $e=\{v_1,v_2\}$ which are on the boundary of some face $\wt\lambda(k)\in\cF(M_{-\infty,n})$. Let $\cE_2=\{e_{k-1}^h\,:\,k\in[-n,n]_{\Z},\Delta \cZ_k=c \}\subset \cE( M_{-\infty,n})$ be the set of edges which were glued to another edge in the sewing procedure. Define $\cE(\Mn)=\cE_1\cup \cE_2$. We let $\iota_n : \Mn \rta M$ be the function which sends each vertex, face, and edge of $\Mn$ to the corresponding vertex, face, or edge of $M$.
	We will define the mappings $\phi_n$ and $\psi_n$ of~\eqref{eqn-peano-functions} in~\eqref{eqn-percolation-phi-psi} just below. 

\begin{figure}
	\centering
	\includegraphics[scale=1]{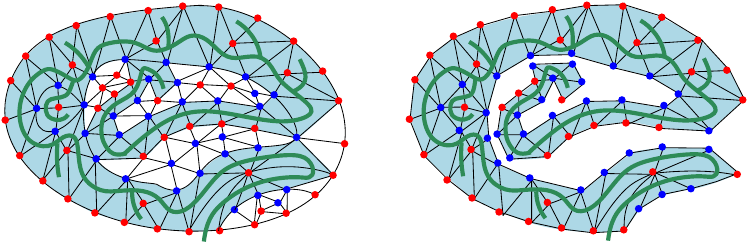}
	\caption[The planar maps $\Mn$ in the case of the UIPT]{The figure shows $\Mn$ embedded into $M$ (left) and $\Mn$ (right) in light blue. Notice that vertices which are adjacent in $M$ may correspond to vertices of $\Mn$ which are far apart. The green path is part of the depth-first search (DFS) tree of the dual map $M^*$ as defined in \cite[Section 4.1]{bhs-site-perc} (this DFS tree is not used in the present paper). The DFS tree gives an ordering of the faces of $M$ that is consistent with the ordering given by $\wt\lambda$.}
	\label{fig4}
\end{figure}

\subsubsection{Proofs of main theorems in the UIPT case}
\label{sec-percolation-proof}
To prove the theorems from Section \ref{sec-intro} in case~\ref{item-kappa6}, we will make small local changes in the map $M$ and the walk $\mcl Z$ defined above and observe that the resulting map and walk are related by Mullin's bijection as described in Section \ref{sec-kappa8}. In particular, as we will see, if we replace each of the $c$ steps of the walk $\cZ$ by a $-a$ step followed by a $-b$ step and each of the corresponding ``gluing steps" in the sewing procedure by a step which adds a certain subgraph consisting of two triangles to the map, then the resulting modified walk and planar map are related via Mullin's bijection (Section~\ref{sec-kappa8-bijection}). This will allow us to reduce the proof of Theorem~\ref{thm-map-count} in the UIPT case to the Mullin bijection case, which we have already treated. It is also possible to treat case~\ref{item-kappa6} directly, without reference to the Mullin bijection, but the argument given here is shorter and simpler. 

We first define a modified walk $\wh\cZ$ with steps in $\{-a,-b,a,b\}$ by replacing each step $c$ in $\cZ$ by one step $-a$ followed by one step $-b$. More precisely, $\wh\cZ$ and the maps $\alpha_n$ and $\wh\alpha_n$ describing the associated time changes are defined as follows. First define $\alpha:\Z\to \Z$ and $\wh\alpha:\Z\to\Z$ by
\eqbn
\begin{split}
	\wh\alpha(k)&=
	\begin{cases}
		k+\#\{ j\in\{0,\dots,k-1\}\,:\,\Delta \cZ_j=c  \} \quad&\text{for}\,\,k\geq 0\\
		k-\#\{ j\in\{k,\dots,-1\}\,:\,\Delta \cZ_j=c  \} \quad&\text{for}\,\,k<0,
	\end{cases}\\
	\alpha(k)&=\sup\{ j\in\Z\,:\,\wh\alpha(j)\leq k \}.
\end{split}
\eqen
See Figure \ref{fig3} for an illustration. Let $\wh\alpha_n=\wh\alpha|_{[-n,n]_\Z}$, and let $n_-=-\wh\alpha_n(-n)$. If $\Delta \cZ_n\neq c$ set $n_+=\wh\alpha_n(n)$, and if $\Delta \cZ_n=c$ set $n_+=\wh\alpha_n(n)+1$. Define $\alpha_n:[-n_-,n_+]_\Z\to[-n,n]_\Z$ by $\alpha_n=\alpha|_{[-n_-,n_+]_\Z}$, and define $\wh\cZ$ by
\eqbn
\wh\cZ_k=
\begin{cases}
	\cZ_{\alpha(k)}\quad & \text{if\,\,}\cZ_{\alpha(k)}\in\{ a,b\},\\
	-a& \text{if\,\,}\cZ_{\alpha(k)}=c\text{\,\,and\,\,}\alpha(k)=\alpha(k+1),\\
	-b& \text{if\,\,}\cZ_{\alpha(k)}=c\text{\,\,and\,\,}\alpha(k)\neq\alpha(k+1).
\end{cases}
\eqen
\begin{figure}
	\centering
	\includegraphics[scale=1]{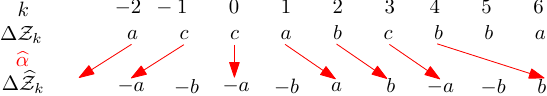}
	\caption[Time change relation $\wh{\cZ}$ and $\cZ$]{Illustration of $\wh\alpha:\Z\to\Z$, which describes the time change of $\wh{\cZ}$ relative to $\cZ$.}
	\label{fig3}
\end{figure}

Since the walk $\wh\cZ$ has nearest-neighbor steps, it determines an infinite triangulation $\wh M$ (analogous to the triangulation $\mcl Q\cup T\cup T_*$ from Section~\ref{sec-kappa8-bijection}), a map $\wh{\frk t} : \BB Z\rta \mcl F(\wh M)$, and an infinite triangulations with boundary $\wh M_{-\infty,n}$ for $n\in\BB N$ (which is a submap of $\wh M$) via the Mullin bijection sewing procedure described in Section~\ref{sec-mullin-inverse}. 

More precisely, for steps $\Delta\wh\cZ_k\in\{a,b \}$ the sewing procedure is as before and $\wh{\frk t}(k)$ is the triangle which we add. If $\Delta \cZ_k=-a$ (resp.\ $\Delta \cZ_k=-b$) we instead add a triangle $\wh{\frk t}(k)$ adjacent to $e_{k-1}^h$, we glue the right (resp.\ left) side of $\wh{\frk t}(k)$ to the edge immediately to its right (resp.\ left) on the frontier, and we define $e_{k}^h$ to be the (only) edge of $\wh{\frk t}(k)$ which is still on the frontier of the map. 

We define $\wh M_n$ be the (finite) submap of $\wh M_{-\infty,n}$ consisting of the vertices and edges on the boundaries of the $2n+1$ triangles added at times in $[-n,n]_{\BB Z}$, as in Section~\ref{sec-kappa8-proof}.

\begin{figure}
	\centering
	\includegraphics[scale=1]{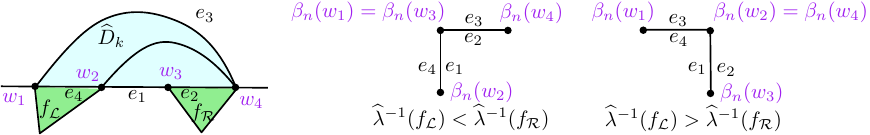}
	\caption[Contraction used to recover the UIPT]{Contraction of $\wh D_k$ (light blue), as defined before the statement of Lemma \ref{prop6}, and illustration (in purple) of the map $\beta_n:\cV(\wh M_n)\to\cV(\Mn)$ defined after Lemma \ref{prop6}. }
	\label{fig2}
\end{figure}

Given $k\in[-n,n]_{\cZ}$ such that $\Delta \cZ_k=c$, let $\wh D_k$ be the submap of $\wh M_n$ of boundary length 4 consisting of the two faces $\wh{\frk t}_n(\wh\alpha_n(k)  )$ and $\wh{\frk t}_n(\wh\alpha_n(k)+1)$ (which correspond to consecutive steps of the $\wh \cZ$ of the form $-a$ and $-b$) and the vertices and edges on their boundaries. We say that we \emph{contract} $\wh D_k$ if we modify the map $\wh M_n$ by the following procedure. (See Figure \ref{fig2} for an illustration.) Let $e_1,e_2,e_3,e_4$ denote the edges on the boundary of $\wh D_k$ in counterclockwise order, such that $e_1=\wh e^h_{\wh\alpha_n(k)-1}$ and $e_3=\wh e^h_{\wh\alpha_n(k)+1}$. A contraction means that we remove the two faces  $\wh{\frk t}_n(\wh\alpha_n(k)  )$ and $\wh{\frk t}_n(\wh\alpha_n(k)+1)$ from the map, and then we identify edges pairwise along the boundary of $\wh D_k$. The edge between the two faces is removed, while the other faces on the boundary of the two faces are kept. Let $f_{\cL}$ (resp. $f_{\cR}$) be the face of $\wh M_{-\infty,n}$ which has $e_4$ (resp.\ $e_2$) on its boundary, but is not equal to  $\wh{\frk t}_n(\wh\alpha_n(k))$ or $\wh{\frk t}_n(\wh\alpha_n(k)+1)$. If $\wt\lambda^{-1}(f_\cL)< \wt\lambda^{-1}(f_\cR)$ (resp.\ $\wt\lambda^{-1}(f_\cL)>\wt\lambda^{-1}(f_\cR)$) then we identify $e_1$ and $e_4$ (resp.\ $e_1$ and $e_2$), and $e_2$ and $e_3$ (resp.\ $e_3$ and $e_4$); notice that the two edges we identify will always have one common vertex and one not common vertex, and when we identify the edges we choose the orientation of the edges in the natural way, such that the common (resp.\ not common) vertices are identified. 

\begin{lem}
Let $\wh\cH_n$ denote the map with vertex set $[-n_-,n_+]_\Z$ and adjacency condition defined by \eqref{eqn-walk-adjacency} with the walk $\wh\cZ$ in place of $\cZ$.

\begin{itemize}
	\item[(i)] The map $k\mapsto\wh{\frk t}(k)$ is a graph isomorphism modulo multiplicity (recall Definition~\ref{def-iso-mod-multiplicity}) from $\wh\cH_n$ to the dual of $\wh M_n$.
	\item[(ii)] If we contract all the submaps $\wh D_k$ of $\wh M_n$ for $k\in[-n,n]_\Z$ such that $ \Delta \cZ_k=c$, then we get a map which is isomorphic to $\Mn$. 
\end{itemize}
	\label{prop6}
\end{lem}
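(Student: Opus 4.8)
The plan is to obtain part~(i) as a direct consequence of Proposition~\ref{prop-kappa-8-tri}, and to obtain part~(ii) by comparing the modified (Mullin-type) sewing procedure for $\wh\cZ$ with the UIPT sewing procedure for $\cZ$ one step at a time, checking that contracting $\wh D_k$ is the local inverse of ``replace the $c$-gluing by the addition of the two triangles of $\wh D_k$''.

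\emph{Part (i).} Since $\wh\cZ$ has nearest-neighbor steps, the triangulation $\wh M$ and the path $\wh{\frk t}$ are exactly what the Mullin sewing procedure of Remark~\ref{remark-mullin-inverse} produces, so the proof of Proposition~\ref{prop-kappa-8-tri} applies verbatim with $(\wh\cZ,\wh M,\wh{\frk t})$ in place of $(\mcl Z,\mcl Q\cup T\cup T_*,\frk t)$: each triangle $\wh{\frk t}(k)$ has a single boundary edge $e_k$ not shared with $\wh{\frk t}(k\pm1)$, each edge of the two trees diagonally bisecting the faces of $\wh M$ equals $e_k$ for exactly two values of $k$ (which index adjacent triangles), and the two coordinates of $\wh\cZ$ are the contour functions of these trees up to constant steps; hence for $i_1<i_2$ the identity $e_{i_1}=e_{i_2}$ is equivalent to the adjacency condition~\eqref{eqn-walk-adjacency}. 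Restricting to the time window $[-n_-,n_+]_\Z$, which is exactly how $\wh M_n$ and $\wh\cH_n$ were defined, then gives that $k\mapsto\wh{\frk t}(k)$ is a graph isomorphism modulo multiplicity from $\wh\cH_n$ to the dual of $\wh M_n$.

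\emph{Part (ii).} I would prove, by induction on $k\in\Z$ (matching the sewing steps one by one, the tails agreeing by the same argument applied to the full bi-infinite walk), that the map obtained from $\wh M_{-\infty,\wh\alpha(k)}$ — or from $\wh M_{-\infty,\wh\alpha(k)+1}$ when $\Delta\cZ_k=c$ — by contracting every submap $\wh D_j$ with $j\le k$ and $\Delta\cZ_j=c$ is isomorphic to $M_{-\infty,k}$, via an isomorphism carrying the marked edge $\wh e^h_{\wh\alpha(k)}$ (resp.\ $\wh e^h_{\wh\alpha(k)+1}$) to $e^h_k$ and respecting the ordering $\wt\lambda$ on the common faces. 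If $\Delta\cZ_k\in\{a,b\}$, the $\wh\cZ$-step at time $\wh\alpha(k)$ literally agrees with the $\cZ$-step at time $k$ (attach a triangle to the marked edge; nothing to contract), so the statement propagates, with $\wh{\frk t}(\wh\alpha(k))$ carried to $\wt\lambda(k)$. If $\Delta\cZ_k=c$, the two $\wh\cZ$-steps at times $\wh\alpha(k),\wh\alpha(k)+1$ attach exactly the two triangles of $\wh D_k$ along the current marked edge; using the inductive hypothesis so that the frontier and the face-ordering of the current map agree with those of $M_{-\infty,k-1}$, I would check from the local pictures (Figures~\ref{fig1} and~\ref{fig2}) that the faces $f_\cL,f_\cR$ flanking $\wh D_k$ across the edges $e_4$ and $e_2$ are precisely the faces behind the frontier edges $e_\cL$ and $e_\cR$ appearing in the $c$-step of the UIPT sewing procedure. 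Consequently the comparison between $\wt\lambda^{-1}(f_\cL)$ and $\wt\lambda^{-1}(f_\cR)$ selects the same branch in both procedures, and the edge identifications prescribed by the contraction ($e_1\sim e_4$, $e_2\sim e_3$, or $e_1\sim e_2$, $e_3\sim e_4$), together with the removal of the edge separating the two triangles of $\wh D_k$, reproduce exactly the effect of the $c$-step: identify $e^h_{k-1}$ with $e_\cL$ (resp.\ $e_\cR$), merge the corresponding pair of frontier vertices into the vertex $\wt\lambda(k)$, and set the new marked edge $e^h_k$. This completes the induction.

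Finally I would pass to the finite maps. Taking $k=n$ in the induction — with the convention $n_+=\wh\alpha_n(n)+1$ when $\Delta\cZ_n=c$, which is precisely what guarantees that both triangles of $\wh D_n$ lie in $\wh M_n$ so that the contraction is defined — gives an isomorphism between the contraction of $\wh M_{-\infty,n}$ along all the $\wh D_k$, $k\in[-n,n]_\Z$, and $M_{-\infty,n}$. It then remains to check that this isomorphism restricts to one between the contraction of the submap $\wh M_n$ and the submap $M_n$ defined in Section~\ref{sec-perc-bijection}; this is a bookkeeping check matching the two definitions: the faces $\wh{\frk t}(\wh\alpha_n(k))$ for $\Delta\cZ_k\in\{a,b\}$ correspond to the faces $\wt\lambda(k)$, the contracted pairs $\wh{\frk t}(\wh\alpha_n(k)),\wh{\frk t}(\wh\alpha_n(k)+1)$ for $\Delta\cZ_k=c$ correspond to the vertices $\wt\lambda(k)$, and together with the heads $\wh e^h$ these account for exactly the vertices, the edge set $\cE_1\cup\cE_2$, and the faces used to build $M_n$. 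I expect the main obstacle to be the local verification in the $\Delta\cZ_k=c$ case: aligning every left/right and orientation convention in the contraction of $\wh D_k$ with those of the UIPT $c$-step, and in particular confirming that $f_\cL$ and $f_\cR$ are the faces behind $e_\cL$ and $e_\cR$, so that the two procedures make the same choice at each $c$-step; the remaining parts are routine once this local correspondence is in hand.
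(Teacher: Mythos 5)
Your proposal is correct, and both parts rest on the same ideas as the paper's proof: part (i) is obtained, exactly as in the paper, by invoking Proposition~\ref{prop-kappa-8-tri} for the nearest-neighbor walk $\wh\cZ$ (whose sewing procedure is precisely Mullin's) and restricting to the window $[-n_-,n_+]_\Z$; part (ii) reduces, as in the paper, to the local verification that contracting the two-triangle quadrilateral produced by a pair of steps $-a,-b$ reproduces the gluing performed at the corresponding $c$-step, with the comparison of $\wt\lambda^{-1}(f_\cL)$ and $\wt\lambda^{-1}(f_\cR)$ selecting the same branch in both procedures.

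The one organizational difference is how the induction in (ii) is set up. The paper inducts over the number of $c$-steps replaced: it considers two walks with steps in $\{a,b,c,-a,-b\}$ differing by a single replacement of one $c$ by $-a,-b$, runs the (mixed-alphabet) sewing procedure on both, and shows that contracting the single quadrilateral $D''$ recovers the other map; iterating over all $c$-steps in the window gives the claim. You instead fix the pair $(\cZ,\wh\cZ)$ and induct over sewing time $k$, carrying along an isomorphism of the partially built maps that matches the marked head edges and the face ordering $\wt\lambda$ on the common faces, and then do the final bookkeeping identifying the contraction of $\wh M_n$ with $M_n$ as defined via $\cE_1\cup\cE_2$. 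The paper's scheme avoids tracking the head-edge and ordering data explicitly (each comparison is between two complete sewing procedures differing in one step, at the cost of working with hybrid walks and, implicitly, a sewing rule valid for all five step types); your scheme avoids hybrid walks and makes explicit exactly which data must be preserved at each step, and it also spells out the passage from the infinite maps to the finite submaps $\wh M_n$, $M_n$, which the paper leaves implicit. Both hinge on the same local picture, which the paper dismisses as "immediate" and which you correctly flag as the only point requiring care (the left/right and orientation conventions, and the identification of $f_\cL,f_\cR$ with the faces behind $e_\cL,e_\cR$).
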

\begin{proof}
	(i) This is immediate by Proposition \ref{prop-kappa-8-tri}, and since $\wh\cZ$ and $\wh M_n$ (in the limit as $n\rta\infty$) are related by Mullin's bijection.

	(ii) Consider two walks $\cZ'$ and $\cZ''$ with steps $\Delta \cZ'$ and $\Delta \cZ''$, respectively, in $\{a,b,c,-a,-b \}$. Assume $\cZ''$ is obtained from $\cZ'$ by choosing a $k_0\in\Z$ such that $\Delta \cZ'_{k_0}=c$, and replacing this step $c$ by two steps $-a$ and $-b$. In other words, we have $\Delta \cZ''_k=\Delta \cZ'_k$ for $k<k_0$, $\Delta \cZ''_{k_0}=-a$, $\Delta \cZ''_{k_0+1}=-b$, and $\Delta \cZ''_k=\Delta \cZ'_{k-1}$ for $k>k_0+1$. Let $M'$ (resp.\ $M''$) denote the map we get when applying the sewing procedure described above to $\cZ'$ (resp.\ $\cZ''$), and let $D''$ be the submap of $M''$ of boundary length 4 corresponding to the faces added in steps $k_0$ and $k_0+1$ of the sewing procedure. Since we can proceed by induction, in order to conclude it is sufficient to show that when we contract $D''$, we get the map $M'$.  This property is immediate by the sewing procedure applied to $\cZ'$ and $\cZ''$, respectively. 
\end{proof}

From Lemma \ref{prop6}(ii) we see that there exists a natural map $\beta_n:\cV(\wh M_n)\to\cV(\Mn)$ given by contracting all of the $\wh D_k$'s. See Figure~\ref{fig2}. We define $\wh\beta_n:\cV(\Mn)\to \cV(\wh M_n)$ in an arbitrary way such that 
$\beta_n \circ \wh\beta_n$ is the identity map $\cV(\Mn)\rta \cV(\Mn)$, and such that $\wh\beta_n$ sends $\bdy \Mn$ to $\bdy \wh M_n$. Note that it is possible to find a map $\wh\beta_n$ satisfying the latter property by the construction in the proof of Lemma \ref{prop6}(ii). Let $\innF(\wh M_n)$ denote the set of bounded faces of $\wh M_n$, i.e., $\cF(\wh M_n)\setminus \innF(\wh M_n)$ consist of the single unbounded face of $\wh M_n$. Let $\wh\phi_n:\cV(\wh M_n)\to\innF(\wh M_n)$ map $v\in \cV(\wh M_n)$ to some arbitrarily chosen face of $\wh M_n$ which has $v$ on its boundary. Let $\wh\psi_n:\innF(\wh M_n)\to \cV(\wh M_n)$ map $f\in \innF(\wh M_n)$ to some arbitrarily chosen vertex on its boundary.
We also require that $\wh\phi_n,\wh\psi_n,\wh\beta_n$ are defined such that the maps $\phi_n$ and $\psi_n$ defined just below satisfy the requirements on the root in \eqref{eqn-peano-functions} (note that by the definition of the contraction operation, it is possible to satisfy both this requirement and the second requirement on $\wh\beta_n$ specified right above).  By Lemma \ref{prop6}(i), we may identify $\innF(\wh M_n)$ with $[-n_-,n_+]_\Z$ in a natural way, so we may view $\wh\psi_n$ and $\wh\phi_n$ as maps  
$\wh\psi_n:[-n_-,n_+]_\Z\to \cV(\wh M_n)$ and
$\wh\phi_n:\cV(\wh M_n)\to [-n_-,n_+]_\Z$. Define 
\eqb 
\phi_n:\cV(\Mn)\to [-n,n]_\Z \: \text{by} \: \phi_n :=\alpha_n\circ\wh\phi_n\circ\wh\beta_n \quad \op{and} \quad
 \psi_n:[-n,n]_\Z\to \cV(\Mn) \: \text{by} \: \psi_n :=\beta_n\circ\wh\psi_n\circ\wh\alpha_n.
 \label{eqn-percolation-phi-psi}
\eqe

\begin{proof}[Proof of Lemma \ref{lem-ball-layers} in case~\ref{item-kappa6}]
By the definition of the maps $\phi_n$ it is sufficient to show the following to conclude the proof of the lemma:
\begin{enumerate}[label=(\roman{enumi})]
\item \label{item-i} $\wh\beta_{n}$ sends $\bdy \Mn$ to $\bdy \wh M_n$;  	
\item \label{item-ii} $\alpha_{n}$ sends the complement of $[-m_-,m_+]_\Z$ to the complement of $[-m,m]_\Z$; 	
\item \label{item-iii} if \eqref{eqn-walk-inf-condition0} holds then \eqref{eqn-walk-inf-condition0} also holds with $\wh \cL,\wh\cR, n_\pm,m_\pm$ instead of $\cL,\cR,n,m$; 
\item \label{item-iv} the map $\wh M_n$ satisfies Lemma \ref{lem-ball-layers}, with \eqref{eq:bdy-disjoint} replaced by $\wh\phi_{n}(\bdy \wh M_n)\cap[-m_-,m_+]_{\Z}=\emptyset$.
\end{enumerate}
		It follows from the definition of $\wh\beta_n$ that~\ref{item-i} holds, and it follows from the definition of $\alpha$ and $\wh{\cZ}$ that~\ref{item-ii} and~\ref{item-iii}, respectively, hold. 
		Property~\ref{item-iv} follows from the argument for case \ref{item-kappa8} since the sewing procedure for $\wh M_n$ and the definition of $\wh\phi_n$ are the same as for case \ref{item-kappa8}. 
\end{proof}

\begin{proof}[Proof of Theorem \ref{thm-map-count} in case~\ref{item-kappa6}]
Define the planar maps $\mcl H$ and $\{\cH_n\}_{n\in\BB N}$ as in~\eqref{eqn-walk-adjacency} and the discussion just after with $\mcl Z$ the encoding walk for $M$. Throughout the proof, we fix $A>0$ and $n\in\BB N$ and we couple $\mcl Z$ with the correlated Brownian motion $Z$ in such a way that the conclusion of Theorem~\ref{thm-sg-map-dist} is satisfied for this choice of $\cH$. 

	To prove condition~\ref{item-map-count-G} from the theorem statement, we will argue that for sufficiently large $C>1$, depending only on $A$, the following is true for each $n\in\BB N$. 
	\begin{itemize}
		\item[(i)] For any $v_1\sim v_2$ in $\cV(\Mn)$ we can find a path $P_{v_1,v_2}^{\wh M_n}$ from $\wh\beta_n(v_1)$ to $\wh\beta_n(v_2)$ in $\wh M_n$ of length at most 3, and each edge of $\wh M_n$ is on at most 10 of these paths. (Notice that when we bound the number of paths hitting each $i\in\cV(\cG_n)$ below, it is sufficient to bound the number of paths hitting each edge, rather than each vertex, in this step.)
		\item[(ii)] With probability at least $1-O_n(n^{-A})$, for any $\wh v_1\sim\wh v_2$ in $\cV(\wh M_n)$ we can find a path $P_{\wh v_1,\wh v_2}^{\wh{\mcl H}_n}$ from $\phi_n(\wh v_1)$ to $\phi_n(\wh v_2)$ in $\wh{\mcl H}_n$ of length at most $C\log n$, and each $i\in [-n_-,n_+]_\Z$ is hit by at most $C\log n$ of the paths $P_{\wh v_1,\wh v_2}^{\wh{\cH}_n}$.
		\item[(iii)] For any $i_1\sim i_2$ in $\wh\cH_n$ we can find a path $P_{i_1,i_2}^{\cH_n}$ from $\alpha_n(i_1)$ to $\alpha_n(i_2)$ in $\cH_n$ of length at most 2, and each vertex $i$ of $\cH_n$ is hit by at most 10 of the paths $P_{i_1,i_2}^{\cH_n}$.
		\item[(iv)] With probability at least $1-O_n(n^{-A})$, for any $i_1\sim i_2$ in $\cH_n$ we can find a path $P_{i_1,i_2}^{\cG_n}$ from $i_1$ to $i_2$ in $\cG_n$ of length at most $C(\log n)^3$, and each $i\in [-n,n]_\Z$ is hit by at most $C(\log n)^6$ of the paths $P_{i_1,i_2}^{\cG_n}$.
	\end{itemize}
	Before we prove (i)-(iv), we will argue that they imply condition~\ref{item-map-count-G} upon replacing $C$ by $100C^2$. Indeed, if the events in (i)-(iv) occur (which happens with probability at least $1-O_n(n^{-A})$) and $v_1\sim v_2$ in $\cV(\Mn)$ we can construct the path $P_{v_1,v_2}^{\mcl G_n}$ from $\phi_n(v_1)$ to $\phi_n(v_2)$ in $\mcl G$ by considering the path $P_{v_1,v_2}^{\wh M_n}$ from (i), replacing each step $(\wh v_1,\wh v_2)$ by the path $P_{\wh v_1,\wh v_2}^{\wh{\mcl H}_n}$ from (ii), replacing each step of the resulting path by the appropriate path from (iii), then replacing each step of this last path by the appropriate path from (iv). It is immediate from (i)-(iv) that the length of the path will be at most $3\cdot C\log n\cdot 2\cdot C(\log n)^3=6C^2(\log n)^4$, and that each $i$ is hit by at most $10\cdot C\log n\cdot 10\cdot C(\log n)^6=100C^2(\log n)^7$ of the paths $P_{v_1,v_2}^{\wh M_n}$.
	
	The property (i) follows from Lemma \ref{prop6}(ii), while  (iii) is immediate from the adjacency condition~\eqref{eqn-walk-adjacency} for $\mcl H_n$ and $\wh{\mcl H}_n$. It follows from Theorem \ref{thm-sg-map-dist} that we can couple so that (iv) holds. Since the degree of the root of the UIPT has an exponential tail~\cite[Lemma 4.2]{angel-schramm-uipt}, we know that for large enough $C$ with probability at least $1-O_n(n^{-A})$, 
	\eqb
	\op{deg}(v; M_{ n})\leq C\log n,\qquad\forall v\in \cV(M_{ n}).
	\label{eq2}
	\eqe
The property (ii) follows from Lemma \ref{prop6}(i) and \eqref{eq2}, since the definition of $\phi_n$ implies that the face of $\wh M_n$ associated with $\phi_n(\wh v_1)$ (resp.\ $\phi_n(\wh v_2)$) has $\wh v_1$ (resp.\ $\wh v_2$) on its boundary, so we can find a path of faces for $\wh M_n$ from $\phi_n(\wh v_1)$ to $\phi_n(\wh v_2)$ of length at most $C\log n$ (c.f.\ the proof of condition~\ref{item-map-count-G} of Theorem~\ref{thm-map-count} from Section~\ref{sec-kappa8-proof} for a similar argument).
	
	The proof of condition~\ref{item-map-count-M} is very similar, and we therefore only give a brief sketch. First we prove variants of (i)-(iv) above for the inverse maps. For example, instead of (i) above, we show that for any $\wh v_1\sim \wh v_2$ in $\cV(\wh M_n)$ we can find a path $P_{\wh v_1,\wh v_2}^{\Mn}$ from $\beta_n(\wh v_1)$ to $\beta_n(\wh v_2)$ in $\Mn$ of length at most 3, and each edge of $\Mn$ is on at most 10 of these paths. The new variants of (i)-(iv) are proved as before, and they imply condition~\ref{item-map-count-M} similarly as in the proof of condition~\ref{item-map-count-G}. 
	
	To prove condition~\ref{item-map-count-close} we will only explain how to bound $\op{dist}(\phi_n\circ\psi_n(i),i;\cG_n)$, since the bound for $\op{dist}(\psi_n\circ\phi_n(v),v;\Mn)$ is proven in a similar way. Assume the events in (i)-(iv) and \eqref{eq2} are satisfied, and choose an arbitrary $i\in[-n,n]_\Z$. Define 
	\eqbn
	\wh i=\wh\alpha_n(i), \quad
	\wh v=\wh\psi_n(\wh i),\quad
	v=\beta_n(\wh v),\quad
	\wh v'=\wh\beta_n(v),\quad
	\wh i'=\wh\phi_n(\wh v'),\quad
	i'=\alpha_n(\wh i').
	\eqen 
	By the definition of contraction and since $\wh v'=\wh\beta_n(\beta_n(\wh v))$, we have $\op{dist}(\wh v,\wh v';\wh M_n)\leq 2$. By the triangle inequality, 
	\eqbn
	\op{dist}(\wh i,\wh i';\wh \cH_n)
	\leq 
	\op{dist}(\wh i,\wh\phi_n(\wh v);\wh \cH_n)
	+
	\op{dist}(\wh\phi_n(\wh v),\wh i';\wh \cH_n).
	\eqen
	The first term on the right side is bounded by $C\log n$, 
		since it follows from the definition of $\wh\psi_n$ and $\wh\phi_n$ that the faces $\wh{\frk t}(\wh i)$ and 
		$\wh{\frk t}(\wh\phi_n(\wh\psi_n(\wh i)))$ have a common vertex, so 
		the distance between the two faces in the dual of $\wh M_n$ (equivalently, $\wh\cH_n$) 
		is at most $C\log n$ by \eqref{eq2}. The second term on the right side is bounded by  $2C\log n$, since (ii) and $\op{dist}(\wh v,\wh v';\wh M_n)\leq 2$ imply that $\op{dist}(\wh\phi_n(\wh v),\wh\phi_n(\wh v');\wh\cH_n)\leq 2C\log n$. We conclude that $\op{dist}(\wh i,\wh i';\wh \cH_n)\leq 3C\log n$. Using this bound, (iii), the definition of $\alpha_n,\wh\alpha_n$, and $i'=\alpha_n(\wh i')$, we get further
	\eqbn
	\op{dist}(i,i';\cH_n)
	\leq 
	\op{dist}(i,\alpha_n(\wh i); \cH_n)
	+
	\op{dist}(\alpha_n(\wh i),i'; \cH_n)
	\leq 1+2\cdot 3C\log n.
	\eqen
	Combining this with (iv) concludes the proof:
	$\op{dist}(i,i';\cG_n)
	\leq (1+6C\log n)(\log n)^3\leq 7C(\log n)^4$, where we assume that $n$ is chosen sufficiently large in the last step.
\end{proof}

\begin{proof}[Proof of Theorems~\ref{thm-map-ball},~\ref{thm-map-dist}, and~\ref{thm-6-ball} in case~\ref{item-kappa6}] 
By~\cite[Theorem 1.2]{angel-peeling}, the metric ball $B_n(e_0 ; M)$ of radius $n$ centered at the root edge in the UIPT satisfies 
\eqb \label{eqn-uipt-ball}
\# \mcl V (B_n(e_0 ; M)) = n^{4+o_n(1)} \quad \text{with probability tending to 1 as $n\rta\infty$} , 
\eqe 
which is stronger than the UIPT case of Theorem~\ref{thm-map-ball}. (Observe that the weaker variant stated in Theorem~\ref{thm-map-ball} can also be proved by proceeding as in the UST case considered above.) Theorem~\ref{thm-map-dist} follows from Theorem~\ref{thm-map-count} and~\cite[Theorem 1.15]{ghs-dist-exponent}. Theorem~\ref{thm-6-ball} is immediate from~\eqref{eqn-uipt-ball} together with the UIPT versions of Theorem~\ref{thm-map-count} and Lemma~\ref{lem-ball-iso}.
\end{proof}

\subsection{Bipolar-oriented planar maps and Schnyder woods}
\label{sec:bipolar}

In this section we study the case of planar maps decorated by a bipolar orientation or Schnyder wood. We first do the necessary combinatorial arguments for finite bipolar-oriented maps in Section~\ref{subsub:bipolar}. Then we move to the infinite-volume setting in Section~\ref{subsub:ubom}, and explain why the \emph{uniform infinite bipolar-oriented map}, the local limit of finite bipolar-oriented maps, exists (the analogous arguments in the case of the other maps considered in this paper have already been carried out elsewhere). We then prove Theorem~\ref{thm-map-count} in the case of the uniform infinite bipolar-oriented map (case~\ref{item-kappa12}) in Section~\ref{subsub:bi-metric}. Finally in Section~\ref{subsub:Schnyder} we treat the case of bipolar-oriented maps with other face degree distributions (case~\ref{item-kappa>8}) and show that Schnyder wood-decorated maps (case~\ref{item-kappa16}) can be treated as a special case of bipolar-oriented maps.

\subsubsection{The mating-of-trees bijection for bipolar-oriented maps}\label{subsub:bipolar}
An \emph{orientation} on a graph is an assignment of a direction to each edge. A vertex is called a sink (resp. source) if there are no outgoing
(resp. incoming) edges incident to the vertex. Sources and sinks are also called \emph{poles}. A \emph{bipolar orientation} of $G$ with specified source and sink is an acyclic orientation with no source or sink except at the specified poles.  A \emph{bipolar-oriented  planar map} is a planar  map  $\map$, with a marked face $\outf$ and a bipolar orientation $\cO$ on $\map$ whose source and sink are both on the boundary of $\outf$.  Bipolar-oriented  planar maps have  rich combinatorial structures and numerous applications
in algorithms. For an overview of the graph theoretic perspective on bipolar orientations,
we refer to~\cite{fmor-bipolar} and references therein. See also~\cite{fps-counting-bipolar,bbf-bipolar-bijection} and the
references therein for enumerative and bijective results.

The plane $\R^2$ is naturally associated with the notion of east, west, south and north. It is known~\cite{abrams-kenyon-enharmonic} that any bipolar-orientated planar map  can be embedded into $\R^2$ in such a way that every edge is north oriented and the marked face is unbounded. See Figure~\ref{fig:bip1} for an example. Given a bipolar-oriented map $(\map,\outf,\cO)$, we always  embed it this way so that $\cO$ can be read off from the embedding and the source and sink can be identified as the south pole and north pole, respectively, of the map. We therefore denote the source and sink by $\spole(\map)$ and $\npole(\map)$, respectively.  For any edge $e\in \cE(\map)$ directed as in $\cO$, we also write $\spole(e)$ and $\npole(e)$, respectively, for the initial and terminal edge of $e$.
\begin{figure}
	\centering
	\includegraphics[scale=0.7]{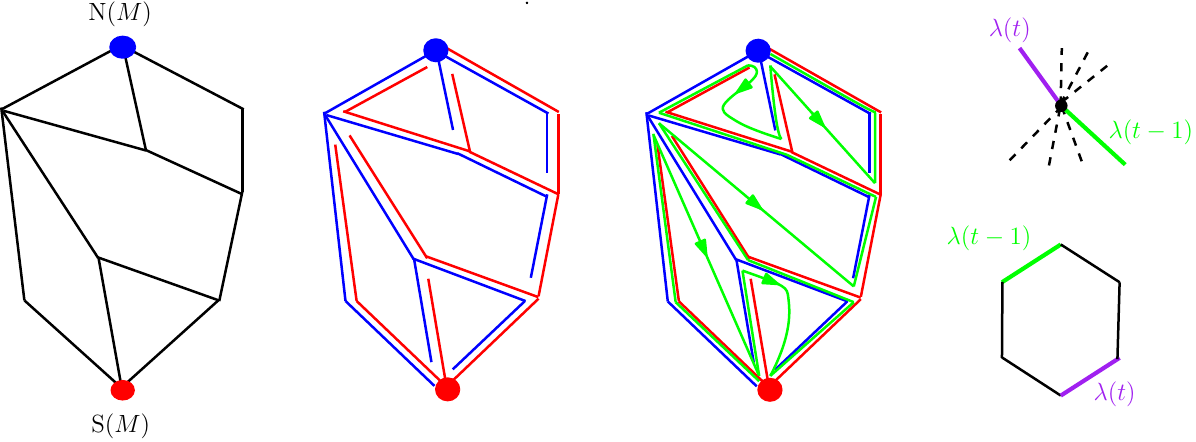}
	\caption[The bijection for bipolar-oriented maps]{
		In the first panel, we embed a bipolar-orientated map $(\map,\outf,\cO)$	so that each edge is pointing to the north direction.   
		We use blue and red vertices to represent their corresponding north and south poles.  The second panel illustrates the (blue) NW tree  and (red) SE tree. On the third panel, there is a path $\path$, drawn in green, as the interface between the NW and SE trees. The path $\path$ induces a total ordering  of $\cE(\map)$ which we still denote by $\path$. The fourth panel lists the two cases in the definition of $\path$ and $\wt\path$. In both cases $\path(t-1)$ is colored green and $\path(t)$ is colored purple. In case~\ref{item:path1} (top figure) $\wt\path(t)$ is the boldfaced vertex and in case~\ref{item:path2} (bottom figure) $\wt\path(t)$ is the face.
	}
	\label{fig:bip1}
\end{figure}

Let $\innV(\map):=\cV(\map)\setminus \{\npole(\map),\spole(\map) \}$ be the set of non-polar vertices of $\map$ 
and $\innF(\map):=\cF(\map)\setminus \{f_0\}$ the set of \emph{bounded} faces of $\map$. 
We call a bipolar-oriented map consisting of a simple cycle (separating a single bounded face from the unbounded face) a \emph{bipolar-oriented face}. 
The edges of a bipolar-oriented face can be divided into those which lie on the west and east boundary arcs connected the two poles, which we call \emph{west edges} and \emph{east edges}, respectively.
We can think of any face in $\innF(\map)$ as a bipolar-oriented face.  We call the clockwise (resp. counterclockwise) arc on $\outf$  from $\spole(\map)$ to $\npole(\map)$ the west (resp. east) boundary of $\map$. An edge $e$ is on both the west and east boundaries of $\map$ if and only if removing $e$ separates $\map$ into two connected components.

We will now review the mating-of-trees bijection for finite bipolar-oriented maps from~\cite{kmsw-bipolar}.
For any $v\in \cV(M)\setminus\{\npole(\map)\}$, there is a unique edge incident to $v$ which is oriented away from $v$ and is the furthest west among all such edges. We call this edge the \emph{NW (northwest) edge} of $v$. 
Similarly, there is a unique edge incident to $v$ which is oriented toward $v$ and is the furthest east among all such edges. We call this edge the \emph{SE (southeast) edge}  of $v$. 
The \emph{NW tree}  $\nwt$ of $(M,f_0,\cO)$ is the directed planar  tree  that can be drawn as follows: each NW
edge is entirely in $\nwt$; for each other edge, a segment containing the
head of the edge is in $\nwt$. 
Similarly, the SE tree $\set$ can be drawn as follows: each SE
	edge is entirely in $\set$; for each other edge, a segment containing the
	tail of the edge is in $\set$. 
Once $\nwt$ and $\set$ are drawn, we can draw an \emph{interface} path $\path$ winding between them
from $\spole(\map)$ to $\npole(\map)$.  The path $\path$ introduces an ordering on $\cE(\map)$, i.e. a mapping (still denoted by $\path$) from $[0,\#\cE(\map) -1]_{\BB Z}$ to $\cE(\map)$. It also defines a bijection $\wt\lambda:[1,\#\cE(\map) - 1]_{\BB Z}\to \innV(\map)\cup \innF(\map)$. 
We now give a formal definition of $\path,\wt\path$.
\begin{definition}\label{def:sewing}
	Set $\path(0)$ to be the NW edge of $\spole(\map)$.  Inductively, for $1\le t\le |\cE(\map)|-1$, given $\path(t-1)$:
	\begin{enumerate}
		\item \label{item:path1} If $\path(t-1)$ is the SE edge of  $\npole(\path(t-1))$, let $\path(t)$ be the NW edge of  $\npole(\path(t-1))$ and set $\wt \path(t):=\spole(\path(t))\in \cV(\map)$.
		\item \label{item:path2} Otherwise, let $\wt\path(t)$ be the unique  face where $\path(t-1)$ is a west edge and $\path(t)$ be the  unique east edge of $\wt\path(t)$ where $\spole(\path(t))=\spole(\wt\path(t))$.
	\end{enumerate}
\end{definition}

See Figure~\ref{fig:bip1} for an illustration of $\nwt, \set,\path,\wt\path$.

Suppose  $(\map,f_0,\cO)$ has $m+1$ edges on the west boundary and  $n+1$ edges on the east boundary. We can associate $(\map,f_0,\cO)$ with a walk $\cZ=(\cL,\cR)$ on $\Z^2$, starting from  $(m,0)$ when $t=0$ and ending at $(0,n)$ when $t= \#\cE(\map) -1$. For $t \in [1 , \# \cE(\map) -1]_{\BB Z}$,  if $\wt\path(t)\in \innV(\map)$, we define the increment $\Delta\cZ_t:=\cZ_t-\cZ_{t-1}$ to be $(-1,1)$.  If $\wt\path(t)\in \innF(\map)$, we set $\Delta\cZ_t=(i,-j)$ where $i+1$ and $j+1$ are the number of the east and west edges of the face $\wt\path(t)$ respectively.
It is shown in~\cite[Theorem 2]{kmsw-bipolar} that this procedure gives a bijection from bipolar-oriented maps to finite-length lattices walks on $\Z^2_{\ge 0}$ starting at the $x$-axis, ending at the $y$-axis, whose steps are in the set 
\begin{equation}\label{eq:step}
\{(-1,1)\} \cup \{(i,-j) : i,j \in \Z_{\ge 0} \}. 
\footnote{The walk in~\cite{kmsw-bipolar} is written as $(\cR,\cL)$, thus the set of steps differs from \eqref{eq:step} by a reflection against $\{x=y\}$.}
\end{equation}

Following \cite[Section~2.2]{kmsw-bipolar},  we now describe the  \emph{sewing procedure} in this case, which is a way to dynamically build a bipolar-oriented map $(\map,f_0,\cO)$ from the lattice path $(\cZ)_{0\le t\le \#\cE(\map) -1}$ of this type.  
For each $0\le k\le \# \cE(\map) -1$, we inductively associate $(\cZ_t)_{0\le t\le k}$ with  a bipolar-oriented map $(\map^k,\outf^k,\cO^k)$ with a marked  edge $e^k$ on its east boundary.  When $k=0$, the map $(\map^0,\outf^0,\cO^0)$  is just a directed edge $e^0$. For $0<k\le \#\cE(\map) -1$, suppose $(\map^{k-1},\outf^{k-1},\cO^{k-1},e^{k-1})$  is constructed.
\begin{enumerate}
	\item \label{item:sew1} If $\Delta\cZ_{k}=(-1,1)$  and $\npole (e^{k-1})\neq \npole (\map^{k-1})$, then  $(\map^{k},\outf^{k},\cO^{k})=(\map^{k-1},\outf^{k-1},\cO^{k-1})$ and $e^k$ is the unique edge on the east boundary of $\map^{k-1}$  with $\spole(e^k)=\npole(e^{k-1})$.
	\item\label{item:sew2} If  $\Delta\cZ_{k}=(-1,1)$  and  $\npole (e^{k-1})=\npole (\map^{k-1})$, then $(\map^{k},\outf^{k},\cO^{k},e^k)$ is obtained by attaching a north going edge $e^{k}$ to $(\map^{k-1},\outf^{k-1},\cO^{k-1})$ at $ \npole (\map^{k-1}) $ so that $\npole (\map^{k})=\npole (e^{k})$. 
	\item \label{item:sew3} If $\Delta\cZ_{k}=(i,-j)$ for some $i,j\ge 0$, to construct $(\map^{k},\outf^{k},\cO^{k},e^k)$, we glue a bipolar-oriented face $f^k$  with $i+1$ east edges and $j+1$ west edges to $\map^{k-1}$,  so that $\npole(f^k)=\npole(e^k)$, the west edges of $f^k$ are east edges of  $\outf^k$, and the east edges of $f^k$ are \emph{not} edges of $\map^{k-1}$. The marked edge $e^k$ is updated to be the unique east edge of $f^k$ satisfying $\spole(e^k)=\spole(f^k)$.
\end{enumerate}

\begin{figure}[h!]
	\centering
	\includegraphics[scale=0.65]{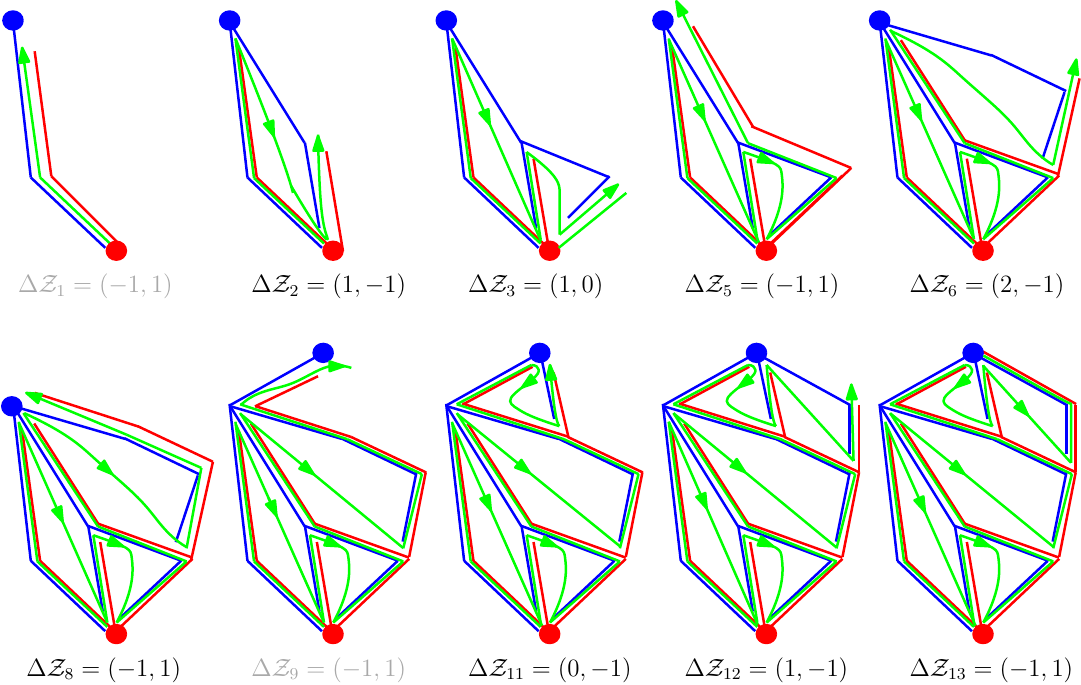}
	\caption[The sewing procedure for bipolar-oriented maps]{We present 10 snapshots of the sewing procedure for the bipolar-oriented map in Figure~\ref{fig:bip1} where  $k\in \{1,2, 3,5,6,8,9, 11,12,13\}$. The ten figures represent these $(\map^k,\outf^k,\cO^k)$'s in order with the corresponding incremental step below. 
		$\Delta\cZ=(-1,1)$ corresponds to a Type~\ref{item:sew1} or Type~\ref{item:sew2} move, i.e. adding an edge. Here the Type~\ref{item:sew2} moves are displayed in gray .  
		$\Delta\cZ\neq (-1,1)$ corresponds to a Type~\ref{item:sew3} move., i.e. adding a face.
		The marked edge $e^k$ of $(\map^k,\outf^k,\cO^k)$ is always the current edge visited by the (green) path $\path$, i.e. $\path(k)$.
		The final map $(\map^{13},\outf^{13},\cO^{13})$ recovers  Figure~\ref{fig:bip1}.}
	\label{fig:sewing}
\end{figure}

From the construction, $\{(\map^k,\outf^k,\cO^k)\}_{0\le k\le \# \cE(\map) -1}$ is an increasing family of bipolar-oriented maps with $\spole(\map)$ as its common south pole. See Figure~\ref{fig:sewing} for an illustration.  It is explained in~\cite{kmsw-bipolar} (and can be easily checked by induction) that the final map $(\map^{\# \cE(\map) -1},\outf^{\# \cE(\map) -1},\cO^{\# \cE(\map) -1})$ is the bipolar-oriented map  $(\map,f_0,\cO)$ producing $\cZ$,  thus each $(\map^k,\outf^k,\cO^k)$ can be considered as  a submap of $(\map,f_0,\cO)$.
Moreover, the number of edges from  $\spole (e^k)$ to $\spole(\map)$ (resp. $\npole (e^k)$ to $\npole(\map)$) on the east  boundary of $(\map^k,\outf^k,\cO^k)$ is equal to $\cR_k$ (resp. $\cL_k$). 
From \ref{item:path1} and~\ref{item:path2} in the Definition~\ref{def:sewing}  we observe  that
\begin{align*}
\path(k)&=e^k &&\textrm{for all} \,\, k,\\
\wt \path(k)&=f^k &&\textrm{whenever}\,\,  \wt \path(k)\in \innF(\map),\\ 
\wt \path(k)&= \spole (e^k) &&\textrm{whenever}\,\,  \wt \path(k)\in \innV(\map).
\end{align*}

By the definition of $\cZ$, $\lambda$ and the sewing procedure, we get the following.
\begin{lem}\label{lem:NW}
For two integers $i,j$, the edge $\lambda(i)$ is the NW edge of $\npole(\lambda(j))$ if and only if $i=\inf\{t>j: \cL_t=\cL_j-1 \}$.
Similarly, $\lambda(i)$ is the SE edge of $\spole(\lambda(j))$ if and only if $i=\sup\{t<j: \cR_t=\cR_j-1 \}$.
\end{lem}

The  following observation will allow us to apply Theorem~\ref{thm-sg-map-dist} in the proof of Theorem~\ref{thm-map-count} in the bipolar-oriented map case. 
In its statement we extend $\cZ$ to $[-1,\#\cE(\map)]_{\BB Z}$ and $\wt \lambda$ to $[0, \#\cE(\map)]_{\BB Z}$ by setting 
\begin{equation}\label{eq:ext}
\cZ_0-\cZ_{-1}=  \cZ_{\#\cE(\map) }-\cZ_{\# \cE(\map) -1} =(-1,1), \quad  \wt\lambda (0)=\spole(\map), \quad \textrm{and}\quad \wt\lambda(n)=\npole(\map).
\end{equation}

\begin{lem}\label{lem:LR}
	For $s,t\in [0,\#\cE(\map)]_{\BB Z}$ with $s<t$, we have that
	\begin{enumerate}
		\item \label{item:R} 
		$\wt\path(s)$ is the tail of a west edge on the boundary of $\wt \path(t)$ if and only if 
		\begin{equation}\label{eq:R}
		\cR_{s-1} \vee (\cR_{t}-1) < \min_{s\le j<t} \cR_j
		\end{equation} 
		\item\label{item:L} 
		$\wt\path(t)$ is the head of an east edge on the boundary of $\wt \path(s)$  if and only if 
		\begin{equation}\label{eq:L}
		\cL_{t} \vee (\cL_{s-1}-1) < \min_{s-1<j\le t-1} \cL_j
		\end{equation}
	\end{enumerate}
\end{lem}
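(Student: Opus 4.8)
The plan is to read both equivalences straight off the sewing procedure of Section~\ref{subsub:bipolar}, using that $\cR_k$ (resp.\ $\cL_k$) records the number of east-boundary edges of $\map^k$ lying below $\spole(e^k)$ (resp.\ above $\npole(e^k)$). I will do \ref{item:R} in detail; \ref{item:L} is the mirror statement --- it describes the other way a vertex and a face can touch (face explored before the vertex rather than after) --- and I will obtain it either by the same method with the roles of the two coordinates interchanged, or by the evident symmetry of the construction under reversing the orientation of $\map$ and rotating the plane by $\pi$, which fixes the class of bipolar-oriented maps, swaps west and east edges, swaps tails and heads, reverses the interface path, and sends $\cZ$ to its time-reversal with the two coordinates swapped; the migration of the ``$-1$'' between \eqref{eq:R} and \eqref{eq:L} and the shift of the minimisation range are precisely the off-by-one artefacts of this time reversal. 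The two extended values $s=0$ and $t=\#\cE(\map)$ are read off separately from \eqref{eq:ext}.

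For \ref{item:R}, write $v_k:=\spole(e^k)$, so $v_k$ sits on the east boundary of $\map^k$ with exactly $\cR_k$ east-boundary edges below it. First I would spell out how the east boundary evolves in one sewing step. Under a Type~\ref{item:sew1} or Type~\ref{item:sew2} move ($\Delta\cZ_k=(-1,1)$) the east boundary below height $\cR_{k-1}+1$ is unchanged and $v_k$ is exactly the vertex at that height (namely $\npole(e^{k-1})$, resp.\ $\npole(\map^{k-1})$), with $\cR_k=\cR_{k-1}+1$. Under a Type~\ref{item:sew3} move ($\Delta\cZ_k=(i,-j)$) the $j+1$ west edges of $f^k$ are glued onto the $j+1$ east-boundary edges of $\map^{k-1}$ lying between heights $\cR_k=\cR_{k-1}-j$ and $\cR_{k-1}+1$; hence the east boundary below height $\cR_k$ is unchanged, the $j$ vertices at heights $\cR_k+1,\dots,\cR_{k-1}$ become interior, the vertices at heights $\cR_k$ and $\cR_{k-1}+1$ survive (as $\spole(f^k)=v_k$ and $\npole(f^k)$), all heights above $\cR_{k-1}+1$ are shifted by $i-j$, and the tails of the $j+1$ west edges of $f^k$ are precisely the vertices that stood at heights $\cR_k,\dots,\cR_{k-1}$ just before the move.

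With this I would prove, by induction on $k\ge s$, the bookkeeping claim: \emph{$v_s$ lies on the east boundary of $\map^k$, at height $\cR_{s-1}+1$, if and only if $\cR_j>\cR_{s-1}$ for every $j\in[s,k]_{\BB Z}$.} The base case $k=s$ uses that $\cR_s=\cR_{s-1}+1$ forces step $s$ to be of Type~\ref{item:sew1}/\ref{item:sew2}; in the inductive step one checks from the move descriptions that a Type~\ref{item:sew3} move at time $k+1$ covers $v_s$ exactly when $\cR_{k+1}\le\cR_{s-1}$, relabels its height only when $\cR_k<\cR_{s-1}$ (excluded by the hypothesis), and otherwise leaves it literally at height $\cR_{s-1}+1$, while Type~\ref{item:sew1}/\ref{item:sew2} moves never affect it. Granting the claim, \ref{item:R} follows: if \eqref{eq:R} holds then $\cR_s=\cR_{s-1}+1$ (step $s$ is an edge move, so $\wt\path(s)=v_s$ is a vertex), $\cR_t\le\cR_{t-1}$ (step $t$ is Type~\ref{item:sew3}, so $\wt\path(t)=f^t$), the bookkeeping claim with $k=t-1$ puts $v_s$ on the east boundary of $\map^{t-1}$ at height $\cR_{s-1}+1$, and $\cR_t-1<\cR_s$ with $\cR_{s-1}<\cR_{t-1}$ give $\cR_t\le\cR_{s-1}+1\le\cR_{t-1}$, i.e.\ $v_s$ is a west-edge tail of $f^t$. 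Conversely, if $\wt\path(s)$ is a west-edge tail of $\wt\path(t)$ then $\wt\path(s)$ is a vertex and $\wt\path(t)$ a face, so steps $s,t$ have these types; were $\cR_k\le\cR_{s-1}$ for some $k\in(s,t-1]_{\BB Z}$, taking the least such $k$ and applying the bookkeeping claim up to time $k-1$ would have $v_s$ covered at step $k<t$, hence interior, contradicting $v_s\in\partial f^t$; so $\cR_j>\cR_{s-1}$ on $[s,t-1]_{\BB Z}$, and $v_s$ being a west-edge tail of $f^t$ forces its height $\cR_{s-1}+1\in[\cR_t,\cR_{t-1}]$, i.e.\ $\cR_t-1\le\cR_{s-1}$. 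Together these are \eqref{eq:R}.

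The main obstacle is exactly the bookkeeping claim of the third paragraph: Type~\ref{item:sew3} moves relabel heights along the east boundary, so one must verify not merely that a surviving vertex keeps \emph{some} position, but that under the running-minimum hypothesis $\cR_j>\cR_{s-1}$ the initial segment of the east boundary up to height $\cR_{s-1}+1$ is never touched, so that $v_s$ stays at the literal height $\cR_{s-1}+1$ throughout $[s,t-1]_{\BB Z}$; this is what lets one compare the west-boundary crossing heights of $f^t$ with a fixed integer. Once the three move types are written out, the rest is a routine induction, and running the same scheme along the east edges of the glued faces (the edges being \emph{created}), with $\cL$ read as height measured from $\npole(\map)$, gives \ref{item:L}.
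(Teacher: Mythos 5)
Your proof is correct and follows essentially the same route as the paper's: an induction along the sewing procedure identifying the lower part of the east boundary of $\map^k$ with the walk times satisfying a running-minimum condition (the paper records the whole list of boundary vertices via the backward ladder times $\tau^k_i$, you track the single vertex $v_s=\spole(e^s)$ and its height, which is the same bookkeeping), followed by the identical $180$-degree rotation/time-reversal trick for Assertion~\ref{item:L}. The only slip is that ``relabels its height only when $\cR_k<\cR_{s-1}$'' should read $\cR_k\le\cR_{s-1}$, but that case is excluded by your running-minimum hypothesis either way, so the argument is unaffected.
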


\begin{figure}
	\centering
	\includegraphics[scale=0.65]{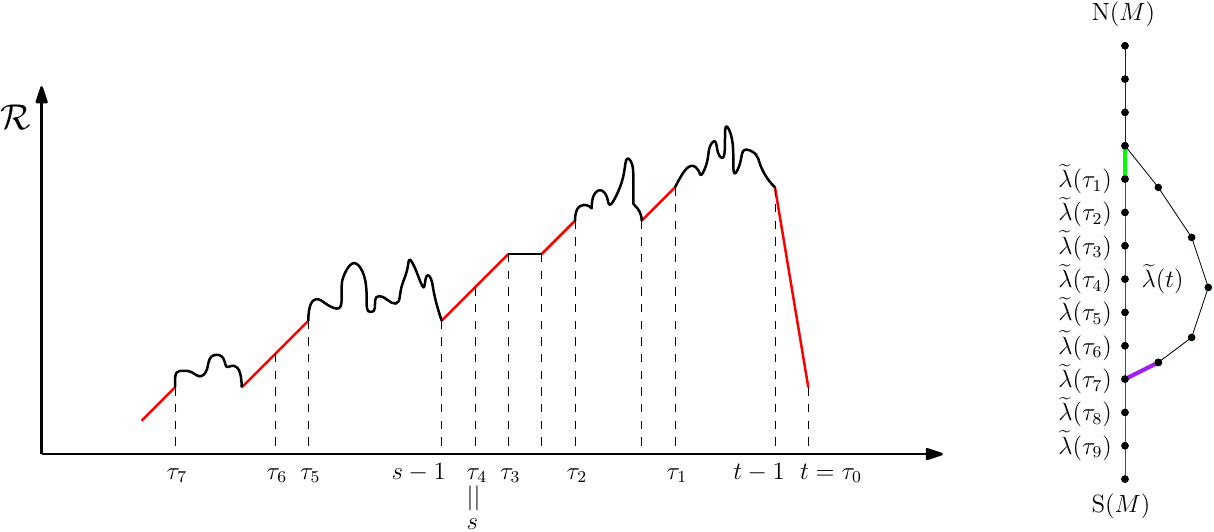}
	\caption[Illustration of the proof of Lemma~\ref{lem:LR}]{Illustration of the proof of Lemma~\ref{lem:LR}.  The left panel is an example of $\cR$ and $\tau^{t-1}_{\bullet}$ when $\Delta\cZ_{t}\in \Z_{\ge 0}\times \Z_{\le 0}$. We write $\tau^{t-1}_{\bullet}$ as $\tau_{\bullet}$ for simplification. In the right panel, the straight line represents the east  boundary of $\map^{t-1}$. The green edge is $\path(t-1)$ and the purple edge is $\path(t)$. The vertices to the south of $\npole(\path(t-1))$ are $\wt\path(\tau_1),\wt\path(\tau_2),\dots$ in order. At time $t$ we add a face $\wt\path(t)$ with west boundary length $\ell=7$. We see that  $\spole(\wt\path(t))=\wt\path(\tau_7)$. Moreover, in the left panel for $t,\cR$ fixed, the time $s$ satisfies the inequality \eqref{eq:R} if and only if $s\in \{\tau_i \}_{1\le i\le 7}$, which proves Assertion~\ref{item:R}.
	}
	\label{fig:R}
\end{figure}

\begin{proof}
	See Figure~\ref{fig:R} for an illustration of the proof.
	For all $k\in [0,\#\cE(\map)]_\Z$, let $\tau^k_0=k$. For $i\ge 0$ and $\tau^k_i\ge 0$, we set $\sup \emptyset=-\infty$ by convention and inductively define
	\eqbn
	\tau^k_{i+1}=\sup\{t<\tau^k_i:  \Delta \cZ_t=(-1,1) \;\textrm{and}\; \cR_t=\cR_{\tau^k_i}-1\}.
	\eqen
	We first prove the following claim by an induction on $k$:
	\begin{align}\label{eq:bdy}
	&\textrm{$\spole(e^k)=\wt \path(\tau^k_1)$ and the vertices on the east boundary of $\map^k$  between $\spole(e^k)$ and} \notag \\
	&\textrm{$\spole(\map^k)$ are $\wt \path(\tau^k_1),\wt \path(\tau^k_2),\cdots,\wt\path(\tau^k_{\cR_k+1})$ in order.}
	\end{align}
	For $k=0$, \eqref{eq:bdy} holds by \eqref{eq:ext}. 
	For $1\le k\le \#\cE(\map)$, assume \eqref{eq:bdy} holds for $k-1$. 
	If $\Delta \cZ_{k}=(-1,1)$,  then $\tau^k_i = \tau^{k-1}_{i-1}$ for $i \geq 1$ and by the definition of the sewing procedure we add an edge at time $k$ (so we do not cover up any boundary vertices), which shows that~\eqref{eq:bdy} holds for $k$.
	If $\Delta \cZ_{k}\in \Z_{\ge 0}\times \Z_{\le 0}$, then at time $k$ we add a face $\wt\path(k)$ with west boundary length $\ell_k:=\cR_{k}-\cR_{k-1}$. 
	Therefore $\tau^{k}_i=\tau^{k-1}_{\ell_k+i-1}$ for $i \ge 1$ and $\spole(e^k)=\wt \path(\tau^{k-1}_{\ell_k})=\wt \path(\tau^{k}_1)$. Thus \eqref{eq:bdy} holds for $k$.
	This proves~\eqref{eq:bdy}.
	Note that $s,t$ satisfies \eqref{eq:R} if and only if $s\in\{\tau^{t-1}_i: 1\le i\le \ell_{t-1}\}$. Now Assertion~\ref{item:R} follows from \eqref{eq:bdy}. 
	
	By rotating $\R^2$ by 180 degrees, we reverse the direction of each edge on $(\map,\outf,\cO)$ and swap the role of south and north as well as west and east. We can also reverse the path $\path$ and the walk $\cZ$. Then the role of $\cR$ is played by the time reversal of $\cL$ and vice versa.
	Here we note that after the time reversal, $(s-1,s,t-1,t)$ becomes $(t,t-1,s,s-1)$. This time reversal trick gives Assertion~\ref{item:L} from Assertion~\ref{item:R}.
\end{proof}

\begin{remark}\label{rmk:reversal}
	The time reversal trick in the proof of Lemma~\ref{lem:LR} will be used frequently.
\end{remark}

\subsubsection{ The uniform infinite bipolar-oriented map}\label{subsub:ubom}

	\begin{figure}
		\centering
		\includegraphics[scale=0.7]{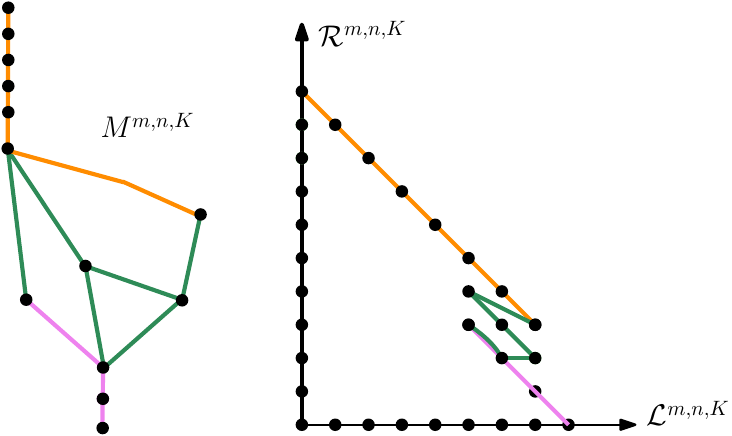}
		\caption{The walk $\cZ^{m,n,K}$ for $m=1$, $n=6$, $K=4$ and its corresponding bipolar-oriented map $(M^{m,n,K}, f^{m,n,K}_0,\cO^{m,n,K})$, adjusted from the fifth figure in Figure~\ref{fig:sewing}. The green edges are $\lambda^{m,n}(t)$ for $t\in [m,n]_\Z$. An edge of a certain color in the map corresponds to a step of the same color in the walk. }
		\label{fig:translate}
	\end{figure}
	In this section, we construct an  infinite planar map which will be the local limit of the uniform bipolar-oriented maps as the size tends to $\infty$ (see Proposition~\ref{prop:BS} and Definition~\ref{def:UIBOM}.).
	Consider a bi-infinite simple random walk $\cZ=(\cL,\cR)$ with $\cZ_0=0$ and the step distribution $\nu$ supported on \eqref{eq:step} such that
	\begin{equation}\label{eq:step1}
	\textrm{$\nu(i,-j) = 2^{-i-j-3}$ for $i,j\ge 0$ or $i=j=-1$}.
	\end{equation}
	As above, we write $\Delta \cZ_t=\cZ_t-\cZ_{t-1}$ for all $t\in \Z$. 
	For two integers $m<n$, we choose $K\in\N$ large enough so that $\cZ([m,n]_{\BB Z}) +(K,K)\subset (0,\infty)^2$.
	Let $x= \cL_{m}+\cR_{m}+2K$ and $y= \cL_{n}+\cR_{n}+2K$
	so that the straight line segment between $\cZ_{m}+(K,K)$ (resp., $\cZ_n+(K,K)$) and $(x,0)$ (resp., $(0,y)$) has slope $-1$.
	By concatenating the trajectory of $\cZ|_{[m,n]_{\BB Z}}  +(K,K)$ with these two line segments of slope $-1$, each traversed by steps of the form $(-1,1)$,
	we get a walk $\cZ^{m,n,K}$ with steps in \eqref{eq:step}, starting from $(x ,0)$ and ending at $(0,y )$.  
	Applying the sewing procedure in Section~\ref{subsub:bipolar} to $\cZ^{m,n,K}$ we obtain bipolar-oriented map $(M^{m,n,K}, f^{m,n,K}_0,\cO^{m,n,K})$. 
	We define $\lambda^{m,n,K}$ as $\lambda$ in Definition~\ref{def:sewing} with $(M^{m,n,K}, f^{m,n,K}_0,\cO^{m,n,K})$
	in place of $(M,f_0,\cO)$. 
	See Figure~\ref{fig:translate} for an illustration.

	Since $\cZ_t=\cZ^{m,n,K}_{\cR_m+K+t-m}$ for each $t\in[m,n]_\Z$, the map $M^{m,n,K}$ can be identified as a submap of $M^{m,n,K+1}$ with compatible edge orientations. In this identification,  for each $t\in[m,n]_{\BB Z}$,
	$\lambda^{m,n,K}(\cR_m+K+t-m)$ and $\lambda^{m,n,K+1}(\cR_m+K+1+t-m)$ represent  the same edge, which we denote by $\lambda^{m,n}(t)$.   This edge corresponds to the step $\Delta \cZ^{m,n,K}_{\cR_m+K+t-m}=\Delta \cZ_t$.
	If $M^{m,n,K}$ is removed from $M^{m,n,K+1}$, we are left with two disjoint edges, one oriented from $\npole(M^{m,n,K})$ to  $\npole(M^{m,n,K+1})$, the other from $\spole(M^{m,n,K+1})$ to 
	$\spole(M^{m,n,K})$.
	Taking the union $\bigcup_K M^{m,n,K}$, we obtain an infinite planar map $M^{m,n}$ with an edge orientation $\cO^{m,n}$ where  $\cO^{m,n}|_{\cE(M^{m,n,K})}=\cO^{m,n,K}$ for large enough $K$.
	Moreover, $M^{m,n,K}$ is a submap of $M^{m,n}$ such that $\cE(M^{m,n})\setminus \cE(M^{m,n,K})$ form two disjoint rays emanating from $\npole(M^{m,n,K})$ and $\spole(M^{m,n,K})$, respectively.  Concatenating  the ray from  $\npole(M^{m,n,K})$ to the NW tree of $M^{m,n,K}$, we obtain an infinite planar tree which we call the NW tree of $M^{m,n}$ that does not depend on $K$.  Similarly, by concatenating  the ray from  $\spole(M^{m,n,K})$ to  the SE tree of $M^{m,n,K}$, we obtain the SE tree of $M^{m,n}$.
	
	By the time reversal trick in the proof of Lemma~\ref{lem:LR}  (see also Remark~\ref{rmk:reversal}),  we have the following.
		\begin{lem}\label{lem:rev}
			For each $t\in\Z$, let $\ol\cZ_t=(\cR_{-t}, \cL_{-t})$.  For $m<n\in\Z$,  let $\wt \cO^{m,n}$ be the orientation obtained by reversing each arrow in $\cO^{m,n}$.  Then 
			$(M^{m,n}, f^{m,n}_0, \wt\cO^{m,n})$ is isomorphic to  $(\ol M^{-n,-m}, \ol f^{-n,-m}_0, \ol \cO^{-n,-m})$,  which is defined in the same way as $(M^{-n,-m}, f^{-n,-m}_0, \cO^{-n,-m})$ with $\cZ$ replaced by $\ol\cZ$.
		\end{lem}
		Recall that $M^{m,n}$ can be embedded so that $\cO^{m,n}$ is pointing upward for each edge. By  Lemma~\ref{lem:rev}, $(\ol M^{-n,-m}, \ol \cO^{-n,-m})$ is obtained by rotating  this picture  by 180 degrees.
	
	For $\ell\in [m,n]_{\BB Z}$,   let  $\{b_{\ell}^{m,n}(i)\}_{i\in\Z}\subset\cE(M^{m,n})$ be such that $b_\ell^{m,n}(0)=\lambda^{m,n}(\ell)$, $b^{m,n}_{\ell}(i+1)$ is the NW edge of $\npole(b^{m,n}_\ell(i))$ for $i\ge 0$, and   $b^{m,n}_{\ell}(i-1)$ is the SE edge of $\spole(b^{m,n}_\ell(i))$ for $i\le 0$.  
	Intuitively, we can think of $\{b^{m,n}_{\ell}(i)\}_{i>0}$ as the branch of the NW tree of $M^{m,n}$ from $\npole(\lambda^{m,n}(\ell))$ to infinity, and $\{b^{m,n}_{\ell}(i)\}_{i<0}$ as the branch of the SE tree of $M^{m,n}$ from $\spole(\lambda^{m,n}(\ell))$ to infinity.	
	\begin{lem}\label{lem:bdy}
		For $m'<m<n<n'$, $M^{m,n}$ is isomorphic to the  submap of $M^{m',n'}$ bounded by the bi-infinite lines $b^{m',n'}_{m}$ and $b^{m',n'}_n$ in $M^{m,n}$. Under this identification, $\cO^{m,n}|_{\cE(\map^{m,n})}=\cO^{m,n}$.  For each $t\in [m,n]_\Z$, we have  $\lambda^{m,n}(t)=\lambda^{m',n'}(t)$ and $b^{m',n'}_t=b^{m,n}_t$.
	\end{lem}
	\begin{proof}
		This can be checked straightforwardly from the definition of the sewing procedure in the finite volume setting.
		We leave the details to the reader.
	\end{proof}
	
	Now for each $n\in\N$ we let $e^0=\lambda^{-n,n}(0)$ be the root edge of $M^{-n,n}$. Taking the union $\bigcup_{n\in\N}M^{-n,n}$ , we obtain an infinite planar map $M$ with the root edge $e^0$ and an edge orientation $\cO$ such that $\cO|_{\cE(M^{-n,n})} =\cO^{-n,n}$ for each $n\in\N$.
	For each $t\in \Z$, let $\lambda(t)=\lambda^{-n,n}(t)$ for $n>|t|$. This defines a function $\lambda: \Z\to \cE(M)$. By definition $\lambda$ is an injection. Lemma~\ref{lem:face} below will show that $\lambda$ is in fact a bijection.

\begin{figure}
	\centering
	\includegraphics[scale=0.65]{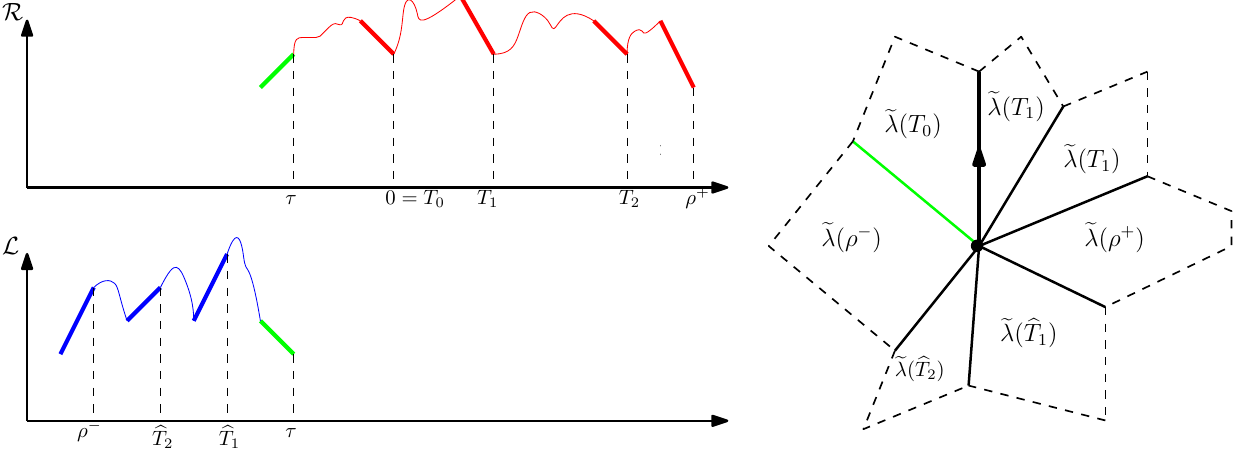}
	\caption[Illustration of the proof of Lemma~\ref{lem:inner}]{ 
		We use the notations in the proof of Lemma~\ref{lem:inner}.	\textbf{Left:} An illustration of the times $\tau,\rho^{\pm}, T_{\bullet},  \wh{T}_{\bullet}$.  The green step is $\Delta\cZ_\tau$.  \textbf{Right:} The corresponding picture for $\nb_0$. The arrowed edge is $e^0$. The green edge is  $\path(\tau)$. The faces of $\nb_0$ correspond to the blue and red steps in the left panel.  From Assertion~\ref{item:R} in Lemma~\ref{lem:LR}, the time $\tau$ is first time when $\spole(\path(\tau))=\spole(e^0)$.   Moreover, $\rho^+$ (resp. $\rho^{-}$) is the unique time $t$ such that $\wt\path(t)$ is a face and $\spole(e^0)$ is on the west (resp. east) boundary of $\wt\path(t)$ but is not a pole of $\wt\path(t)$.  Finally,  $\{\wt\path(T_i)\}_{\xi^-<i\le \xi^+}$ (resp. $\{\wt\path(\wh T_i)\}_{1 \le i\le \zeta}$) describes the set of faces where $\spole(e^0)$ is the south (resp. north) pole.
	}
	\label{fig:nb}
\end{figure}

So far there is no probability involved. 
	Now we prove a few probabilistic lemmas ensuring that almost surely  $M$ does not have certain  pathological behaviors.
	We will use the following notation.  For integers $m<n$, we call the union of the  east (resp., west) boundary of $M^{m,n,K}$ for all large enough $K$'s the  west (resp. east) boundary   of $M^{m, n}$.
	\begin{lem}\label{lem:inner}
		With probability $1$, there exists $n\in\N$ such that  $\spole(e^0)$, i.e., the tail of $e^0$, is not on the east or west boundary of $M^{-n,n}$.
\end{lem}
\begin{proof}
	Consider the following special times for $\cZ$:
	\allb
	&\tau :=\max\{ t\le 0:  \Delta\cZ_t=(-1,1)\;\textrm{and}\; \cR_t=\cR_0=0\} , \notag \\
	&\rho^+ :=\inf\{t\ge 0:\cR_t<0\} , \quad \op{and}\quad \rho^- :=\sup\{t<\tau: \Delta\cZ_t \not= (-1,1) \;\textrm{and}\; \cL_{t-1}\le \cL_{\tau-1} \} .
	\alle
	These times are illustrated in Figure~\ref{fig:nb}. Note that all three times are finite almost surely.  
	
	Let $K$ be so large that $M^{\rho^--1,\rho^+,K}$  is well defined and let $(M',f_0',\cO'):=(M^{\rho^--1,\rho^+,K}$, $f_0^{\rho^--1,\rho^+,K}$, $\cO^{\rho^--1,\rho^+,K})$
		Now we define a function $\wt\lambda: [\rho^-,\rho^+]_{\Z} \to \innV(\map')\cup \innF(\map')$ with $\wt\lambda$ as in Definition~\ref{def:sewing} but with a different interval of definition.
		For $t\in [\rho^-,\rho^+]_\Z$, if $\path(t-1)$ is the SE edge of $\npole(\path(t-1))$, set $\wt \path(t):=\spole(\path(t))$.
		Otherwise, let $\wt\path(t)$ be the unique  face where $\path(t-1)$ is a west edge. Here the notion of east/west/south/north is with respect to $(M',f_0',\cO')$.
		We define $\wt\lambda$ this way so that  Lemma~\ref{lem:LR}  can be adapted to this situation straightforwardly.
		
		Let $\nb_0$ be the subgraph  of $\map'$ consisting of all faces of $\map'$ with $\spole(e^0)$ on their boundaries,  and all vertices and edges of $\map'$ that are on the boundaries of such faces. 
		We claim that faces in $\nb_0$ all belong to $\innF(\map')$, which will imply Lemma~\ref{lem:inner} with $n=|\rho^-|\vee |\rho^+|$. 
	
	Let $\xi^+=\#\{t\in (0,\rho^+): \cR_t=0\}$  and $\xi^-=-\#\{t\in[\tau,0) : \cR_t=0 \}$.  Let   $\{T_i\}_{\xi^-  \le i  \le \xi^+}$ be the set of times in $[\tau,\rho^+]$ for which $\mcl R_t =0$, in increasing order. Note that $\tau=T_{\xi^-}$ and $T_0=0$. 
	Let $\zeta:=\#\{t\in (\rho^{-},\tau-1): \cL_t= \cL_{\tau-1} \}$ and let $\{\wh T_i\}_{1\le i  \le \zeta}$ represent this time  set in decreasing order.  
	Recall Lemma~\ref{lem:LR} and its proof. We make the following observations which imply the claim in the previous paragraph. We will justify them just below. 
	\begin{enumerate}
		\item \label{item:n1} $\{\path(T_i)\}_{\xi^-  \le i  \le \xi^+}$ is equal to the set of outgoing edges in $\nb_0$ in clockwise order.
		In particular,  $\path(\tau)$ is the NW edge of $\spole(\path(0))$.
		\item \label{item:n2}The cardinality of the set of  faces in $M'$ with $\spole(e^0)$  as south pole equals $\xi^+ -\xi^-$. When the set is nonempty, it equals $\{\wt\path(T_i)\}_{\xi^-<i\le \xi^+ }$.
		\item  \label{item:ne} 
		$\wt\path(\rho^+)$ is the unique face in $M'$ with $\spole(e^0)$ on its west boundary but not as one of its poles.
		\item \label{item:s} The cardinality of the set of faces  in $M'$ with $\spole(e^0)$ as  north pole equals $\zeta$. When the set is nonempty, it equals $\{\wt\path(\wh T_i)\}_{1\le i\le \zeta}$.
		\item  \label{item:w} 
		$\wt\path(\rho^-)$ is the unique face in $M'$ with $\spole(e^0)$ on its east boundary but not as one of its poles.
	\end{enumerate}
	See Figure~\ref{fig:nb} for an illustration of observations \ref{item:n1}-\ref{item:w}. Note that if $\Delta\cZ_0=(1,-1)$, then $\tau=0$. Otherwise $\tau=\inf\{s: \textrm{ $s$ and $0$ satisfy \eqref{eq:R}} \}$. In both cases $\spole(\path(\tau)) =\spole(e^0)$. Since $\Delta\cZ_\tau=(1,-1)$,  condition~\ref{item:path1} in the definition of $\path$ implies that $\path(\tau)$ is the NW edge of $\spole(\path(0))=\spole(e^0) $. Now $\{T_i\}_{\xi^-<i\le \xi}$ is the set of times $t$ satisfying $\tau=\inf\{s: \textrm{ $s$ and $t$ satisfy \eqref{eq:R}} \}$. Moreover, $\rho^+$ is the unique time $>\tau$ such that  $\tau$ and $\rho^+$ satisfy \eqref{eq:R} and $\cR_{\rho^+}<\cR_\tau$. This combined with Assertion~\ref{item:R} in Lemma~\ref{lem:LR} proves observations \ref{item:n1}-\ref{item:ne}. Recall Remark~\ref{rmk:reversal} and Lemma~\ref{lem:rev}. If we replace $\tau$ by $\tau-1$ and $\cR$ by the time reversal of $\cL$ in the above argument, we obtain observations~\ref{item:s}-\ref{item:w}. 
\end{proof}

\begin{lem}\label{lem:regular}
		The following occurs with probability 1. 
		Each vertex in $M$ has finite degree.  For each $r$, there exists $n=n(r)\in\N$ large enough such that $B_r(\spole(e^0);M)$ is a subgraph of $M^{-n,n}$ and has no vertices on the east or west boundary of $M^{-n,n}$.
	\end{lem}
	\begin{proof}
		By Lemma~\ref{lem:inner}, the degree of $\spole(e^0)$ is finite. Note that $e^0=\spole(\lambda(0))$.
		By the stationarity of $\cZ$,  Lemma~\ref{lem:inner} remains true if $\spole(e^0)$ is replaced by $\spole(\lambda(k))$ for each $k\in\Z$. 
		Let $v$ be an adjacent vertex of $\spole(e^0)$. Then by the proof of Lemma~\ref{lem:inner} there exists $k\in \Z$ such that $v=\spole(\lambda(k))$ or $v=\npole(\lambda(k))$. In the later case, we must  have $v=\spole(\lambda(k'))$
		where $k'=\inf\{t>k: \cL_t=\cL_k-1 \}$. Therefore we can always find $k\in\Z$ such that $v=\spole(\lambda(k))$. Thus there almost surely  exists $n_v\in\N$ such that $v\in \cV(M^{-n_v,n_v})$ and $v$ is not on the east or west boundary of $M^{-n_v,n_v}$. 
		Let $n(1)$ be the maximum over all such $n_v$'s. Then $n(1)$ satisfies the desired property for 
		$B_1	(\spole(e^0);M)$. Iterating this argument for each vertex in $B_1	(\spole(e^0);M)$ gives the desired result for $B_2(\spole(e^0);M)$. The general statement follows from further iterations.
	\end{proof}
	By Lemma~\ref{lem:regular}, given each $v\in\cV(M)$, by considering  $M^{-n,n}$ with large enough $n$, 
	we can talk about the SE and NW edges of $v$ as in  the finite volume case.
	\begin{lem}\label{lem:face}
		For each edge $e$ on $M$, there exists $n\in\N$ such that $e\in \cE(M^{-n,n})$ and $e$ has no endpoints on the east or west boundary of $M^{-n,n}$. In particular, there exists $k\in[n,n]_\Z$ such that $e=\lambda (k)$ and the two faces with $e$ on their boundaries belong to $\innF(M^{-n,n})$.
	\end{lem}
	\begin{proof}
		The existence of $M^{-n,n}$ in the first statement immediately follows from Lemma~\ref{lem:regular}.  By the definition of east and west boundaries, the two faces in $\cF(M^{-n,n})$ with $e$ on their boundaries belong to $\innF(M^{-n,n})$. Choose $K$ large enough so that $M^{-n,n,K}$ is well defined.
		By Lemma~\ref{lem:LR} and  the definition of $\cZ^{-n,n,K}$, for each $t<\cR_{-n}+K$ there is no face in $\innF(M^{-n,n,K})$ with  $\lambda^{-n,n,K}(t)$ being an edge. Therefore,  $\lambda^{-n,n,K}(t)$ must be on the west boundary of $M^{-n,n,K}$.
		Similarly, $\lambda^{-n,n,K}(t)$ must be on the east boundary of $M^{-n,n,K}$ for each $t>\cR_{-n}+K+2n$. Therefore there exists $k\in[-n,n]_\Z$ such that $e=\lambda^{-n,n}(k)=\lambda(k)$.
	\end{proof}
	By Lemma~\ref{lem:face}  $\lambda$ is a bijection between $\Z$ and $\cE(M)$. 
	Moreover, the degree of each face of $M$ is bounded. Therefore, given a face of $M$ we can define its poles and east/west boundaries as in the finite map case.
	Now for each $t\in \Z$, if $\path(t-1)$ is the SE edge of $\npole(\path(t-1))$, set $\wt \path(t):=\spole(\path(t))$.
	Otherwise, let $\wt\path(t)$ be the unique  face where $\path(t-1)$ is a west edge.  Then $\wt\lambda:\Z\to \cV(M)\cup\cF(M)$ is a bijection.  For each $t\in\Z$, both Items~\ref{item:path1} and~\ref{item:path2} in Definition~\ref{def:sewing} hold in this infinite volume setting. The following fact is an immediate consequence of this extension of definition.
	\begin{lem}\label{lem:inn-face}
		For $n\in\N$ and $K$ large enough such that $M^{-n,n,K}$ is well defined,  we have that $\wt \lambda([-n,n]_\Z)\cap \cF(M)=\innF(M^{-n,n,K})$.
	\end{lem}

We adapt the notion of Benjamini-Schramm convergence
\cite{benjamini-schramm-topology} to the setting of oriented rooted graphs. Given planar maps $G$ and $G'$ with oriented root edges $e$ and $e'$, respectively, the \emph{Benjamini-Schramm distance}~\cite{benjamini-schramm-topology} from $(G,e)$ to $(G',e')$ is defined by 
\[
d_{\mathrm{loc}} = \inf\{2^{-r}:r\in \N, B_r(o;G)\simeq B_r(o';G')  \} ,
\]
where $o$ and $o'$ are the tail of $e$ and $e'$ respectively and  $B_r(e;G)\simeq B_r(e';G')$ means that the two metric balls are isomorphic as oriented rooted planar maps.   Then $d_{\op{loc}}$ defines a metric on the space of finite rooted planar maps. Let $\mathfrak{M}$ be the Cauchy completion of the space of finite oriented rooted planar maps under $d_{\mathrm{loc}}$. Elements in $\mathfrak M$ which are not finite  are  infinite rooted oriented planar maps. 
The following proposition asserts that $M$ is the Benjamini-Schramm limit of the  uniform finite bipolar-oriented maps viewed from a typical edge.
\begin{prop}\label{prop:BS}
	Let $(\map(n), \cO(n))$ be a uniformly chosen bipolar-oriented map with $n$ edges, and let $e(n)$ be an edge  uniformly chosen in $\cE(\map(n))$. Then as $n\rta\infty$, $(\map(n),e(n),\cO(n) )$ converges in law to $(\map,e^0,\cO)$ defined above in the  Benjamini-Schramm topology. 
	
\end{prop}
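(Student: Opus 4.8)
The plan is to push the whole question onto the encoding walk, using that the mating--of--trees bijection of~\cite[Theorem~2]{kmsw-bipolar}, together with the canonical edge ordering $\path$, is a \emph{local} and \emph{canonical} rule. First I would record the following reformulation: a uniform bipolar--oriented map $(\map(n),\cO(n))$ with $n$ edges, equipped with a uniform edge $e(n)$, corresponds under that bijection to a uniformly random admissible walk $\cZ^{(n)}$ with $n-1$ increments --- confined to $\Z^2_{\ge 0}$, starting on one axis and ending on the other, with steps in the set~\eqref{eq:step} --- together with an \emph{independent} uniform marked time $t_n\in[0,n-1]_\Z$, and that re--rooting the map at $e(n)$ amounts to re--indexing the walk so that $t_n$ becomes $0$ (as in Remark~\ref{remark-edge-map}, by the same sewing argument). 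A direct computation then shows that $\prod_t\nu(\Delta\cZ^{(n)}_t)$, for $\nu$ as in~\eqref{eq:step1}, depends on the walk only through its length and its endpoints; hence $\nu$ is precisely the step law for which the uniform measure on admissible length--$(n-1)$ walks coincides with the law of an i.i.d.--$\nu$ walk conditioned on the polynomial--cost event $A_n$ that it stays in the quadrant for $n-1$ steps and has the prescribed boundary behaviour.

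Next I would establish locality of the sewing procedure. By Lemmas~\ref{lem:LR} and~\ref{lem:inner}, for the bi--infinite walk $\cZ$ the neighbourhood $\nb_{\spole(e^0)}$ --- all faces of $\map$ containing $\spole(e^0)$, with their incident vertices and edges --- is a deterministic function of the increments $(\Delta\cZ_k)$ for $k$ in the a.s.\ finite window $[\rho^--1,\rho^+]_\Z$, and the same holds, suitably translated, for the neighbourhood of any vertex. Iterating over the vertices of a radius--$r$ ball, and using that the mean--zero walk $\cZ$ has $\liminf$ of each coordinate equal to $-\infty$ in both time directions (so that $\map$ is well defined, locally finite, and every metric ball in $\map$ sits inside a submap built from finitely many increments), one gets that $B_r(o;\map)$, as an oriented rooted planar map, is a measurable function of $(\Delta\cZ_k)_{|k|\le K_r}$ for some a.s.\ finite random $K_r$. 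The identical combinatorial rule applies to the finite map: on the event that $t_n\in[K,n-1-K]_\Z$ and the recentred increments $(\Delta\cZ^{(n)}_{t_n+k})_{|k|\le K}$ coincide with a configuration for which the infinite--map rule already determines $B_r$ inside the window, the ball $B_r(o(n);\map(n))$ equals the ball produced from those increments by the same function.

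It then remains to show that the recentred increment sequence $(\Delta\cZ^{(n)}_{t_n+k})_{k\in\Z}$ converges, in the sense of finite--dimensional distributions, to an i.i.d.--$\nu$ sequence. Fixing a window of $2K$ coordinates and an admissible configuration $w$, and averaging over $t_n$ (the $o(n)$ times within distance $\delta n$ of the ends are negligible), it suffices to prove $\P[(\Delta\cZ^{(n)}_{t+k})_{-K<k\le K}=w\mid A_n]\rta\prod_k\nu(w_k)$ uniformly over bulk $t$. Conditioning on the walk value $\cZ^{(n)}_{t-K}$: under $A_n$ and for bulk $t$ this value lies at distance at least $M$ from both axes with probability tending to $1$ as $n\rta\infty$ and then $M\rta\infty$, so the quadrant constraint is inactive inside the window; then, given the two endpoints of the window, the windowed increments are an i.i.d.--$\nu$ bridge, and a Markov--property computation --- in which the Doob $h$--transform attached to $A_n$ is asymptotically trivial over $O(1)$ spatial and temporal scales in the bulk, producing the telescoping cancellation $\P_a[\text{reach }b\text{ in }2K\text{ steps}]\cdot\prod_k\nu(w_k)\big/\P_a[\text{reach }b\text{ in }2K\text{ steps}]=\prod_k\nu(w_k)$ --- yields the claim. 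Combining this with the locality of the previous paragraph and a routine tightness step (choose $K$ with $\P[K_r>K]<\epsilon$ under i.i.d.--$\nu$; the same holds within $2\epsilon$ for the recentred finite walk once $n$ is large; on the complementary good events the radius--$r$ balls agree in law up to $4\epsilon$; let $\epsilon\rta 0$) gives $\P[B_r(o(n);\map(n))\simeq(G,e)]\rta\P[B_r(o;\map)\simeq(G,e)]$ for every finite oriented rooted map $(G,e)$ and every $r$, which is the asserted Benjamini--Schramm convergence.

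The step I expect to be the real obstacle is the input used just above: that a uniformly chosen marked time $t_n$ falls, with probability tending to $1$, where the walk is far --- on a scale much larger than the fixed window size --- from the boundary of the quadrant, which is what makes conditioning on quadrant confinement locally invisible. Making this precise requires either the Brownian--motion--in--a--cone scaling limit for the uniform bipolar--oriented walk (implicit in~\cite{kmsw-bipolar}), or classical asymptotics for random walks conditioned to stay in a cone (yielding that the bulk coordinates of the walk are of order $\sqrt n$, and that $A_n$ has polynomial probability); everything else is the standard ``bijection $+$ locality $+$ local limit'' template for establishing a Benjamini--Schramm limit, and I would also need to check carefully the finite--map analogue of Lemma~\ref{lem:inner} and the fact that $\map$ is locally finite.
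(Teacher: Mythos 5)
Your skeleton matches the paper's: reformulate via the mating-of-trees bijection so that the uniform map with a uniform marked edge becomes a conditioned walk with a uniform recentring time, then use locality of the sewing procedure (Lemma~\ref{lem:inner} and the iteration of~\eqref{eq:nb}, packaged through a.s.\ finite forward/backward stopping times) to reduce Benjamini--Schramm convergence to local convergence of the recentred walk. Where you genuinely diverge is in how you prove that local convergence. The paper only needs that $\P[E_n]$ decays sub-exponentially (quoted from~\cite{dw-limit,dw-cones}) and then invokes Cram\'er's theorem, as in~\cite[Section~4.2]{shef-burger}: if a fixed local pattern had noticeably different probability under the conditioned recentred walk than under $\cZ$, averaging over the uniform time would force a linear number of ``atypical'' windows, an exponentially unlikely event, contradicting the sub-exponential cost of $E_n$. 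Your route instead goes through bulk estimates for walks in a cone (the marked time sits at distance $\gg 1$ from the quadrant boundary, the Doob $h$-transform is locally trivial there), which requires strictly more refined input (local CLT/harmonic-function asymptotics in cones rather than just a lower bound on $\P[E_n]$), but in exchange would give quantitative local total-variation control rather than a soft coupling. Both are viable; the paper's argument is the cheaper one.

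Two places in your write-up need repair before this is a proof. First, the identification in your opening paragraph is not literally correct as stated: the weight $\prod_t\nu(\Delta\cZ_t)$ equals $2^{m+n'}\cdot 2^{-3(n-1)}$ where $m+1,n'+1$ are the boundary lengths, so it is \emph{not} constant over admissible walks, and conditioning an i.i.d.-$\nu$ walk on quadrant confinement does not by itself produce the uniform measure; you must randomize the boundary heights with the compensating geometric weights, which is exactly the role of the auxiliary pair $(\xi_1,\xi_2)$ with $\P[(\xi_1,\xi_2)=(i,j)]=2^{-i-j-2}$ in the paper (equivalently~\cite[Remark~2]{kmsw-bipolar}). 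Second, in your window argument the displayed ``telescoping cancellation'' is circular: if you condition on \emph{both} endpoints of the window you get an i.i.d.-$\nu$ \emph{bridge}, whose law is not $\prod_k\nu(w_k)$. What you actually need is to condition only on the position at the left edge of the window and show that the probability of the residual confinement event after the window is asymptotically insensitive to the $O(1)$ displacement produced by the window, i.e.\ that ratios of cone-confinement probabilities (equivalently of the cone harmonic function) tend to $1$ uniformly in the bulk --- precisely the Denisov--Wachtel-type input you flag as ``the real obstacle.'' With those two points fixed, your argument goes through; alternatively, the Cram\'er route lets you bypass the bulk analysis entirely.
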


\begin{proof}
	Let $(\xi_1,\xi_2)$ be independent of $\cZ$ so that $\P[(\xi_1,\xi_2)=(i,j)]=2^{-i-j-2}$ for all $(i,j)\in \Z_{\ge 0}\times \Z_{\ge 0}$.
	Let $E_n$ be the event that $\{(\xi_1,0)+\cZ_t\}_{0\le t\le n-1}$ is a walk starting at $(\xi_1,0)$, ending at $(0,\xi_2)$ and staying in $ \Z_{\ge 0}\times \Z_{\ge 0}$ during $[0,n-1]$. 
	Then  conditioning on $E_n$ and applying the sewing procedure to $\{(\xi_1,0)+\cZ_t\}_{0\le t\le n-1}$, we obtain a sample of $(\map(n), \cO(n))$ (see \cite[Remark 2]{kmsw-bipolar}).  
	Let $\path_n$ be the edge-valued function associated with this sewing procedure and let $U_n$ be a uniform sample from $[0,n-1]_{\Z}$. Then $(\map(n), \path(U_n), \cO(n))$ is a sample of $(\map(n), e(n),\cO(n))$.
	
	Now let $\cZ^n(\bullet)$ be sampled from the conditional law of $\cZ(\bullet+U_n)-\cZ(U_n)$ given $E_n$.  Standard results for random walk in cones (see, e.g., \cite[Theorem 4]{dw-limit} and\cite[Theorem 1]{dw-cones})  yield that as long as  $\P[E_n]>0$, it holds that $\P[E_n]$ decays sub-exponentially in $n$.  Therefore, Cramer's large deviation theorem implies that for all $N>0$ the walks  $\cZ^n$ and $\cZ$ can be coupled so that their restrictions to $[-N,N]$ agree with probability $1-o_n(1)$ (see \cite[Section 4.2]{shef-burger} for a similar application of Cramer's theorem). Now Proposition~\ref{prop:BS} follows from Lemma~\ref{lem:regular}.\qedhere
\end{proof}

	For each $\ell\in \Z$,  let $M_{\ell,\infty}$ be the submap of $M$ obtained by taking the union $\bigcup_{n>\ell}M^{\ell,n}$, rooted at the edge $\lambda(\ell)$. Then $M_{\ell,\infty}$ has a bi-infinite boundary, which coincides with the west boundary of  $M^{\ell,n}$ for each integer $n>\ell$.  List these boundary edges as $\{b_\ell(i)\}_{i\in\Z}$ such that $b_\ell(0)=\lambda(\ell)$ and $\npole(b_\ell(i))=\spole(b_\ell(i+1))$ for each $i\in\Z$. 
	By  Lemma~\ref{lem:bdy}, we have the following. 
	\begin{lem}\label{lem:half}
		In the setting right above,  almost surely,  $b_\ell(i+1)$ is the NW  edge of $\npole(b_\ell(i))$ for each $i\ge 0$, and  $b_\ell(i-1)$ is the SE  edge of  $\spole(b_\ell(i))$ for each $i\le 0$.
	\end{lem}
	For each $\ell\in\Z$, we can also consider $M_{-\infty, \ell}=\bigcup_{n<\ell}M^{n,\ell}$, where $\lambda(\ell)$ is assigned to be the root edge. Then $M_{-\infty,\ell}$ and $M_{\ell,\infty}$ share the same boundary.
	\begin{definition}\label{def:UIBOM}
		We call $(\map,e^0,\cO)$ the \emph{uniform infinite bipolar-oriented map} ($\UBOM$). We call $\{M_{-\infty,\ell}\}_{\ell\in\Z}$ and $\{M_{\ell,\infty}\}_{\ell\in\Z}$  the forward and backward \emph{sewing procedure} of $(\map,e^0,\cO)$, respectively.
	\end{definition}
	\begin{prop}\label{prop:bijection}
		For each $\ell\in\Z$, $\{\cZ_k-\cZ_\ell\}_{k\le \ell}$ and $(M_{-\infty,\ell}, \lambda(\ell),\cO|_{\cE(M_{-\infty,\ell})})$ almost surely determine each other. The same statement holds for $M_{\ell,\infty}$ with $\{\cZ_k-\cZ_\ell\}_{k\le \ell}$  replaced by $\{\cZ_k-\cZ_\ell\}_{k\ge\ell}$. 
		In particular, $\cZ$ and $(M,e^0,\cO)$ almost surely determine each other.
	\end{prop}
	\begin{proof}
		For $n>\ell$, by definition $M^{ \ell , n}$ is determined by $\{\cZ_k-\cZ_\ell\}_{k\in [\ell,n]_\Z}$.  Therefore $\{\cZ_k-\cZ_\ell\}_{k\ge \ell}$ determines
		$(M_{\ell,\infty}, \lambda(\ell),\cO|_{\cE(M_{\ell,\infty})})$
		Now given  $(M_{\ell,\infty}, \lambda(\ell),\cO|_{\cE(M_{\ell,\infty})})$,
		let $f$ be the face  with $\lambda(\ell)$ on its west boundary. According to whether $\npole(\lambda(\ell))=\npole(f)$, we can decide $\lambda(\ell+1)$.   By Lemma~\ref{lem:half}, $(M_{\ell+1,\infty}, \lambda(\ell+1),\cO|_{\cE(M_{\ell+1,\infty})})$
		is determined by $(M_{\ell,\infty}, \lambda(\ell),\cO|_{\cE(M_{\ell,\infty})})$. By induction, $(M_{k,\infty}, \lambda(k),\cO|_{\cE(M_{k,\infty})})$ is determined by $(M_{\ell,\infty}, \lambda(\ell),\cO|_{\cE(M_{\ell,\infty})})$ for all $k> \ell$. This allows us to recover $\{\Delta\cZ_t\}_{t>\ell }$ hence 	$\{\cZ_k-\cZ_\ell\}_{k\in [\ell,n]_\Z}$.  Using the time reversal trick in Lemma~\ref{lem:rev},  we obtain the same result for $M_{-\infty,\ell}$ and $\{\cZ_k-\cZ_\ell\}_{k\le \ell}$.
	\end{proof}
	
	Given $v\in \cV(\map)$, we define $\nb_v$ to be the subgraph of $\map$ consisting of all faces of $\map$ with $v$ on their boundaries and all vertices and edges of $\map$ on the boundaries of such faces. In the proof of Lemma~\ref{lem:inner}, if $M'$  is identified as a subgraph of $M$, then $\nb_{\spole(e^0)}$ equals $\nb_0$ defined there. 
For $f\in \cF(\map)$, let $\nb_f$ be subgraph of $\map$  given by the union of $\nb_v$'s  where $v$ is a  boundary vertex of $f$, i.e., $\nb_f$ is the set of all faces of $\map$ which share a vertex with $f$ and all vertices and edges which lie on the boundaries of such faces.
Let  
\eqbn
|\nb_x|:=\#\cV(\nb_x)+\#\cF(\nb_x)+\#\cE(\nb_x) ,\quad \forall x\in \cV(\map)\cup\cF(\map).
\eqen
We  conclude this section by a degree-bound type estimates for the $\UBOM$.  
\begin{lem} \label{lem-12-deg}
	For all fixed $i\in \Z$, set $\nb_i:=\nb_{\spole(\path(i))}$ and  $\wt \nb_i:=\nb_{\wt\path(i)}$. Then the laws of both $|\nb_i|$ and $| \wt \nb_i| $ do not depend on $i$ and have exponential tails. In particular, the degrees of the two endpoints of the root edge of the $\UBOM$ have exponential tails.
\end{lem}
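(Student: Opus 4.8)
\textbf{Reduction to walk events.} The plan is to express $|\nb_i|$ and $|\wt\nb_i|$ purely in terms of explicit functionals of the encoding walk $\cZ$, and then extract exponential tails from the step distribution \eqref{eq:step1} together with standard random walk estimates. By the stationarity of the increments of $\cZ$ (the step distribution $\nu$ does not depend on time), the translated walk $j\mapsto\cZ_{j+i}-\cZ_i$ has the same law as $\cZ$ and encodes the $\UBOM$ rooted at $\path(i)$; hence the law of $|\nb_i|$ (resp.\ $|\wt\nb_i|$) does not depend on $i$, and it suffices to treat $i=0$. For the vertex $v=\spole(e^0)=\spole(\path(0))$, Lemma~\ref{lem:inner} already identifies the faces of $\nb_0$ as the west, east, north and south faces, and the key inclusion \eqref{eq:nb} gives
\eqbn
\cF(\nb_0)\subset \cF(\map_{\rho^- -1,\infty})\setminus \cF(\map_{\rho^+,\infty}) .
\eqen
Thus the number of faces of $\nb_0$ is bounded by $\rho^+ - \rho^- + 1$, and each such face $\wt\lambda(t)$ has boundary length controlled by the corresponding step $\Delta\cZ_t$; summing the contributions of the at most $\rho^+-\rho^-+1$ steps in the window $[\rho^-,\rho^+]_{\BB Z}$ bounds $\#\cV(\nb_0)+\#\cE(\nb_0)$ as well, so $|\nb_0| \leq C\sum_{t=\rho^-}^{\rho^+}(|\Delta\cZ_t|+1)$ for a universal constant $C$.

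\textbf{Exponential tails for the walk functionals.} The three relevant times are $\tau=\max\{t\le 0:\Delta\cZ_t=(-1,1),\ \cR_t=0\}$, $\rho^+=\inf\{t\ge 0:\cR_t<0\}$, and $\rho^-=\sup\{t<\tau:\Delta\cZ_t\neq(-1,1),\ \cL_{t-1}\le\cL_{\tau-1}\}$. Since $\cR$ has mean-zero i.i.d.\ increments with exponential moments in both directions (from \eqref{eq:step1}), standard hitting-time estimates for one-dimensional random walk give that $\rho^+$ and $|\tau|$ have exponential tails — e.g.\ $\rho^+$ is dominated by a geometric sum of excursions of $\cR$ above its running minimum, each of finite exponential moment, and similarly for $\tau$ using the reversed walk. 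The time $\rho^-$ requires a two-step argument: first condition on $(\tau,\cL_{\tau-1})$, which has exponential tails by the previous step, and then observe that $\tau-\rho^-$ is dominated by the first time, going backward from $\tau-1$, that the reversed $\cL$-walk makes a non-$(-1,1)$ step at a level $\le\cL_{\tau-1}$; since a positive fraction of steps are of type $(i,-j)$ and the reversed walk is recurrent with exponential-moment increments, this backward hitting time has an exponential tail conditionally on $(\tau,\cL_{\tau-1})$, hence unconditionally. Combining, $\rho^+-\rho^-+1$ has an exponential tail. Finally, on the window $[\rho^-,\rho^+]_{\BB Z}$ the sum $\sum(|\Delta\cZ_t|+1)$ is a sum of a number (with exponential tail) of i.i.d.\ terms each with exponential moments, which again has an exponential tail; this handles $|\nb_0|$. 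For $\wt\nb_0=\nb_{\wt\lambda(0)}$, note $\wt\lambda(0)$ is either a vertex or a face with boundedly many boundary vertices (its boundary length is controlled by $\Delta\cZ_0$, which has an exponential tail), so $|\wt\nb_0|\le\sum_{v}|\nb_v|$ over those vertices, and each $|\nb_v|$ has an exponential tail by the stationarity argument and the first case; a union bound over the (exponentially-tailed number of) boundary vertices preserves the exponential tail.

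\textbf{Conclusion and main obstacle.} The degrees of the two endpoints of the root edge $e^0$ are bounded by $|\nb_0|$ and $|\nb_{\npole(e^0)}|$ respectively, and $\npole(e^0)=\spole(\path(i))$ for the appropriate $i$ (or is covered by the same reversed-walk argument), so the final assertion follows. I expect the main obstacle to be the treatment of $\rho^-$: unlike $\rho^+$ and $\tau$ it is not a hitting time of a single coordinate but involves the interplay of $\cL$ with the level $\cL_{\tau-1}$ determined by the $\cR$-walk, so the cleanest route is to first condition on the $\sigma$-algebra generated by $\{\cZ_t\}_{t\le\tau}$ up to time $\tau$ (or on $(\tau,\cZ_\tau)$), verify that conditionally the relevant backward increments are still i.i.d.\ with exponential moments, and only then apply the one-dimensional hitting-time bound; care is needed because $\tau$ is itself a backward stopping time, so one should use the strong Markov property for the reversed walk rather than naive independence.
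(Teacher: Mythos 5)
There is a genuine gap, and it sits at the heart of your argument: the claim that $\rho^+$, $|\tau|$, and $\tau-\rho^-$ have exponential tails is false. The coordinate $\cR$ is a mean-zero random walk with i.i.d.\ increments, so $\rho^+=\inf\{t\ge 0:\cR_t<0\}$ is a first (strict descending) ladder epoch; for such walks $\BB P[\rho^+>n]\asymp n^{-1/2}$ (in particular $\E[\rho^+]=\infty$), and it is \emph{not} true that excursions above the running minimum have finite exponential moments. The same heavy-tailed behaviour holds for the backward return/ladder times defining $\tau$ and $\rho^-$. Consequently your reduction $|\nb_0|\le C\sum_{t=\rho^-}^{\rho^+}(|\Delta\cZ_t|+1)$, while a valid inequality, bounds $|\nb_0|$ by a quantity whose tail decays only polynomially; the inclusion \eqref{eq:nb} is far too lossy to give an exponential tail, because the window $[\rho^-,\rho^+]$ is typically of polynomial length even though $\nb_0$ itself is small.

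The paper's proof avoids this by never bounding $|\nb_0|$ by the length of the time window. Instead it uses the structural description from Lemma~\ref{lem:inner}: the faces of $\nb_0$ are exactly the faces $\wt\path(T_i)$, $\wt\path(\wh T_i)$, $\wt\path(\rho^\pm)$ attached at the special times $T_i,\wh T_i$ at which $\cR$ (resp.\ the reversed $\cL$) returns to the exact minimal level. The \emph{number} of such times, $\xi^+-\xi^-$ and $\zeta$, is geometric (at each return there is a fixed positive probability of dropping strictly below, by the Markov property), and conditionally the increments $\Delta\cZ_{T_i}$, $\Delta\cZ_{\wh T_i}$ at those times are i.i.d.\ with exponential tails by \eqref{eq:step1}; Lemma~\ref{lem:LD} then gives the exponential tail for $|\nb_0|$, with the time-reversal trick of Remark~\ref{rmk:reversal} handling the northern analogue. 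A second point where your sketch is too quick is $\wt\nb_0$ when $\wt\path(0)$ is a face: a ``union bound over an exponentially-tailed number of exponentially-tailed terms'' does not by itself give an exponential tail for the sum (the relevant threshold per term becomes a constant), which is why the paper constructs the stopping times $t_k$ so that the blocks $\nb^+_{t_k}$ covering $\wt\nb_0\cap\map_{0,\infty}$ are genuinely i.i.d.\ and Lemma~\ref{lem:LD} applies. To repair your proof you would need to replace the $[\rho^-,\rho^+]$-window bound by this kind of ladder-time/geometric-count argument.
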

For the proof of Lemma~\ref{lem-12-deg}, we need the following elementary estimate.
\begin{lem}\label{lem:LD}
	Suppose $\{X_i\}_{i\ge 1}$ is a sequence of i.i.d.\ nonnegative random variables such that 
	$\E [ e^{bX_1} ]<\infty$ for some $b>0$. 
	Let $\xi$ be a nonnegative  random variable (not necessarily independent from the $X_i$'s) with an exponential tail. Then there exists $c>0$ only depending on the law of $X_1$ and $\xi$ so that
	\[
	\P\Big[ \sum_{i=1}^\xi X_i \ge n\Big] \le e^{-c n} .
	\] 
\end{lem}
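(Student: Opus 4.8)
\textbf{Proof plan for Lemma~\ref{lem:LD}.} The statement is a routine large-deviations fact: a random sum of i.i.d.\ summands with a finite exponential moment, indexed by an independent-or-not stopping-type count $\xi$ with an exponential tail, itself has an exponential tail. The plan is to split according to whether $\xi$ is small or large relative to $n$, and to use the exponential moment of the $X_i$ to control the sum on the event $\{\xi \le \delta n\}$ and the exponential tail of $\xi$ to control the complementary event.

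First I would fix a small $\lambda \in (0,b)$ for which $M:=\E[e^{\lambda X_1}] < \infty$. By Markov's inequality applied to $e^{\lambda\sum_{i=1}^m X_i}$, for any deterministic $m$ we have $\P\big[\sum_{i=1}^m X_i \ge n\big] \le M^m e^{-\lambda n}$. Choosing $\delta>0$ small enough that $M^\delta \le e^{\lambda/2}$ (possible since $M^\delta \to 1$ as $\delta\to 0$), a union bound over $m \le \delta n$ gives $\P\big[\sum_{i=1}^{\lfloor \delta n\rfloor} X_i \ge n\big] \le \lceil \delta n\rceil\, e^{\lambda \delta n} e^{-\lambda n} \le e^{-\lambda n/2 + o(n)} \le e^{-\lambda n/4}$ for $n$ large, using the monotonicity of the partial sums in $m$ (the $X_i$ are nonnegative). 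Since on the event $\{\xi \le \delta n\}$ we have $\sum_{i=1}^\xi X_i \le \sum_{i=1}^{\lfloor \delta n\rfloor} X_i$, this bounds $\P\big[\sum_{i=1}^\xi X_i \ge n,\ \xi \le \delta n\big]$.

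For the complementary event, I would simply use the exponential tail hypothesis on $\xi$: there are $a_0,a_1>0$ with $\P[\xi \ge t] \le a_0 e^{-a_1 t}$, so $\P[\xi > \delta n] \le a_0 e^{-a_1 \delta n}$. Adding the two contributions and setting $c := \tfrac14\min(\lambda, a_1\delta)$ (shrinking $c$ further to absorb the polynomial and constant prefactors and to handle the finitely many small $n$) yields $\P\big[\sum_{i=1}^\xi X_i \ge n\big] \le e^{-cn}$ for all $n\ge 1$, with $c$ depending only on the laws of $X_1$ and $\xi$.

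There is no real obstacle here; the only mild subtlety is that $\xi$ need not be independent of the $X_i$, which is why the argument must be phrased as a deterministic bound on the partial sums $\sum_{i=1}^{\lfloor\delta n\rfloor}X_i$ (valid on the whole event $\{\xi\le\delta n\}$ regardless of dependence) rather than via a conditioning/Wald-type computation. The nonnegativity of the $X_i$ is used twice: once to say $\sum_{i=1}^\xi X_i \le \sum_{i=1}^{\lfloor\delta n\rfloor}X_i$ on $\{\xi\le\delta n\}$, and once implicitly in the union bound over $m$. This completes the plan.
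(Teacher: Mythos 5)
Your proposal is correct and follows essentially the same route as the paper: split on $\{\xi\le\delta n\}$, dominate the random sum by the deterministic-length sum $\sum_{i=1}^{\lfloor\delta n\rfloor}X_i$ using nonnegativity, and use the exponential tail of $\xi$ for the complementary event; the paper simply cites Cram\'er's theorem for the fixed-length sum where you carry out the Chernoff/Markov bound explicitly (and your union bound over $m\le\delta n$ is superfluous, since monotonicity already reduces everything to $m=\lfloor\delta n\rfloor$, but it is harmless).
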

\begin{proof}
	By a union bound, for $\delta > 0$, 
	\begin{align*}
	\P\Big[ \sum_{i=1}^\xi  X_i \ge n\Big] \le \P [\xi\ge \delta n ]+ \P\Big[ \sum_{i=1}^{\lfloor \delta n \rfloor} X_i \ge n\Big].
	\end{align*}
	By choosing $\delta$ small enough according to the law of $X_1$,  the lemma follows from Cramer's Large Deviation Theorem.  
\end{proof}

\begin{proof}[Proof of Lemma~\ref{lem-12-deg}]
	The stationarity of $\{\nb_i \}_{i\in \Z}$ and  $\{\wt \nb_i \}_{i\in \Z}$ follows from the stationarity of $\cZ$.
	So it suffices to focus on $i=0$ in the rest of the proof.
	
	To deal with $\nb_0$, we retain the notation used in the proof of Lemma~\ref{lem:inner}. By the Markov property of $\cZ$, all of $\xi^-,\xi^{+}$ and $\zeta$ have geometric distributions. Moreover, conditioning on $\xi^-\neq 0$, we have that $\{\Delta\cZ_{T_i} \}_{\xi^- <i\le 0}$ are i.i.d and have exponential tails.  The same also holds for  $\{\Delta\cZ_{T_i} \}_{1\le i\le \xi_+}$ and $\{\Delta\cZ_{\wh T_i} \}_{1\le i\le \zeta}$. (However, the laws of $\Delta\cZ_{T_{-1}}$ and $\Delta\cZ_{T_1}$ are not the same.)  Now  observations \ref{item:n1}-\ref{item:w} in the proof of Lemma~\ref{lem:inner} combined with Lemma~\ref{lem:LD} and ~\eqref{eq:step1} imply that $|\nb_0|$ has an exponential tail.
	We obtain the  same result for $\{\nb_{\npole(\path(i))}\}_{i\in \Z}$ by reversing the time direction and apply Lemma~\ref{lem:rev}.
	
	It remains to treat $\wt\nb_0$.
		Set $t_0=0$. For a positive integer $k$, we inductively define $t_k=\inf\{ t>t_{k-1} : \cL_t=\cL_{t_{k-1}}-1 \}$.
		Then $\{t_k\}_{k\ge 0}$ are stopping times for the filtration $\{\cF_\ell=\sigma(\{\cZ_i\}_{i\le \ell})\}_{\ell\in\Z}$. Moreover, $\lambda(t_k)$ is the NW edge of $\lambda(t_{k-1})$ for $k\ge 1$.
		Let $\nb^{+}_k=(\nb_{\spole(\path(t_{k-1}))} \cup \nb_{\spole(\path(t_k)} )\cap \map^{t_{k-1},t_k}$.
		Then $\{\nb^{+}_k\}_{k\ge 1}$ is a sequence of independent identically distributed random variables.
		Recall the $M_{0,\infty}$ in Definition~\ref{def:UIBOM}.   On the event $v\in\cV(\map)$, $\wt \nb_0=\nb_0$ is handled in the previous paragraph.
		On the event  $\wt\path(0)\in \cF(\map)$, let $\ell$ be  the number of edges on the east boundary of $\wt\path(0)$, which has an exponential tail.
		Then   $\{\nb^+_{k} \}_{1\le k\le \ell}$ and $\nb_{\spole(\lambda(t_\ell))}\cap M_{0,\infty}$ cover $\wt \nb_{0}\cap \map_{0,\infty}$.  
		Let $|\nb^{+}_k|=\#\cV(\nb^+_k)+\#\cF(\nb^+_k)+\#\cE(\nb^+_k)$. Since $|\nb^{+}_1|\le |\nb_{\spole(e^0)}|+|\nb_{\npole(e^0)}|$, we see that   $|\nb^{+}_k|$ has an exponential tail.  On the other hand, $\nb_{\spole(\lambda(t_\ell))}\cap M_{0,\infty}$ has the same law as $\nb_{0}\cap M_{0,\infty}$, whose size has an exponential tail.  By Lemma~\ref{lem:LD} and  reversing the time as in Lemma~\ref{lem:rev},  we can bound $|\wt\nb_0|$ by the sum of two random variables with exponential tails, thus proving Lemma~\ref{lem-12-deg}.
\end{proof}

\begin{remark}
	A bipolar orientation on a map can be associated with a bipolar orientation on its dual map via rotating the orientation of each edge by 90 degrees.  Therefore the $\UBOM$ and its dual map agree in law. Thus bounds on $|\wt\nb_0|$ immediately follow from bounds on $|\nb_0|$.   However, our proof of Lemma~\ref{lem-12-deg} is more robust and works for the more general setting in Section~\ref{subsub:Schnyder}.
\end{remark}

\subsubsection{Proof of Theorem~\ref{thm-map-count} for the UIBOM}\label{subsub:bi-metric}
From the discussion in Section~\ref{subsub:ubom},
Assertions~\ref{item:R} and~\ref{item:L} in Lemma~\ref{lem:LR} hold for the $\UBOM$ for all $s,t\in \Z$ with $s<t$. We still refer to this fact as Lemma~\ref{lem:LR} for the $\UBOM$. Let $\cQ :=\cQ(\map)$ be the radial map associated with $\map$, i.e., the quadrangulation obtained from $\map$ by connecting the center of each face of $\map$ with its adjacent vertices and removing all edges of $\map$. We identify $\cV(\map)\cup\cF(\map)$ with $\cV(\cQ)$  so that $\wt \path$ is a bijection between $\Z$ and $\cV(\cQ)$.
By Lemma~\ref{lem:LR} for the $\UBOM$, we have the following.
\begin{prop}\label{prop:bipolar}
	The graph $\cQ$ is isomorphic via $i \leftrightarrow \wt\path(i)$ to the graph $\wh \cH$ defined as follows. The vertex set of $\wh\cH$ is $\Z$. For $s,t\in \cV(\wh\cH)$ with $s <t$,  the vertices $s$ and $t$ are connected by an edge in $\wh\cH$ if and only if either 
	\eqb \label{eq:walk-adjacency3}
	(\cL_{s-1}-1) \vee \cL_{t} < \min_{s-1< j\le t-1} \cL_j \quad \op{or} \quad
	\cR_{s-1} \vee (\cR_{t}-1) < \min_{s\le j<t} \cR_j.
	\eqe
\end{prop}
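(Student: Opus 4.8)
The plan is to read the adjacency relation of $\cQ$ off the sewing procedure of Section~\ref{subsub:bipolar} and then quote Lemma~\ref{lem:LR}. Recall that $\cV(\cQ)$ is identified with $\cV(\map)\cup\cF(\map)$ and, via $\wt\path$, with $\Z$, and that two vertices of $\cQ$ are adjacent precisely when one of them is a face $f\in\cF(\map)$ and the other a vertex $v\in\cV(\map)$ incident to $f$. So it suffices to show that for $s<t$ one has $\wt\path(s)\sim\wt\path(t)$ in $\cQ$ if and only if \eqref{eq:walk-adjacency3} holds.

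The first step is to determine from the sewing rules which vertices incident to a face $f=\wt\path(k)$ carry $\wt\path$-labels smaller than $k$ and which carry labels larger than $k$. When $\wt\path(k)=f\in\innF(\map)$ the step $\Delta\cZ_k$ lies in $\Z_{\ge 0}\times\Z_{\le 0}$, and the type~\ref{item:sew3} move glues the west boundary arc of $f$ onto edges already present in the current map; consequently the poles $\spole(f)$ and $\npole(f)$, together with all interior vertices of the west boundary arc of $f$, are labelled at times $<k$, whereas each interior vertex of the east boundary arc of $f$ first appears as $\wt\path(k')$ for some $k'>k$. It follows that, for $s<t$, the pair $\{\wt\path(s),\wt\path(t)\}$ is an edge of $\cQ$ if and only if \emph{either} (i)~$\wt\path(t)$ is a face and $\wt\path(s)$ lies on the west boundary arc of $\wt\path(t)$ (allowing $\wt\path(s)\in\{\spole(\wt\path(t)),\npole(\wt\path(t))\}$), \emph{or} (ii)~$\wt\path(s)$ is a face and $\wt\path(t)$ is an interior vertex of the east boundary arc of $\wt\path(s)$; these alternatives are mutually exclusive, as (i) forces $\wt\path(t)\in\innF(\map)$ and (ii) forces $\wt\path(t)\in\innV(\map)$.

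In alternative~(i) I would split once more: $\wt\path(s)$ is the tail of a west edge of $\wt\path(t)$ (namely $\wt\path(s)=\spole(\wt\path(t))$ or an interior west-boundary vertex) or $\wt\path(s)=\npole(\wt\path(t))$ is the head of an east edge of $\wt\path(t)$; these exhaust the west boundary arc, and by Lemma~\ref{lem:LR} for the $\UBOM$ they are equivalent to the $\cR$-clause \eqref{eq:R} and the $\cL$-clause \eqref{eq:L} of \eqref{eq:walk-adjacency3}, respectively. In alternative~(ii), $\wt\path(t)$ is a vertex, so $\Delta\cZ_t=(-1,1)$ gives $\cR_t-1=\cR_{t-1}\ge\min_{s\le j<t}\cR_j$ and the $\cR$-clause of \eqref{eq:walk-adjacency3} is violated; I would then show that (ii) is equivalent to the $\cL$-clause \eqref{eq:L}. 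This last equivalence is the mirror image of Lemma~\ref{lem:LR}(\ref{item:R}) under the $180^\circ$-rotation and time reversal of Remark~\ref{rmk:reversal}, which interchanges ``interior east-boundary vertex of a face created earlier'' with ``tail of a west edge of a face created later'' and replaces $\cR$ by the time reversal of $\cL$ under the index substitution $(s-1,s,t-1,t)\mapsto(t,t-1,s,s-1)$, exactly as at the end of the proof of Lemma~\ref{lem:LR}; carrying this through produces precisely \eqref{eq:L}. Finally, for the converse direction, a short computation using that vertex steps equal $(-1,1)$ and face steps lie in $\Z_{\ge 0}\times\Z_{\le 0}$ shows that both clauses of \eqref{eq:walk-adjacency3} are violated unless exactly one of $\wt\path(s),\wt\path(t)$ is a face, which matches the fact that two vertices, or two faces, of $\map$ are never $\cQ$-adjacent.

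The main obstacle is the combinatorial bookkeeping: deriving the ``labelled before step $k$ versus labelled after step $k$'' dichotomy for the boundary of a face cleanly from the sewing rules, and checking that the time-reversed form of Lemma~\ref{lem:LR}(\ref{item:R}) used in alternative~(ii) is literally the $\cL$-clause of \eqref{eq:walk-adjacency3} rather than an off-by-one variant. Neither point is conceptually deep, but both demand care with the index shifts that pervade the bipolar-orientation bijection.
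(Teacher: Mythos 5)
Your overall strategy—identify $\cQ$-adjacency with vertex–face incidence, sort the boundary vertices of a face according to whether their $\wt\path$-label comes before or after the face's label, and then quote Lemma~\ref{lem:LR}—is exactly the paper's route (the paper's proof is literally "by Lemma~\ref{lem:LR}"). But your classification step contains a genuine error: you assert that both poles of a face $f=\wt\path(k)$, together with the interior west-boundary vertices, are labelled before time $k$. In fact only the tails of the west edges of $f$ (i.e.\ $\spole(f)$ and the interior west-boundary vertices) are labelled before $f$; the north pole $\npole(f)$ is labelled \emph{after} $f$, just like the interior east-boundary vertices. This is visible already in Lemma~\ref{lem:inner}: the "south faces'' of a vertex (the faces having that vertex as north pole) carry labels $\wh T_i<\tau-1$, i.e.\ strictly smaller than the vertex's own label $\tau$; a two-edge-face example computed by hand confirms it. Consequently your case decomposition (i)$\cup$(ii) omits every $\cQ$-edge of the form (earlier face, its north pole), so the forward implication "adjacency $\Rightarrow$ \eqref{eq:walk-adjacency3}'' is never established for those pairs; your treatment of the north pole inside alternative~(i) invokes Lemma~\ref{lem:LR}(\ref{item:L}) for a configuration ($s<t$, $\wt\path(s)=\npole(\wt\path(t))$) that never occurs; and your claimed equivalence "(ii) $\Leftrightarrow$ \eqref{eq:L}'' is false as an iff, since \eqref{eq:L} also holds precisely when $\wt\path(t)$ is the north pole of the earlier face $\wt\path(s)$ (take the quadrilateral face with one interior vertex on each side: the face has label $2$, its north pole label $4$, and \eqref{eq:L} holds for $(s,t)=(2,4)$).

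The underlying subtlety is that the verbal statement of Lemma~\ref{lem:LR}(\ref{item:L}) should be read with the roles of $s$ and $t$ exchanged: the time-reversal argument of Remark~\ref{rmk:reversal} applied to Assertion~\ref{item:R} yields "for $s<t$, $\wt\path(t)$ is the head of an east edge on the boundary of $\wt\path(s)$ if and only if \eqref{eq:L}'', since the $180^\circ$ rotation turns tails of west edges (south pole and interior west vertices) into heads of east edges (north pole and interior east vertices) while reversing the label order. With that reading the correct dichotomy is: vertices labelled before $f$ are exactly the tails of west edges of $f$, matched by the $\cR$-clause via Lemma~\ref{lem:LR}(\ref{item:R}); vertices labelled after $f$ are exactly the heads of east edges of $f$ (north pole included), matched by the $\cL$-clause—and both directions of Proposition~\ref{prop:bipolar} then follow at once from the two iffs, with no separate converse computation needed. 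So the fix is local, but as written your proof misplaces the north pole and therefore does not close.
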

\begin{remark}\label{rmk:Q}
	Although it is not needed later, we note that $\cQ$ is a quadrangulation with boundary which can be oriented as follows. Each edge $e\in \cE(\cQ)$ must be of the form $\{v,f\}$ with $v\in \cV(\map)$ and $f\in \cF(\map)$. Orient $e$ from $f$ to $v$ if $v=\npole(f)$ or $\spole(f)$ and from $v$ to $f$ otherwise. This combinatorial object is called a 2-orientation on quadrangulation (see e.g. \cite[Section 5.2]{Felner}) because every vertex has out-degree  2.  This constructions gives a general bijection between bipolar orientations and 2-orientations. When $\map=\Z^2$, the 2-orientation on $\cQ$ is called a 6-vertex configuration. This special case is considered  in~\cite{kmsw-6vertex}, based on which a connection between $\SLE_{12}$ and the 6-vertex model is found. 
\end{remark} 
To use Theorem~\ref{thm-sg-map-dist} and Remark~\ref{remark-other-adjacency} to prove Theorem~\ref{thm-map-count} , we define a planar map $\mcl H = \mcl H(\mcl Z)$ via the following discrete analogue of \eqref{eqn-bm-inf-adjacency}.  
The vertex set of $\mcl H$ is $\cV(\cH) =\Z$, and for $s,t\in \cV(\cH)$ with $s <t$, we declare that $s$ and $t$ are connected by an edge in $\mcl H$ if and only if either 
\eqb \label{eq:walk-adjacency2}
(\cL_{s-1}-1) \vee \cL_{t} < \min_{s-1< j\le t-1} \cL_j \quad \op{or} \quad
\cR_{s-1} \vee (\cR_{t}-1) < \min_{s\le j< t} \cR_j \quad \op{or} \quad t-s= 1.
\eqe

By Proposition~\ref{prop:bipolar}, the radial map $\cQ$ can be identified with a submap of $\cH$ via  $i \leftrightarrow \wt\path(i)$. Moreover, $\cH$ can be recovered from $\wh\cH$ in Proposition~\ref{prop:bipolar} by adding edges $\{t,t+1\}$ for all $t\in \Z$.  Recall Definition~\ref{def:sewing} of $\path$ and $\wt\path$ and its infinite volume extension in Section~\ref{subsub:ubom}. We have that $\cH$ is isomorphic via $\wt\path$ to the graph $\ol\cQ$ such that $\cV(\ol\cQ) = \cV(\cQ)$ and $v,w\in \cV(\ol\cQ)$ are connected in $\ol\cQ$ if and only if one of the following three possibilities occurs:
\begin{enumerate}
	\item $v$ and $w$ are connected in $\cQ$; 
	\item\label{item:2vertice} $v,w$ are two vertices of $\map$ and are connected by an edge $e\in \cE(\map)$ so that $e$ is the NW edge of $\spole(e)$ and the SE edge of $\npole(e)$
	(see Item \ref{item:path1} in Definition~\ref{def:sewing});
	\item \label{item:2face} $v,w$ are two faces of $\map$ which share an edge $e$ so that $\spole(e)$ is the south pole of one face and $\npole(e)$ is the north pole of the other face (see Item \ref{item:path2} in Definition~\ref{def:sewing}).
\end{enumerate}

For $n\in\BB N$, let $\mcl H_{n}$ be the subgraph of $\cH$ whose vertex set is $[-n,n]_{\BB Z}$, with two vertices connected by an edge in $\cH_{n}$ if and only if they are connected by an edge in $\cH$.  Let $\ol\cQ_n$  be the image of $\cH_n$ under the isomorphism $\wt\path$. Let $M_{n}$ be the smallest connected subgraph of $M$ containing each vertex in $\{\wt\path(t)\}_{t\in[-n,n]_\Z}$ and all vertices and edges on the boundary of each face in $\{\wt\path(t)\}_{t\in[-n,n]_\Z}$.

One can give a more direct description of $M_n$. Recall Definition~\ref{def:UIBOM}. For $n\in\N$, let	$\{b_n(i)\}_{i\in\Z}$ and $\{b_{-n}(i)\}_{i\in\Z}$  be as in Lemma~\ref{lem:half} with $\ell=n$ and $\ell=-n$, respectively.	Define
	\begin{align*}
	\tau_n&=\inf\{t\ge n: \lambda(t)=  b_{-n}(i)  \textrm{ for some }i>0 \} \qquad &&\textrm{and}\qquad v_{+,n}=\spole(\lambda(\tau_n));\\ 
	\tau_{-n}&=\sup\{t\le -n: \spole(\lambda(t))   =  \spole (b_n(i))   \textrm{ for some }i\le 0 \} \qquad &&\textrm{and}\qquad v_{-,n}=\spole(\lambda(\tau_{-n})).
	\end{align*}
	Both $v_{+,n}$ and $v_{-,n}$ are on the boundary of both $M_{-n,\infty}$ and $M_{-\infty,n}$. 
	The segments between   $v_{+,n}$ and $v_{-,n}$ on these two boundaries  enclose a bipolar oriented map $(\wh M_n,\wh f_n,\wh \cO_n)$. In other words.
	$\wh M_n$  is the unique finite  connected component of $M^{-n,n}$ after removing $\{v_{+,n} , v_{-,n} \}$.
	Moreover, $\cO|_{\cE(\wh \map_n )} =\wh\cO_n$,  $v_{+,n}=\npole(\wh M_n)$ and $v_{-,n}=\spole(\wh M_n)$. 
	\begin{lem}\label{lem:submap}
		$M_n$ is  isomorphic to  $\wh M_n$ when $\wh M_n$ is viewed as a graph.  
	\end{lem}
	\begin{proof}
		We first show that $\wh M_n$  is a subgraph of $M_n$. 
			Since both $\wh M_n$ and $M_n$ are connected subgraphs of $M$, it suffices to show that $\cE(\wh M_n) \subset \cE(M_n)$.
		Choose $K$ large enough so that $M^{n,n,K}$ is well defined. Given $e\in \cE(\wh M_n)$, if $e$ is on the boundary of a  face $f\in\innF(M^{n,n,K}) $, by Lemma~\ref{lem:inn-face} there exists $i\in[-n,n]_\Z$ such that $\wt\lambda(i)=f$. Therefore $e\in\cE(M_n)$. 
		Now we suppose $e$ is not on the boundary of any face in $\innF(M^{-n,n,K})$. 
		Then $e$ must be on the boundary of both $M_{-n,\infty}$ and $M_{-\infty,n}$.
		Let $k\in\Z$ be such that $\lambda(k)=e$. 
		If $k\ge n$, by the minimality of $\tau_n$, we must have $\tau_n\le k$.  This contradicts that $v_{+,n}=\spole(\lambda(\tau_n))$ is the north pole of $\wh M_n$. Therefore $k<n$. Similarly, if $k<-n$, we must have $\tau_{-n}\ge k$ which contradicts that $v_{-,n}=\spole(\lambda(\tau_{-n}))$ is the south pole of $\wh M_n$.   Therefore $k\ge -n$. Let $i$ be such that $b_{-n}(i)=e$. Then we must have $i\ge 0$. 
		Therefore $e$ is the NW edge of $\spole(e)$ and hence $\spole(e)=\wt \lambda(k)\in \cV(M_n)$. On the other hand, $\lambda(k+1)$ must be the NW edge of $\npole(e)$. Namely, $\lambda(k+1)=b_{-n}(i+1)$. Otherwise $\wt\lambda(k+1)$ would be a face with $e$ on its boundary. Therefore $\npole(e)\in \cV(M_n)$, which  concludes the proof that $\cE(\wh M_n)\subset\cE(M_n)$.
		
		It remains to show that $M_n$ is a subgraph of $\wh M_n$. By the definition of $M_n$ above, it suffices to show that $\{\wt\path(t)\}_{t\in[-n,n]_\Z}\subset \cV(\wh M_n)\cup \innF(\wh M_n)$. By Lemma~\ref{lem:inn-face}, $\{\wt\path(t)\}_{t\in[-n,n]_\Z}\cap\cF(M)=\innF(M^{-n,n})= \innF(\wh M_n)$.  
		Now suppose $j\in\Z$ is such that $v=\wt \lambda(j) \in \cV(M^{-n,n,K}) \setminus \cV(\wh M_n)$. Then  $v$ must be in one of the two infinite connected of $M^{-n,n}$ after removing $\{ v_{+,n} , v_{-,n}\}$. Therefore $j>\tau_n\ge n$ or $j< \tau_{-n}\le -n$. This gives  $\{\wt\path(t)\}_{t\in[-n,n]_\Z}\cap \cV(M)\subset \cV(\wh M_n)$ and concludes the proof.\qedhere
	\end{proof}
	
	From now on we identify the graph $M_n$ with the   submap $\wh M_n$ of $M$ in Lemma~\ref{lem:submap}.
Let $\iota_n : M_n\rta M$ be the inclusion map.   
Define $\psi : \BB Z\rta \cV (\map)$ by $\psi(i)=\spole(\path(i))$
and set $\psi_n := \psi|_{[-n,n]_{\BB Z}}$. 
Define 
\[
\phi_n(v):= \wt\path^{-1}(v) \1_{v\neq \spole(e^0)}\qquad \qquad\qquad\qquad\qquad\qquad\textrm{for $v\in \cV(\map_n)$ with $ |\wt\path^{-1}(v)|\le n$.}
\] 
To define $\phi_n(v)$ for the case when $|\wt\path^{-1}(v)|> n$ we proceed as follows. 
Suppose $v\in \cV(M_n)$ and  $\cF(\nb_v)\cap \cF(\map_n)=\emptyset$.  Then $v$ cannot be on any inner face of $M_n$.
	Here recall that $\cF(\nb_v)$ is a subset of $\cF(M)$ and note that the outer face of $M_n$ is not a face in $M$. So $\cF(\nb_v)\cap \cF(M_n)$ is just $\cF(\nb_v)\cap \innF(M_n)$. 
	By the first paragraph in the proof of Lemma~\ref{lem:submap}, $v\in \{\wt\path(t)\}_{t\in [-n,n]_\Z}$.   Therefore, for $v\in \cV(\map_n)$ with $|\wt\path^{-1}(v)|>n$, we must have $\cF(\nb_v)\cap \cF(\map_n)\neq \emptyset$. In this case, 
	we let  $\phi_n(v)$  be the integer with the smallest absolute value for which
	$\wt \lambda(\phi_n(v)) \in  \cF(\nb_v) \cap \cF(M_n)$  (in the case of a tie, we choose  $\phi_n(v)$ to be positive).  Now we have a function $\phi_n:\cV(M_n)\to[-n,n]_\Z$.
	Since $\wt\path(0) \in \cF(\nb_0)\cap \cF(\map_n)$ when $\wt\path(0)\in \cF(\map)$, we see that $\phi_n(\spole(e^0)) = 0$. Therefore $\psi_n,\phi_n,\map_n$ satisfy \eqref{eqn-peano-functions}.
\begin{proof}[Proof of Lemma~\ref{lem-ball-layers} in case~\ref{item-kappa12}]
	We first prove that under the condition~\eqref{eqn-walk-inf-condition0} of Lemma~\ref{lem-ball-layers}, there does not exist  $v\in\cV(\bdy M_n)$ such that $k=\phi_n(v)\in[-m,m]_\Z$ and $|\wt\lambda^{-1}(v)|\le n$. 
	Suppose by way of contradiction that there exists such a $v$. If $v$ is on the west boundary of $M_n$, by the definition of  $\phi_n$, we have $v=\wt\lambda(k)$ and $\lambda(k)$ is the NW edge of $v$. If $v$ is on the west boundary of $M_n$ then there exists $i>0$ such that $\lambda(k)=b_{-n}(i)$. (Recall $b_{-n}$ in Lemma~\ref{lem:half}.)  
	Therefore, by Lemma~\ref{lem:NW} and Lemma~\ref{lem:half},
		 $k=\inf\{t>-n: \cL_t=\cL_{-n}-i\}$. Since $k\in [-m,m]_{\BB Z}$, this contradicts  the assumption~\eqref{eqn-walk-inf-condition0} for $\cL$. 
	Similarly, if $v$ is on the east boundary of $M_n$, 
	by the time reversal trick in Remark~\ref{rmk:reversal} and Lemma~\ref{lem:rev},  there exists $i<0$ such that $\lambda(k-1)=b_{n}(i)$ and  $k-1=\sup\{t<n: \cR_t=\cR_n+i\}$.  This contradicts  the assumption~\eqref{eqn-walk-inf-condition0} for $\cR$.  
	
It remains to show that there exists no  $v\in\cV(\bdy M_n)$ such that $k=\phi_n(v)\in[-m,m]_\Z$ 	and $|\wt\lambda^{-1}(v)|>n$. Suppose by way of contradiction that there exists such a $v$. Then  $\wt \lambda(k)$ is a face in $\innF(M_m)$ with $v$  on its boundary. Suppose $\wt\lambda^{-1}(v)>n$. Then $v$ must be on the east boundary of $M_n$. Thus the north pole of $\wt\lambda(k)$, which is $\npole(\lambda(k-1))$, must be  on the east boundary of $M_n$. Let $\ell=\inf\{t>k-1: \cL_t=\cL_{k-1}-1 \}$. By Lemma~\ref{lem:NW}, $\lambda(\ell)$ is the NW edge of $\npole(\lambda(k-1))$. Since $\wt\lambda^{-1}(v)>n$, we must have $\ell>n$. Therefore $\cL_{k-1}= \min_{t\in[k-1,n]_\Z}\cL_t$,  which  contradicts~\eqref{eqn-walk-inf-condition0} for $\cL$.  Suppose $\wt\lambda^{-1}(v)<-n$. Then similarly,  $v$ and $\spole(\lambda(k))$ must be on the east boundary of $M_n$. This gives $\sup\{t<k: \cR_t=\cR_{k}-1\}<-n$, which contradicts~\eqref{eqn-walk-inf-condition0} for $\cR$.
\end{proof}
 
\noindent Recall the notations above Lemma~\ref{lem-12-deg}.  The following fact  follows from Lemma~\ref{lem:inner} and the definition of $\ol\cQ_n$. 
\begin{lem}\label{lem:ad}
	Suppose $v\in \cV(\map_n)$ and $f_1,f_2\in \cF(\nb_v)\cap \cF(M_n)$. Then $f_1,f_2$ are connected in $\ol\cQ_n$ only using vertices and faces in $\nb_v$.  (Recall the identification $\cV(\map)\cup\cF(\map)=\cV(\ol\cQ)$.)
\end{lem}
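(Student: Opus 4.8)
Proof proposal for Lemma~\ref{lem:ad}.

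The plan is to reduce the statement to Lemma~\ref{lem:inner} (the UIBOM version) via the correspondence recorded in Proposition~\ref{prop:bipolar} and the description of $\ol\cQ$ given just above. First I would recall that $\nb_v$ is, by definition, the set of all faces of $\map$ having $v$ on their boundary, together with the edges and vertices on the boundaries of those faces. By Lemma~\ref{lem:inner} (applied to the UIBOM, with $v$ in place of $\spole(e^0)$; note $\{\nb_i\}_{i\in\Z}$ is stationary), the faces of $\nb_v$ split into a unique west face, a (possibly empty) cyclically-ordered block of north faces having $v$ as common south pole, a unique east face, and a (possibly empty) block of south faces having $v$ as common north pole. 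Moreover, under the identification $\cV(\map)\cup\cF(\map)=\cV(\ol\cQ)=\cV(\cH)=\Z$ coming from $\wt\path$, the times $\wt\path^{-1}$ of these faces are exactly the stopping times $\rho^-$, $\{T_i\}_{\xi^-<i\le\xi^+}$, $\rho^+$, $\{\wh T_i\}_{1\le i\le\zeta}$ from the proof of Lemma~\ref{lem:inner}, and $\wt\path^{-1}(v)$ equals the time $\tau$ in that proof. Thus the faces of $\nb_v$ correspond to a set of $\BB Z$-times and $v$ itself corresponds to one further time.

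Next I would show directly that any two of these faces are joined in $\ol\cQ$ by a path staying inside $\nb_v$. The key point is that, by observations \ref{item:n1}--\ref{item:w} in the proof of Lemma~\ref{lem:inner}, consecutive north faces of $\nb_v$ (in clockwise order) share an outgoing edge $e$ from $v$ with $\spole(e)=v$, hence by case~\ref{item:2face} in the description of $\ol\cQ$ they are adjacent in $\ol\cQ$; likewise consecutive south faces are adjacent in $\ol\cQ$; the west face and the first north face are adjacent in $\cQ$ (they share the vertex $v$ and the appropriate outgoing edge puts them on consecutive quadrilaterals of $\cQ$ around $v$), and similarly the last north face and the east face, the east face and the first south face, and the last south face and the west face. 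Concatenating these adjacencies we obtain a cycle through all faces of $\nb_v$ in $\ol\cQ$, all of whose vertices (which are faces of $\map$, or $v$, or boundary vertices of the faces in $\nb_v$) lie in $\nb_v$. In particular any $f_1,f_2\in\cF(\nb_v)$ are connected in $\ol\cQ$ using only vertices and faces in $\nb_v$.

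Finally I would pass from $\ol\cQ$ to $\ol\cQ_n$. Since $v\in\cV(\map_n)$, the faces of $\nb_v$ which lie in $\cF(\map_n)$ are precisely those whose $\wt\path$-time lies in $[-n,n]_\Z$; but the path constructed in the previous paragraph uses only the vertices and faces of $\nb_v$, which — by the explicit description of $M_n$ and $\ol\cQ_n$ preceding the lemma and the iterated inclusion~\eqref{eq:nb} — are already contained in the relevant index range once $v\in\cV(\map_n)$ and we restrict attention to $f_1,f_2\in\cF(\nb_v)\cap\cF(\map_n)$ (as in the later applications). Hence the connecting path lies in $\ol\cQ_n$, giving the claim. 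The main obstacle I anticipate is bookkeeping: carefully matching the combinatorial description of $\nb_v$ from Lemma~\ref{lem:inner} with the three types of $\ol\cQ$-adjacency, and in particular verifying that the west/east faces are correctly linked to the first/last north and south faces through $\cQ$-edges around $v$; once the dictionary between the stopping times and the faces of $\nb_v$ is set up correctly, the rest is routine.
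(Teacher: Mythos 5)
Your reduction to the structure of $\nb_v$ from Lemma~\ref{lem:inner} is a reasonable starting point, but two of your key steps do not hold as stated. First, consecutive north faces of $\nb_v$ are in general \emph{not} adjacent in $\ol\cQ$ via condition~\ref{item:2face}: that condition requires the shared edge $e$ to have $\spole(e)$ equal to the south pole of one face \emph{and} $\npole(e)$ equal to the north pole of the other. For two north faces of $\nb_v$ the shared outgoing edge $e$ from $v$ has $\spole(e)=v$, the common south pole, but $\npole(e)$ need not be a pole of either face (it can be an interior vertex of the east boundary of one and of the west boundary of the other), so no $\ol\cQ$-edge is present. Likewise, the west face and the first north face are never literally ``adjacent in $\cQ$'' — $\cQ$ is bipartite between $\cV(\map)$ and $\cF(\map)$ — so what you really have is a length-two path through a shared boundary vertex such as $v$. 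One could repair the cycle by routing all these connections through shared boundary vertices, but that exposes the second, more serious problem.

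The real content of Lemma~\ref{lem:ad} is the restriction to $\ol\cQ_n$, and your final paragraph does not handle it: $v\in\cV(\map_n)$ does \emph{not} imply $\wt\path^{-1}(v)\in[-n,n]_{\BB Z}$, nor that every face of $\nb_v$ lies in $\cF(\map_n)$ — the paper defines $\phi_n$ separately for vertices with $|\wt\path^{-1}(v)|>n$ precisely because such boundary vertices of $M_n$ occur, and the applications of the lemma take $f_1,f_2\in\cF(\nb_v)\cap\cF(\map_n)$ because $\cF(\nb_v)\not\subset\cF(\map_n)$ in general. Your cyclic path repeatedly passes through $v$ and through all faces of $\nb_v$, and these may simply not be vertices of $\ol\cQ_n$; the inclusion~\eqref{eq:nb} concerns $\map_{\rho^\pm,\infty}$ and gives no control of the $[-n,n]$ time window for such a $v$. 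The paper's proof is organized exactly to avoid this: it lists the faces of $\nb_v$ in increasing order of their $\wt\path$-times $i_1<\cdots<i_{\#\cF(\nb_v)}$, so that the subset lying in $\cF(\map_n)$ (times in $[-n,n]_{\BB Z}$) is automatically a consecutive block, and then checks that each consecutive-in-time pair that lies in $\cF(\map_n)$ is either an $\ol\cQ$-edge of type~\ref{item:2face} or is joined inside $\cQ_n$ by a short path staying in $\nb_v$ (in particular, with intermediate vertices whose times lie in $[-n,n]_{\BB Z}$). Concatenating only over the block between $f_1$ and $f_2$ then keeps the whole path in $\ol\cQ_n$. Without an argument of this kind — i.e., without showing your connecting path can be chosen among vertices and faces of $\nb_v$ whose $\wt\path$-times lie in $[-n,n]_{\BB Z}$ — the proposal does not prove the lemma.
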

\begin{proof}
	We leave the reader to check the following fact. Write $\cF(\nb_v)$ as $\{\wt \path(i_k): 1\le k \le \#\cF(\nb_v)\}$  where $i_1<\cdots < i_{\#\cF(\nb_v)}$.
	Then for any $1\le k \le  \#\cF(\nb_v)-1$,   if $\wt \path(i_k)$ and $\wt \path(i_{k+1})$ are both in $\cF(\map_n)$ but do not satisfy condition~\ref{item:2face} in the definition of $\ol\cQ$, then they are connected in $\cQ_n$ only using vertices and faces in $\nb_v$. These facts together with the definition of $\ol\cQ_n$ imply the statement of the lemma.
\end{proof}

From here on the proof of Theorem~\ref{thm-map-count} in this case  is very similar to the case of the infinite spanning tree-decorated map (Section~\ref{sec-kappa8-proof}) but there are minor technical differences so we give the complete argument.

Let $C_1  >  0$ be the constant from Theorem~\ref{thm-sg-map-dist} for $A$ in Theorem~\ref{thm-map-count} and for $\mcl Z$ above.  As in the spanning tree case, by the isomorphism (modulo multiplicity) $\wt\path$ between $\cH$ and $\ol\cQ$ plus Remark~\ref{remark-other-adjacency}, we see that with probability at least $1-O_n(n^{-A})$, the following is true 
\begin{enumerate}[label=(\Alph{enumi})]
	\item For each $i_1,i_2 \in  [-n , n]_{\BB Z}$ with $\wt\path(i_1) \sim \wt\path(i_2)$ in $\ol\cQ$, there is a path $\wt P_{i_1,i_2}^{\mcl G}$ from $i_1$ to $i_2$ in $\mcl G _{n}$ with $| \wt P_{i_1,i_2}^{\mcl G}| \leq C_1 (\log n)^3$; and each $j \in [-n , n]_{\BB Z}$ is contained in at most $ C_1 (\log n)^6 $ of the paths $\wt P_{i_1,i_2}^{\mcl G}$. \label{item-12-path-G}
	\item For each $i_1,i_2 \in  [-n , n]_{\BB Z}$ with $i_1 \sim i_2$ in $\mcl G$, then there is a path $\wt P_{i_1,i_2}^{\ol\cQ}  $ from $\wt\path(i_1)$ to $\wt\path(i_2)$ in $\ol\cQ_{n}$ with $| \wt P_{i_1,i_2}^{\ol\cQ}| \leq C_1 (\log n)^3$; and each $j  \in [-n , n]_{\BB Z}$ is contained in at most $ C_1 (\log n)^6 $ of the paths $\wt P_{i_1,i_2}^{\ol\cQ}$. \label{item-12-path-T}
\end{enumerate}
By basic properties of Brownian motion, $\deg(i;\cG)$  has an exponential tail. See \cite[Lemma~2.2]{gms-tutte} for a proof. Combined with Lemma~\ref{lem-12-deg}, a union bound implies that  there exists $C_2=C_2(A)$ such that  with probability at least $1-O_n(n^{-A})$,
\begin{enumerate}[label=(\Alph{enumi})]
	\setcounter{enumi}{2}
	\item\label{item-12-deg}  
	$\max_{x\in  \cV(\map_n)\cup\cF(\map_n)} |\nb_x| \le C_2\log n$ and $\max_{i\in [-n,n]_\Z} \deg(i;\cG) \le C_2\log n.$
\end{enumerate}
From here on we work on the event that \ref{item-12-path-G}-\ref{item-12-deg} hold.
\medskip

\noindent\textit{Proof of condition~\ref{item-map-count-G}.}  
For $v_1,v_2\in \cV(\map_{n})$ such that $v_1\sim v_2$ in $\map_n$, let $e$ be the edge connecting them. If both of the two faces of $\map$ with $e$ on their boundaries are not in $\map_n$, then one can check that $v_1,v_2$ satisfy condition~\ref{item:2vertice} in the definition of $\ol\cQ$   and $v_1,v_2\in \cV(\cQ_n)$.  Indeed, if $\{v_1,v_2\}$ is not on a face, it must be of the form $\path(t)$ for some $t\in [-n,n)_\Z$ by the definition of $\map_n$. This means that it is the NW edge of its tail. If it is not the SE edge of its head, then the face on its right will be visited at $t+1$.
By the definition of $\phi_n$, we have that $\phi_n(v_i)=\wt\path^{-1}(v_i)$ for $i=1,2$.  In this case we let $P^{\cG}_{v_1,v_2}=P^{\cG}_{\wt\path^{-1}(v_1),\wt\path^{-1}(v_2)}$.

If there exists $f\in \cF(\map_n)$ with $e$ on its boundary, then  $f\in \cF(\nb_{v_1}) \cap \cF(\nb_{v_2})$.
Now Lemma~\ref{lem:ad}  yields that  $\wt\path(\phi_n(v_i))$ and $f$ are connected by a path $P_i$ in $\ol\cQ_n$ using only vertices and faces in $\nb_{v_i}$ for $i=1,2$.
Let $P^\cG_{v_1,v_2}$ be the  concatenation of  paths  $\wt P^\cG_{\wt\path^{-1}(v),\wt\path^{-1}(w)}$ where $\{v,w\}$ is an edge on $P_1$ or $P_2$. Now 
\begin{equation}\label{eq:fl}
|P_{v_1,v_2}^{\mcl G}|  \overset{\ref{item-12-path-G}}{\le}   2\max_{f\in  \cF(\map_n)}|\nb_f| \cdot C_1(\log n)^3 \overset{\textrm{\ref{item-12-deg}}}{\le} 2C_1C_2(\log n)^4.
\end{equation}

Given $f\sim v$ in $\cQ_n$, if  $\{f,v\}$ is an edge on $P_1$ or $P_2$ defined above for some $v_1,v_2\in \cV(\map)$, we must have $v_1,v_2\in \cV(\nb_f)$.
Therefore for each $j \in [-n , n]_{\BB Z}$, the number of paths of the form $P_{v_1,v_2}^{\mcl G}$ hitting $j$ is at most 
\begin{equation}\label{eq:multiple}
\max_{f\in  \cF(\map_n)} |\nb_f| \cdot C_2(\log n)^6   \overset{\textrm{\ref{item-12-deg}}}{\le}  C_1C_2 (\log n)^7.
\end{equation}
\medskip 

\noindent\textit{Proof of condition~\ref{item-map-count-M}.} 
Given a path $P$ on $\ol\cQ$ of length $m$, we write it as $\{\wt\path(k_i)\}_{1\le i\le m}$. In addition, if $\wt\path(k_1)\in \cF(\map)$ (resp. $\wt\path(k_m)\in \cF(\map)$), we add $\psi(k_1)$ (resp. $\psi(k_m)$) at the beginning (resp. end) of $P$. Denote this possibly augmented path by $\ol{P}$. 
Now we define a path $\Pi(P)$ on $\map_n$ in two steps:
\begin{enumerate}
	\item for any two consecutive faces $f_1,f_2$ of $\map$, insert a vertex $v\in \cV(\map_n)$ which is an endpoint of the edges shared by $f_1,f_2$.  Denote this path by $\ol P'$.
	\item For any three consecutive vertices $v_1,f,v_2$ on $\ol{P}'$ so that $v_1,v_2\in \cV(\map)$ and $f\in \cF(\map)$, replace the segment $[v_1,f,v_2]$ with an arbitrary arc (say, clockwise) on $f$ between $v_1$ and $v_2$. 
\end{enumerate}
For each $i_1,i_2 \in  [-n , n]_{\BB Z}$ with $i_1 \sim i_2$ in $\mcl G$,  set  $P_{i_1,i_2}^{\map}:=\Pi(\wt P_{i_1,i_2}^{\ol\cQ})$. Then $P_{i_1,i_2}^{\map}$ connects $\psi_n(i_1)$ and $\psi_n(i_2)$ and
\begin{equation}\label{eq:dist-bd}
|P_{i_1,i_2}^{\map}| \overset{\textrm{\ref{item-12-path-T}}}{\le}  \max_{f\in \cF(M_{n})} (\#\nb_f) \cdot  2C_1(\log n)^3  \overset{\textrm{\ref{item-12-deg}}}{\le} 2C_1C_2(\log n)^7.
\end{equation} 
For each $v\in \cV(\map_{n} )$, in order for $v$ to be hit by some $P^\map_{i_1,i_2}$,  there must exist a face in $\nb_v$ that is hit by some path $\wt P_{i_1,i_2}^{\ol\cQ}$. 
By \ref{item-12-path-T}, the number of paths of the form $P^\map_{i_1,i_2}$ hitting $v$ is bounded by
\begin{equation}\label{eq:num}
|\nb_v|\cdot C_1(\log n)^6 \overset{\textrm{\ref{item-12-deg}}}{\le}  C_1C_2(\log n)^7.
\end{equation}
\medskip

\noindent\textit{Proof of condition~\ref{item-map-count-close}.} 
For all $v\in \cV(\map_n)$, we can check case by case that 
$v$ and $\psi(\phi_n(v))$  are both on  $\nb_{\wt\path(\phi_n(v))}$.
Therefore by \ref{item-12-deg} $\op{dist}\left(\psi(\phi_n(v)) ,v ; \map_{n} \right)  \le  \max_{f\in \cF(\map_n)} |\nb_f| \le  C_2\log n$.

Now we proceed to bound $ \op{dist}\left(\phi(\psi(i)) , i ; \mcl G_{n} \right)$ for $i\in[-n,n]_\Z$. If $\wt\path(i)\in \cV(\map_n)$, then \ref{item-12-deg} implies  that $\op{dist}\left(\wt\path( \phi_n(\psi(i))) ,\wt\path(i) ; \ol\cQ_{n} \right) =\op{dist}\left(\wt\path(\phi_n(\wt\path(i))) ,\wt\path(i); \ol\cQ_{n} \right) \le C_2 \log n$. 
If $\wt\path(i)\in \cF(\map_n)$, then $\wt\path(\phi_n(\psi (i)  ))$ and $\wt\path(i)$ are both in $\nb_{\psi(i)}\cap \cF(\map_n)$. By Lemma~\ref{lem:ad} and \ref{item-12-deg}, we have  $\op{dist}\left(\wt\path( \phi_n(\psi(i))) ,\wt\path(i) ; \ol\cQ_{n} \right) \le C_2\log n$.
Now  \ref{item-12-path-G} implies that   $\op{dist}\left(\phi_n(\psi(i)) , i ; \mcl G_{n} \right)\le C_2\log n\cdot  C_1(\log n)^3=C_1C_2(\log n)^4$.
\qed

\subsubsection{Bipolar orientations with other weights and Schnyder woods}
\label{subsub:Schnyder}

The only properties of $\cZ=(\cL,\cR)$ that we used in Sections~\ref{subsub:ubom} and~\ref{subsub:bi-metric} are as follows:
\begin{align}\label{eq:step2}
&\textrm{$\{\Delta\cZ_i\}_{i\in \Z}$ are i.i.d with $\E [\Delta\cZ_i]=(0,0)$ and the correlation of $\cL$ and $\cR$ is in $(-1,1)$ }\\
&\textrm{The support of  $\Delta\cZ_1$ is  contained in \eqref{eq:step} and $\E[e^{b |\Delta Z_1 |}] <\infty$ for some $b>0$.}\label{eq:step3}
\end{align}
In particular, as long as  $\cZ$ satisfies \eqref{eq:step2}-\eqref{eq:step3}, we can construct an infinite planar map associated with $\cZ$, analogous to the $\UBOM$, via the sewing procedure. This gives a large family of measures on infinite bipolar-oriented maps. By \cite[Theorem 6, Remark 2]{kmsw-bipolar} and the argument for Proposition~\ref{prop:BS}, this family of infinite bipolar-oriented maps includes the Benjamini-Schramm limits of the uniform measures on size $n$ bipolar-oriented $k$-angulations\footnote{The infinite bipolar-oriented triangulation has been considered in \cite{ghs-bipolar}, where the authors shows that the triangulation and its dual map jointly converge  to two $\SLE_{12}$'s coupled in imaginary geometry.} for any $k\geq 3$.
More generally, one can consider Boltzmann-type distributions where each face has a certain weight. 
We note that the above family includes maps encoded by walks with any correlation in $(-1,0)$, which corresponds to $\kappa>8$ and $\gamma<\sqrt{2}$.

The exact same argument as in Section~\ref{subsub:bi-metric} shows that Theorem~\ref{thm-map-count} still holds for this family of maps (with $M_n$, $\phi_n$, and $\psi_n$ defined in exactly the same way and $\gamma$ determined by $\op{corr}(\mcl L_1, \mcl R_1) = -\cos(\pi\gamma^2/4)$).
We remark that in Section~\ref{subsub:bi-metric} we sometimes  rotate the plane 180 degree and switch $\cZ_t=(\cL_t,\cR_t)$ to $(\cR_{-t},\cL_{-t})$. But we never need the fact  that 
$(\cL_t,\cR_t)\overset{d}{=}(\cR_{-t},\cL_{-t})$. All we need is that the walk $(\cR_{-t},\cL_{-t})$ also satisfies \eqref{eq:step2}-\eqref{eq:step3}, so we do not have to impose this kind of symmetry condition on $\Delta\cZ$ to ensure our generalization of Section~\ref{subsub:bi-metric}.
Thus, we have established Theorem~\ref{thm-map-count} in case~\ref{item-kappa>8}.

As an example, suppose $\cZ$ is a bi-infinite  random walk satisfies \eqref{eq:step2}-\eqref{eq:step3} with step distribution 
\begin{equation}\label{eq:step4}
\nu(-1,1) = 2^{-1} \qquad \textrm{and} \qquad \nu(1,-j) = 2^{-j-2} \quad \textrm{for}\; j\ge 0.
\end{equation} Let $\map^{\cZ}$ be the infinite map produced by $\cZ$ via the sewing procedure. 
For any positive integer $n$ we apply the finite sewing procedure  to the walk $\{\cZ_t \}_{0\le t\le n-1}$. We condition on $\{\cZ_t\}_{0\le t\le n-1}$ starting at (0,0), ending at  $(0,1)$, and staying in $\Z_{\ge 0}\times\Z_{\ge 0}$. This produces a uniform  sample of an $n$-edge bipolar-oriented map with the following property:
\begin{equation}\label{eq:proptery}
\textrm{The  west (resp. east) boundary  length equals 1 (resp. 2), and each bounded face has exactly 2 east edges.}
\end{equation}
By the same argument as in Proposition~\ref{prop:BS},  the Benjamini-Schramm limit as $n\to\infty$ is $\map^{\cZ}$. 

We choose to elaborate on the example of $\map^{\cZ}$ because of its close relation to Schnyder wood-decorated random planar maps. Following the notations in \cite{lsw-schnyder-wood}, consider a simple plane triangulation (i.e. without self-loops or multiple edges) $\map$ with unbounded face $\triangle \Ab \Ar \Ag$, where the \emph{outer vertices} $\Ab$, $\Ar$, and $\Ag$ are arranged in counterclockwise order. We denote by $\overline{\Ar\Ag}$ the edge connecting $\Ar$ and $\Ag$, and similarly for $\overline{\Ag\Ab}$ and $\overline{\Ab\Ar}$.  Vertices, edges, and faces of $M$ other than $\Ab,\Ar,\Ag,\overline{\Ar\Ag},\overline{\Ag\Ab},\overline{\Ab\Ar}$ and $\triangle \Ab\Ar\Ag$ are called \textit{inner} vertices, edges, and faces respectively.
A \textit{3-orientation} on $M$ is an orientation on inner edges where every inner vertex has out-degree $3$. 	Given a 3-orientation $\cO$ on $M$, the following algorithm, gives a  way of coloring the inner edges in blue, red and green:
\begin{enumerate}
	\item Color $\Ab,\Ar,\Ag$ blue, red, and green.
	\item For an inner edge $e$, set $e_1=e$, and inductively for $k \geq 1$ do the following: if the head of $e_k$ is an outer vertex, we  stop. Otherwise, let $e_{k+1}$ be the second outgoing edge encountered when clockwise (or equivalently, counterclockwise) rotating $e_k$ about its head. This procedure always terminates in finitely many steps and yields a path	$\cP=[e_1,e_2,\cdots e_\ell]$.
	\item Assign to $e$ the color of the outer vertex at which $\cP$ terminates.
\end{enumerate} 
It is elementary to check (see e.g. \cite[Introduction]{lsw-schnyder-wood})  that the set of blue edges forms a tree $\Tb$ spanning all inner vertices plus $\Ab$, and
similar statements hold for red and green edges, where we denote the corresponding trees by $\Tr$ and $\Tg$ respectively.  $\{\Tb,\Tr,\Tg \}$ is called the \emph{Schnyder wood} associated with the 3-orientation $\cO$.  
In this paper we treat the notion of 3-orientation and Schnyder wood interchangeably  and call $(\map, \cO)$ a \emph{Schnyder wood-decorated triangulation}, or simply a wooded triangulation. See Figure~\ref{fig:sch} for an illustration.

The Schnyder wood was first introduced by Walter Schnyder \cite{schnyder1989planar} to prove a characterization of graph planarity. He later used Schnyder woods to describe an efficient algorithm for embedding planar graph in such a way that its edges are straight lines and its vertices lie on a grid \cite{Schnyder}. Schnyder's celebrated construction has continued to play an important role in graph theory and enumerative combinatorics \cite{bernardi2007catalan,felsner2008schnyder}.  It is proved in \cite{lsw-schnyder-wood} that uniformly wooded triangulation converges to a triple of $\SLE_{16}$ curves on Liouville quantum gravity with $\gamma=1$ in the peanosphere topology. Although inspired by \cite{lsw-schnyder-wood}, arguments in this paper do not rely on any ingredient from \cite{lsw-schnyder-wood}, thus can be read independently.

Suppose $\map$ has $n$ inner vertices.  Let $\map'$ be the submap of $M$ with all of the green edges removed. Reverse the orientation of each red edge, and give the outer edges the orientations $\Ar\Ab$, $\Ar\Ag$ and $\Ag\Ab$. It is explained in \cite[Proposition 7.1]{Felner} that 
this operation gives a bipolar orientation $(\map',\cO')$ and is a bijection between the set of wooded triangulations with $n$ inner vertices  and the set of bipolar-oriented maps with $2n+3$ edges satisfying \eqref{eq:proptery}. Moreover, to recover $\map$ from $\map'$, one just needs to connect  an edge between any pair vertices $v,w$ where 
\begin{equation}\label{eq:green}
\textrm{$v,w$ lie on opposite sides of a bounded face $f$ of $\map'$ so that neither $v$ nor $w$ is a pole of $f$.}
\end{equation}
These edges form the green tree $\Tg$ that we deleted. Then we can reverse the lower right edge of each face to recover the red tree. In light of this bijection, given a sample of $\map^{\cZ}$, (deterministically) connecting all such pairs in $\cV(\map^{\cZ})$ as in \eqref{eq:green}, we obtain a sample of the Benjamini-Schramm limit of a uniform wooded triangulation with $n$ inner vertices, which we call the \emph{uniform infinite wooded triangulation} or $\UIWT$.\footnote{
	If we take a Benjamini-Schramm limit of a uniform wooded triangulation rooted at typical edge, then the rooted edge is colored blue, red or green with equal probability. But if we first sample $\map^{\cZ}$ then obtain $\UIWT$ by adding edges as in \eqref{eq:green}, the rooted edge must to blue or red. However, if we think of the $\UIWT$ as a vertex rooted map whose vertex is the tail of the root edge, since each vertex has a unique outgoing edge of each color, these two ways of generating $\UIWT$ are equivalent.
}
This provides an alternative proof of the existence of $\UIWT$ other than \cite[Proposition 2.9]{lsw-schnyder-wood}.
\begin{figure}
	\centering
	\includegraphics[scale=0.7]{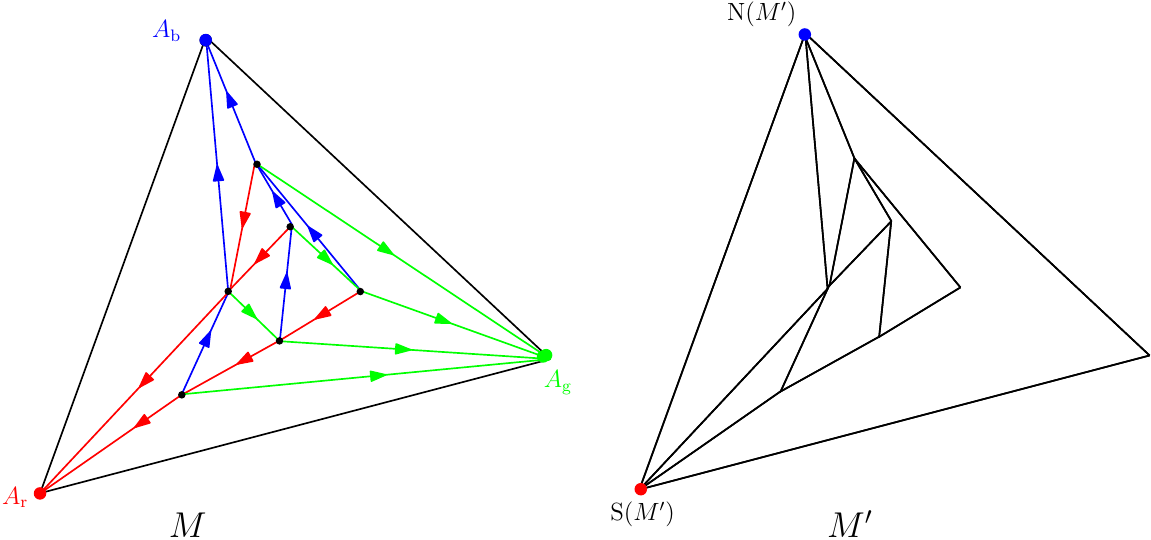}
	\caption[The relationship between bipolar-oriented maps and Schnyder wood-decorated maps]{
		On the left  is the output of the coloring algorithm for a 3-orientation on a simple triangulation, $(\map,\cO)$. On the right is the bipolar-oriented map  $(\map',\cO')$ obtained by deleting all green edges and reversing the direction of red edges. We see that $(\map',\cO')$ satisfies \eqref{eq:proptery}.}
	\label{fig:sch}
\end{figure}
\begin{proof}[Proof of Theorem~\ref{thm-map-count} and Lemma~\ref{lem-ball-layers} for the $\UIWT$]
	Define $\psi_{n},\phi_{n},\wt\path,\nb_{\bullet}, \map^\cZ_n$ in the same way as in Section~\ref{subsub:ubom}-\ref{subsub:bi-metric} with $\map$ replaced by $\map^{\cZ}$.
	Since $\cZ$ satisfies \eqref{eq:step2}-\eqref{eq:step3}, the same argument for case~\ref{item-kappa12} of  Lemma~\ref{lem-ball-layers},  Lemma~\ref{lem-12-deg}, and Theorem~\ref{thm-map-count} yields  the analogs of these results for $\map^{\cZ}$.  Now let $M$ 
	be a sample of $\UIWT$ generated by adding green edges of $\map$ to $\map^{\cZ}$ as in \eqref{eq:green}, with $\map^{\cZ}$ in place of $\map'$.  
	Let $\map_{n}$ be obtained from $M^{\cZ}_{n}$  by adding edges as in \eqref{eq:green} with $M^{\cZ}_{n}$  in place of $\map'$.  
	Since  $\cV(\map)=\cV(\map^{\cZ})$, we see that $\psi_n$ maps $[-n,n]_\Z$ to $\cV(\map_n)$ and $\phi_n$ maps $\cV(\map_n)$ to $[-n,n]_\Z$. 
	
	Since $\cV(\partial M^{\cZ}_{n})=\cV(\partial M_{n})$ for each $n\in \N$, Lemma~\ref{lem-ball-layers} in case~\ref{item-kappa16} follows from the same statement for $\map^{\cZ}$.
	Moreover, conditions~\ref{item-map-count-M} and~\ref{item-map-count-close} in Theorem~\ref{thm-map-count} hold for  $M^{\cZ}_{n}$  implies that the same holds for  $M_{n}$. So it only remains to prove condition~\ref{item-map-count-G}. In this case we work on the event that \ref{item-12-path-G}-\ref{item-12-deg} in Section~\ref{subsub:bi-metric} hold, with $\map^{\cZ}$ in place of the $\UBOM$. In this case the only additional case to consider is when $v_1,v_2$ are connected  by a green edge in $\map_n$.  By \eqref{eq:green} we can find a face $f$ incident to $v_1,v_2$  so that $|\wt\path^{-1}(f)|\le n$.  Therefore the exact same proof of condition~\ref{item-map-count-G} for the $\UBOM$ still works.
\end{proof}

\begin{proof}[Proof of Theorems~\ref{thm-map-ball} and \ref{thm-map-dist} in cases~\ref{item-kappa12} through~\ref{item-kappa16}] 
	This follows from Theorem~\ref{thm-map-count}, Lemma~\ref{lem-ball-iso}, and the results of~\cite{ghs-dist-exponent}, exactly as in case~\ref{item-kappa8}.
\end{proof}


\section{Open questions}
\label{sec-questions}

The proofs of our main results extend to any random planar map which can be encoded by a peanosphere-type bijection for which the encoding random walk can be sufficiently closely approximated by Brownian motion. This leads us to ask whether there are any more random planar map models, aside from the ones listed in Section~\ref{sec-main-results}, for which this is the case.

\begin{ques} \label{ques-kmt}
Is there an analogue of the strong coupling theorem~\cite{zaitsev-kmt,kmt} for random walk and Brownian motion for the non-Markovian random walk appearing in Sheffield's hamburger-cheeseburger bijection~\cite{shef-burger} for $p\not=0$, or for the non-Markovian random walk appearing in the generalized hamburger-cheeseburger model of~\cite{gkmw-burger}?
\end{ques}

An affirmative answer to Question~\ref{ques-kmt} would enable us to generalize the results of this paper to critical Fortuin-Kasteleyn planar maps for $q\in (0,4)$ which are encoded by Sheffield's hamburger cheeseburger bijection and belong to the $\gamma$-LQG universality class for $\gamma \in (\sqrt 2, 2)$; and to the planar maps decorated by active spanning trees with bending energy which are encoded by the model of~\cite{gkmw-burger} and which belong to the $\gamma$-LQG universality class for $\gamma \in (0,\sqrt 2)$. 

The results of the present paper can be easily generalized to other random planar map models for which the corresponding encoding walk has i.i.d.\ increments since the needed strong coupling result comes from~\cite{zaitsev-kmt}. 
 
\begin{ques} \label{ques-other-peano}
What random planar map models, other than the ones listed in Section~\ref{sec-main-results}, can be encoded by a mating-of-trees bijection where the random walk has i.i.d.\ increments?
\end{ques}
 
It is natural to ask for a bijective encoding of a planar map decorated by a variant of the Ising model (or some other decorated random planar map model in the $\sqrt 3$-LQG universality class) by a random walk with i.i.d.\ increments, since this model is in some sense the simplest statistical mechanics model for which no such encoding is currently known. Note that the hamburger-cheeseburger bijection encodes a planar map decorated by an instance of the FK Ising model, but the walk in this case does not have i.i.d.\ increments. 

A potential candidate for the encoding walk in such a hypothetical bijection is the \emph{Gessel walk}~\cite{bkr-gessel-walk,kkz-gessel-walk,mbm-gessel-walk}, whose increment distribution is uniform on $\{ (-1,0) , (1,0) , (1,1) , (-1,-1)\}$. This walk is exactly solvable and its coordinates have correlation $1/\sqrt 2 = -\cos(\pi (\sqrt 3)^2/4)$, which is the same as the correlation of the Brownian motion used to define the mated-CRT map when $\gamma = \sqrt 3$. 

We also note that the results of the present paper provide additional motivation to prove stronger estimates for distances in the mated-CRT map $\mcl G$ (improving on the results of~\cite{ghs-dist-exponent,dg-lqg-dim,gp-lfpp-bounds,ang-discrete-lfpp}) since these results can be immediately transferred to other random planar map models by means of Theorem~\ref{thm-map-coupling}.

\bibliography{cibiblong,cibib,ref} 
\bibliographystyle{hmralphaabbrv}

\end{document}